\DeclareFontFamily{U}{min}{}
\DeclareFontShape{U}{min}{m}{n}{<-> udmj30}{}
\newcommand\yo{\!\text{\usefont{U}{min}{m}{n}\symbol{'207}}\!}
\def\blfootnote{\xdef\@thefnmark{}\@footnotetext}
\tikzset{
  blackdot/.pic={
	\fill (0,0) circle[radius=0.065];
  }
}
\tikzset{
  whitedot/.pic={
	\draw (0,0) circle[radius=0.065];
  }
}
\theoremstyle{plain}
\newtheorem{lemma}{Lemma}[subsection]
\newtheorem{prop}[lemma]{Proposition}
\newtheorem{theorem}[lemma]{Theorem}
\newtheorem{cor}[lemma]{Corollary}
\newtheorem{taller}[lemma]{$\!\!$}
\newenvironment{blanko}[1]%
{\begin{taller}{\normalfont\bfseries  #1}\normalfont}%
{\end{taller}}
\theoremstyle{definition}
\newtheorem{remark}[lemma]{Remark}
\newtheorem{example}[lemma]{Example}
\newtheorem{definition}[lemma]{Definition}
\newtheorem{observation}[lemma]{Observation}
\newtheorem{aplemma}{Fact}[section]
\newtheorem{apdefinition}[aplemma]{Definition}
\newenvironment{dashlist}%
{%
	\begin{list}%
	{---}%
	{%
	\setlength{\itemsep}{1pt}\setlength{\parsep}{0pt}\setlength{\topsep}{0pt}%
	\setlength{\itemindent}{16pt}\setlength{\labelwidth}{4pt}%
	\setlength{\labelsep}{4pt}\setlength{\leftmargin}{7pt}}%
}%
{\end{list}}
\DeclareRobustCommand\upperstar{%
  \mathchoice%
    {\kern0pt\raise0.55ex\hbox{$\displaystyle *$}\kern0.8pt}
    {\kern0pt\raise0.58ex\hbox{$\textstyle *$}\kern0.8pt}
    {\kern0pt\raise0.45ex\hbox{$\scriptstyle *$}\kern0.4pt}
    {\kern0pt\raise0.4ex\hbox{$\scriptscriptstyle *$}\kern0.2pt}
}%
\DeclareRobustCommand\lowerstar{%
  \mathchoice%
    {\kern0pt\raise-0.65ex\hbox{$\displaystyle *$}\kern0.8pt}
    {\kern0pt\raise-0.68ex\hbox{$\textstyle *$}\kern0.8pt}
    {\kern0pt\raise-0.54ex\hbox{$\scriptstyle *$}\kern0.4pt}
    {\kern0pt\raise-0.5ex\hbox{$\scriptscriptstyle *$}\kern0.2pt}
}%
\DeclareRobustCommand\upperfivestar{%
  \mathchoice%
    {\kern0pt\raise0.55ex\hbox{$\displaystyle \star$}\kern0.8pt}
    {\kern0pt\raise0.58ex\hbox{$\textstyle \star$}\kern0.8pt}
    {\kern0pt\raise0.45ex\hbox{$\scriptstyle \star$}\kern0.4pt}
    {\kern0pt\raise0.4ex\hbox{$\scriptscriptstyle \star$}\kern0.2pt}
}%
\newcommand{\isleftadjointto}{\dashv}
\newcommand{\op}{^{\text{{\rm{op}}}}}
\newcommand{\psimpcat}{\simplexcategory^{\operatorname{pt}}}
\newcommand{\augmap}{\zeta}
\newcommand{\uu}{\mathsf{u}}
\newcommand{\ii}{\mathsf{i}}
\newcommand{\kk}{\mathsf{k}}
\newcommand{\bb}{\mathsf{b}}
\newcommand{\qq}{\mathsf{q}}
\newcommand{\pp}{\mathsf{p}}
\newcommand{\rr}{\mathsf{r}}
\newcommand{\hh}{\mathsf{h}}
\newcommand{\w}{\mathsf{w}}
\newcommand{\jj}{\mathsf{j}}
\newcommand{\spaces}{\mathcal{S}}
\providecommand{\simplexcategorytext}{%
\begin{tikzpicture}
	\begin{scope}[scale=0.82]
	  \draw[line width = 0.7pt] (0.0, 0.0) -- (0.142, 0.284) -- (0.284, 0.0) -- 
	  (0.0, 0.0) -- (0.142, 0.284);
	  \draw[line width = 0.7pt] (0.057, 0.0) -- +(0.113, 0.227);
	\end{scope}
\end{tikzpicture}%
}
\providecommand{\simplexcategoryscript}{%
\begin{tikzpicture}
	\begin{scope}[scale=0.57]
	  \draw[line width = 0.5pt] (0.0, 0.0) -- (0.142, 0.284) -- (0.284, 0.0) -- 
	  (0.0, 0.0) -- (0.142, 0.284);
	  \draw[line width = 0.5pt] (0.057, 0.0) -- +(0.113, 0.227);
	\end{scope}
\end{tikzpicture}%
}
\providecommand{\simplexcategoryscriptscript}{%
\begin{tikzpicture}
	\begin{scope}[scale=0.42]
	  \draw[line width = 0.7pt] (0.0, 0.0) -- (0.142, 0.284) -- (0.284, 0.0) -- 
	  (0.0, 0.0) -- (0.142, 0.284);
	  \draw[line width = 0.7pt] (0.057, 0.0) -- +(0.113, 0.227);
	\end{scope}
\end{tikzpicture}%
}
\DeclareRobustCommand\simplexcategory{%
  \mathchoice%
    {\simplexcategorytext}
    {\simplexcategorytext}
    {\simplexcategoryscript}
    {\simplexcategoryscriptscript}
}%
\def\rad{0.20}
\newsavebox{\subbotrecttext}
\sbox{\subbotrecttext}{\begin{tikzpicture}[line width = 0.36pt, line cap=round]
	\draw (0,0) --+ (\rad,0);
	\draw (0,-0.3*\rad) --+ (\rad,0);
	\draw (0.5*\rad,0) --+ (0,0.8*\rad);
  \end{tikzpicture}}
\newsavebox{\suptoprecttext}
\sbox{\suptoprecttext}{\begin{tikzpicture}[line width = 0.36pt, line cap=round]
	\draw (0,0) --+ (\rad,0);
	\draw (0,0.3*\rad) --+ (\rad,0);
	\draw (0.5*\rad,0) --+ (0,-0.8*\rad);
  \end{tikzpicture}}
\def\rad{0.15}
\newsavebox{\subbotrectscript}
\sbox{\subbotrectscript}{\begin{tikzpicture}[line width = 0.32pt, line cap=round]
	\draw (0,0) --+ (\rad,0);
	\draw (0,-0.3*\rad) --+ (\rad,0);
	\draw (0.5*\rad,0) --+ (0,0.8*\rad);
  \end{tikzpicture}}
\newsavebox{\suptoprectscript}
\sbox{\suptoprectscript}{\begin{tikzpicture}[line width = 0.32pt, line cap=round]
	\draw (0,0) --+ (\rad,0);
	\draw (0,0.3*\rad) --+ (\rad,0);
	\draw (0.5*\rad,0) --+ (0,-0.8*\rad);
  \end{tikzpicture}}
\def\rad{0.12}
\newsavebox{\subbotrectscriptscript}
\sbox{\subbotrectscriptscript}{\begin{tikzpicture}[line width = 0.26pt, line cap=round]
	\draw (0,0) --+ (\rad,0);
	\draw (0,-0.3*\rad) --+ (\rad,0);
	\draw (0.5*\rad,0) --+ (0,0.8*\rad);
  \end{tikzpicture}}
\newsavebox{\suptoprectscriptscript}
\sbox{\suptoprectscriptscript}{\begin{tikzpicture}[line width = 0.26pt, line cap=round]
	\draw (0,0) --+ (\rad,0);
	\draw (0,0.3*\rad) --+ (\rad,0);
	\draw (0.5*\rad,0) --+ (0,-0.8*\rad);
  \end{tikzpicture}}
\DeclareRobustCommand\subbot{%
  {\mathchoice%
    {\usebox{\subbotrecttext}}
    {\usebox{\subbotrecttext}}
    {\usebox{\subbotrectscript}}
    {\usebox{\subbotrectscriptscript}}
}}%
\DeclareRobustCommand\suptop{%
  {\mathchoice%
    {\usebox{\suptoprecttext}}
    {\usebox{\suptoprecttext}}
    {\usebox{\suptoprectscript}}
    {\usebox{\suptoprectscriptscript}}
}}%
\newcommand{\ssplit}{s_\subbot}
\newcommand{\scosplit}{s^\subbot}
\newcommand{\tsplit}{t_\suptop}
\newcommand{\Tot}{\operatorname{Tot}}
\newcommand{\decbot}{\operatorname{Dec}_\bot{}\kern-2pt}
\newcommand{\dectop}{\operatorname{Dec}_\top{}\kern-2pt}
\newcommand{\Decbot}[1]{\operatorname{Dec}_\bot{}\kern-2pt{#1}}
\newcommand{\Dectop}[1]{\operatorname{Dec}_\top{}\kern-2pt{#1}}
\newcommand{\decbotb}{{\operatorname{D}}\widetilde{\operatorname{ec}}_\bot{}\kern-1pt}
\newcommand{\Dec}{\operatorname{Dec}}
\newcommand{\Id}{\operatorname{id}}
\newcommand{\id}{\operatorname{id}}
\newcommand{\Map}{\operatorname{Map}}
\newcommand{\Fun}{\operatorname{Fun}}
\newcommand{\name}[1]{\ulcorner #1\urcorner}
\providecommand{\kat}[1]{\text{\textbf{\textsl{#1\/}}}}
\newcommand{\PrSh}{\kat{Pr}}
\newcommand{\sS}{s\mathcal{S}}
\renewcommand{\Pr}{\kat{Pr}} 
\newcommand{\twoSeg}{\PrSh^{\operatorname{2-Seg}}(\simplexcategory)}
\newcommand{\Abcs}{\kat{ABC}}
\newcommand{\BOORS}{\Pr^{\operatorname{BOORS}}(\Sigma)}
\newcommand{\DD}{\mathcal{D}}
\newcommand{\sd}{\operatorname{sd}}
\newcommand{\isopil}{\stackrel{\raisebox{0.1ex}[0ex][0ex]{\(\sim\)}}%
			{\raisebox{-0.15ex}[0.28ex]{\(\rightarrow\)}}}
\newcommand{\drpullback}
  {\arrow[phantom]{dr}[very near start,description]{\lrcorner}}
\newcommand{\dlpullback}
  {\arrow[phantom]{dl}[very near start,description]{\llcorner}}
\newcommand{\ulpullback}
  {\arrow[phantom]{ul}[very near start,description]{\ulcorner}}
\newcommand{\urpullback}
  {\arrow[phantom]{ur}[very near start,description]{\urcorner}}
\newcommand{\comma}{\raisebox{1pt}{$\downarrow$}}
\begin{document}

\author{Joachim Kock and Thomas Jan Mikhail}

\title{

Abacus bicomodule configurations and the 
Bergner--Osorno--Ozornova--Rovelli--Scheimbauer equivalence
}

\date{}

\maketitle

\begin{abstract}
  A theorem of Bergner, Osorno, Ozornova, Rovelli, and Scheimbauer states an
  equivalence between \mbox{$2$-Segal} spaces and certain augmented stable double
  Segal spaces. In this paper we establish more general equivalences, involving simplicial maps of $2$-Segal spaces and abacus
  bicomodule configurations, extending results of Carlier. The BOORS equivalence is recovered from the special case of the
  identity map. One main ingredient is an analysis of the relationship between the
  BOORS and Carlier notions of augmentation, hitherto considered unrelated.\blfootnote{This work was funded by grant FI-DGR 2020 of
  AGAUR (Catalonia) and by grant 10.46540/3103-00099B from the Independent Research
  Fund Denmark. It was also supported by research grant PID2020-116481GB-I00
  (AEI/FEDER, UE) of Spain and grant 2021-SGR-1015 of Catalonia, and through the
  Severo Ochoa and Mar\'ia de Maeztu Program for Centers and Units of Excellence in
  R\&D grant number CEX2020-001084-M as well as the Danish National Research
  Foundation through the Copenhagen Centre for Geometry and Topology (DNRF151).}
\end{abstract}

\tableofcontents

\newpage

\section{Introduction}

\subsection{Background}

\begin{blanko}{$2$-Segal spaces and $S_\bullet$-constructions.}
  One of the main motivations for Dyckerhoff and Kapranov to introduce the notion
  of $2$-Segal space~\cite{Dyckerhoff-Kapranov:1212.3563} was Waldhausen's
  $S_\bullet$-construction~\cite{Waldhausen} and Hall algebras. They showed~\cite[Theorem~7.3.3]{Dyckerhoff-Kapranov:1212.3563} that for any proto-exact
  $\infty$-category $\mathcal{A}$ (such as for example a stable
  $\infty$-category), the Waldhausen $S_\bullet$-construction
  $S_\bullet(\mathcal{A})$ is a $2$-Segal space (see
  also~\cite{Galvez-Kock-Tonks:1512.07573}), and that the Hall-algebra
  construction on $\mathcal{A}$ factors through $S_\bullet(\mathcal{A})$.
  
  Bergner, Osorno, Ozornova, Rovelli, and
  Scheimbauer~\cite{Bergner-Osorno-Ozornova-Rovelli-Scheimbauer:1609.02853},~\cite{Bergner-Osorno-Ozornova-Rovelli-Scheimbauer:1809.10924} (BOORS) made the
  surprising discovery that {\em every} $2$-Segal space arises as an
  $S_\bullet$-construction, if just the $S_\bullet$-construction is extended to
  more general inputs, certain bisimplicial spaces. They identified 
  certain augmented stable double Segal
  spaces as the appropriate input to produce (all) $2$-Segal spaces, and showed
  that this generalized $S_\bullet$-construction is part of an equivalence. The
  BOORS theorem is quite remarkable, as it gives a completely new perspective on
  $2$-Segal spaces, and at the same time gives new insight on the
  $S_\bullet$-construction (see~\cite{Bergner-Osorno-Ozornova-Rovelli-Scheimbauer:1901.03606}), by staging it in
  a setting where it has an inverse. It is striking that this inverse is another
  well-appreciated construction: the inverse takes a $2$-Segal space to its total
  decalage, suitably augmented. Waldhausen's original $S_\bullet$-construction
  dates back to 1983, while the total decalage construction is credited to
  Illusie~\cite{Illusie2} (1972). For an overview of how various Waldhausen
  constructions relate in this perspective, see~\cite{Bergner-Osorno-Ozornova-Rovelli-Scheimbauer:1901.03606}; for an
  introduction to the BOORS equivalence, see~\cite{Rovelli:2412.17400}.
\end{blanko}

\begin{blanko}{Decomposition spaces and bicomodules.}
  Independently of the work of Dyckerhoff and Kapranov, the notion of
  decomposition space was introduced by G\'alvez,
  Kock, and Tonks~\cite{Galvez-Kock-Tonks:1512.07573} with the purpose
  of providing the most general setting for the incidence coalgebra
  construction, originally introduced by Rota~\cite{Rota:Moebius} for posets. An
  important motivation for this generalization was M\"obius
  inversion~\cite{Galvez-Kock-Tonks:1512.07577}. It was quickly realized (first
  by Anel) that decomposition spaces are essentially the same thing as $2$-Segal
  spaces (although it took some years before the last bit of the equivalence
  fell into place, namely the statement that unitality of a $2$-Segal space is
  automatic~\cite{Feller-Garner-Kock-Proulx-Weber:1905.09580}).
  
  A first goal of the theory of decomposition spaces was to upgrade the
  classical theory of incidence algebras and M\"obius inversion from posets to
  decomposition spaces
  \cite{Galvez-Kock-Tonks:1612.09225}. Carlier~\cite{Carlier:1801.07504} took an
  important step in this direction with a far-reaching generalization of Rota's
  formula for the M\"obius functions of two posets related by a Galois
  connection~\cite{Rota:Moebius}. He generalized Rota's formula to adjunctions
  of $\infty$-categories, and went further to the situation of certain
  correspondences between decomposition spaces. To this end he developed a
  theory of {\em bicomodule configurations}, namely suitably augmented stable
  double Segal spaces designed to have incidence bicomodules, just like
  decomposition spaces have incidence coalgebras. 
  In this setting, Carlier established a M\"obius
  inversion principle for certain {\em M\"obius} bicomodule configurations,
  which feature vertical top splittings and horizontal bottom splittings. Having
  horizontal splittings is equivalent to having so-called {\em abacus} maps,
  which are additional operators $B_{i+1,j}\to B_{i,j+1}$ on a bisimplicial
  space $B$.
  
  A key ingredient in the theory of Carlier is a construction that to any
  correspondence of decomposition spaces defines a bicomodule configuration, and
  to any functor between $\infty$-categories defines a bicomodule configuration
  with abacus maps. It is via this construction that the classical set-up of
  Galois connections and $\infty$-adjunctions is brought into the abstract
  framework of stable double Segal spaces and bicomodule configurations,
  so that Rota's original formula can be seen as a special case of Carlier's general
  M\"obius inversion principle for bicomodules.
\end{blanko}

\begin{blanko}{Stability and augmentations.} \label{bla:Stb+Aug}
  The notion of stable double Segal space is central to both the BOORS equivalence
  and Carlier's theory of bicomodules, but with very different motivations. A
  double Segal space is called stable if certain vertical-horizontal bisimplicial
  identities are pullback squares. For BOORS, the purpose was to capture certain
  bipullback features of stable or proto-exact $\infty$-categories as used in the
  classical $S_\bullet$-construction; for Carlier the purpose was to encode the
  bicomodule condition of a left and a right coaction.
  (Carlier~\cite{Carlier:1801.07504} learned about the stability condition from
  BOORS~\cite{Bergner-Osorno-Ozornova-Rovelli-Scheimbauer:1609.02853}, but
  reformulated it in a way suitable for $\infty$-categories.)
  
  However, the notions of augmentation used by BOORS and Carlier are 
  very different. For Carlier, an augmentation of a bisimplicial space $B$
  consists of an augmentation (in the usual sense of simplicial spaces) of each
  row and each column, assembling into two extra new simplicial spaces: one
  forming an augmentation column $B_{\bullet,-1}$ and the other forming an
  augmentation row $B_{-1,\bullet}$; these play the role of the coalgebras that
  the bicomodule is over. A bicomodule configuration in the sense of Carlier thus
  has the shape
  \begin{equation}\label{bla:Delta/[1]}
    \begin{tikzcd} 
         & B_{-1,0} \ar[r, shorten <=5pt, 
	shorten >=4pt] 
         & B_{-1,1} \ar[l, shift left=1.5]
                    \ar[l, shift right=1.5]
                    \ar[r, shorten <=5pt, 
	shorten >=4pt, shift left=1.5] 
                    \ar[r, shorten <=5pt, 
	shorten >=4pt, shift right=1.5]
         & B_{-1,2} \ar[l]
                    \ar[l, shift left=3] 
                    \ar[l, shift right=3] 
                    \ar[r, shorten <=5pt, 
	shorten >=4pt, phantom, "\dots"]
                    & \phantom{} \\
       B_{0,-1} \ar[d, shorten <=5pt, shorten >=2pt]
         & B_{0,0} \ar[d, shorten <=5pt, shorten >=2pt]
                   \ar[u]
                   \ar[l] 
                   \ar[r, shorten <=5pt, 
	shorten >=4pt] 
         & B_{0,1} \ar[d, shorten <=5pt, shorten >=2pt]
                   \ar[u]
                   \ar[l, shift left=1.5]
                   \ar[l, shift right=1.5]
                   \ar[r, shorten <=5pt, 
	shorten >=4pt, shift left=1.5] 
                   \ar[r, shorten <=5pt, 
	shorten >=4pt, shift right=1.5]
         & B_{0,2} \ar[u]
                   \ar[d, shorten <=5pt, shorten >=2pt]
                   \ar[l]
                   \ar[l, shift left=3] 
                   \ar[l, shift right=3]
                   \ar[r, shorten <=5pt, 
	shorten >=4pt, phantom, "\dots"]
                    & \phantom{}\\
       B_{1,-1} \ar[u, shift left=1.5]
                \ar[u, shift right=1.5]
                   \ar[d, shorten <=5pt, shorten >=2pt, phantom, "\vdots"]
         & B_{1,0} \ar[u, shift left=1.5]
                   \ar[u, shift right=1.5]
                   \ar[l] 
                   \ar[r, shorten <=5pt, 
	shorten >=4pt] 
                   \ar[d, shorten <=5pt, shorten >=2pt, phantom, "\vdots"]
         & B_{1,1} \ar[u, shift left=1.5]
                   \ar[u, shift right=1.5]
                   \ar[l, shift left=1.5]
                   \ar[l, shift right=1.5]
                   \ar[r, shorten <=5pt, 
	shorten >=4pt, shift left=1.5] 
                   \ar[r, shorten <=5pt, 
	shorten >=4pt, shift right=1.5] 
                   \ar[d, shorten <=5pt, shorten >=2pt, phantom, "\vdots"]
         & B_{1,2}   
                   \ar[u, shift left=1.5]
                   \ar[u, shift right=1.5]
                   \ar[l]
                   \ar[l, shift left=3] 
                   \ar[l, shift right=3]
                   \ar[r, shorten <=5pt, 
	shorten >=4pt, phantom, "\dots"]
                   \ar[d, shorten <=5pt, shorten >=2pt, phantom, "\vdots"]
                    & \phantom{}\\
        \phantom{}
          & \phantom{}\
          &\phantom{}
          & \phantom{} &
    \end{tikzcd}
  \end{equation}
  Technically, this is the shape of a presheaf on the category
  $\simplexcategory_{/[1]}$.

  In contrast, what in the BOORS papers~\cite{Bergner-Osorno-Ozornova-Rovelli-Scheimbauer:1609.02853,Bergner-Osorno-Ozornova-Rovelli-Scheimbauer:1809.10924}
  is called an augmentation of a bisimplicial space
  $B$ 
  is the addition
  of a single extra space $B_{-1}$ with a morphism
  {\em to} $B_{0,0}$, subject to some pullback conditions. This space parametrizes the
  objects that play the role of the zero object in a proto-exact
  $\infty$-category.
  A BOORS-augmented bisimplicial space thus has the shape
  \begin{center}
    \begin{tikzcd} 
      B_{-1} \ar[rd]   &  
         & 
         &  \\
         & B_{0,0}
		 \ar[d, shorten <=5pt, 
     shorten >=3pt]
                   \ar[r, shorten <=5pt, 
                   shorten >=4pt] 
         & B_{0,1}
		 \ar[d, shorten <=5pt, 
     shorten >=3pt]
                   \ar[l, shift left=1.5]
                   \ar[l, shift right=1.5]
                   \ar[r, shorten <=5pt, 
                   shorten >=4pt, shift left=1.5] 
                   \ar[r, shorten <=5pt, 
                   shorten >=4pt, shift right=1.5]
         & B_{0,2}
                   \ar[d, shorten <=5pt, 
                   shorten >=3pt]
                   \ar[l]
                   \ar[l, shift left=3] 
                   \ar[l, shift right=3]
                   \ar[r, phantom, "\dots"]
                    & \phantom{}\\
         & B_{1,0}
         \ar[u, shift left=1.5]
                   \ar[u, shift right=1.5]
                   \ar[r, shorten <=5pt, 
                   shorten >=4pt] 
                   \ar[d, phantom, "\vdots"]
         & B_{1,1} 
         \ar[u, shift left=1.5]
                   \ar[u, shift right=1.5]
                   \ar[l, shift left=1.5]
                   \ar[l, shift right=1.5]
                   \ar[r, shorten <=5pt, 
                   shorten >=4pt, shift left=1.5] 
                   \ar[r, shorten <=5pt, 
                   shorten >=4pt, shift right=1.5] 
                   \ar[d, phantom, "\vdots"]
         & B_{1,2}   
                   \ar[u, shift left=1.5]
                   \ar[u, shift right=1.5]
                   \ar[l]
                   \ar[l, shift left=3] 
                   \ar[l, shift right=3]
                   \ar[r, phantom, "\dots"]
                   \ar[d, phantom, "\vdots"]
                    & \phantom{}\\
        \phantom{}
          & \phantom{}\
          &\phantom{}
          & \phantom{} &
    \end{tikzcd}
  \end{center}
  --- it is a presheaf on a certain category $\Sigma$.
  
  In the present work (outside of this introduction), in order to avoid confusion regarding the word ``augmentation'', we will instead refer to the map
  $B_{-1} \to B_{0,0}$ as a {\em pointing}, and call the pullback conditions the
  {\em pointing axioms}. For all such larger diagram shapes extending 
  bisimplicial spaces, we call the 
  nonnegatively-indexed part the {\em bulk}.
\end{blanko}

The fact that stable double Segal spaces arise in two different settings with
disparate motivations could be ascribed to a coincidence --- after all it
is a very natural notion.
The discrepancy between the two notions of
  augmentation abets the supposition that
  these settings might be unrelated.

\subsection{Contributions of this paper}

  The achievement of the present paper is to connect the works of 
  BOORS and Carlier.
  We have two main contributions (as well as the theory building up to these
  results):
  \begin{itemize}
  \item We upgrade Carlier's key construction to an equivalence, by
  identifying the conditions allowing one to ``go back''. The
  equivalence we establish goes between certain simplicial maps and certain
  abacus bicomodule configurations.
  \item We exploit Carlier's abacus maps to explain the relationship between the two notions
  of augmentation. This allows the BOORS equivalence to be derived from more
  general equivalences involving simplicial maps and bicomodule configurations.
  \end{itemize}
We proceed to a more detailed and technical overview of the main results.

\begin{blanko}{Simplicial infrastructure (\S\ref{sec:from -1 to splittings}).}
  The paper is primarily about bisimplicial spaces. However, we do need some
  groundwork which is simplicial rather than bisimplicial, culminating with a
  proposition (\ref{prop:RigCoalgIsRigPoint}) comparing the BOORS augmentation axioms
  with certain coalgebras for the lower-decalage comonad. This is a
  generalization of the observation of
  Garner--Kock--Weber~\cite{Garner-Kock-Weber:1812.01750} that Dec-coalgebra
  structure on a $1$-category expresses a local-initial-objects structure (or
  local-terminal-objects structure, 
  which is important
  in the theory of
  operadic categories of Batanin and Markl~\cite{Batanin-Markl:1404.3886}.)
\end{blanko}

\begin{blanko}{Abacus maps (\S\ref{sec:bisimplicial}).} \label{bla:Abacus}
  A prominent role is played throughout by the so-called {\em abacus maps}: 
  these are a
  family of diagonal maps $B_{i+1,j} \to B_{i,j+1}$ in a row-and-column-augmented
  bisimplicial space $B$ satisfying a number of relations. Such an
  object thus looks like this:
  \begin{center}
    \begin{tikzcd} 
         & B_{-1,0} \ar[r, shorten <=5pt, shorten >=4pt] 
         & B_{-1,1} \ar[l, shift left=1.5]
                    \ar[l, shift right=1.5]
                    \ar[r, shorten <=5pt, shorten >=4pt, shift left=1.5] 
                    \ar[r, shorten <=5pt, shorten >=4pt, shift right=1.5]
         & B_{-1,2} \ar[l]
                    \ar[l, shift left=3] 
                    \ar[l, shift right=3] 
                    \ar[r, shorten <=5pt, shorten >=4pt, phantom, "\dots"]
                    & \phantom{} \\
       B_{0,-1} \ar[d, shorten <=5pt, shorten >=3pt]
                \ar[ur]
         & B_{0,0} \ar[d, shorten <=5pt, shorten >=3pt]
                   \ar[u]
                   \ar[l] 
                   \ar[r, shorten <=5pt, shorten >=4pt] 
                   \ar[ur]
         & B_{0,1} \ar[d, shorten <=5pt, shorten >=3pt]
                   \ar[u]
                   \ar[l, shift left=1.5]
                   \ar[l, shift right=1.5]
                   \ar[r, shorten <=5pt, shorten >=4pt, shift left=1.5] 
                   \ar[r, shorten <=5pt, shorten >=4pt, shift right=1.5]
                   \ar[ur, shorten <=6pt, shorten >=3pt] 
         & B_{0,2} \ar[u]
                   \ar[d, shorten <=5pt, shorten >=3pt]
                   \ar[l]
                   \ar[l, shift left=3] 
                   \ar[l, shift right=3]
                   \ar[r, shorten <=5pt, shorten >=4pt, phantom, "\dots"]
                    & \phantom{}\\
       B_{1,-1} \ar[u, shift left=1.5]
                \ar[u, shift right=1.5]
                   \ar[ur]
                   \ar[d, phantom, "\vdots"]
         & B_{1,0} \ar[u, shift left=1.5]
                   \ar[u, shift right=1.5]
                   \ar[l] 
                   \ar[r, shorten <=5pt, shorten >=4pt] 
                   \ar[ur]
                   \ar[d, phantom, "\vdots"]
         & B_{1,1} \ar[u, shift left=1.5]
                   \ar[u, shift right=1.5]
                   \ar[l, shift left=1.5]
                   \ar[l, shift right=1.5]
                   \ar[r, shorten <=5pt, shorten >=4pt, shift left=1.5] 
                   \ar[r, shorten <=5pt, shorten >=4pt, shift right=1.5] 
                   \ar[ur, shorten <=6pt, shorten >=3pt]
                   \ar[d, phantom, "\vdots"]
         & B_{1,2}   
                   \ar[u, shift left=1.5]
                   \ar[u, shift right=1.5]
                   \ar[l]
                   \ar[l, shift left=3] 
                   \ar[l, shift right=3]
                   \ar[r, shorten <=5pt, shorten >=4pt, phantom, "\dots"]
                   \ar[d, phantom, "\vdots"]
                    & \phantom{}\\
        \phantom{}
          & \phantom{}\
          &\phantom{}
          & \phantom{}&
    \end{tikzcd}
\end{center}
  It is a presheaf on a suitable index category $\mathcal{D}$, which we study 
  in some detail. 
  As part of the description of $\DD$, the data of abacus maps is shown to
  be equivalent to having horizontal splittings (that is, $\decbot$-coalgebra
  structure) on all 
  bulk rows. Abacus maps appear already in Carlier's
  first paper~\cite{Carlier:1801.07504} and were named and axiomatized in his
  second paper~\cite{Carlier:1812.09915}.
  An {\em abacus bicomodule configuration} is a presheaf on $\DD$ that is stable double Segal and satisfies further conditions on the augmentations (cf.~\ref{bimod-conf}).
  
\end{blanko}

\begin{blanko}{Simplicial maps vs.~abacus bicomodule configurations (\S\ref{sec:carlier-stuff}).}
  \label{bla:RelSetting}
  Presheaves on $\DD$ arise naturally from general simplicial maps (that is,
  presheaves on $\simplexcategory\times[1]$), by way of right Kan extension
  $$
  \begin{tikzcd}
  \PrSh(\simplexcategory\times[1])  \ar[r, shift left, "\qq\lowerstar"]
  &
  \PrSh(\mathcal{D})
  \ar[l, shift left, "\qq\upperstar"]
  \end{tikzcd}
  $$
  along the functor $\qq :\simplexcategory\times[1] \to \DD$ that includes the two copies of $\simplexcategory$ into the two augmentations of $\DD$.
  In Theorem~\ref{thm:star-equiv} we identify
  a certain condition ($\star$) on $\DD$-presheaves, 
  required for the adjunction to restrict to an equivalence
  $$
  \PrSh(\simplexcategory\times[1]) \simeq \PrSh\upperfivestar(\mathcal{D})  .
  $$
  This has the flavor of the BOORS equivalence, but in a relative setting, and
  with more elaborate categories in place of $\simplexcategory$ and $\Sigma$.
  From here we gradually impose more conditions on the two sides of the
  equivalence to arrive at objects of special interest. We introduce the notion
  of relatively upper $2$-Segal simplicial maps between $2$-Segal spaces,
  identified as precisely those simplicial maps that correspond to Carlier's
  bicomodule configurations (with abacus maps satisfying ($\star$)). In
  particular, the adjunction $ \qq\upperstar \isleftadjointto \qq\lowerstar$
  restricts to an equivalence (Theorem~\ref{thm:ABC*=Up2Seg})
  $$
    \PrSh^{\operatorname{up\, 2-Seg}} (\simplexcategory\times [1])
	\simeq \Abcs\upperfivestar ,
  $$
  where $ \Abcs\upperfivestar $ stands for the full subcategory of $
  \PrSh\upperfivestar(\DD) $ spanned by abacus bicomodule configurations. By
  establishing an equivalence, this improves upon a result of Carlier~\cite{Carlier:1812.09915}, who showed how to construct an abacus bicomodule
  configuration from any functor of $\infty$-categories. (Note that between
  $\infty$-categories, any functor is relatively upper $2$-Segal.) Finally,
  restricting further, to the full subcategory of $\PrSh(\simplexcategory)$
  spanned by the $2$-Segal spaces (interpreted as equivalences of $2$-Segal
  spaces) and to the full subcategory of bicomodule configurations
  with invertible abacus maps, we arrive at an equivalence (Theorem~\ref{thm:ABCInv=2Seg})
  $$
    \twoSeg \simeq \Abcs^\simeq ,
  $$
  which we interpret as a BOORS-type equivalence 
  but with 
  $\DD$-presheaves instead of $\Sigma$-presheaves.
\end{blanko}

\begin{blanko}{Relating the two notions of augmentation (\S\ref{sec:Boors-comp}).}
  \label{bla:Unific}
  The results above prompt a closer analysis of the relationship between the two
  notions of augmentation, leading finally to a derivation of the BOORS
  equivalence from equivalences involving bicomodules.
  The comparison of the two notions of augmentation is mediated by a functor $\jj
  : \Sigma \to \DD$ sending $[-1]$ to $[0,-1]$. 
  We show that the pullback functor 
  $\jj\upperstar$
  restricts 
  to an equivalence (Theorem~\ref{thm:j-starEquivalence}) 
  $$
   {\Abcs^\simeq} \isopil \BOORS
   $$ 
  between bicomodule configurations
  with invertible abacus maps and $\Sigma$-presheaves satisfying the BOORS axioms. 
  
  The equivalences established can be organized into the diagram
  \begin{equation}\label{overview-diagram}
    \begin{tikzcd}
      {\PrSh(\simplexcategory\times [1])} & {\PrSh(\DD)} & {\PrSh(\Sigma)} \\
      {\PrSh(\simplexcategory\times[1])} & {\PrSh\upperfivestar(\DD)} \\
      {\PrSh^{\operatorname{up\, 2-Seg}}(\simplexcategory\times [1])} & {\Abcs\upperfivestar} \\
      \twoSeg & {\Abcs^\simeq} & \BOORS
      \arrow["{\qq\lowerstar}", shift left, from=1-1, to=1-2]
      \arrow["{\qq\upperstar}", shift left, from=1-2, to=1-1]
      \arrow["{\jj\upperstar}", from=1-2, to=1-3]
      \arrow[equals, from=2-1, to=1-1]
      \arrow["\simeq", "\ref{thm:star-equiv}"', from=2-1, to=2-2]
      \arrow[hook, from=2-2, to=1-2]
      \arrow[hook, from=3-1, to=2-1]
      \arrow["{{\simeq}}", "{\ref{thm:ABC*=Up2Seg}}"', from=3-1, to=3-2]
      \arrow[hook, from=3-2, to=2-2]
      \arrow[hook, from=4-1, to=3-1]
      \arrow["{{\simeq}}", "{\ref{thm:ABCInv=2Seg}}"', from=4-1, to=4-2]
      \arrow[hook, from=4-2, to=3-2]
      \arrow["{{\simeq}}", "{\ref{thm:j-starEquivalence}}"', from=4-2, to=4-3]
      \arrow[hook, from=4-3, to=1-3]
    \end{tikzcd}
  \end{equation}
  where all the vertical inclusions are full. The composite of the bottom two
  horizontal functors turns out to be precisely the original BOORS equivalence
  (Corollary~\ref{cor:BOORS}).
  
\end{blanko}

  After the main body of the paper, we include a short little cheat sheet with
  standard facts about $2$-Segal spaces and various classes of simplicial maps.

\bigskip

\noindent {\bf Acknowledgments.} We thank Steve Lack and Julie Bergner for fruitful discussions and helpful feedback.
Subsections~\ref{subsec:split} and~\ref{subsec:dec} owe a lot to forthcoming work
of Batanin--Kock--Weber~\cite{Batanin-Kock-Weber:operadic}.

\section{Splittings, decalage, and rigid \texorpdfstring{$\decbot$}{decbot}-coalgebras}
\label{sec:from -1 to splittings}

In this section we set up some simplicial machinery concerning decalage
and subtle variations on the simplex category involving extra bottom
sections. The full scope of these results will be used in Section~5, but
the set-up, notation, and some of the results are also used in earlier
sections. The main goal is to establish an equivalence
(Proposition~\ref{prop:RigCoalgIsRigPoint}) between the notion of
local-initial-object structures expressed in terms of pointings (a
condition imposed on the zeroth row in a BOORS-augmented stable double
Segal space) and certain bottom-split simplicial spaces (as occur in the
rows of a Carlier-augmented stable double Segal space with abacus maps).
This builds on insight from
Garner–Kock–Weber~\cite{Garner-Kock-Weber:1812.01750} in a different
context.

\subsection{Bottom-split simplicial spaces}
\label{subsec:split}

As always, let $\simplexcategory$ be the category of non-empty finite standard
ordinals and monotone maps. We work with simplicial spaces, meaning 
$\simplexcategory$-presheaves 
with values in the $\infty$-category of spaces $\spaces$. They form the 
$\infty$-category $\sS := \Pr(\simplexcategory) := 
\Fun(\simplexcategory\op,\spaces)$. For a space $C$ we denote by $\bar C$ the 
simplicial space with constant value $C$.

We shall need also several other base 
categories closely related to $\simplexcategory$.
Let $\simplexcategory^\bb$ be the category of
non-empty finite standard ordinals with a bottom element and bottom-preserving
monotone maps. Let $\simplexcategory_\bb$ be the full subcategory of 
$\simplexcategory^\bb$ consisting of those objects where the bottom element is not alone.
The forgetful functor $\uu :\simplexcategory^\bb \to \simplexcategory$ has a left 
adjoint which freely adds a bottom element. The resulting monad on 
$\simplexcategory$ is denoted $\bb$. The forgetful functor $\uu$ is in fact monadic,
so that $\simplexcategory^\bb$ becomes the category of Eilenberg--Moore algebras 
for $\bb$, and $\simplexcategory_\bb$ the Kleisli category (the full subcategory 
of free algebras), hence the notation for these two categories.
The left adjoint free functor thus factors as
$$
\simplexcategory \stackrel{\kk}{\longrightarrow} 
\simplexcategory_\bb \stackrel{\ii }{\longrightarrow}
\simplexcategory^\bb  ,
$$
where $\kk$ is identity-on-objects and $\ii $ is full.
We let these functors dictate the naming conventions for the objects in the 
three categories:
$$
[n] \mapsto [n] \mapsto [n]  .
$$
The category $\simplexcategory^\bb$ thus has an extra object denoted $[-1]$
corresponding to the linear-order-with-bottom-element consisting of only the 
bottom 
element.

We now take presheaves (of spaces), to get functors
$$
\PrSh(\simplexcategory) \stackrel{\kk\upperstar}{\longleftarrow} 
\PrSh(\simplexcategory_\bb) \stackrel{\ii \upperstar}{\longleftarrow}
\PrSh(\simplexcategory^\bb) .
$$

Objects in $\PrSh(\simplexcategory)$ are of course simplicial spaces;
we picture them by drawing the first few face and degeneracy maps
\begin{center}
  \begin{tikzcd}[column sep={7em,between origins}]
    \phantom{X_{-1}} & {X_0} & {X_1} & {X_2} & {}
    \arrow["{s_0}"{description, inner sep = .5pt}, shorten <=5pt, shorten >=5pt, from=1-2, to=1-3]
    \arrow["{d_1}"', shift right=2.5, from=1-3, to=1-2]
    \arrow["{d_0}", shift left=2.5, from=1-3, to=1-2]
    \arrow["{s_1}"{description, inner sep = .5pt, pos=0.4}, shift left=2.5, shorten <=5pt, 
	shorten >=5pt, from=1-3, to=1-4]
    \arrow["{s_0}"{description, inner sep = .5pt, pos=0.4}, shift right=2.5, shorten <=5pt, 
	shorten >=5pt, from=1-3, to=1-4]
    \arrow["{d_1}"{description, inner sep = .5pt, pos=0.4}, from=1-4, to=1-3]
    \arrow["{d_2}"', shift right=5, from=1-4, to=1-3]
    \arrow["{d_0}", shift left=5, from=1-4, to=1-3]
    \arrow["\cdots"{description}, phantom, from=1-5, to=1-4]
  \end{tikzcd}
\end{center}

Objects in $\PrSh(\simplexcategory_\bb)$ are called {\em bottom-split simplicial
spaces}; the diagram of their generating maps starts like this:
\begin{center}
  \begin{tikzcd}[column sep={7em,between origins}]
    \phantom{X_{-1}} & {X_0} & {X_1} & {X_2} & {}
    \arrow["{s_0}"{description, inner sep = .5pt}, shorten <=5pt, shorten >=5pt, from=1-2, to=1-3]
    \arrow["{d_1}"', shift right=2.5, from=1-3, to=1-2]
    \arrow["{d_0}", shift left=2.5, from=1-3, to=1-2]
    \arrow["{\ssplit}"', bend right=25, shift right=2, from=1-2, to=1-3, shorten <=5pt, shorten >=5pt]
    \arrow["{s_1}"{description, inner sep = .5pt, pos=0.4}, shift left=2.5, shorten <=5pt, 
	shorten >=5pt, from=1-3, to=1-4]
    \arrow["{s_0}"{description, inner sep = .5pt, pos=0.4}, shift right=2.5, shorten <=5pt, 
	shorten >=5pt, from=1-3, to=1-4]
    \arrow["{d_1}"{description, inner sep = .5pt, pos=0.4}, from=1-4, to=1-3]
    \arrow["{d_2}"', shift right=5, from=1-4, to=1-3]
    \arrow["{d_0}", shift left=5, from=1-4, to=1-3]
    \arrow["\cdots"{description}, phantom, from=1-5, to=1-4]
    \arrow["{\ssplit}"', bend right=25, shift right=5, from=1-3, to=1-4, shorten <=5pt, shorten >=5pt]
  \end{tikzcd}
\end{center}
The extra bottom sections satisfy the simplicial identities corresponding to a 
section to $d_\bot$ ``below'', and we will always use the symbol $\ssplit$ for 
these ``sub-bottom'' degeneracy maps. 

Objects in $\PrSh(\simplexcategory^\bb)$ are called {\em bottom-split augmented
simplicial spaces}; they are drawn as
\begin{center}
  \begin{tikzcd}[column sep={7em,between origins}]
    {X_{-1}} & {X_0} & {X_1} & {X_2} & {}
    \arrow["{d_0}"', from=1-2, to=1-1]
    \arrow["{\ssplit}"', bend right=25, shift left=1, shorten <=5pt, shorten >=5pt, from=1-1, to=1-2]
    \arrow["{s_0}"{description, inner sep = .5pt}, shorten <=5pt, shorten >=5pt, from=1-2, to=1-3]
    \arrow["{d_1}"', shift right=2.5, from=1-3, to=1-2]
    \arrow["{d_0}", shift left=2.5, from=1-3, to=1-2]
    \arrow["{\ssplit}"', bend right=25, shift right=2, from=1-2, to=1-3, shorten <=5pt, shorten >=5pt]
    \arrow["{s_1}"{description, inner sep = .5pt, pos=0.4}, shift left=2.5, shorten <=5pt, shorten >=5pt, from=1-3, to=1-4]
    \arrow["{s_0}"{description, inner sep = .5pt, pos=0.4}, shift right=2.5, shorten <=5pt, shorten >=5pt, from=1-3, to=1-4]
    \arrow["{d_1}"{description, inner sep = .5pt, pos=0.4}, from=1-4, to=1-3]
    \arrow["{d_2}"', shift right=5, from=1-4, to=1-3]
    \arrow["{d_0}", shift left=5, from=1-4, to=1-3]
    \arrow["\cdots"{description}, phantom, from=1-4, to=1-5]
    \arrow["{\ssplit}"', bend right=25, shift right=5, from=1-3, to=1-4, shorten <=5pt, shorten >=5pt]
  \end{tikzcd}
\end{center}

Let $\simplexcategory_+$ denote the simplex category  
augmented with the empty ordinal, denoted $[-1]$. 
An {\em augmented simplicial space} is a presheaf on 
$\simplexcategory_+$, where the index convention is this:
\begin{center}
  \begin{tikzcd}[column sep={7em,between origins}]
    {X_{-1}} & {X_0} & {X_1} & {X_2} & {}
    \arrow["{d_0}"', from=1-2, to=1-1]
    \arrow["{s_0}"{description, inner sep = .5pt}, shorten <=5pt, shorten >=5pt, from=1-2, to=1-3]
    \arrow["{d_1}"', shift right=2.5, from=1-3, to=1-2]
    \arrow["{d_0}", shift left=2.5, from=1-3, to=1-2]
    \arrow["{s_1}"{description, inner sep = .5pt, pos=0.4}, shift left=2.5, shorten <=5pt, shorten >=5pt, from=1-3, to=1-4]
    \arrow["{s_0}"{description, inner sep = .5pt, pos=0.4}, shift right=2.5, shorten <=5pt, shorten >=5pt, from=1-3, to=1-4]
    \arrow["{d_1}"{description, inner sep = .5pt, pos=0.4}, from=1-4, to=1-3]
    \arrow["{d_2}"', shift right=5, from=1-4, to=1-3]
    \arrow["{d_0}", shift left=5, from=1-4, to=1-3]
    \arrow["\cdots"{description}, phantom, from=1-4, to=1-5]
  \end{tikzcd}
\end{center}
It is often practical to interpret this data as a simplicial map
$$
\bar X_{-1} \leftarrow X
$$
from the underlying simplicial space to the constant simplicial space on $X_{-1}$.

For any simplicial space $X$, we can take the geometric realization (colimit)
  $X_{-1}$ forming altogether an augmented simplicial space with augmentation
  map $X_{-1} \leftarrow X_0$. 
In the situation where $X$ is split by extra bottom degeneracy maps (that is, $X$ 
is the underlying simplicial space of a $\simplexcategory_\bb$-presheaf), the
augmentation map $X_{-1} \stackrel{d_0}\leftarrow X_0 $ acquires a section $
\ssplit : X_{-1} \to X_0 $, so as to form altogether a 
$\simplexcategory^\bb$-presheaf. In fact, this shape of diagram is an absolute
colimit: every $\simplexcategory^\bb$-presheaf $A$ exhibits $A_{-1}$ as the
geometric realization (colimit) of $\ii\upperstar(A)$, or equivalently, as the
geometric realization (colimit) of the underlying simplicial space $\kk \upperstar
\ii \upperstar (A)$ (see~\cite[6.1.3.16]{Lurie:HTT}). This is one interpretation of the fact that $\ii \upperstar$ is an equivalence.

\subsection{Decalage}
\label{subsec:dec}

The adjunction 
\[
\begin{tikzcd}[column sep={3em,between origins}]
& \Pr(\simplexcategory^\bb) \ar[ld, "\ii \upperstar"'] \\
\Pr(\simplexcategory_\bb) \ar[rd, "\kk \upperstar"'] \ar[r, phantom, "\isleftadjointto" description] & {}\\
& \Pr(\simplexcategory) \ar[uu, "\uu \upperstar"']
\end{tikzcd}
\]
defines a comonad on $\PrSh(\simplexcategory)$ which is the lower decalage
comonad 
$$
\decbot := \bb\upperstar = (\ii \kk )\upperstar \uu \upperstar : \PrSh(\simplexcategory) \to \PrSh(\simplexcategory).
$$
We write 
$$
\varepsilon: \decbot(X) \to X
$$
for its counit; it is given in each
degree by bottom face maps. We write $\delta: \decbot(X) \to \decbot\decbot (X)$
for the comultiplication; it is given in each degree by bottom degeneracy
maps. We shall also use the canonical augmentation of $\decbot(X)$, namely the
simplicial map
$$
\bar X_0 \stackrel\augmap\longleftarrow \decbot(X)
$$
given in each degree by composites of top face maps; in particular, in degree 
zero it is the map $X_0 \stackrel{d_1}\leftarrow X_1$.

The
functor $\kk\upperstar\circ \ii \upperstar$ is comonadic, so as to identify
$\PrSh(\simplexcategory^\bb)$ with the category of $\decbot$-coalgebras. In
explicit terms, a $ \simplexcategory^\bb $-presheaf $A$ has an underlying simplicial space
$X:=\kk \upperstar \ii  \upperstar (A)$ possessing extra bottom degeneracies; these
assemble into a simplicial map $\gamma: X \to \decbot(X) $ which is the
structure map of a $\decbot$-coalgebra: the split-simplicial identities
satisfied by the extra bottom degeneracies correspond precisely to the coalgebra
axioms.

It is also the case that $\uu \upperstar$ is monadic, so that the induced monad
$\decbotb:= \uu \upperstar (\ii \kk )\upperstar $ on
$\PrSh(\simplexcategory^\bb)\simeq \decbot$-$\kat{Coalg}$ has the category
$\PrSh(\simplexcategory)$ as category of algebras
\cite{Garner-Kock-Weber:1812.01750}. We shall not need that fact, but we shall
reference the monad $\decbotb$ a few times. We write $\eta_A : A \to \decbotb(A)$
for its unit.

\begin{observation}
  \label{obs:observation}
  If $X\in
  \Pr(\simplexcategory)$ has a $\decbot$-coalgebra structure, so as to constitute
  a $\simplexcategory^\bb$-presheaf $A$ (that is, $X = (\ii \kk )\upperstar (A)$),
  then the structure map $\gamma: X \to \decbot(X)$ underlies a 
  $\decbot$-coalgebra map, namely $\gamma= (\ii \kk )\upperstar 
  (\eta_A)$. (This identification is a general fact about comonads.)
  We say 
  loosely that $\gamma$ is itself a $\decbot$-coalgebra map.
\end{observation}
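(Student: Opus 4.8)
The plan is to recognize this as an instance of the general fact that, for any comonad $G$ on a category, the structure map $\gamma: X \to GX$ of a $G$-coalgebra $(X,\gamma)$ is itself a morphism of $G$-coalgebras, from $(X,\gamma)$ to the cofree coalgebra $(GX,\delta_X)$. Here $G = \decbot$, and the relevant comonadic adjunction is $L \isleftadjointto R$ with $L = (\ii\kk)\upperstar$ and $R = \uu\upperstar$, so that $\decbot = LR$ and $\decbotb = RL$; its comparison equivalence between $\PrSh(\simplexcategory^\bb)$ and the category of $\decbot$-coalgebras is the one already invoked above. First I would record the two standard pieces of comonad bookkeeping that pin down all the players. (a) The comparison functor sends an object $A$ to the $\decbot$-coalgebra $\bigl(LA,\, L\eta_A\bigr)$ and a morphism $f$ to $L(f)$; in particular the $\decbot$-coalgebra structure carried by $X = (\ii\kk)\upperstar(A)$ is $\gamma = L\eta_A = (\ii\kk)\upperstar(\eta_A)$, which is the first assertion of the statement. (b) The comultiplication of $\decbot$ is $\delta = L\eta R$, so the comparison functor sends $\decbotb(A) = RLA$ to the coalgebra with underlying object $LRLA = \decbot(X)$ and structure map $L\eta_{RLA} = \delta_X$ --- that is, to the cofree $\decbot$-coalgebra on $X$.

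Granting (a) and (b), the conclusion is immediate: $\eta_A: A \to \decbotb(A)$ is a morphism of $\simplexcategory^\bb$-presheaves (it is the unit of the monad $\decbotb$), so applying the comparison functor produces a morphism of $\decbot$-coalgebras $(\ii\kk)\upperstar(\eta_A) = \gamma$ from $(X,\gamma)$ to the cofree coalgebra $(\decbot X,\delta_X)$; this is exactly the claim that $\gamma$ underlies a $\decbot$-coalgebra map. One can also bypass the adjunction altogether: unwinding the definition, the statement ``$\gamma$ is a $\decbot$-coalgebra morphism $(X,\gamma)\to(\decbot X,\delta_X)$'' unravels to the equation $\delta_X\circ\gamma = \decbot(\gamma)\circ\gamma$, which is verbatim the coassociativity axiom of the coalgebra $(X,\gamma)$ --- this is the cleanest way to see why it is ``a general fact about comonads''.

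I do not anticipate a real obstacle: the observation is essentially a definitional unwinding, recorded so that the map $\gamma$ can later be cited as a $\decbot$-coalgebra morphism. The only point meriting a moment's care is the identification in (b) of the coalgebra structure that the comparison functor abstractly puts on $\decbot(X)$ with the standard cofree structure $\delta_X$; this is the routine identity $\delta = L\eta R$ (together with a triangle identity for the counit axiom) and requires nothing beyond chasing definitions.
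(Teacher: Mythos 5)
Your proof is correct and takes exactly the route the paper intends: the Observation is justified there only by the parenthetical remark that this is a general fact about comonads, and your argument --- identifying the comparison functor's structure map on $X=(\ii\kk)\upperstar(A)$ as $(\ii\kk)\upperstar(\eta_A)$ and noting that the condition for $\gamma$ to be a coalgebra morphism into the cofree coalgebra $(\decbot X,\delta_X)$ is verbatim the coassociativity axiom --- is precisely the standard unwinding of that fact. Nothing is missing.
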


For categories (and in fact for $\infty$-categories), $\decbot$-coalgebras 
can be interpreted as specifying local initial objects, meaning an initial object 
in each connected component (cf.~Garner--Kock--Weber~\cite{Garner-Kock-Weber:1812.01750}
for the dual case of local terminal objects). The degeneracy map
$\ssplit: A_{-1} \to A_0$ picks out an object in each connected component, and 
the coalgebra axioms imply that these objects are in fact initial in 
each component.

\subsection{Maps between bottom-split simplicial spaces}

Since our main interest is in simplicial objects, we use the underlying
simplicial objects of $\simplexcategory^\bb$-presheaves to dictate how we extend
the various notions of simplicial map to bottom-split simplicial objects: a
morphism of bottom-split (possibly augmented) simplicial spaces $F: A \to B$ is
called {\em left fibration}, {\em right fibration}, {\em or culf} if the map of
underlying simplicial spaces
$$
(\kk \upperstar\circ \ii \upperstar) (A) \to (\kk \upperstar\circ \ii \upperstar) (B)
$$
is a left fibration, a right fibration, or culf, respectively. For the property
``cartesian'' there is a potential conflict of meanings: it could mean cartesian
on the simplicial part in line with the convention, or it could mean truly
cartesian (on all arrows). The following lemma resolves this conflict, as it
implies that the two notions agree:

\def\q{q}
\begin{lemma}\label{lem:lfib=>cart}
  A left fibration $F: A \to B$ of bottom-split simplicial spaces is automatically
  cartesian. If the bottom-split simplicial spaces are augmented,
  then $F$ is cartesian also on the augmentation map, here temporarily denoted 
  $A_{-1} \stackrel q \leftarrow A_0$. In particular
  \begin{equation}\label{eq:aug-pbk}
  \begin{tikzcd}
  A_{-1} \ar[d] & \ar[l, "\q"'] A_0 \ar[d]  \\
  B_{-1} & \ar[l, "\q"] B_0
  \end{tikzcd}
  \end{equation}
  is a pullback.
\end{lemma}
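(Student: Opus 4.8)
The plan is to separate the statement into three pieces: (i) the underlying simplicial map $f$ of $F$ is cartesian over \emph{every} arrow of $\simplexcategory$ (not just over the bottom-element-preserving ones, which is all that being a left fibration literally gives); (ii) in the augmented case the square \eqref{eq:aug-pbk} is a pullback; and (iii) $F$ is in addition cartesian over the remaining generators of $\simplexcategory_\bb$, resp.\ $\simplexcategory^\bb$ --- the sub-bottom degeneracies $\ssplit$ and, in the augmented case, the two arrows between $[-1]$ and $[0]$. Pieces (i) and (iii) together say that $F$ is cartesian on all arrows and that this coincides with being cartesian on the simplicial part, so they settle the stated dichotomy; piece (ii) is the only genuinely hands-on step, and (iii) is formal once (i) and (ii) are in hand. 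Throughout I use the identification of $\simplexcategory^\bb$-presheaves with $\decbot$-coalgebras from \S\ref{subsec:dec}: the underlying simplicial space $X_A$ carries a structure map $\gamma_A \colon X_A \to \decbot(X_A)$ (given degreewise by the $\ssplit$'s), $F$ is a morphism of coalgebras, and $\varepsilon_{X_A} \circ \gamma_A = \id$.

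For (i) I would argue by a retract trick. Since $\decbot = \bb\upperstar$ is restriction along the monad $\bb \colon \simplexcategory \to \simplexcategory$, the naturality square of $\decbot(f)$ over a map $\phi$ of $\simplexcategory$ is precisely the naturality square of $f$ over the bottom-element-preserving map $\bb(\phi)$; as $f$ is a left fibration this is a pullback, so $\decbot(f)$ is cartesian over \emph{every} arrow of $\simplexcategory$. Now the square built from $\gamma_A$ and $\gamma_B$ exhibits a map $f \to \decbot(f)$ in $\Fun([1], \PrSh(\simplexcategory))$ --- it commutes because $F$ is a coalgebra morphism --- and the square built from the counits $\varepsilon_{X_A}, \varepsilon_{X_B}$ gives a map $\decbot(f) \to f$ back, whose composite with the former is $\id_f$ because $\varepsilon \circ \gamma = \id$. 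Thus $f$ is a retract of $\decbot(f)$ in the arrow $\infty$-category. Since passing to the naturality square over a fixed $\phi$ is functorial, and since being a pullback square is equivalent to invertibility of a comparison map and invertibility is stable under retracts, $f$ is cartesian over every $\phi \in \simplexcategory$; equivalently $F$ is cartesian over every arrow of $\simplexcategory^\bb$ in the image of $\bb$.

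For (ii) I would build an explicit inverse to $(q, F_0) \colon A_0 \to A_{-1} \times_{B_{-1}} B_0$. Given a point $(\bar a, b)$ of the target, so $F_{-1}(\bar a) = q(b)$, the point $\ssplit(\bar a) \in A_0$ lies over $\ssplit(q(b)) \in B_0$ (because $F$ commutes with $\ssplit$), and the degree-zero sub-bottom degeneracy applied to $b$ is an edge of $B_1$ from $\ssplit(q(b))$ to $b$ (its source is $\ssplit(q(b))$ and its $d_0$ is $b$). As $f$ is a left fibration, the map $A_1 \to B_1 \times_{B_0} A_0$ induced by the source vertex $d_1$ is an equivalence, so this edge has an essentially unique lift to $A_1$ with source $\ssplit(\bar a)$; send $(\bar a, b)$ to its target under $d_0$. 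This is a section of $(q, F_0)$: the target lies over $b$ because $d_0$ commutes with $F$, and it lies over $\bar a$ because $q \circ d_0 = q \circ d_1$ (both arise from the unique arrow $[-1] \to [1]$ in $\simplexcategory^\bb$) forces $q$ of the target to equal $q(\ssplit(\bar a)) = \bar a$. It is also a retraction: for $a \in A_0$, the degree-zero $\ssplit$ applied to $a$ is itself an edge of $A_1$ over the degree-zero $\ssplit$ of $F_0(a)$, with source $\ssplit(q(a))$ (using $d_1\ssplit = \ssplit q$), hence --- by uniqueness of the lift --- equal to the chosen lift, so its $d_0$, namely $a$, is recovered. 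The care needed is to run all of this at the level of spaces: the ``essentially unique lift'' is the chosen section of the equivalence $A_1 \isopil B_1 \times_{B_0} A_0$, and the two composites are verified as coherent homotopies through that equivalence, using only $q\ssplit = \id$, $d_1 \ssplit = \ssplit q$, $d_0 \ssplit = \id$, naturality of $F$, and $q d_0 = q d_1$.

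For (iii) everything follows by the pasting law for pullback squares. The square over the arrow $[-1] \to [0]$ is exactly \eqref{eq:aug-pbk} from (ii); pasting it with squares already known from (i) (and using $q\ssplit = \id$, $d_1\ssplit = \ssplit q$) yields the squares over the arrow $[0] \to [-1]$ (the section $\ssplit\colon A_{-1}\to A_0$) and over the degree-zero sub-bottom degeneracy $\ssplit\colon A_0 \to A_1$; the higher sub-bottom degeneracies are then obtained from the degree-zero one by pasting with top-face squares (covered by (i)), using that $\gamma_A$ is a simplicial map. Since every arrow of $\simplexcategory^\bb$, resp.\ $\simplexcategory_\bb$, is a composite of $\bb$-images of $\simplexcategory$-maps, sub-bottom degeneracies, and the two $[-1]$-arrows, $F$ is cartesian on all arrows. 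Finally, a bottom-split but non-augmented $F$ is reduced to the augmented case by passing to its canonical augmentation --- available since $\ii\upperstar$ is an equivalence (\S\ref{subsec:split}) --- and restricting back. The one step I expect to be delicate is (ii): handling the unique lift and the two composites as coherent homotopies rather than as bijections on points; everything else is bookkeeping with the pasting law and the descriptions of $\simplexcategory_\bb$ and $\simplexcategory^\bb$.
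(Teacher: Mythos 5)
Your proposal is correct, but it is organized differently from the paper's proof, so a comparison is worthwhile. The paper makes the augmentation square \eqref{eq:aug-pbk} the engine of the whole argument: after augmenting by geometric realization it exhibits \eqref{eq:aug-pbk} directly as a retract of the naturality square over $d_1\colon A_1\to A_0$ (the horizontal composites $A_0\xrightarrow{\ssplit}A_1\xrightarrow{d_0}A_0$ and $A_{-1}\xrightarrow{\ssplit}A_0\xrightarrow{q}A_{-1}$ are identities), concludes by stability of limits under retracts, and only then obtains cartesianness over $d_0$ and all remaining face maps by the same cancellation against $q$ that you use in your step (iii), finishing with the section argument for degeneracies and splittings. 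You instead dispose of the simplicial part first: your retract $f\to\decbot f\to f$ in the arrow category, combined with the fact that $\decbot$ of a left fibration is cartesian, gives cartesianness over the whole bulk without ever invoking an augmentation --- a genuinely different and arguably cleaner mechanism for the first claim, which in particular handles the non-augmented case directly. One small elision there: your one-line justification that the square of $f$ over $\bb(\phi)$ is a pullback ``as $f$ is a left fibration'' is not literal, since a left fibration is a priori only cartesian over top face maps; either cite Fact~\ref{Decbot(lfib)} or note that cartesianness over all bottom-preserving maps follows from the same pasting-cancellation trick you use elsewhere. For the augmentation square, your explicit two-sided inverse built from the edge $\ssplit(b)$ and the unique $d_1$-lift is essentially an unpacked version of the paper's retract argument; it is valid as sketched (a two-sided homotopy inverse suffices to show the comparison map $A_0\to A_{-1}\times_{B_{-1}}B_0$ is an equivalence, so the coherence concern you flag is manageable), but the retract-of-a-pullback-square formulation avoids the pointwise bookkeeping entirely. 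Your step (iii) and the reduction of the non-augmented case via the canonical colimit augmentation coincide with the paper's.
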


\begin{proof}
  We can assume that $F:A\to B$ is augmented (since it can be augmented 
  uniquely by taking geometric realization). We first prove that 
  then the square \eqref{eq:aug-pbk} is a pullback.
  This is a retract argument: the diagram
  \[
  \begin{tikzcd}
      {A_0} \ar[rrr, "\ssplit"{pos=0.6}] \ar[dd] \ar[dr, "\q"] & & & 
      {A_1} \ar[rrr, "d_0"{pos=0.6}] \ar[dd] \ar[dr, "d_1"] & & & 
      {A_0} \ar[dd] \ar[dr, "\q"] &
	  \\ &
      {A_{-1}} \ar[rrr, "\ssplit"{pos=0.4}] \ar[dd]  & & & 
      {A_0} \ar[rrr, "\q"{pos=0.4}] \ar[dd]  & & & 
      {A_{-1}} \ar[dd] 
	  \\ 
      {B_0} \ar[rrr, "\ssplit"{pos=0.6}] \ar[dr, "\q"] & & & 
      {B_1} \ar[rrr, "d_0"{pos=0.6}] \ar[dr, "d_1"] & & & 
      {B_0} \ar[dr, "\q"] &
	  \\ &
      {B_{-1}} \ar[rrr, "\ssplit"{pos=0.4}] & & &  
      {B_0} \ar[rrr, "\q"{pos=0.4}] & & &  
      {B_{-1}}
  \end{tikzcd}
  \]
  is easily seen to commute, and it exhibits the square~\eqref{eq:aug-pbk} as a
  retract of the middle vertical square in the diagram. This middle square is
  a pullback, since the simplicial map is assumed to be a left fibration.
  Since limits are stable
  under retracts,  it follows that also the square~\eqref{eq:aug-pbk} is a
  pullback, as asserted. Now since $F$ is cartesian on both $\q$ and on $d_1$,
  it is cartesian on $q \circ d_1 = q \circ d_0$ and therefore it must
  also be cartesian on $d_0$; this argument can be repeated for all
  face maps. So now $F$ is cartesian on all face maps. But then it is also
  cartesian on all degeneracy maps, since these are sections.
\end{proof}

\begin{remark}
  The above lemma is an abstraction of the fact for $1$-categories that if a left 
  fibration preserves local initial objects, then it is also a right fibration.
\end{remark}

\def\q{d_0}

\begin{lemma}\label{lem:rfib-on-s-1}
  Let $F: A \to B$ be a right fibration of bottom-split simplicial spaces.
  Then $F$ is also cartesian on all splitting maps, including
  the augmentation splitting.
\end{lemma}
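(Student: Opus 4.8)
The plan is to verify separately that $F$ is cartesian on each sub-bottom degeneracy $\ssplit\colon A_{n-1}\to A_n$, with the ``interior'' cases $n\ge 1$ and the augmentation splitting ($n=0$) treated by genuinely different arguments; the augmentation splitting is where the real work lies.

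For $n\ge 1$, the map $\ssplit\colon A_{n-1}\to A_n$ is a section of the bottom face map $d_0\colon A_n\to A_{n-1}$, on which $F$ is cartesian since it is a right fibration. I would run the usual pasting argument on the horizontal composite of naturality squares
\[
\begin{tikzcd}
A_{n-1}\ar[r,"\ssplit"]\ar[d,"F"'] & A_n\ar[r,"d_0"]\ar[d,"F"'] & A_{n-1}\ar[d,"F"]\\
B_{n-1}\ar[r,"\ssplit"'] & B_n\ar[r,"d_0"'] & B_{n-1}\rlap{ :}
\end{tikzcd}
\]
the right-hand square is cartesian by hypothesis, while the outer rectangle is cartesian because $d_0\ssplit=\id$ exhibits it as the identity square on $F\colon A_{n-1}\to B_{n-1}$; by the pasting law the left-hand square is cartesian. (The same trick shows that a right fibration is cartesian on any map that is a section of a face map it is already cartesian on, e.g.\ on $s_0$.)

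The case $n=0$ is the crux, and here the pasting trick fails outright: the augmentation splitting $\ssplit\colon A_{-1}\to A_0$ is a section of the augmentation map $d_0\colon A_0\to A_{-1}$, but a right fibration need not be cartesian on that augmentation map (a two-component example already shows this). Instead I would prove directly that the comparison map $\kappa\colon A_{-1}\to B_{-1}\times_{B_0}A_0$ determined by $F_{-1}$ and $\ssplit$ is an equivalence. It has an evident candidate inverse $\rho:=d_0\circ\mathrm{pr}_{A_0}\colon B_{-1}\times_{B_0}A_0\to A_0\to A_{-1}$, and $d_0\ssplit=\id$ gives $\rho\kappa\simeq\id_{A_{-1}}$ at once. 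For $\kappa\rho\simeq\id$, the $B_{-1}$-coordinate is handled by $d_0\ssplit=\id$ (in $B$), and one is left to produce, naturally in a point of $B_{-1}\times_{B_0}A_0$ — i.e.\ a point $a\in A_0$ together with a path $\ssplit(\bar b)\simeq F_0(a)$ in $B_0$ — a path $\ssplit d_0(a)\simeq a$ in $A_0$. This is the one place the right-fibration hypothesis is really used. Compare the two $1$-simplices $\ssplit(a)$ and $s_0(a)$ of $A_1$ (here $\ssplit\colon A_0\to A_1$): both have bottom face $a$, and after applying $F_1$ they become canonically homotopic, since by naturality, the given path, and the split-simplicial identity $\ssplit\ssplit=s_0\ssplit$ one has
\[
F_1\ssplit(a)=\ssplit F_0(a)\ \simeq\ \ssplit\ssplit(\bar b)=s_0\ssplit(\bar b)\ \simeq\ s_0 F_0(a)=F_1 s_0(a).
\]
As $F$ is a right fibration, $(d_0,F_1)\colon A_1\to A_0\times_{B_0}B_1$ is an equivalence, so $\ssplit(a)$ and $s_0(a)$ are themselves canonically homotopic in $A_1$; applying $d_1$ and using $d_1\ssplit=\ssplit d_0$ and $d_1 s_0=\id$ yields the desired path $\ssplit d_0(a)\simeq a$. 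Carrying these homotopies coherently gives $\kappa\rho\simeq\id$, hence $\kappa$ is an equivalence and the $n=0$ square is a pullback.

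I expect the obstacle to be exactly this last step: spotting that one should play $\ssplit(a)$ off against the honest degeneracy $s_0(a)$ via the level-one right-fibration square, and then keeping track of the homotopies. The cleanest way to organize the $n=0$ argument — and to make the coherence transparent — is probably to rephrase it in terms of the equalizer presentation $A_{-1}\simeq\mathrm{eq}\bigl(\ssplit d_0,\ \id\colon A_0\rightrightarrows A_0\bigr)$ (valid since the $\decbot$-coalgebra structure exhibits $A_{-1}$ as the retract of $A_0$ splitting the idempotent $\ssplit d_0$) together with its counterpart for $B$; everything else — the reductions, the pasting, and the split-simplicial identity bookkeeping — is routine.
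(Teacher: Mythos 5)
Your handling of the interior splittings ($n\ge 1$) is exactly the paper's first step: $\ssplit$ is a section of a bottom face map, on which a right fibration is cartesian, so pasting gives the claim. For the augmentation splitting, your key move --- playing $\ssplit(a)$ off against $s_0(a)$ inside the level-one right-fibration pullback $A_1\simeq A_0\times_{B_0}B_1$, using $\ssplit\ssplit=s_0\ssplit$ and $d_1\ssplit=\ssplit d_0$ --- is the same mechanism the paper uses, but the paper packages it once and for all as a retract argument: the diagonal maps $\ssplit$ (together with $d_0$, $d_1$) exhibit the square on $\ssplit:A_{-1}\to A_0$ (over $B$) as a retract of the square on $s_0:A_0\to A_1$ (over $B$), and the latter is cartesian by the first step since $s_0$ is a section of $d_0$; pullbacks are stable under retracts, and one is done with no pointwise path manipulation at all.

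As written, though, your $n=0$ case has a genuine gap. What you construct is, for each point of $B_{-1}\times_{B_0}A_0$, a path $\ssplit d_0(a)\simeq a$; to conclude $\kappa\rho\simeq\id$ in the $\infty$-setting you need an actual (natural, coherent) homotopy of maps of spaces, and this is precisely the step you defer (``carrying these homotopies coherently''). Even the pointwise lift has an unaddressed wrinkle: the homotopy you build in $B_1$ between $F_1\ssplit(a)$ and $F_1s_0(a)$ does not lie over the constant path at $F_0(a)$ under $d_0$ --- it lies over the given path composed with its reverse --- so to compare the two points of $A_0\times_{B_0}B_1$ you need a further filling argument. More seriously, the device you propose to make the coherence transparent rests on a false statement: in an $\infty$-category the splitting of a split idempotent is \emph{not} the homotopy equalizer of the idempotent with the identity. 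For instance, taking $A$ to be the constant bottom-split augmented simplicial space on a space $C$ (all structure maps the identity), one has $\ssplit d_0=\id_C$, and $\mathrm{eq}(\id_C,\id_C)$ is the free loop space of $C$, not $C$; so $A_{-1}\simeq\mathrm{eq}(\ssplit d_0,\id)$ fails in general and cannot discharge the coherence. The crucial case therefore remains unproved as stated; the retract argument (or some equally coherent replacement for your pointwise construction) is the missing ingredient.
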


\begin{proof}
  (Although the proof is similar to the previous proof, there are subtle 
  differences, and in particular the 
  square~\eqref{eq:aug-pbk} is not always a pullback.)
  That $F$ is a right fibration means that it is cartesian on bottom face maps. Since the
  splittings 
  $$
  A_i \stackrel{\ssplit}\longrightarrow A_{i+1}
  $$
  are sections to bottom face maps for all $i\geq 0$, it follows that $F$ is also 
  cartesian on all these splittings. It remains to show that $F$ is cartesian on 
  the augmentation splitting, meaning that the square
    \begin{equation}\label{eq:augsplit-pbk}
  \begin{tikzcd}
  A_{-1} \ar[d] \ar[r, "\ssplit"] &  A_0 \ar[d]  \\
  B_{-1} \ar[r, "\ssplit"'] &  B_0
  \end{tikzcd}
  \end{equation}
  is a pullback.
  This is a retract argument: the diagram
  \[
  \begin{tikzcd}
      {A_{-1}} \ar[rrr, "\ssplit"] \ar[dd] \ar[dr, "\ssplit"'] & & & 
      {A_0} \ar[rrr, "\q"] \ar[dd] \ar[dr, "\ssplit"] & & & 
      {A_{-1}} \ar[dd] \ar[dr, "\ssplit"] &
	  \\ &
      {A_0} \ar[rrr, pos=0.33, "s_0"] \ar[dd]  & & & 
      {A_1} \ar[rrr, pos=0.33, "d_1"] \ar[dd]  & & & 
      {A_0} \ar[dd] 
	  \\ 
      {B_{-1}} \ar[rrr, pos=0.66, "\ssplit"'] \ar[dr, "\ssplit"'] & & & 
      {B_0} \ar[rrr, pos=0.66, "\q"'] \ar[dr, "\ssplit"'] & & & 
      {B_{-1}} \ar[dr, "\ssplit"] &
	  \\ &
      {B_0} \ar[rrr, "s_0"'] & & &  
      {B_1} \ar[rrr, "d_1"'] & & &  
      {B_0}
  \end{tikzcd}
  \]
  is easily seen to commute, and it exhibits the square~\eqref{eq:augsplit-pbk}
  as a retract of the middle vertical square in the diagram. But the middle
  square is a pullback since $p$ is a right fibration and by the first argument
  of the proof. Since limits are stable under retracts, it follows that also the
  square~\eqref{eq:augsplit-pbk} is a pullback, as asserted.
\end{proof}

It is a standard fact for $1$-categories that if
$\mathcal{C}'\to\mathcal{C}$ is a right fibration of categories, and if
$\mathcal{C}$ has local initial objects, then also $\mathcal{C}'$ has
local initial objects, and they are just the preimages of the local
initial objects of $\mathcal{C}$. The next lemma generalizes this to
state that $\decbot$-coalgebra structure can be pulled back along right
fibrations. We will need this result again in
Section~\ref{sec:Boors-comp}, where we need it in a slightly more
general version, so we state it in that generality: Let $ \mathcal{E} $
be an $\infty$-category with pullbacks. Then we may speak of right
fibrations in the $\infty$-category $ \Fun(\simplexcategory\op, \mathcal
E) $, and we can also speak of the lower decalage comonad and its
coalgebras.

\begin{lemma} \label{lem:RFibLiftDecCoal}
  Let $ \mathcal E $ be a category with pullbacks and let $ F : C' \to C $ be a
  right fibration in $ \Fun(\simplexcategory\op, \mathcal E)
  $. Then a $ \decbot $-coalgebra structure on $ C $ induces a $ \decbot
  $-coalgebra structure on $ C' $ (which is in fact unique with the property that
  $ F $ becomes a morphism of $ \decbot$-coalgebras).
\end{lemma}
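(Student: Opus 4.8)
The plan is to reformulate the right-fibration hypothesis as a single pullback statement and bootstrap everything from it. Saying that $F$ is cartesian on the bottom face map in each simplicial degree is exactly saying that the counit $\varepsilon \colon \decbot(C') \to C'$ together with $\decbot(F) \colon \decbot(C') \to \decbot(C)$ exhibit $\decbot(C')$ as the pullback of $F \colon C' \to C$ along the counit $\varepsilon \colon \decbot(C) \to C$ (both legs read off degreewise). We will use throughout that pullbacks in $\Fun(\simplexcategory\op, \mathcal E)$ are computed degreewise in $\mathcal E$, and that $\decbot$, being restriction along an endofunctor of $\simplexcategory$, preserves them.

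First I would construct the structure map. The counit axiom $\varepsilon \circ \gamma = \id_C$ of the given coalgebra makes $\id_{C'} \colon C' \to C'$ and $\gamma \circ F \colon C' \to \decbot(C)$ agree over $C$, so the pullback property above yields a unique map $\gamma' \colon C' \to \decbot(C')$ with $\varepsilon \circ \gamma' = \id_{C'}$ and $\decbot(F) \circ \gamma' = \gamma \circ F$. The first equation is the counit axiom for $\gamma'$; the second says precisely that $F$ will intertwine $\gamma'$ and $\gamma$, i.e.\ that $F$ becomes a morphism of $\decbot$-coalgebras as soon as $\gamma'$ is confirmed to be a coalgebra structure. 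Uniqueness is then immediate: any coalgebra structure $\gamma''$ on $C'$ for which $F$ is a coalgebra morphism satisfies the very same two equations, hence coincides with $\gamma'$ by the universal property.

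The remaining, and main, task is the coassociativity axiom $\delta \circ \gamma' = \decbot(\gamma') \circ \gamma'$. Applying the pullback-preserving functor $\decbot$ to the identification $\decbot(C') \simeq C' \times_C \decbot(C)$ shows that $\decbot(\varepsilon)$ and $\decbot\decbot(F)$ exhibit $\decbot\decbot(C')$ as a pullback; so it suffices to check the identity after postcomposing with each of these two maps. Postcomposing with $\decbot(\varepsilon)$: the left side becomes $\decbot(\varepsilon) \circ \delta \circ \gamma' = \gamma'$ by a comonad triangle identity, and the right side becomes $\decbot(\varepsilon \circ \gamma') \circ \gamma' = \decbot(\id_{C'}) \circ \gamma' = \gamma'$ using the counit axiom for $\gamma'$. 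Postcomposing with $\decbot\decbot(F)$: naturality of $\delta$ together with $\decbot(F) \circ \gamma' = \gamma \circ F$ turns the left side into $\delta \circ \gamma \circ F$, while functoriality of $\decbot$ and the same equation turn the right side into $\decbot(\gamma) \circ \decbot(F) \circ \gamma' = \decbot(\gamma) \circ \gamma \circ F$; these agree because $\delta \circ \gamma = \decbot(\gamma) \circ \gamma$ is the coassociativity axiom of the \emph{given} coalgebra $(C, \gamma)$. Hence $\gamma'$ is a $\decbot$-coalgebra structure and $F$ a coalgebra morphism, as required.

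The main obstacle is organizational rather than conceptual. In a $1$-category the argument above is already complete, since pullback-projection pairs are jointly monic. In the $\infty$-categorical setting one must, in addition, produce and match the homotopies witnessing the equalities above — in particular a $3$-cell expressing that the two separately computed values of $\delta \circ \gamma'$ and $\decbot(\gamma') \circ \gamma'$ are compatible over $\decbot(C)$ — but this is forced by the coherence of the comonad $\decbot$ and the naturality of $\varepsilon$ and $\delta$, and uses no input beyond the right-fibration hypothesis and the coalgebra axioms on $C$.
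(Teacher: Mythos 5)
Your reformulation of the right-fibration hypothesis as the single pullback square exhibiting $\decbot(C')$ as $C'\times_C\decbot(C)$, your construction of $\gamma'$ from the universal property, and the two verifications by postcomposing with the projections are all correct, and in fact produce the same structure map as the paper's proof. The genuine gap is your final paragraph. In this paper $\mathcal E$ is an $\infty$-category (the lemma is applied with $\mathcal E=\spaces$ and $\mathcal E=\sS$), and a $\decbot$-coalgebra structure is not ``a map satisfying the counit and coassociativity axioms up to homotopy'': it is a homotopy-coherent structure, equivalently (as the paper records via comonadicity of $\kk\upperstar\circ\ii\upperstar$) a lift of $C'$ to a $\simplexcategory^\bb$-presheaf, which involves an infinite tower of coherence data above the two axioms. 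Producing $\gamma'$ and checking the two axioms after projection to the legs of a pullback --- even after supplying the compatibility cell over $\decbot(C)$ that you mention --- only yields the lowest layers of that tower; the claim that the remaining coherences are ``forced by the coherence of the comonad'' is exactly the point that requires an argument, not an assertion. The authors are explicitly careful about this elsewhere: the discussion before Lemma~\ref{lem:CartCounit=>Coalg} stresses that rigidity/cartesianness must account for ``all the higher coherences,'' and that lemma derives a coalgebra structure from a cartesian section of $\varepsilon$ precisely by invoking the present lemma rather than by verifying axioms.

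The paper's own proof handles the coherences in one stroke, and this is where it genuinely differs from your route: the coalgebra structure on $C$ is encoded as the bottom splitting (extra degeneracies $\gamma$, $\decbot(\gamma)$, \dots) of the comonad bar resolution $C \leftarrow \decbot C \leftleftarrows \Dec_\bot^2 C \cdots$; degreewise the coface maps of this resolution are instances of $d_0$ and of non-top face maps, against all of which a right fibration is cartesian, so pulling back the \emph{entire} split diagram along $F$ produces the split resolution of $C'$, i.e.\ the fully coherent $\decbot$-coalgebra structure on $C'$, with $F$ a coalgebra map by construction. Your argument as written is a complete proof when $\mathcal E$ is an ordinary $1$-category, and your uniqueness remark likewise only pins down the structure map; in the $\infty$-setting uniqueness should be understood as contractibility of the space of coherent lifts making $F$ a coalgebra map, which again falls out of the pullback-of-the-resolution description rather than of the universal property of a single pullback. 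To repair your proof you would either have to carry out such a coherent construction (essentially rediscovering the bar-resolution argument) or quote a general result identifying coalgebras with split resolutions.
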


\begin{proof}
  The $\decbot$-coalgebra structure on $C$ is given by the bottom splitting of the
  bar resolution of the comonad $\decbot$, as pictured as the
  (horizontal) solid part of the diagram
  \[
  \begin{tikzcd}[row sep={6em,between origins}, column sep={8em,between origins}]
  C \ar[r, bend right=28, shorten <=7pt, shorten >=6pt, "\gamma"', end anchor=west, start anchor=east, shift left=0.3]& \decbot C 
  \ar[r, bend right=28, shorten <=7pt, shorten >=6pt, "\decbot(\gamma)"', end anchor=west, start anchor=east, shift right=1.6] 
  \ar[l, shift right, "\varepsilon"'] & \Dec_\bot^2 C 
  \ar[l, shift right=2, "\varepsilon_{\decbot}"'] \ar[l, "\decbot\varepsilon"{description,inner sep = .5pt}] \ar[r, "\cdots"{description}, phantom]
  & {} 
  \\
  C' \ar[u, "F"] \ar[r, dotted, bend right=28, shorten <=7pt, shorten >=6pt, 
  "\gamma'"', end anchor=west, start anchor=east, shift left=0.1] & \decbot C' \ar[u, dotted, "\decbot F"] \ar[l, dotted, shift right, 
  "\varepsilon"'] \ar[r, dotted, bend right=28, shorten <=6pt, shorten >=4pt, "\decbot(\gamma')"', end anchor=west, start anchor=east, shift right=1.6] 
  & \Dec_\bot^2 C' 
  \ar[u, dotted] 
  \ar[l, dotted, shift right=2, "\varepsilon_{\decbot}"'] 
  \ar[l, dotted, "\decbot\varepsilon"{description,inner sep = .5pt}] \ar[r, "\cdots"{description}, phantom] & {}
  \end{tikzcd}
  \]
  In the solid diagram, all the $\Dec_\bot^k(\varepsilon)$ are instances 
  of $d_0$, and all the $\varepsilon_{\Dec_\bot^k}$ are instances of $d_k$ (which are active).
  Since $F$ is a right fibration, it forms pullbacks with all of them, which 
  means that when pulling back the whole diagram, a new $\decbot$-coalgebra 
  structure results on $C'$.
\end{proof}

\subsection{Rigid \texorpdfstring{$\decbot$}{decbot}-coalgebras}
\label{subsec:rigid}

Among all $\decbot$-coalgebras, of particular importance for this article are
the rigid $\decbot$-coalgebras:

\begin{definition}\label{rigid}
  A $\decbot$-coalgebra $ \gamma : X \to \decbot(X) $ on a simplicial space $X$
  is said to be {\em rigid} if $\gamma$ is cartesian.
\end{definition}

\begin{remark}\label{rmk:rigid-unit}
  Equivalently, {\em a $\decbot$-coalgebra $A \in \Pr(\simplexcategory^\bb)$ (with underlying structure map $\gamma$) is rigid
  if and only if the $A$-component $\eta_A: A \to \decbotb(A)$ of the unit of the 
  monad $\decbotb$ is cartesian}. 
  Indeed, we have $\gamma \simeq (\ii\kk)\upperstar(\eta_A)$ by
  Observation~\ref{obs:observation}, saying that $\gamma$ is the restriction of 
  $\eta_A$ to the underlying simplicial space. So if $\eta_A$ is cartesian, then 
  also $\gamma$ is cartesian.
  Conversely, if $\gamma$ is cartesian and hence a left
  fibration, then $\eta_A$ is a left fibration (by our convention for left
  fibrations in $\Pr(\simplexcategory^\bb)$), but now Lemma~\ref{lem:lfib=>cart}
  guarantees that $\eta_A$ is actually cartesian.
\end{remark}

The notion of rigidity is motivated by the notion of local initial objects, 
cf.~the next subsection. For $1$-categories (or $\infty$-categories),
$\decbot$-coalgebra structure encodes a choice
of local initial objects~\cite{Garner-Kock-Weber:1812.01750}, but
for more
general simplicial spaces, the further property of rigidity
must be imposed separately in order to get a reasonable
notion of local initial objects, as we shall see.
The reason the rigidity condition does not turn up in category 
theory~\cite{Garner-Kock-Weber:1812.01750} is that it is automatic for Segal 
spaces:

\begin{lemma}\label{lem:1-Segal-is-rigid}
  For a Segal space $X$, any $\decbot$-coalgebra structure is rigid.
\end{lemma}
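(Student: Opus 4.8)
The plan is to reduce the statement to Lemma~\ref{lem:lfib=>cart} by showing that for a Segal space $X$, the structure map $\gamma : X \to \decbot(X)$ of any $\decbot$-coalgebra is a left fibration, hence automatically cartesian (i.e.\ rigid). Recall that $\decbot(X)$ is the lower decalage, whose degree-$n$ space is $X_{n+1}$, and the counit $\varepsilon : \decbot(X) \to X$ is given in each degree by the bottom face map $d_0 : X_{n+1} \to X_n$. The coalgebra structure $\gamma$ is a section of $\varepsilon$, so $\gamma$ in degree $n$ is a map $X_n \to X_{n+1}$ splitting $d_0$; by the coalgebra axioms it is the sub-bottom degeneracy $\ssplit$, and $X$ with this extra structure is a $\simplexcategory^\bb$-presheaf $A$.

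First I would recall that being a left fibration means being cartesian on all bottom face maps (the maps $d_0$), and that $\gamma$ being a left fibration (in the sense of our convention for $\simplexcategory^\bb$-presheaves) would by Lemma~\ref{lem:lfib=>cart} force $\gamma$ to be cartesian, which is exactly rigidity. So the real content is: \emph{for a Segal space, $\gamma$ is cartesian on the squares
\[
\begin{tikzcd}
X_n \ar[r, "\ssplit"] \ar[d, "d_0"'] & X_{n+1} \ar[d, "d_0"] \\
X_{n-1} \ar[r, "\ssplit"'] & X_n
\end{tikzcd}
\]
that is, on the $d_0$ face maps in the decalage direction.} Concretely, $\decbot(X)$ has $d_0^{\decbot} = d_1 : X_{n+1} \to X_n$ (the decalage shifts the face maps), so what must be checked is that the square relating $\ssplit$ on $X_n$ and $X_{n-1}$ via the appropriate face map is a pullback.

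The key step is to exploit the Segal condition to decompose the relevant spaces. For a Segal space, $X_{n+1} \simeq X_n \times_{X_0} X_1$ (Segal map into the edge $[\{n, n+1\}]$), and more generally the Segal maps express $X_{n+1}$ as an iterated fibre product along the spine. Under such a decomposition the sub-bottom degeneracy $\ssplit$ and the decalage face maps become recognizable, and the square in question becomes a pullback by a diagram chase using only pullback pasting and the Segal equivalences. The main obstacle I expect is bookkeeping: correctly tracking which face and degeneracy maps $\ssplit$, $\varepsilon$, $d_0^{\decbot}$ correspond to under the index conventions, and choosing the right Segal decomposition so that the pullback becomes visibly a pasting of squares already known to be pullbacks (the Segal squares themselves, plus the naturality square of $\ssplit$ over an edge inclusion). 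Once $\gamma$ is shown cartesian on all $d_0$'s it is a left fibration, and Lemma~\ref{lem:lfib=>cart} (or Remark~\ref{rmk:rigid-unit}) finishes the argument. An alternative, possibly cleaner route is to verify rigidity degreewise directly: show each naturality square of $\ssplit$ along a face map is a pullback by writing $X_{n+1}$ via the Segal condition as $X_1 \times_{X_0} X_n$ with $\ssplit$ acting as $s_0$ on the $X_1$-factor paired with the identity, whence the square is the product of a trivial pullback with an identity square; I would try this computation first since it sidesteps the decalage bar-resolution bookkeeping entirely.
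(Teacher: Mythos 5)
There is a genuine gap, and it comes from a convention mix-up that breaks your reduction. In this paper a \emph{left} fibration means cartesian on the \emph{top} face maps $d_\top$, and a \emph{right} fibration means cartesian on the bottom face maps $d_\bot$ (see \ref{cheat:counits}, and the proof of Lemma~\ref{lem:lfib=>cart}, whose middle square is a $d_1$-square). You assert the opposite, and accordingly identify ``the real content'' as the squares $\ssplit \circ d_0 = d_1 \circ \ssplit$, i.e.\ the naturality squares of $\gamma$ over the bottom face maps. Establishing only those squares would show that $\gamma$ is a \emph{right} fibration, and right fibrations of bottom-split simplicial spaces are \emph{not} automatically cartesian --- the paper only proves the much weaker Lemma~\ref{lem:rfib-on-s-1}, and the inclusion of the initial object into $[1]$ (take nerves; both are Segal and bottom-split, and the splittings are preserved) is a right fibration that is not cartesian. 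So Lemma~\ref{lem:lfib=>cart} could not be invoked from what you propose to check, and the chain of reasoning as written does not close. Your fallback ``direct'' verification has the same orientation problem plus two slips: under $X_{n+1}\simeq X_1\times_{X_0}X_n$ the splitting is the graph $(\ssplit\circ v_0,\id)$ using the degree-$0$ component of the coalgebra structure at the first vertex, \emph{not} $s_0$ (that would describe the ordinary degeneracy); and the $d_0$-naturality square is not of product form under this decomposition (the right-hand vertical map is $d_1$, which deletes the gluing vertex and involves composition), so the ``product of a trivial pullback with an identity square'' argument only covers the top and inner face maps.

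The repair is easy and is essentially the paper's proof, which is much shorter than your plan: since $\varepsilon\circ\gamma\simeq\id$ and $X$ Segal means precisely that $\varepsilon:\Decbot X\to X$ is a left fibration (\ref{cheat:counits}), pullback cancellation makes $\gamma$ a left fibration; by Observation~\ref{obs:observation} it is a map of bottom-split simplicial spaces, so Lemma~\ref{lem:lfib=>cart} upgrades it to cartesian, i.e.\ rigid. Alternatively, if you insist on a hands-on argument, aim your first-edge decomposition at the \emph{top}-face squares (where $\ssplit$ really is a graph and the verification, with a fiber or pasting argument rather than a literal product of squares, does go through) and then cite Lemma~\ref{lem:lfib=>cart}; but the bottom-face squares are neither what that lemma needs nor what your decomposition handles.
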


\begin{proof}
  Let $\gamma : X \to \decbot(X)$ be a $ \decbot$-coalgebra. The counit axiom for
  coalgebras gives the equivalence $\varepsilon \circ \gamma \simeq \Id$. Now $X$ is
  Segal if and only if $\varepsilon$ is a left fibration (\ref{cheat:counits}), so in
  this case also $\gamma$ is a left fibration. On the other hand, by
  Observation~\ref{obs:observation}, $\gamma$ is actually a morphism of
  bottom-split simplicial spaces, and we know from Lemma~\ref{lem:lfib=>cart} that
  a left fibration of bottom-split simplicial spaces is in fact cartesian.
\end{proof}

\begin{example}
  Let $Y$ be lower $2$-Segal, and consider the $1$-Segal space
  $X:= \Decbot{Y}$. Then $X$ has a canonical
  extra bottom degeneracy map $\ssplit: X \to \Decbot{X}$, and this is cartesian.
\end{example}

For the next lemma, recall that $\decbot$-coalgebra structure can be pulled
back along cartesian maps (see Lemma~\ref{lem:RFibLiftDecCoal}). It turns out
the rigidity accounts for all the higher coherences in the notion of
$\decbot$-coalgebra:

\begin{lemma} \label{lem:CartCounit=>Coalg}
  Let $ \gamma : X \to \decbot X $ be a simplicial map satisfying the
  equation $ \varepsilon \circ \gamma \simeq 1 $. If $ \gamma $ is
  cartesian, then $\gamma$ is obtained by pulling back the canonical
  $\decbot$-coalgebra structure on $\decbot(X)$ along $\gamma$. In particular,
  $\gamma$ is a rigid $\decbot$-coalgebra structure.
\end{lemma}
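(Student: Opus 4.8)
The plan is to recognize $\gamma$ as the $\decbot$-coalgebra structure that Lemma~\ref{lem:RFibLiftDecCoal} produces when one pulls back the canonical (cofree) $\decbot$-coalgebra on $X$ along $\gamma$ itself, and then to pin that structure down using the universal property of cofree coalgebras. All the geometry is contained in Lemma~\ref{lem:RFibLiftDecCoal}; the remainder is formal comonad theory.

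First I would note that a cartesian map of simplicial spaces is in particular a right fibration, being cartesian on the bottom face maps; so Lemma~\ref{lem:RFibLiftDecCoal} applies to $\gamma : X \to \decbot X$ (with $\mathcal E = \spaces$, which has pullbacks). Feeding it the canonical $\decbot$-coalgebra $(\decbot X, \delta)$ downstairs, it returns a $\decbot$-coalgebra structure $\gamma' : X \to \decbot X$ on $X$ --- this $\gamma'$ is by definition ``the pullback of the canonical coalgebra structure on $\decbot(X)$ along $\gamma$'' --- together with the information that $\gamma$ underlies a morphism of $\decbot$-coalgebras $(X,\gamma') \to (\decbot X, \delta)$, and that $\gamma'$ is the unique coalgebra structure with this property. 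It remains to check $\gamma' \simeq \gamma$.

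For that I would use that the cofree functor $\decbot(-)$ is right adjoint to the forgetful functor from $\decbot$-coalgebras to $\sS$, with adjunction counit the comonad counit $\varepsilon$: thus for any $\decbot$-coalgebra $(C,\alpha)$, post-composition with $\varepsilon : \decbot X \to X$ identifies $\decbot$-coalgebra maps $(C,\alpha) \to (\decbot X, \delta)$ with simplicial maps $C \to X$. Applying this with $(C,\alpha) = (X,\gamma')$: the coalgebra map $\gamma$ just obtained corresponds to $\varepsilon\circ\gamma \simeq 1_X$, which is the hypothesis; and the structure map $\gamma'$ itself underlies a coalgebra map $(X,\gamma') \to (\decbot X, \delta)$ by Observation~\ref{obs:observation}, corresponding to $\varepsilon\circ\gamma' \simeq 1_X$, which is the counit axiom of the coalgebra $(X,\gamma')$. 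Both correspond to $1_X$, so $\gamma \simeq \gamma'$. Finally $\gamma \simeq \gamma'$ is a $\decbot$-coalgebra structure and is cartesian by hypothesis, hence rigid by Definition~\ref{rigid}.

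The only step that needs care is the appeal to the cofree adjunction: $\gamma$ and $\gamma'$ must be compared as coalgebra morphisms out of one and the same coalgebra, namely $(X,\gamma')$ --- which is legitimate precisely because $\gamma'$ was handed to us by Lemma~\ref{lem:RFibLiftDecCoal}. One must not try to regard $\gamma$ as a coalgebra morphism out of $(X,\gamma)$, since $(X,\gamma)$ is not yet known to be a coalgebra; avoiding this trap is the whole subtlety, and once it is avoided no computation is required.
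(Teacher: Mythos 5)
Your proof is correct and follows essentially the same route as the paper: both apply Lemma~\ref{lem:RFibLiftDecCoal} to pull the canonical coalgebra structure on $\decbot X$ back along the cartesian map $\gamma$, and then identify the induced structure map with $\gamma$ using $\varepsilon\circ\gamma\simeq 1$. The only difference is in packaging this last identification — you invoke the universal property of the cofree coalgebra (coalgebra maps into $(\decbot X,\delta)$ being classified by postcomposition with $\varepsilon$), while the paper performs the equivalent explicit diagram chase, postcomposing the coalgebra-morphism square with $\decbot(\varepsilon)$.
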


\begin{proof}
  First of all, since $\gamma$ is cartesian, it follows from
  Lemma~\ref{lem:RFibLiftDecCoal} that
  $X$ inherits a $\decbot$-coalgebra structure $\ssplit : X \to \decbot X $ from
  the canonical one on $\decbot X$, given by the comultiplication of the comonad
  $ \delta : \decbot X \to \Dec_\bot^2 X$. According to the same
  lemma, the top square in the diagram
  \begin{center}
    \begin{tikzcd}
      X & {\decbot X} \\
      {\decbot X} & {\Dec^2_\bot{}\kern-2pt X} \\
      & {\decbot X}
      \arrow["{s_{\subbot}}", from=1-1, to=1-2]
      \arrow["\gamma"', from=1-1, to=2-1]
      \arrow["{\decbot (\gamma)}"', from=1-2, to=2-2]
      \arrow[bend left=50, Rightarrow, no head, from=1-2, to=3-2]
      \arrow["{\delta}"', from=2-1, to=2-2]
      \arrow[bend right=25, Rightarrow, no head, from=2-1, to=3-2]
      \arrow["{\decbot(\varepsilon)}"', from=2-2, to=3-2]
    \end{tikzcd}
  \end{center}
  commutes (and is in fact a pullback). By the commutativity of the whole
  diagram we see that $\gamma$ is in fact equivalent to $\ssplit$, so since
  $\ssplit$ is a coalgebra structure, also $\gamma$ is a coalgebra structure.
\end{proof}

\subsection{Local initial objects}

The property characterizing an initial object $x$ in a Segal space $X$ is that the
projection $X_{x/}\to X$ is an equivalence. The coslice under $x$ is formally
defined as the pullback in the diagram below, and the projection required to be an
equivalence is the vertical composite in the diagram
\[
\begin{tikzcd}
\bar 1 \ar[d, "\bar x"'] & X_{x/} \dlpullback \ar[l] \ar[d]  \\
\bar X_0 & \decbot(X) \ar[l, "\augmap"] \ar[d, "\varepsilon"'] \\
& X  .
      \arrow[bend left=50, Rightarrow, no head, from=1-2, to=3-2]
\end{tikzcd}
\]
If it exists, an initial object is essentially unique.
(The resulting map $\gamma: X \to \decbot(X)$ is the corresponding
$\decbot$-coalgebra structure.)
For {\em local} initial objects, there is instead an initial object in each connected 
component, and the situation is rather
\begin{equation}\label{C->X}
\begin{tikzcd}
\bar C \ar[d, "\bar a"'] & X_{a/} \dlpullback \ar[l] \ar[d]  \\
\bar X_0 & \decbot(X) \ar[l, "\augmap"] \ar[d, "\varepsilon"'] \\
& X  ,
      \arrow[bend left=50, Rightarrow, no head, from=1-2, to=3-2]
\end{tikzcd}
\end{equation}
for some pointing $a:C\to X_0$.
Again, the choice of a pointing with this property is 
essentially unique, and by absoluteness, $C$ must be the geometric realization 
of $X$. This definition is appropriate also for general simplicial spaces:

\begin{definition} \label{def:InitPoint}
   A {\em local-initial-objects structure} on a simplicial space $X$ consists of
   a pointing $a: C \to X_0$ for which the map
   $$
   \bar C \times_{\bar{X}_0} \decbot(X) \stackrel{\text{pr}}{\longrightarrow} \decbot (X) \stackrel{\varepsilon}{\longrightarrow} X
   $$
   is an equivalence (as in diagram~\eqref{C->X}).

   Dually, a pointing $a: C \to X_0$ is said to be a {\em local-terminal-objects structure} on $X$ if
  $\bar C \times_{\bar{X}_0} \dectop(X) \stackrel{\text{pr}}{\longrightarrow}
	\dectop (X) \stackrel{\varepsilon}{\longrightarrow} X$
  is an equivalence.  
\end{definition}

Once we pass to the bisimplicial setting, this condition will appear in the pointing 
axiom~\ref{def:PointingAxiom} of 
BOORS~\cite{Bergner-Osorno-Ozornova-Rovelli-Scheimbauer:1609.02853}, 
\cite{Bergner-Osorno-Ozornova-Rovelli-Scheimbauer:1901.03606}.

We now head towards proving that local-initial-objects structures 
and rigid $\decbot$-coalgebras are in fact equivalent notions. The underlying combinatorial shape of local-initial-objects structures is captured
by the category $\psimpcat$, given by adjoining a terminal object to $\simplexcategory$. In terms of generators and relations, $
\psimpcat $ has, in addition to the generators and relations of $
\simplexcategory$, an object $ [-1] $ and a new morphism $ [-1] \leftarrow [0] $. We call
the objects in $ \PrSh(\psimpcat) $ {\em pointed simplicial spaces}. A pointed
simplicial space $X$ thus consists of a simplicial space, also denoted by $X$, and
a map $a : C \to X_0 $ called the {\em pointing}. A convenient way of organizing
the data of such a pointed simplicial space is by a simplicial map $ \bar C \to X
$.

The comparison of rigid $\decbot$-coalgebras and local-initial-objects structures
spaces is based on the inclusion of categories
\begin{center}
  \begin{tikzcd}[column sep={5em,between origins}]
	\psimpcat && {[-1]} & {[0]} & {[1]} & {[2]} & \cdots \\
	{\simplexcategory^\bb} && {[-1]} & {[0]} & {[1]} & {[2]} & \cdots
	\arrow["\hh"', from=1-1, to=2-1]
	\arrow[shift left=1, bend left=20, shorten <=5pt, shorten >=5pt, end anchor=east, start anchor=west, from=1-4, to=1-3]
	\arrow[shift right=2, from=1-4, to=1-5]
	\arrow[shift left=2, from=1-4, to=1-5]
	\arrow[shorten <=5pt, shorten >=5pt, from=1-5, to=1-4]
	\arrow[from=1-5, to=1-6]
	\arrow[shift left=4, from=1-5, to=1-6]
	\arrow[shift right=4, from=1-5, to=1-6]
	\arrow[shift left=2, shorten <=5pt, shorten >=5pt, from=1-6, to=1-5]
	\arrow[shift right=2, shorten <=5pt, shorten >=5pt, from=1-6, to=1-5]
	\arrow[from=2-3, to=2-4]
	\arrow[shift left=1, bend left=20, shorten <=5pt, shorten >=5pt, end anchor=east, start anchor=west, from=2-4, to=2-3]
	\arrow[shift left=2, from=2-4, to=2-5]
	\arrow[shift right=2, from=2-4, to=2-5]
	\arrow[shorten <=5pt, shorten >=5pt, from=2-5, to=2-4]
	\arrow[shift left=2, bend left=20, shorten <=5pt, shorten >=5pt, from=2-5, to=2-4]
	\arrow[from=2-5, to=2-6]
	\arrow[shift left=4, from=2-5, to=2-6]
	\arrow[shift right=4, from=2-5, to=2-6]
	\arrow[shift left=2, shorten <=5pt, shorten >=5pt, from=2-6, to=2-5]
	\arrow[shift right=2, shorten <=5pt, shorten >=5pt, from=2-6, to=2-5]
	\arrow[shift left=4, bend left=20, shorten <=5pt, shorten >=5pt, from=2-6, to=2-5]
\end{tikzcd}
\end{center}
and the induced adjunction
\begin{center}
    \begin{tikzcd}[column sep=large]
        {\text{Pr}(\simplexcategory^\bb)} & {\text{Pr}(\psimpcat)}.
        \arrow["{\hh \upperstar }", "{\bot}"', shift left=2, from=1-1, to=1-2]
        \arrow["{\hh \lowerstar }", shift left=2, from=1-2, to=1-1]
    \end{tikzcd}
\end{center}
We first describe $\hh\lowerstar$.
Given a simplicial space $X$, we can consider the bottom-split augmented
simplicial space $\uu\upperstar(X)$; recall that $\uu: 
\simplexcategory^\bb \to \simplexcategory$ is the forgetful functor, so 
that pulling back along it amounts to removing the bottom face
maps, but not the bottom degeneracy maps, and shifting down everything so that the original degree 0
becomes the new degree $-1$. There is a canonical morphism of bottom-split augmented
simplicial space $\bar X_0 \stackrel{\iota}{\longleftarrow} \uu\upperstar(X)$, which in degree $-1$ is given by the identity. The
right Kan extension 
of a pointed simplicial space $ \bar C
\to X $, denoted by $ \hh\lowerstar(X)$ for short, can now be described as the pullback of the entire
$\simplexcategory^\bb$-presheaf $\uu\upperstar (X)$ along the pointing $ a : 
C \to X_0 $:

\begin{center}
  \begin{tikzcd}
    {\bar C} & {\dlpullback \hh\lowerstar(X)} \\
    {\bar X_0} & {\uu\upperstar(X)} ,
    \arrow["{\bar a}"', from=1-1, to=2-1]
    \arrow[from=1-2, to=1-1]
    \arrow[from=1-2, to=2-2]
    \arrow["\iota", from=2-2, to=2-1]
  \end{tikzcd}
\end{center}
where $\bar a$ is a map of constant bottom-split simplicial spaces. Expanding
these bottom-split augmented simplicial spaces, the construction can be depicted
in more detail as in the diagram
\begin{center}
  \begin{tikzcd}
    {C} & {\dlpullback \hh\lowerstar(X)_0} & {\dlpullback \hh\lowerstar(X)_1} & \cdots \\
    {X_0} & {X_1} & {X_2} & \cdots
    \arrow[bend right=20, end anchor=west, start anchor=east, shift right=1, shorten <=5pt, shorten >=5pt, from=1-1, to=1-2]
    \arrow["a"', from=1-1, to=2-1]
    \arrow[from=1-2, to=1-1]
    \arrow[bend right=20, end anchor=west, start anchor=east, shift right=2, shorten <=5pt, shorten >=3pt, from=1-2, to=1-3]
    \arrow[from=1-2, to=2-2]
    \arrow[shift right, from=1-3, to=1-2]
    \arrow[shift left, from=1-3, to=1-2]
    \arrow[from=1-3, to=2-3]
    \arrow["{s_0}"', end anchor=west, start anchor=east, bend right=20, shift right=1, shorten <=5pt, shorten >=5pt, from=2-1, to=2-2]
    \arrow["{{{d_1}}}"', from=2-2, to=2-1]
    \arrow["{s_0}"', end anchor=west, start anchor=east, shift right=2, bend right=20, shorten <=5pt, shorten >=5pt, from=2-2, to=2-3]
    \arrow["{{{d_2}}}"', shift right, from=2-3, to=2-2]
    \arrow["{{{d_1}}}"{description, inner sep=.5pt}, shift left=1, from=2-3, to=2-2]
  \end{tikzcd}
\end{center}
where $\hh\lowerstar(X)_{-1} \simeq C $.

The counit $ \hh\upperstar \hh\lowerstar \to 1 $ of the adjunction evaluated at a
pointed simplicial space $ X $ is given by 
$\id_C$ on the augmentation and by
$$
  \bar C \times_{\bar{X}_0} \decbot(X) \stackrel{\text{pr}}{\longrightarrow}
  \decbot (X) \stackrel{\varepsilon}{\longrightarrow} X
$$
on the underlying simplicial space.

With the adjunction $\hh\upperstar \isleftadjointto\hh\lowerstar$ at hand we can now
reformulate the definition of local-initial-objects structure as in the following
lemma, which is a tautology.

\begin{lemma}\label{lem:h-counit}
  A pointed simplicial space $\bar C \to X$ is a local-initial-objects structure on $X$
  if and only if the counit of the adjunction $\hh\upperstar \isleftadjointto\hh\lowerstar$ evaluated at $X$ is invertible.
\end{lemma}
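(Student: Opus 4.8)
The statement to prove (Lemma~\ref{lem:h-counit}) asserts that a pointed simplicial space $\bar C \to X$ is a local-initial-objects structure on $X$ (in the sense of Definition~\ref{def:InitPoint}) precisely when the counit of the adjunction $\hh\upperstar \isleftadjointto \hh\lowerstar$ at $X$ is invertible. The plan is to simply unwind both sides and observe they are literally the same condition; the excerpt itself already flags this as ``a tautology'', so the work is purely to make the identification explicit.

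First I would recall the explicit description of the counit $\hh\upperstar\hh\lowerstar(X) \to X$ given just above the lemma statement: on the augmentation part it is $\id_C$, and on the underlying simplicial space it is the composite
$$
  \bar C \times_{\bar{X}_0} \decbot(X) \xrightarrow{\ \text{pr}\ } \decbot(X) \xrightarrow{\ \varepsilon\ } X .
$$
Next I would recall Definition~\ref{def:InitPoint}: the pointing $a : C \to X_0$ is a local-initial-objects structure exactly when this very composite is an equivalence of simplicial spaces. So the two conditions already agree on the underlying simplicial space, by definition.

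It then remains only to note that the augmentation component of the counit, being $\id_C$, is always invertible, so the counit at $X$ is invertible if and only if its underlying-simplicial-space component is invertible — and a map of $\simplexcategory^\bb$-presheaves is an equivalence iff it is an equivalence on the underlying simplicial space together with on the augmentation object (equivalences of presheaves are detected pointwise). Combining, the counit at $X$ is invertible iff the displayed composite is an equivalence, which is the defining condition for a local-initial-objects structure. This completes the proof.

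The only place where one might worry is whether the ``underlying simplicial space'' part of the counit really is the stated composite and whether nothing is lost in passing from $\simplexcategory^\bb$-presheaves to simplicial spaces; but this is exactly what the construction of $\hh\lowerstar$ as a pullback of $\uu\upperstar(X)$ along $\bar a$ supplies, together with the fact that $\hh\upperstar$ restricts a $\simplexcategory^\bb$-presheaf along $\hh$ and $\hh$ hits all objects $[n]$ with $n \geq -1$ of $\psimpcat$. So there is no genuine obstacle — the content of the lemma is entirely bookkeeping, and its role is to package Definition~\ref{def:InitPoint} in adjunction-theoretic language for use in the sequel.
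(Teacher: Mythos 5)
Your proposal is correct and matches the paper exactly: the paper gives no separate proof, calling the lemma ``a tautology'' after having described the counit explicitly (identity on the augmentation, and the composite $\bar C \times_{\bar{X}_0} \decbot(X) \to \decbot(X) \to X$ on the underlying simplicial space), which is precisely the unwinding you carry out. Your extra remark that equivalences of $\simplexcategory^\bb$-presheaves are detected levelwise is the only bookkeeping the paper leaves implicit, and it is fine.
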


\begin{lemma}
  Let $ A \in \PrSh(\simplexcategory^\bb) $ be a rigid $ \decbot $-coalgebra. Then
  $ \hh\upperstar (A) $ is a local-initial-objects structure on the underlying simplicial space of $A$.
\end{lemma}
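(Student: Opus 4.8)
The plan is to unwind what $\hh\upperstar(A)$ is as a pointed simplicial space, reduce the statement to the equivalence demanded by Definition~\ref{def:InitPoint}, and then extract that equivalence from a single pullback square which is exactly where rigidity is used.

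First I would identify $\hh\upperstar(A)$. Write $X := (\ii\kk)\upperstar(A)$ for the underlying simplicial space of $A$, let $\gamma : X \to \decbot(X)$ be the underlying $\decbot$-coalgebra structure map, and put $C := A_{-1}$. Since $\hh : \psimpcat \to \simplexcategory^\bb$ sends the generating morphism $[0]\to[-1]$ of $\psimpcat$ to the unique morphism $[0]\to[-1]$ of $\simplexcategory^\bb$, the pointed simplicial space $\hh\upperstar(A)$ has underlying simplicial space $X$ and pointing $a := \ssplit : C \to X_0$, the augmentation splitting of $A$. Hence, by Definition~\ref{def:InitPoint} (equivalently, by Lemma~\ref{lem:h-counit} together with the explicit description of the counit of $\hh\upperstar \isleftadjointto \hh\lowerstar$ recorded just above it), it suffices to show that
\[
\bar C \times_{\bar X_0} \decbot(X) \stackrel{\mathrm{pr}}{\longrightarrow} \decbot(X) \stackrel{\varepsilon}{\longrightarrow} X
\]
is an equivalence, where the pullback is formed along $\bar a$ and the canonical augmentation $\augmap : \decbot(X) \to \bar X_0$.

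The crux is to show that the commutative square of simplicial spaces
\[
\begin{tikzcd}
X \ar[r, "\gamma"] \ar[d, "\pi"'] & \decbot(X) \ar[d, "\augmap"] \\
\bar C \ar[r, "\bar a"'] & \bar X_0
\end{tikzcd}
\]
is a levelwise pullback, where $\pi : X \to \bar C$ is the canonical map to the constant simplicial space on the augmentation $C = A_{-1}$, with degree-$n$ component the structure map $A_n \to A_{-1}$ of the unique morphism $[-1]\to[n]$ of $\simplexcategory^\bb$. The observation that does the work is that, in each simplicial degree $n$, this square is precisely the naturality square of the monad unit $\eta_A : A \to \decbotb(A) = \uu\upperstar(X)$ at the morphism $[-1]\to[n]$ of $\simplexcategory^\bb$: the restriction of $\eta_A$ to $\simplexcategory$ is $\gamma$ (Observation~\ref{obs:observation}), its degree-$(-1)$ component is the pointing $a$, and the structure map $\uu\upperstar(X)_n = X_{n+1} \to X_0 = \uu\upperstar(X)_{-1}$ of $[-1]\to[n]$ is the composite of top face maps, i.e.\ $\augmap$ in degree $n$. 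Lining up all four edges of the square with the decalage data, and in particular recognising this last map as $\augmap$, is the step I expect to require the most care. Once it is set up, rigidity enters at once: by Remark~\ref{rmk:rigid-unit}, $A$ rigid means $\eta_A$ is cartesian, hence cartesian on each $[-1]\to[n]$, so the square above is a levelwise pullback.

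To conclude, the levelwise pullback exhibits $(\pi,\gamma)$ as an equivalence $X \isopil \bar C \times_{\bar X_0} \decbot(X)$, under which the composite $\bar C \times_{\bar X_0} \decbot(X) \stackrel{\mathrm{pr}}{\to} \decbot(X) \stackrel{\varepsilon}{\to} X$ becomes $\varepsilon \circ \gamma$; this is the identity by the counit axiom of the $\decbot$-coalgebra $\gamma$. Hence the composite is an equivalence, and $\hh\upperstar(A)$ is a local-initial-objects structure on $X$. Everything here is formal once $\hh\upperstar(A)$ has been identified; the only genuinely delicate point is the third paragraph.
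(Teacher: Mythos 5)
Your proposal is correct and follows essentially the same route as the paper's proof: identify the pointing of $\hh\upperstar(A)$ as the augmentation splitting, show via rigidity (i.e.\ $\eta_A$ cartesian, Remark~\ref{rmk:rigid-unit}) that the levelwise squares exhibit $X\simeq \bar A_{-1}\times_{\bar A_0}\decbot(X)$ with projection $\gamma$, and conclude with the counit law $\varepsilon\circ\gamma\simeq\id$. The only difference is that you spell out the pullback squares as naturality squares of $\eta_A$ at $[-1]\to[n]$, which the paper leaves implicit in its displayed diagram.
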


\begin{proof}
  Let $X := (\ii\kk)^*(A)$ denote the underlying simplicial space of $A$ and let $
  \gamma : X \to \decbot X $ denote the corresponding $\decbot$-coalgebra
  structure map. We must show that the composite
  $$
    \bar A_{-1} \times_{\bar{A}_0} \decbot(X) \stackrel{\text{pr}}{\longrightarrow} \decbot (X) \stackrel{\varepsilon}{\longrightarrow} X
  $$
  is invertible. The fact that $A$ is rigid means that the pullback projection $
  \bar A_{-1} \times_{\bar{A}_0} \decbot(A')
  \stackrel{\text{pr}}{\longrightarrow} \decbot (A') $ is computed by $ \gamma :
  X \to \decbot X $, as can be seen in the diagram
  \begin{center}
    \begin{tikzcd}
      {A_{-1}} & {\dlpullback A_0} & {\dlpullback A_1} & \cdots \\
      {A_0} & {A_1} & {A_2} & \cdots
      \arrow["{\gamma_{-1}}"', from=1-1, to=2-1]
      \arrow[from=1-2, to=1-1]
      \arrow["{\gamma_0}"', from=1-2, to=2-2]
      \arrow["{d_0}", shift left, from=1-3, to=1-2]
      \arrow["{d_1}"', shift right, from=1-3, to=1-2]
      \arrow["{\gamma_1}", from=1-3, to=2-3]
      \arrow[from=2-2, to=2-1]
      \arrow["{d_1}", shift left, from=2-3, to=2-2]
      \arrow["{d_2}"', shift right, from=2-3, to=2-2]
    \end{tikzcd}
  \end{center}
  As a result, the above composite reduces to
  $$
    X \stackrel{\gamma}{\longrightarrow} \decbot X 
	\stackrel{\varepsilon}{\longrightarrow} X ,
  $$
  which is equivalent to the identity (by the $\decbot$-coalgebra 
  counit law) and thereby invertible.
\end{proof}

\begin{lemma}
  Let $a : C \to X_0$ be a local-initial-objects structure on a simplicial space
  $X$. Then $ \hh\lowerstar (X) $ is a rigid $ \decbot $-coalgebra.
\end{lemma}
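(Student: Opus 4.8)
The plan is to unwind the explicit description of $\hh\lowerstar(X)$ and verify that its $\decbot$-coalgebra structure map is cartesian, which is precisely what rigidity means (Definition~\ref{rigid}). First I would recall that the underlying simplicial space $X'$ of $\hh\lowerstar(X)$ sits in a pullback $X' \simeq \bar C \times_{\bar X_0} \decbot(X)$, formed along the pointing $\bar a$ and the canonical augmentation $\augmap$ of $\decbot(X)$, and that the projection $\hh\lowerstar(X) \to \uu\upperstar(X)$ is a map of $\simplexcategory^\bb$-presheaves. Passing to underlying data (and using that $\uu\upperstar(X)$ is the cofree coalgebra, with structure map $\delta$), this yields a morphism of $\decbot$-coalgebras $\mathrm{pr}\colon X' \to \decbot(X)$. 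The first key point is that $\mathrm{pr}$ is \emph{cartesian}: by construction it is the base change of $\bar a\colon \bar C \to \bar X_0$ along $\augmap$; a map of constant simplicial spaces is cartesian (each naturality square is the identity on two parallel sides), and cartesian maps are stable under base change.

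The second key point is to express the structure map $\gamma'\colon X' \to \decbot(X')$ of $\hh\lowerstar(X)$ through $\mathrm{pr}$ and the map $f := \varepsilon \circ \mathrm{pr}\colon X' \to X$, which is an equivalence precisely because $a$ is a local-initial-objects structure (Definition~\ref{def:InitPoint}). Since $\mathrm{pr}$ is a coalgebra morphism, $\decbot(\mathrm{pr}) \circ \gamma' = \delta \circ \mathrm{pr}$; post-composing with $\decbot(\varepsilon)$ and using the comonad identity $\decbot(\varepsilon) \circ \delta = \Id$ gives
$$
  \decbot(f) \circ \gamma' = \mathrm{pr} .
$$
(Alternatively this is immediate degreewise from $d_0 s_0 = \Id$.) Since $f$, hence $\decbot(f)$, is an equivalence, $\gamma'$ is cartesian if and only if $\mathrm{pr}$ is; so $\gamma'$ is cartesian and $\hh\lowerstar(X)$ is a rigid $\decbot$-coalgebra. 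If one prefers, the last step can be routed through Lemma~\ref{lem:CartCounit=>Coalg}: the composite $\mathrm{pr} \circ f^{-1}\colon X \to \decbot(X)$ satisfies the counit equation and is cartesian, hence is a rigid $\decbot$-coalgebra structure on $X$, and $\hh\lowerstar(X)$ corresponds to an isomorphic coalgebra on $X' \simeq X$.

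I expect the difficulty to be bookkeeping rather than conceptual. The one real input is that pulling back along the constant map $\bar a$ forces the structure map to be cartesian; the care needed is in keeping the avatars of the data apart — the $\simplexcategory^\bb$-presheaf $\hh\lowerstar(X)$ versus its underlying coalgebra $(X',\gamma')$, and the cofree structure $\delta$ on $\decbot(X)$ versus the augmentation $\augmap$ — so that the identity $\decbot(f) \circ \gamma' = \mathrm{pr}$ is applied with the right coalgebra structures in place. Everything else is formal.
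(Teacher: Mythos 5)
Your proposal is correct, and its skeleton coincides with the paper's: both arguments rest on the pullback description of $\hh\lowerstar(X)$, on the observation that the projection to $\decbot(X)$ is cartesian because it is a base change of the map of constant simplicial spaces $\bar a$ (which is trivially cartesian), and on the local-initial-objects hypothesis supplying the equivalence $f=\varepsilon\circ\mathrm{pr}$. The difference lies in the final step. The paper transports everything along $f$ to obtain a cartesian map $\gamma\colon X\to\decbot(X)$ with $\varepsilon\circ\gamma\simeq 1$ and then invokes Lemma~\ref{lem:CartCounit=>Coalg} (itself resting on Lemma~\ref{lem:RFibLiftDecCoal}) to recognize $\gamma$ as the coalgebra structure map of $\hh\lowerstar(X)$ --- this is exactly the ``alternative'' you sketch at the end. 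Your primary route bypasses that lemma: since $\hh\lowerstar(X)$ is a $\simplexcategory^\bb$-presheaf by construction, it already carries a structure map $\gamma'$, the projection $\mathrm{pr}$ to the cofree coalgebra $\uu\upperstar(X)$ is automatically a coalgebra morphism, and the formal identity $\decbot(f)\circ\gamma'\simeq\mathrm{pr}$ (a comonad counit computation, or degreewise $d_0s_0=\id$) reduces rigidity in the sense of Definition~\ref{rigid} to cartesianness of $\mathrm{pr}$ together with invertibility of $\decbot(f)$. This is marginally more self-contained and never has to argue that a cartesian section of $\varepsilon$ is a coalgebra structure; what the paper's route buys is the explicit identification of the structure map with the section $\gamma$, reusing machinery it has already developed for other purposes. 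Both versions are complete proofs.
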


\begin{proof}
  The assumption that $\bar C \to X$ is a local-initial-objects structure says
  that we have a pullback square
  \begin{equation}
    \begin{tikzcd}
    \bar C \ar[d, "\bar a"']  & X \ar[d, "\gamma"] \ar[l] \dlpullback  \\
    \bar X_0 & \decbot X \ar[l, "\augmap"]
    \end{tikzcd}
  \end{equation}
  where $\gamma$ satisfies the unit law $\varepsilon \circ \gamma \simeq 1$. Since $
  \bar a $ is cartesian so is $\gamma$. But, by definition, the pullback is
  precisely the underlying simplicial space of $\hh\lowerstar(X)$, and the
  $\decbot$-coalgebra structure on $\hh\lowerstar(X)$ is precisely that induced by
  pulling back the canonical $\decbot$-coalgebra structure on $\decbot(X)$ along $\gamma$.
  By Lemma~\ref{lem:CartCounit=>Coalg}, the $\decbot$-coalgebra structure map of
  $\hh\lowerstar(X)$ is identified with $\gamma$, rendering $\hh\lowerstar(X)$
  into a rigid $\decbot$-coalgebra.
\end{proof}

Thanks to these two lemmas we may restrict the functors $\hh\upperstar
\isleftadjointto\hh\lowerstar$ to an adjunction between the full
subcategory of rigid $\decbot$-coalgebras and the full subcategory of
local-initial-objects structures. Lemma~\ref{lem:h-counit} characterizes
the fixpoint locus of the counit. We now analyze the unit.

The unit $ 1 \to \hh\lowerstar \hh\upperstar $ evaluated on $ A \in
\PrSh(\simplexcategory^\bb) $ is given by
$$
  A \longrightarrow \bar A_{-1} \times_{\bar A_0} \decbotb A ,
$$
where $ \bar A_{-1} $ and $ \bar A_0 $ are interpreted as $
\simplexcategory^\bb$-presheaves. This map is induced by the universal property 
of the pullback, as detailed in the diagram

\begin{equation} \label{diag:hUnit}
  \begin{tikzcd}
    && A \\
    {\bar A_{-1}} & {\dlpullback \hh\lowerstar\hh\upperstar(A)} \\
    {\bar A_0} & {\decbotb(A) .}
    \arrow["\iota"', bend right=20, from=1-3, to=2-1]
    \arrow[dashed, from=1-3, to=2-2]
    \arrow[bend left=30, from=1-3, to=3-2]
    \arrow["{\bar s_\subbot}"', from=2-1, to=3-1]
    \arrow[from=2-2, to=2-1]
    \arrow[from=2-2, to=3-2]
    \arrow["\iota", from=3-2, to=3-1]
  \end{tikzcd}
\end{equation}
Here the arrows labeled $\iota$ are the 
canonical maps of augmented (bottom-split) simplicial spaces given by the identity in 
degree $-1$, and where the map $A \to \decbotb(A)$ is the unit for the $\decbotb$ 
monad; it is given by $\ssplit$ in each degree, so 
the outer square commutes.

\begin{lemma}\label{lem:h-unit}
  The unit of the adjunction $\hh\upperstar \isleftadjointto\hh\lowerstar$ evaluated on a
  $\decbot$-coalgebra $ A $ is an equivalence if and only if $ A $ is
  rigid.
\end{lemma}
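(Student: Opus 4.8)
The plan is to recognize the unit $\nu_A\colon A \to \hh\lowerstar\hh\upperstar(A)$ as the comparison map into the pullback that \emph{defines} $\hh\lowerstar\hh\upperstar(A)$, so that ``$\nu_A$ is an equivalence'' becomes a statement about a single square, and then to reduce everything to the characterization of rigidity in Remark~\ref{rmk:rigid-unit}.

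First I would note that, by the construction of $\hh\lowerstar\hh\upperstar(A)$ as $\bar A_{-1}\times_{\bar A_0}\decbotb(A)$ in diagram~\eqref{diag:hUnit}, the unit $\nu_A$ is an equivalence if and only if the outer square of~\eqref{diag:hUnit},
\[
\begin{tikzcd}
A \ar[r, "\eta_A"] \ar[d, "\iota"'] & \decbotb(A) \ar[d, "\iota"] \\
\bar A_{-1} \ar[r, "\bar s_\subbot"'] & \bar A_0 ,
\end{tikzcd}
\]
is cartesian. The point to make next is that this outer square \emph{is} the naturality square, taken at the morphism $\eta_A$, of the natural transformation $\iota$ from the identity functor on $\Pr(\simplexcategory^\bb)$ to the functor ``constant on the $(-1)$-level''; here one uses that $\iota\colon A\to\bar A_{-1}$ is the identity in degree $-1$ (it is the cocone map exhibiting $A_{-1}$ as the geometric realization of $A$), the identification $\decbotb(A)_{-1}\simeq A_0$, and the fact that $\eta_A$ is $\ssplit$ in each degree (so that the square commutes, as already observed). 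Since $[-1]$ is initial in $\simplexcategory^\bb$, the components $\iota_{[m]}$ are precisely the restriction operators $A(f)$ along the unique morphisms $f\colon[-1]\to[m]$; hence the outer square above is cartesian if and only if the naturality squares of $\eta_A$ over all these morphisms $f$ are pullbacks, equivalently if and only if $\eta_A$ is the pullback of $\bar s_\subbot\colon\bar A_{-1}\to\bar A_0$ along $\iota\colon\decbotb(A)\to\bar A_0$.

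Given this identification, both directions are immediate. If $A$ is rigid, then $\eta_A$ is cartesian by Remark~\ref{rmk:rigid-unit}, so in particular its naturality squares over the morphisms $[-1]\to[m]$ are pullbacks; hence the outer square is cartesian and $\nu_A$ is an equivalence. Conversely, if $\nu_A$ is an equivalence, then the outer square is cartesian, so $\eta_A$ is a pullback of $\bar s_\subbot$; but $\bar s_\subbot$ is a map of constant $\simplexcategory^\bb$-presheaves, hence cartesian (each of its naturality squares has identity verticals), and a pullback of a cartesian map is again cartesian, so $\eta_A$ is cartesian, whence $A$ is rigid by Remark~\ref{rmk:rigid-unit}.

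The only step I expect to require care is the bookkeeping in the middle paragraph: matching the outer square of~\eqref{diag:hUnit} exactly with the naturality square of $A\mapsto\bar A_{-1}$ at $\eta_A$ --- in particular reconciling the two copies of $\iota$ and the identification $\decbotb(A)_{-1}\simeq A_0$ --- and checking, via the initiality of $[-1]$ in $\simplexcategory^\bb$, that cartesianness of $\eta_A$ restricted to the morphisms into $[-1]$ is exactly the pullback condition on that square. Everything after that is formal, relying only on Remark~\ref{rmk:rigid-unit} together with the stability of cartesian maps under pullback and the fact that maps of constant presheaves are cartesian.
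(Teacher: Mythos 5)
Your proposal is correct and follows essentially the same route as the paper: identify the unit with the pullback-induced map of diagram~\eqref{diag:hUnit}, observe it is an equivalence precisely when the outer square is a pullback, translate that into cartesianness of $\eta_A \colon A \to \decbotb(A)$, and conclude via Remark~\ref{rmk:rigid-unit}. The only difference is that you spell out the step the paper treats as immediate (outer square a pullback $\iff$ $\eta_A$ cartesian), using initiality of $[-1]$ in $\simplexcategory^\bb$ and the fact that a base change of a cartesian map along a map of constant presheaves is cartesian, and these details are sound.
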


\begin{proof}
  The unit of $\hh\upperstar \isleftadjointto\hh\lowerstar$ is the dashed arrow in 
  diagram~\eqref{diag:hUnit}, so it is an equivalence if and only if the 
  outer square is a pullback, which is to say that 
  the map $A \to \decbotb(A)$ is cartesian. But according to Remark~\ref{rmk:rigid-unit} 
  this
  is precisely to say that $A$ is rigid.
\end{proof}

The last four lemmas (\ref{lem:h-counit}--\ref{lem:h-unit})
together establish the following result.

\begin{prop}\label{prop:RigCoalgIsRigPoint}
  The adjunction $\hh\upperstar \isleftadjointto\hh\lowerstar$ restricts to an equivalence
  of full subcategories
  \begin{center}
    \begin{tikzcd}
      {\{\text{rigid } \decbot\text{-coalgebras}\}} & 
	  {\{\text{local-initial-objects structures}\} .}
      \arrow["{\hh\upperstar }", shift left, from=1-1, to=1-2]
      \arrow["{\hh\lowerstar }", shift left, from=1-2, to=1-1]
    \end{tikzcd}
  \end{center}
\end{prop}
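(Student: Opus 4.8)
The plan is to combine the four preceding lemmas, which between them have already isolated every substantive point. First I would note that the two middle lemmas --- the one sending a rigid $\decbot$-coalgebra $A$ to the local-initial-objects structure $\hh\upperstar(A)$, and the one sending a local-initial-objects structure $X$ to the rigid $\decbot$-coalgebra $\hh\lowerstar(X)$ --- assert precisely that $\hh\upperstar$ carries the full subcategory of rigid $\decbot$-coalgebras into the full subcategory of local-initial-objects structures, and that $\hh\lowerstar$ carries the latter back into the former. Since both subcategories are full, the adjunction $\hh\upperstar \isleftadjointto \hh\lowerstar$ restricts to an adjunction between them: the natural equivalence $\Map(\hh\upperstar A, X) \simeq \Map(A, \hh\lowerstar X)$ is inherited on the nose, and so are the unit and counit transformations.

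It then remains only to check that, after restriction, both the unit and the counit are natural equivalences. For the unit, Lemma~\ref{lem:h-unit} says that the component of $1 \to \hh\lowerstar\hh\upperstar$ at a $\decbot$-coalgebra is an equivalence if and only if the coalgebra is rigid; in particular it is an equivalence at every object of the rigid subcategory. For the counit, Lemma~\ref{lem:h-counit} identifies the fixpoints of $\hh\upperstar\hh\lowerstar \to 1$ as exactly the local-initial-objects structures, so the counit is an equivalence at every object of that subcategory. An adjunction whose unit and counit are both natural equivalences is an adjoint equivalence, which gives the claimed equivalence of full subcategories.

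I expect the only friction to be bookkeeping rather than mathematics: one must be slightly careful, in the $\infty$-categorical setting, that restricting an adjunction to full subcategories each closed under the relevant adjoint really does reproduce the original unit and counit (it does, since these are determined objectwise and the subcategories are full), and that the two middle lemmas are genuinely statements about where $\hh\upperstar$ and $\hh\lowerstar$ land. All of the real content --- the rigidity/cartesianness analysis and the pullback computations of $\hh\lowerstar$ and of the (co)units --- has already been dispatched in Lemmas~\ref{lem:h-counit}--\ref{lem:h-unit}.
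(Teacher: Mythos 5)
Your argument is exactly the paper's: the two middle lemmas show that $\hh\upperstar$ and $\hh\lowerstar$ restrict to the stated full subcategories, and Lemmas~\ref{lem:h-counit} and~\ref{lem:h-unit} identify the counit and unit as invertible precisely there, giving an adjoint equivalence. Correct, and essentially identical to the paper's (very brief) proof.
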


\begin{example}
  Let $ X $ be a lower 2-Segal simplicial space. Then by definition $\decbot(X)$
  is Segal~(\ref{upper-lower-2Segal}). Now, $\decbot(X)$ supports a canonical $\decbot$-coalgebra structure
  given by the comonad comultiplication. Since $\decbot(X)$ is $1$-Segal, it
  follows from Lemma~\ref{lem:1-Segal-is-rigid} that $\decbot(X)$ is a rigid
  $\decbot$-coalgebra. Its restriction to $\psimpcat$ is a local-initial-objects
  structure on $X$ by Proposition~\ref{prop:RigCoalgIsRigPoint}.
\end{example}

\begin{remark}
  According to Proposition~\ref{prop:RigCoalgIsRigPoint}, the data of local initial
  objects can be understood from two different viewpoints. While the notion in
  terms of a pointing is more economical, the viewpoint of rigid 
  $\decbot$-coalgebras has other advantages by relating to various functorialities 
  of interest, as exploited in this article.
\end{remark}

\section{Augmented bisimplicial spaces with abacus maps}

\label{sec:bisimplicial}

From now on we will be considering bisimplicial spaces, i.e.~presheaves on $
\simplexcategory\times\simplexcategory$. For a bisimplicial space 
$B : (\simplexcategory\times\simplexcategory)\op\to\spaces$, 
we use matrix convention for rows and columns, but starting of course the 
indexations from $0$. So $B_{\bullet,0}$ is referred to as the zeroth column
and $B_{0,\bullet}$ as the zeroth row. (Note that the 
opposite row-column convention is common in algebraic topology (see for 
example \cite{Bousfield-Friedlander:10.1007/BFb0068711}). We have 
chosen to follow the convention used by BOORS and Carlier.)
Following Carlier~\cite{Carlier:1801.07504}
we use the convention of using the letters
\begin{quotation}
$d$ and $s$ for face and degeneracy maps in the horizontal 
direction (second index)

\smallskip

$e$ and $t$ for face and 
degeneracy maps in the vertical direction (first index).

\end{quotation}

\subsection{Stability} \label{subsec:stab}

A bisimplicial space is called {\em upper stable} if all $e_\bot$ and $d_{\bot}$
form pullbacks with each other; it is called {\em lower stable} if all $e_\top$
and $d_\top$ form pullbacks with each other; it is called {\em stable} if it is
both upper stable and lower stable. Upper stable can also be formulated as
saying that all $e_\bot$ (or equivalently all $d_\bot$) are right fibrations;
lower stable can be formulated as saying that all $e_\top$ (or equivalently all
$d_\top$) are left fibrations.
  
\begin{remark}
  The stability condition is due to
  BOORS~\cite{Bergner-Osorno-Ozornova-Rovelli-Scheimbauer:1609.02853}, see also
  Carlier~\cite{Carlier:1801.07504}. The separate notions of lower and upper
  stable are required in a few places below. It may be a bit confusing that lower
  stable relates to top face maps while upper stable relates to bottom face maps,
  but the terminology matches the usage for upper and lower $2$-Segal conditions
  under the chosen convention for total decalage (cf.~\ref{bla:Tot}), as 
  expressed by this straightforward lemma:
\end{remark}

\begin{lemma}
  If a simplicial space $X$ is $2$-Segal (respectively upper $2$-Segal, 
  respectively lower $2$-Segal), then $\Tot(X)$ is stable (respectively upper 
  stable, respectively lower stable).
\end{lemma}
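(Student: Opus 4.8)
The plan is to compute the stability squares of $\Tot(X)$ explicitly and recognize them as Segal pullbacks for a decalage of $X$. First I would recall from \ref{bla:Tot} that $\Tot(X)_{m,n}=X_{m+n+1}$, and check by inspecting the ordinal-sum maps that on $\Tot(X)_{m,n}$ the vertical bottom/top faces $e_\bot,e_\top$ are $d_0,d_m$ and the horizontal bottom/top faces $d_\bot,d_\top$ are $d_{m+1},d_{m+n+1}$ (face maps of $X$). Everything then becomes a matter of reading off which $X$-squares the stability conditions demand.

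For the upper-stable case, the pullback square asserted at position $(m,n)$ (with $m,n\ge 1$) between $e_\bot$ and $d_\bot$ becomes, writing $N:=m+n+1\ge 3$ and $i:=m+1$ (so $2\le i\le N-1$), the commuting square on $X_N,X_{N-1},X_{N-1},X_{N-2}$ with maps $d_i,d_0$ out of $X_N$ and $d_{i-1},d_0$ into $X_{N-2}$ — commutativity being $d_0d_i=d_{i-1}d_0$. Since $X$ is upper $2$-Segal iff $Y:=\dectop(X)$ is Segal (\ref{upper-lower-2Segal}) and the face maps $d_0,\dots,d_k$ of $Y$ in degree $k$ are those of $X$, this is exactly the square $Y_k\xrightarrow{(d_i,d_0)}Y_{k-1}\times_{Y_{k-2}}Y_{k-1}$ (with cospan legs $d_{i-1},d_0$), for $k:=N-1\ge 2$ and $2\le i\le k$. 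So the whole statement reduces to the claim: \emph{for $Y$ a Segal space and $2\le i\le k$, this square is a pullback}. Read through restriction to faces, the claim says $Y_{[k]}\simeq Y_{[k]\setminus\{i\}}\times_{Y_{[k]\setminus\{0,i\}}}Y_{[k]\setminus\{0\}}$. For $k=2$ (forcing $i=2$) this is verbatim the degree-$2$ Segal condition $Y_2\simeq Y_{\{0,1\}}\times_{Y_{\{1\}}}Y_{\{1,2\}}$. For $k\ge 3$ I would argue: since $i\ge 2$, the vertex $1$ lies in $[k]\setminus\{i\}$ and is interior there, so the Segal condition gives $Y_{[k]\setminus\{i\}}\simeq Y_{\{0,1\}}\times_{Y_{\{1\}}}Y_{[k]\setminus\{0,i\}}$ with second projection the face map $d_0\colon Y_{[k]\setminus\{i\}}\to Y_{[k]\setminus\{0,i\}}$; substituting this into the right-hand side and reassociating pullbacks, the inner $Y_{[k]\setminus\{0,i\}}$ cancels against itself and the right-hand side becomes $Y_{\{0,1\}}\times_{Y_{\{1\}}}Y_{[k]\setminus\{0\}}$, which is $Y_{[k]}$ by the Segal decomposition of $Y_{[k]}$ at the vertex $1$; checking that the resulting equivalence is the canonical comparison map is then routine.

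For the lower-stable case I would argue dually: the top-face identifications turn the asserted pullback between $e_\top$ and $d_\top$ at $(m,n)$, after transposing it (which preserves being a pullback), into an instance of the square just treated but for the opposite simplicial space $X\op$, with $i=N-m$ (again $2\le i\le N-1$); and $X$ is lower $2$-Segal iff $X\op$ is upper $2$-Segal. Finally, a $2$-Segal space is both upper and lower $2$-Segal, so in that case $\Tot(X)$ is both upper and lower stable, i.e.\ stable. The only real content, and the step I expect to be the main obstacle, is the claim about Segal spaces above; its subtle point is the hypothesis $i\ge 2$, which is precisely what makes the interior vertex $1$ available to separate $0$ from $i$ in $[k]$ — for $i=1$ the square is instead a $2$-Segal/unitality-type condition which does not hold for general Segal spaces. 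Everything else is bookkeeping with the conventions of \ref{bla:Tot} and standard facts about Segal spaces.
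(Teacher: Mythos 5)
Your proof is correct; the paper itself supplies no argument for this lemma (it is flagged as ``straightforward''), and your route --- translating the stability squares through the ordinal-sum conventions of \ref{bla:Tot} into face-map squares of $\dectop(X)$ (resp.\ of $\decbot(X)$, via passing to $X\op$) and proving these are pullbacks from the Segal decompositions at vertex $1$ by pullback pasting --- is exactly the standard ``bonus pullback''/prism-lemma argument that the paper invokes for the neighbouring Lemma~\ref{lem:stable}. The genuinely delicate points (the restriction $i\ge 2$, and the $k=2$ base case where vertex $1$ fails to be interior in $[k]\setminus\{i\}$) are correctly isolated and handled.
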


\begin{lemma}\label{lem:stable}
  Let $B$ be a bisimplicial space.
  \begin{enumerate}
	\item
  If $B$ is upper stable, then
  
  ---
  all horizontal active maps are right fibrations (considered
as simplicial maps between columns),

--- all vertical active maps are right 
fibrations (considered
as simplicial maps between rows).

\item If $B$ is lower stable, then

---
all horizontal active maps are left fibrations (considered
as simplicial maps between columns),

--- all vertical active maps are left 
fibrations (considered
as simplicial maps between rows).
  
\item If $B$ is stable, then

--- every vertical active map is cartesian
  (considered as a simplicial map between rows),
  
  --- every horizontal active map 
  is cartesian (considered as a simplicial map between columns).
  \end{enumerate}
\end{lemma}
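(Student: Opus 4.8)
The strategy is to reduce each assertion to known statements about left and right fibrations of simplicial spaces, using the characterization of stability in terms of $e_\bot$, $d_\bot$, $e_\top$, $d_\top$ being fibrations, together with the standard fact that the class of left (resp.\ right) fibrations is closed under composition and that sections to fibrations are again fibrations (or more precisely, that the relevant squares can be assembled from given pullback squares). For part~(1), recall that $B$ upper stable means that every $d_\bot$ (viewed as a map between columns) and every $e_\bot$ (viewed as a map between rows) is a right fibration. A horizontal active map $B_{\bullet,j} \to B_{\bullet,k}$ (between columns) is built up from the maps $d_\bot$ by composition and, conversely, an arbitrary horizontal active map factors as a composite of codegeneracy-type active generators and face-type active generators; the key point is that in the simplex category every active map is a composite of the cofaces $d^\bot$ and the codegeneracies, and that the degeneracy directions contribute sections. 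So the horizontal active maps between columns are composites of $d_\bot$'s (which are right fibrations by hypothesis) and degeneracies (which are sections to right fibrations, hence themselves cartesian on the relevant squares), and right fibrations are closed under composition. The same argument with rows in place of columns, using that all $e_\bot$ are right fibrations, gives the second bullet of~(1). Part~(2) is the formal dual: lower stable says every $d_\top$ and every $e_\top$ is a left fibration, and the identical compositional argument shows all horizontal active maps (between columns) and all vertical active maps (between rows) are left fibrations.

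For part~(3), the point is that a simplicial map which is simultaneously a left fibration and a right fibration is cartesian on all maps (not just the active/inert generators), by Lemma~\ref{lem:lfib=>cart} or rather its proof technique: being a right fibration gives cartesianness on the inert (bottom face) maps and being a left fibration gives it on the active maps, and together these generate all maps, hence all squares are pullbacks; sections then handle the degeneracies. Concretely, if $B$ is stable it is both upper and lower stable, so by~(1) and~(2) every vertical active map between rows is both a right fibration and a left fibration. A map that is both a left fibration and a right fibration of simplicial spaces is cartesian: it is cartesian on all active maps (left fibration) and on all inert maps (right fibration), and since every morphism in $\simplexcategory$ factors as an active map followed by an inert map, the pasting law for pullbacks yields cartesianness on every morphism. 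Hence every vertical active map is cartesian as a map between rows, and symmetrically every horizontal active map is cartesian as a map between columns.

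The main thing to be careful about is the reduction of ``horizontal active map'' to a composite of the primitive squares: one must check that the active maps in $\simplexcategory$ relevant here (the ones appearing as structure maps between the columns $B_{\bullet,j}$) are generated under composition by the $d_\bot$'s and degeneracies — i.e.\ that ``active'' really does mean ``endpoint-preserving'' and that such maps decompose into iterated last-vertex-omitting cofaces and codegeneracies. This is a standard fact about $\simplexcategory$ (every active map is a composite of $d^{n}$-type cofaces and codegeneracies), but it needs to be invoked cleanly so that ``all horizontal active maps'' genuinely follows from ``all $d_\bot$ are right fibrations''. Once that bookkeeping is in place, everything else is formal closure properties of the fibration classes plus the active/inert factorization, and no genuinely new input is needed beyond Lemmas~\ref{lem:lfib=>cart} and the definition of stability.
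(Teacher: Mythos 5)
There is a genuine gap, and it sits exactly at the point you yourself flagged as ``bookkeeping''. Your reduction of parts (1)--(2) rests on the claim that the active maps of $\simplexcategory$ are generated by the cofaces $d^\bot$ (later you say ``$d^n$-type'', i.e.\ last-vertex-omitting cofaces) together with the codegeneracies. That is false: active means endpoint-preserving, so the two \emph{outer} cofaces are not active at all (they generate the inert maps); the active maps are generated by the \emph{inner} cofaces and the codegeneracies. Consequently the upper-stability hypothesis, which hands you only the pullbacks of $e_\bot$ against $d_\bot$, says nothing directly about the generators you actually have to control: to prove (1) you must show that the squares of the inner horizontal faces $d_j$ ($0<j<n$) and of the degeneracies $s_j$ against $e_\bot$ are pullbacks (and the transposed statement for rows), and this is precisely the nontrivial content of the lemma, which your appeal to ``closure of right fibrations under composition'' never touches. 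The missing step is the prism (pasting) argument the paper alludes to: by induction, using $d_\bot\circ d_j = d_{j-1}\circ d_\bot$, the rectangle for $d_{j-1}\circ d_\bot$ is a pullback (stability square pasted with the inductively known square), and cancelling against the stability square for $d_\bot$ yields the square for $d_j$; the degeneracy squares then follow because each $s_j$ is a section of a face map already handled (note that only $s_\bot$ is a section of $d_\bot$ itself, so even the section trick needs the inner faces first). The induction never reaches the top faces, as it must not, since upper stability should not imply lower stability. This is what the paper's one-line proof means by ``standard pullback-prism-lemma arguments'', in analogy with the bonus pullbacks of \cite[Lemma 3.10]{Galvez-Kock-Tonks:1512.07573}.

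A second, related misidentification occurs in your part (3): in this paper a left fibration means cartesian on top face maps and a right fibration means cartesian on bottom face maps; being cartesian on \emph{all active} maps is the culf condition, a different notion, and being cartesian on \emph{all inert} maps already amounts to being simultaneously a left and a right fibration, so your ``left fibration handles actives, right fibration handles inerts, then use the active--inert factorization'' argument is not available as stated. The implication you need --- a simplicial map that is both a left and a right fibration is cartesian --- is true, but it is again proved by the same prism induction: a right fibration is cartesian on all non-top faces and all degeneracies, a left fibration on all non-bottom faces and all degeneracies, and together these cover all generators (compare the technique in Lemma~\ref{lem:lfib=>cart}). With parts (1)--(2) established by the pasting induction, your part (3) then goes through as intended.
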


\begin{proof}
  This follows from standard pullback-prism-lemma arguments (in analogy with the
  so-called bonus pullbacks holding for decomposition
  spaces~\cite[Lemma 3.10]{Galvez-Kock-Tonks:1512.07573}).
\end{proof}

\begin{definition}
  A bisimplicial space is called {\em double Segal} (resp.~{\em double 
  $2$-Segal}) if every row and every 
  column is a $1$-Segal space (resp.~a 
  $2$-Segal space).
\end{definition}

\begin{lemma}[Carlier~{\cite[Lemma~2.3.3]{Carlier:1801.07504}}]
  If a bisimplicial space is double Segal, then for it to be stable
  it is enough for the two squares
  \begin{center}
    \begin{tikzcd}
      {B_{0,0}} & {B_{0,1}} \\
      {B_{1,0}} & {\ulpullback B_{1,1}}
      \arrow["{d_0}"', from=1-2, to=1-1]
      \arrow["{e_0}", from=2-1, to=1-1]
      \arrow["{e_0}"', from=2-2, to=1-2]
      \arrow["{d_0}", from=2-2, to=2-1]
    \end{tikzcd}
    \qquad\qquad
    \begin{tikzcd}
      {B_{0,0}} & {B_{0,1}} \\
      {B_{1,0}} & {\ulpullback B_{1,1}}
      \arrow["{d_1}"', from=1-2, to=1-1]
      \arrow["{e_1}", from=2-1, to=1-1]
      \arrow["{e_1}"', from=2-2, to=1-2]
      \arrow["{d_1}", from=2-2, to=2-1]
    \end{tikzcd}
  \end{center}
  to be pullbacks.
  (To check upper stability, it is enough to check the square on the left, and 
  to check lower stability, it is enough to check the square on the right.)
\end{lemma}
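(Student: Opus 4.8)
The ``only if'' direction is immediate, since the two displayed squares are the instances at $i=j=0$ of the squares defining upper and lower stability. So the plan is to prove the converse: assuming $B$ is double Segal and the two corner squares are pullbacks, I will derive all of the stability pullbacks from one standard ingredient, applied twice.

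That ingredient is a prism-lemma fact (the ``map'' version of the bonus pullbacks alluded to in the proof of Lemma~\ref{lem:stable}): \emph{a simplicial map $f\colon X\to Y$ between Segal spaces is a right fibration --- i.e.\ cartesian on the bottom face map $d_\bot$ in every degree --- as soon as it is cartesian on $d_\bot\colon X_1\to X_0$; dually, $f$ is a left fibration as soon as it is cartesian on $d_\top\colon X_1\to X_0$.} One proves this by writing $X_n\simeq X_1\times_{X_0}X_{n-1}$ via the Segal condition (and likewise for $Y$), identifying $d_\bot\colon X_n\to X_{n-1}$ with the projection onto the second factor, and then pasting pullback squares: the hypothesis $X_1\simeq Y_1\times_{Y_0}X_0$ propagates to $X_n\simeq Y_n\times_{Y_{n-1}}X_{n-1}$ for every $n$. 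Crucially this uses that \emph{both} $X$ and $Y$ are Segal, which for the rows and columns of $B$ is precisely the double Segal hypothesis.

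Now I turn the crank. For upper stability I must show that, for all $i,j$, the square
\begin{center}
\begin{tikzcd}
B_{i+1,j+1} \ar[r,"d_\bot"] \ar[d,"e_\bot"'] & B_{i+1,j} \ar[d,"e_\bot"] \\
B_{i,j+1} \ar[r,"d_\bot"'] & B_{i,j}
\end{tikzcd}
\end{center}
is a pullback. For fixed $i$, the family of these squares over $j$ is exactly the family of bottom-face squares of the map of rows $e_\bot\colon B_{i+1,\bullet}\to B_{i,\bullet}$ (a map of Segal spaces), so the fact above reduces it to the single $j=0$ member, for every $i$. But the resulting family of $j=0$ squares, now indexed by $i$, is the family of bottom-face squares of the map of columns $d_\bot\colon B_{\bullet,1}\to B_{\bullet,0}$ (again a map of Segal spaces), so the same fact reduces it to the single $i=0$ member, which --- drawn the other way round --- is the left-hand square of the statement. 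Hence: the left-hand square a pullback $\Rightarrow$ upper stable. Dually, running the argument with ``left fibration'' and the top face maps $d_\top,e_\top$ in place of ``right fibration'' and $d_\bot,e_\bot$ shows that the right-hand square being a pullback implies lower stability. Combining the two implications gives stability, and the two parenthetical refinements are exactly these two implications stated separately.

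The only genuine content is the prism-lemma fact, which is routine; the rest is bookkeeping about which of the two simplicial directions plays the role of ``degree'' and which the role of ``the map'', run once for right fibrations and once, dually, for left fibrations. The point to watch is that each invocation truly needs both the row/column in question \emph{and its immediate neighbour} to be Segal, so it is the full double Segal hypothesis --- not Segalness in one direction only --- that does the work.
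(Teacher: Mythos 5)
Your proof is correct. Note that the paper offers no proof of this statement at all — it is quoted from Carlier (his Lemma~2.3.3) — so there is no in-paper argument to compare against; but your route (use the Segal decomposition plus the pullback prism lemma to show that a map of Segal spaces cartesian on $d_\bot\colon X_1\to X_0$, resp.\ $d_\top$, is cartesian on all bottom, resp.\ top, face maps, then apply this once along rows and once along columns, dually for the lower case) is exactly the standard prism-lemma argument the paper itself invokes for the neighbouring Lemma~\ref{lem:stable}, and you correctly flag that each invocation needs both the source and target row/column to be Segal, which is where the double Segal hypothesis is used.
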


\subsection{Augmentations}

Let $B$ be a bisimplicial space.
Then by taking the geometric realization of each row we obtain an 
augmentation column $X = B_{\bullet,-1}$. The simplicial operators 
inside $X$ are induced by the vertical simplicial operators of 
$B$ and the universal property of the geometric 
realization. Similarly, by taking geometric realization of each column, we 
obtain an augmentation row $Y = B_{-1,\bullet}$, where  
the simplicial operators inside $Y$ are induced by the horizonal 
simplicial operators of $B$.

The total shape of such a diagram (see picture~\eqref{bla:Delta/[1]} in
the introduction) is that of a presheaf on
$$
(\simplexcategory_+ \times \simplexcategory_+) \setminus ([-1],[-1]) \simeq 
\simplexcategory_{/[1]} 
$$
(see Carlier~\cite{Carlier:1801.07504}). 
Under this isomorphism, an object $[i,j]$ on the left corresponds to
the map  $(s^\bot)^{\circ i} (s^\top)^{\circ j} : [i{+}1{+}j]\to [1]$ in $\simplexcategory$
on the right.
Here one of $i$ and $j$, but not both, 
could be equal to $-1$, corresponding to an empty fibre.
The left-hand category motivates our notation (which agrees with 
that of $\simplexcategory\times\simplexcategory$), whereas the
$\simplexcategory_{/[1]}$ viewpoint is more elegant, and becomes 
relevant from Subsection~\ref{ssec:CoCartCorr}.

\begin{prop}\label{prop:colim-aug}
  If a stable double $2$-Segal space $B$ is endowed with
  colimit augmentation column $X$ and colimit augmentation row
  $Y$, then $X$ and $Y$
  are again $2$-Segal, and the augmentation maps are culf.
\end{prop}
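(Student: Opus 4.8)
The plan is to prove the two assertions — that $X$ and $Y$ are $2$-Segal, and that the augmentation maps $B_{0,\bullet}\to Y$ and $B_{\bullet,0}\to X$ are culf — by reducing everything to the decalage picture and the behaviour of culf maps under geometric realization. I would work with the augmentation column $X = B_{\bullet,-1}$ (the row case being entirely symmetric by transposing the roles of the two simplex factors and the letters $d/s$ versus $e/t$). The first observation is that $X$ is, by construction, the levelwise colimit over the horizontal direction of the bisimplicial space $B$; that is, $X_i = \operatorname{colim}_{[j]\in\simplexcategory\op} B_{i,j}$, with the vertical simplicial operators inherited from $B$ via the universal property of the colimit. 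So the task is to understand how the $2$-Segal property and the culf property are inherited by such a ``columnwise geometric realization''.

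The key structural input is that stability gives us decalage-coalgebra structure on the rows. By Lemma~\ref{lem:stable}(1)--(2), upper stability makes all vertical active maps right fibrations between rows and lower stability makes them left fibrations; in particular, each bulk row $B_{i,\bullet}$ carries the $\decbot$-coalgebra structure coming from horizontal bottom degeneracies (the abacus/horizontal-splitting structure discussed in \S\ref{bla:Abacus}), and lower stability ensures these coalgebra structures are compatible vertically. I would first treat the zeroth column: since $B_{0,\bullet}$ is a $2$-Segal space, the geometric realization $B_{0,-1}=X_0$ together with the augmentation map $X_0 \leftarrow B_{0,0}$ is governed by the standard fact (usable from \S\ref{subsec:split}, the absoluteness statement around \cite[6.1.3.16]{Lurie:HTT}) that when a simplicial space is split by extra bottom degeneracies its geometric realization is an absolute colimit computed by $\ssplit$. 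Running this in each vertical degree and invoking Lemma~\ref{lem:RFibLiftDecCoal} (pulling $\decbot$-coalgebra structure back along right fibrations, here the vertical active maps of the bisimplicial space) identifies $X$ as built from $B$ by a fibrewise absolute colimit, so that $X_i$ sits in a pullback over $X_0 \leftarrow B_{0,0}$ with $B_{i,0}$ for each $i$ — this is exactly the statement that the augmentation map $B_{\bullet,0}\to X$ is cartesian on active maps, hence a right fibration, and I would then upgrade ``right fibration'' to ``culf'' using the Segal property of the rows together with the standard characterization of culf in terms of cartesianness on all active maps (the cheat-sheet facts referenced as \ref{cheat:counits} and companions).

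With the augmentation map $B_{\bullet,0}\to X$ known to be culf, the $2$-Segal property of $X$ follows from the $2$-Segal property of $B_{\bullet,0}$: culf maps reflect the $2$-Segal condition (a culf map into, or the source of a culf map into, a $2$-Segal space is $2$-Segal — this is one of the cheat-sheet closure properties, and here $B_{\bullet,0}$ is $2$-Segal by hypothesis while the augmentation map is culf), or alternatively one checks the $2$-Segal pullback squares for $X$ directly by writing them as fibrewise-colimits of the corresponding squares for $B$ and using that geometric realization of a diagram of culf maps preserves the relevant pullbacks. I expect the main obstacle to be the bookkeeping in the second step: verifying that the fibrewise absolute colimit genuinely produces the asserted pullback squares (both the one exhibiting $B_{\bullet,0}\to X$ as cartesian on active maps and the $2$-Segal squares of $X$), which requires carefully combining the absoluteness of the bottom-split colimit with the right-fibration property of the vertical active maps and the stability of $B$; the formal template for this is the prism-lemma argument already invoked in the proof of Lemma~\ref{lem:stable}, so the difficulty is one of organization rather than of a genuinely new idea.
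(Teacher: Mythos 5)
Your proposal has two genuine gaps, and both are load-bearing.

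First, you build the whole argument on a $\decbot$-coalgebra (bottom-splitting/abacus) structure on the rows of $B$, invoking the structure of \S\ref{bla:Abacus} and Lemma~\ref{lem:RFibLiftDecCoal} to get a ``fibrewise absolute colimit''. But the hypotheses of Proposition~\ref{prop:colim-aug} concern a bare stable double $2$-Segal bisimplicial space (a $\simplexcategory_{/[1]}$-presheaf with colimit augmentations), not a $\DD$-presheaf: stability is a pullback \emph{property}, and it does not supply the extra degeneracies $\ssplit$. Splittings only appear later, either as part of the data of a $\DD$-presheaf or after imposing a pointing axiom (Lemma~\ref{lem:PropagateS-1}); here there is nothing to pull back along the vertical maps, so the absoluteness mechanism is simply unavailable. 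The paper's proof gets culfness by a different route: by Lemma~\ref{lem:stable}(3) (this is where \emph{both} upper and lower stability enter, cf.\ Remark~\ref{rem:colim-aug}) every active map between columns (resp.\ rows) is a \emph{cartesian} simplicial map, and then descent in the $\infty$-topos of spaces (Lurie 6.1.3.9/6.1.3.10) shows that the squares formed by the augmentation maps over active maps are pullbacks, i.e.\ the augmentation maps are culf. Your plan to ``upgrade right fibration to culf using the Segal property of the rows'' has no clear content in this generality and risks circularity, since culfness of $\varepsilon$-type maps is tied to $2$-Segalness of the target, which is exactly what is being proved. (Also note the terminological slip: cartesian on active maps is the culf condition, not the right-fibration condition, and the squares over $X_0\leftarrow B_{0,0}$ you describe sit over non-active maps $[0]\to[i]$.)

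Second, your deduction of $2$-Segalness of $X$ and $Y$ uses the culf closure property in the wrong direction. Fact~\ref{cheat:culf+effepi} says a culf map transfers $2$-Segalness from the \emph{target} to the \emph{source}; here the source $B_{0,\bullet}$ is $2$-Segal and the target $Y$ is the unknown, and for that converse direction one needs the culf map to also be an effective epimorphism. The paper proves this separately (the map from the total space of simplices to the realization is an effective epi by Lurie 6.2.3.13, and it factors through the augmentation map, so the augmentation is an effective epi by 6.2.3.12) and only then applies \ref{cheat:culf+effepi}. Your proposal omits the effective-epimorphism step entirely, and your fallback (``check the $2$-Segal squares of $X$ directly as fibrewise colimits'') is again an unexecuted descent argument rather than a proof.
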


\begin{proof}
  Let $a:Y_k \to Y_n$ be an active map of $Y$. It is induced by the
  corresponding simplicial map $a_{\bullet}:B_{\bullet,k} \to B_{\bullet,n}$
  between columns. Since $B$ is stable, the
  simplicial map $a_\bullet$ is cartesian by Lemma~\ref{lem:stable}. 
  By descent (see
  Lurie~\cite[Theorem 6.1.3.9, Proposition 6.1.3.10]{Lurie:HTT}), this implies
  that also the augmentation square
  \[
  \begin{tikzcd}
  Y_k \ar[r, "a"] & Y_n  \\
  B_{0,k} \ar[u] \ar[r, "a_0"'] & B_{0,n} \ar[u]
  \end{tikzcd}
  \]
  is a pullback. Since this argument works for every active map, this means
  that the simplicial map $B_{0,\bullet} \to Y$ is culf. 
  
  It is a general fact that the augmentation map from a simplicial space to its
  geometric realization is an effective epimorphism. To see this, note first that
  the induced map from the space of all simplices $ \bigsqcup_{[n] \in
  \simplexcategory} B_{\bullet, n} \to Y_n $ is an effective epimorphism
  (by~\cite[6.2.3.13]{Lurie:HTT}), but this map factors through the augmentation
  map $B_{0,n} \to Y_n$ which must therefore be an effective epimorphism too
  (see~\cite[6.2.3.12]{Lurie:HTT}). Since the augmentation maps are effective
  epimorphisms, and since the zeroth row is $2$-Segal, it follows
  from~\ref{cheat:culf+effepi} that also $Y$ is $2$-Segal.
\end{proof}

\begin{remark} \label{rem:colim-aug}
  Note that both lower stable and upper stable are required in the proposition,
  because this is what makes the active maps cartesian (\ref{lem:stable}) so that we can apply
  descent. In the applications of Proposition~\ref{prop:colim-aug}, the bisimplicial
  space $B$ will actually be $1$-Segal in each column and each
  row, but the augmentation row and column will still only be $2$-Segal. A key
  example of this situation is when $B = \Tot(X)$ for $X$ a
  $2$-Segal space.
\end{remark}

\begin{definition}\label{bimod-conf} (Carlier~\cite{Carlier:1801.07504})
  A {\em bicomodule configuration} is a presheaf
  $ B \in \PrSh(\simplexcategory_{/[1]}) $ whose underlying bisimplicial space
  is stable, double Segal, 
  where the augmentation row and column are $2$-Segal and where the
  augmentation maps are culf.
\end{definition}

\begin{remark}
  Bicomodule configurations are so named because they induce
  bicomodules in the $\infty$-category of spaces and spans
  \cite{Carlier:1801.07504}, \cite{Godicke:2407.13357}. The role of the
  augmentations, which we often denote $X:= B_{\bullet,-1}$ and $Y:=
  B_{-1,\bullet}$, is to encode the coalgebras that $B$ is a bicomodule
  over: being a left $X$-comodule and a right $Y$-comodule requires the
  $1$-Segal and culf conditions imposed
  (cf.~Walde~\cite{Walde:1611.08241} and Young~\cite{Young:1611.09234}),
  whereas the bicomodule condition is expressed by stability,
  cf.~Carlier~\cite{Carlier:1801.07504}.

  We should warn that the general morphisms of
  $\simplexcategory_{/[1]}$-presheaves maps are not the ``correct''
  notion of morphism of bicomodule configurations: the correct morphisms
  corresponding to $X$-$Y$-bicomodule homo\-morphisms are certain spans of
  $\simplexcategory_{/[1]}$-presheaves where the left leg is a right
  fibration on columns and a left fibration on rows, and where the
  right leg is a left fibration on columns and a right fibration on
  rows. The most general result in this direction is G\"odicke's
  theorem~\cite{Godicke:2407.13357}, which establishes an equivalence of
  $\infty$-categories between bicomodule configurations and bicomodules
  in spans, having first identified the correct morphisms.
\end{remark}

\subsection{The abacus category \texorpdfstring{$\DD$}{D}}

\label{sub:DD}

In this subsection we describe the category $\DD$
that captures the combinatorics of bicomodule configurations with abacus maps. 
The category $\DD$ was first studied by Carlier~\cite{Carlier:1801.07504}, as far as 
we know (he denotes it $\overline{\Delta}_{/[1]}$). Carlier~\cite[\S3.2]{Carlier:1801.07504} defined it in terms of certain mapping cylinders and 
cocartesian fibrations. 
We give a slightly more elementary definition, more closely reflecting 
Carlier's suggested combinatorial interpretation in terms of black and white 
beads (\cite[3.2.1]{Carlier:1801.07504}). The elementary combinatorial/graphical 
interpretation is useful in practice to quickly verify identities in $\DD$.

\begin{definition}
  We define $\DD$ formally as a full subcategory of the arrow category of
  $\simplexcategory_+$: the objects are the maps $(d^\top)^{\circ (1+j)} : [i]
  \to [i+1+j] $ for all $ i,j \in \simplexcategory_+ $ except for the case $i =
  j = -1$. The object $(d^\top)^{\circ(1+j)} : [i] \to [i+1+j]$ will be denoted
  by $[i,j]$.
  
  The morphisms $[i,j] \to [i',j']$ in $\DD$ are thus commutative diagrams
    \begin{center}
	\begin{tikzcd}
	  {[i]} & {[i']} \\
	  {[i{+}1{+}j]} & {[i'{+}1{+}j']}  ,
	  \arrow[from=1-1, to=1-2]
	  \arrow["{(d^\top)^{\circ(1+j)}}"', from=1-1, to=2-1]
	  \arrow["{(d^\top)^{\circ (1+j')}}", from=1-2, to=2-2]
	  \arrow[from=2-1, to=2-2]
	\end{tikzcd}
  \end{center}
  where the horizontal arrows are monotone maps.
\end{definition}

\begin{definition}
  The category $\DD$ contains the category $\simplexcategory_{/[1]} \simeq
  (\simplexcategory_+\times \simplexcategory_+) \setminus \{(-1,-1)\}$, and they
  have the same objects.
  The {\em vertical coface maps} in $\DD$ are given by
  \begin{center}
    $e^k : [i,j] \to [i{+}1,j]$ \qquad \qquad
    \begin{tikzcd}
      {[i]} & {[i+1]} \\
      {[i{+}1{+}j]} & {[(i{+}1){+}1{+}j]}  .
      \arrow["{d^k}", from=1-1, to=1-2]
      \arrow["{(d^\top)^{\circ(1+j)}}"', from=1-1, to=2-1]
      \arrow["{(d^\top)^{\circ ( 1+j)}}", from=1-2, to=2-2]
      \arrow["{d^k}"', from=2-1, to=2-2]
    \end{tikzcd}
    \qquad\qquad $0\leq k\leq i+1$
  \end{center}
  The {\em horizontal coface maps} in $\DD$ are given by
  \begin{center}
    $d^k : [i,j] \to [i,j{+}1]$ \qquad\qquad
    \begin{tikzcd}
      {[i]} & {[i]} \\
      {[i{+}1{+}j]} & {[i{+}1{+}(j{+}1)]}  .
      \arrow[Rightarrow, no head, from=1-1, to=1-2]
      \arrow["{(d^\top)^{\circ(1+j)}}"', from=1-1, to=2-1]
      \arrow["{(d^\top)^{\circ (1+j+1)}}", from=1-2, to=2-2]
      \arrow["{d^{k+i+1}}"', from=2-1, to=2-2]
    \end{tikzcd}
    \qquad\qquad
    $0 \leq k \leq j+1$
  \end{center}
  The vertical and horizontal codegeneracy maps can be described similarly.
  Following the global convention we use the letter $s$ for the horizontal
  codegeneracy maps and the letter $t$ for the vertical codegeneracy maps.
\end{definition}

A distinctive feature of $\DD$ are the abacus maps, which we now 
describe.

\begin{definition}
  The {\em abacus maps} are the maps in $\DD$ given by
  \begin{center}
	$f: [i,j+1] \to [i+1,j]$ \qquad 
	\qquad
	\begin{tikzcd}
	  {[i]} & {[i+1]} \\
	  {[i{+}1{+}(j{+}1)]} & {[(i{+}1){+}1{+}j]}  .
	  \arrow["{d^\top}", from=1-1, to=1-2]
	  \arrow["{(d^\top)^{\circ(1+j+1)}}"', from=1-1, to=2-1]
	  \arrow["{(d^\top)^{\circ (j+1)}}", from=1-2, to=2-2]
	  \arrow[Rightarrow, no head, from=2-1, to=2-2]
	\end{tikzcd}
  \end{center}
\end{definition}

There is a useful graphical interpretation of $\DD$ (mentioned in passing by
Carlier~\cite[Remark~3.2.1]{Carlier:1801.07504}) in terms of black and white
beads: the objects in $\DD$ are columns of beads, one for each element in the
codomain $[i+1+j]$, first black then white: the object $[i,j]$ has $i+1$ black
beads followed by $j+1$ white beads, such as
\begin{center}
  \begin{tikzpicture}
	\begin{scope}[shift={(0,0)},decoration={ markings, mark=at position 0.55 with {\arrow{>}}}]
	  \draw (0,0.0) pic {blackdot};
	  \draw (0,0.3) pic {blackdot};
	  \draw (0,0.6) pic {whitedot};
	  \draw (0,0.9) pic {whitedot};
	  \draw (0,1.2) pic {whitedot};
	\end{scope}
  \end{tikzpicture}
\end{center}
representing the object $[1,2]$. (We read from the bottom to the top.)
The black beads are the elements in the image of the
map (that is, the elements in $[i]$), whereas the white beads are the elements in
$[i+1+j]$ that are not in the image. 
The graphical interpretation of 
a map in $\DD$ is that it is monotone and 
maps black beads to black beads, but may map white beads to 
black beads. The maps that preserve
colors are the maps that belong to the subcategory $\simplexcategory_{/[1]}$.
The maps that are the identity on white beads are the vertical maps;
the maps that are the identity on black beads are the horizontal maps, such as 
for example

\vspace*{4pt}
\begin{center}
  \begin{tikzpicture}
	\begin{scope}[shift={(0,0)},decoration={ markings, mark=at position 0.6 with {\arrow{>}}}]
	  \draw[postaction={decorate}] (0,0) pic {blackdot} -- (-0.7, -0.15) pic {blackdot}; 
	  \draw[postaction={decorate}] (0,0.3) pic {blackdot} -- (-0.7, 0.15) pic {blackdot}; 
	  \draw (-0.7, 0.45) pic {blackdot};
	  \draw[postaction={decorate}] (0,0.6) pic {whitedot} -- (-0.7, 0.75) pic {whitedot}; 
	  \draw[postaction={decorate}] (0,0.9) pic {whitedot} -- (-0.7, 1.05) pic {whitedot}; 
	\end{scope}
	\node at (-0.35, -0.7) {$[2,1] \stackrel{e^2}\longleftarrow [1,1]$};
  \end{tikzpicture}
  \qquad \qquad
  \begin{tikzpicture}
	\begin{scope}[shift={(0,0)},decoration={ markings, mark=at position 0.6 with {\arrow{>}}}]
	  \draw[postaction={decorate}] (0,0) pic {blackdot} -- (-0.7, -0.15) pic {blackdot}; 
	  \draw (-0.7, 0.75) pic {whitedot};
	  \draw[postaction={decorate}] (0,0.3) pic {blackdot} -- (-0.7, 0.15) pic {blackdot}; 
	  \draw[postaction={decorate}] (0,0.6) pic {whitedot} -- (-0.7, 0.45) pic {whitedot}; 
	  \draw[postaction={decorate}] (0,0.9) pic {whitedot} -- (-0.7, 1.05) pic {whitedot}; 
	\end{scope}
	\node at (-0.35, -0.7) {$[1,2] \stackrel{d^1}\longleftarrow [1,1]$};
  \end{tikzpicture}
\end{center}

The abacus maps $f$ are those for which the underlying map is the identity
and whose only effect is to turn the bottom white bead into a black
bead (loosely speaking, swiping it from the white part to the black part,
hence Carlier's terminology ``abacus map''), such as
\vspace*{4pt}
\begin{center}
  \begin{tikzpicture}
	\begin{scope}[shift={(0,0)},decoration={ markings, mark=at position 0.6 with {\arrow{>}}}]
	  \draw[postaction={decorate}] (0,0) pic {blackdot} --+ (-0.7, 0) pic {blackdot}; 
	  \draw[postaction={decorate}] (0,0.3) pic {blackdot} --+ (-0.7, 0) pic {blackdot}; 
	  \draw[postaction={decorate}] (0,0.6) pic {whitedot} --+ (-0.7, 0) pic {blackdot}; 
	  \draw[postaction={decorate}] (0,0.9) pic {whitedot} --+ (-0.7, 0) pic {whitedot}; 
	\end{scope}
		\node at (-0.35, -0.6) {$[2,0] \stackrel{f}\longleftarrow [1,1]$};
  \end{tikzpicture}
\end{center}

The abacus maps together with the two commuting sets of (augmented) cosimplicial
operators on the black and white beads form a generating set of
morphisms for $ \DD $. To see this, let $ g $ be a morphism in $ \DD $. If $ g $
maps a white bead to a black bead, then $ g $ factors uniquely through an abacus
map as $ g = g' \circ f $, where if we forget the colors, $ g $ and $ g' $ are equal as
monotone maps. We can repeat this process and extract from this a factorization
$ g = g_{\text{simp}} \circ g_{\text{ab}} $ where $ g_{\text{ab}} $ is a composite of
abacus maps and $ g_{\text{simp}} $ is a composite of (augmented) bisimplicial
operators. This shows that the (augmented) bisimplicial operators, together with
the abacus maps, generate $ \DD $. To put any morphism in $ \DD $ in this
canonical form it suffices to know how this factorization applies to the case
where an (augmented) cosimplicial operator is followed by an abacus map.
These identities, together with the usual identities for bisimplicial operators
of $\simplexcategory_{/[1]}$ therefore give a full description of the category $
\DD $:

\begin{lemma} \label{lem:DDAbacus}
  The category $\DD$ is presented in terms of generators and relations as having the generators and relations of $\simplexcategory_{/[1]}$ and in addition to that, abacus maps $f$, subject to the relations
\def\dhh{48pt}
\def\dww{60pt}
\newsavebox{\relall}
\sbox{\relall}{\small
    \begin{tikzcd}[column sep={\dww,between origins}, row sep={\dhh,between origins}, ampersand replacement=\&]
      \&\&\& {[i,j]} \& {[i,j{+}1]} \&\& {[i,j]} \& {[i,j{+}1]} \\
      \&\& {[i{+}1,j{-}1]} \& {[i{+}1,j]} \&\& {[i{+}1,j{-}1]} \& {[i{+}1,j]} \\
      \& {[i{-}1,j{+}1]} \& {[i,j]} \& {[i,j{+}1]} \\
      {[i,j]} \& {[i,j{+}1]} \& {[i{+}1,j]} \&\& {[i,j{+}1]} \& {[i,j{+}2]} \\
      {[i{+}1,j]} \&\&\& {[i{+}1,j]} \& {[i{+}1,j{+}1]} \\
      \& {[i{-}1,j{+}1]} \&\& {[i{+}2,j]} \\
      {[i,j]} \& {[i,j{+}1]} \\
      {[i{+}1,j]}
      \arrow["{d^k}", from=1-4, to=1-5]
      \arrow["f"', from=1-4, to=2-3]
      \arrow["{1\le k \le j+1}"{description}, draw=none, from=1-4, to=2-4]
      \arrow["f", from=1-5, to=2-4]
      \arrow["f"', from=1-7, to=2-6]
      \arrow["{1 \le k \le j}"{description}, draw=none, from=1-7, to=2-7]
      \arrow["{s^k}"', from=1-8, to=1-7]
      \arrow["f", from=1-8, to=2-7]
      \arrow["{d^{k-1}}"', from=2-3, to=2-4]
      \arrow["{s^{k-1}}", from=2-7, to=2-6]
      \arrow["f"', from=3-2, to=4-1]
      \arrow["{e^k}", from=3-2, to=4-2]
      \arrow["{d^\bot}", from=3-3, to=3-4]
      \arrow["{e^\top}"', from=3-3, to=4-3]
      \arrow[""{name=0, anchor=center, inner sep=0}, "f", from=3-4, to=4-3]
      \arrow["{0 \le k \le i}"{description}, draw=none, from=4-1, to=4-2]
      \arrow["{e^k}"', from=4-1, to=5-1]
      \arrow["f", from=4-2, to=5-1]
      \arrow["f"', from=4-5, to=5-4]
      \arrow["{s^\bot}"', from=4-6, to=4-5]
      \arrow["f", from=4-6, to=5-5]
      \arrow["f", from=5-5, to=6-4]
      \arrow["f"', from=6-2, to=7-1]
      \arrow["{t^\top}", from=6-4, to=5-4]
      \arrow["{0 \le k \le i-1}"{description}, draw=none, from=7-1, to=7-2]
      \arrow["{t^k}"', from=7-2, to=6-2]
      \arrow["f", from=7-2, to=8-1]
      \arrow["{t^k}", from=8-1, to=7-1]
      \arrow["{(\ast)}"{description}, draw=none, from=3-3, to=0]
    \end{tikzcd}
}
\[
\begin{tikzpicture}
  \small
  \def\dhh{48pt}
  \def\dww{60pt}
  \node at (-7, 5.85) {$i, j \geq -1$:};
  \node at (5.5,-5.95) {$(\ast)$ not both $i$ and $j$ equal to $-1$};

  \begin{scope}[shift={(0,0)}]
    \node at (0,0) {\usebox{\relall}};
    \end{scope}
\end{tikzpicture}
\]
\end{lemma}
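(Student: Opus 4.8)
The plan is to prove the presentation by exhibiting the canonical functor $\DD' \to \DD$ as an isomorphism, where $\DD'$ denotes the category presented by the displayed generators and relations. By construction this functor is the identity on objects, and it is full because the discussion preceding the lemma already identifies the (augmented) bisimplicial operators together with the abacus maps as a generating set of $\DD$ (via the factorization $g = g_{\text{simp}}\circ g_{\text{ab}}$). So the whole content is faithfulness, and before that one must check \emph{soundness}: that each relation in the displayed diagram genuinely holds in $\DD$. Since $\DD$ is a full subcategory of the arrow category of $\simplexcategory_+$, each relation is an identity between two monotone commuting squares and may be verified directly; more efficiently one reads it off Carlier's black/white-bead picture, in which a bisimplicial operator acts on the black (resp.\ white) part and an abacus map recolours the bottom white bead --- so, e.g., $(\ast)$ just says that inserting a new top black bead equals inserting a new bottom white bead and then recolouring it, the case $i=j=-1$ being excluded precisely because $[-1,-1]\notin\DD$. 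This is mechanical but has to be carried out for the horizontal and vertical cofaces and codegeneracies, with attention to the boundary operators $d^\bot$, $s^\bot$, $e^\top$, $t^\top$.

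For faithfulness I would argue by normal forms. Call a word in the generators \emph{reduced} if it has the shape (bisimplicial word)$\,\cdot\,$(abacus word). The key local fact is: for a single abacus letter $f$ immediately followed by a single bisimplicial generator $\sigma$, the relations rewrite $f\sigma$ into reduced form. For a vertical coface or codegeneracy, or a horizontal coface $d^k$ or codegeneracy $s^k$ with $k\ge 1$, the relevant relation is $f\sigma = \sigma' f$; for $\sigma = d^\bot$ relation $(\ast)$ gives $fd^\bot = e^\top$, already bisimplicial; and for $\sigma = s^\bot$ the relation gives $fs^\bot = t^\top f f$. Granting this, an arbitrary word is reduced by peeling off its leftmost abacus letter, recursively reducing the remainder, and then commuting that letter to the right past the resulting bisimplicial word.

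The step I expect to be the main obstacle is exactly the $s^\bot$-relation, which increases the number of abacus letters and so defeats any naive termination measure (number of abacus letters, or number of abacus/bisimplicial inversions). To make the rewriting terminate I would induct on the pair $(m,\ell)$ ordered lexicographically, where $m$ is the number of $s^\bot$-letters in the bisimplicial word currently being traversed and $\ell$ is its length: each application of the $s^\bot$-relation consumes one $s^\bot$ and produces two abacus letters that then only have to traverse bisimplicial words assembled from the remaining letters, which contain strictly fewer $s^\bot$'s. A secondary thing to get right, which I would verify together with soundness, is that the displayed list is exhaustive --- that every composite ``abacus map after bisimplicial generator'' is covered by one of the relations --- since overlooking a case would leave the presentation incomplete.

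Finally, in a reduced word the bisimplicial part can be brought to a chosen normal form using the presentation of $\simplexcategory_{/[1]}$, which is part of our presentation, while the abacus part is automatically canonical: from any object of $\DD$ there is at most one abacus map, so a composite of abacus maps is determined by its length alone, and no relations among pure abacus words are needed. Thus every morphism of $\DD'$ has a normal form --- a pair (normal bisimplicial word, integer $k\ge 0$) --- whose image in $\DD$ is $\overline{g_{\text{simp}}}\circ f^{\circ k}$, i.e.\ exactly the canonical factorization of the preceding discussion. Since that factorization is unique in $\DD$, two normal forms with the same image have equal $k$ and bisimplicial parts that agree in $\DD$, hence (by the presentation of $\simplexcategory_{/[1]}$ and the faithfulness of the inclusion $\simplexcategory_{/[1]} \hookrightarrow \DD$) are connected by our relations. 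Therefore $\DD' \to \DD$ is injective on hom-sets, and the presentation follows.
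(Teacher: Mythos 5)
Your route --- generation via the unique factorization $g = g_{\text{simp}}\circ g_{\text{ab}}$, soundness of the displayed relations checked on the bead pictures, and completeness by rewriting every word into the shape (bisimplicial)$\cdot$(abacus) and then invoking uniqueness of that factorization in $\DD$ together with the known presentation of $\simplexcategory_{/[1]}$ --- is exactly the argument the paper sketches (the paper gives no formal proof beyond the surrounding discussion), and your exhaustiveness worry does pan out: the top operators $e^\top$, $t^\top$ are covered by the displayed commutations because of how the index ranges are set up, while only $d^\bot$ and $s^\bot$ need the special relations. The genuine problem is the one step you yourself single out as the crux: the termination of the rewriting. Your lexicographic measure $(m,\ell)$, with $m$ the number of $s^\bot$-letters in the word being traversed, does not decrease, because commuting $f$ past $s^k$ lowers the index, so in particular $f\,s^1 = s^\bot f$ creates \emph{new} $s^\bot$-letters behind the travelling abacus letter --- and these are precisely the letters that the second copy of $f$ spawned by the $s^\bot$-relation must subsequently traverse. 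Concretely, push $f$ through $s^\bot s^1 s^1$ (so $(m,\ell)=(1,3)$): the spawn leaves two copies of $f$ in front of $s^1 s^1$, and after the first copy has passed, the second faces $s^\bot s^\bot$, a word with $(m,\ell)=(2,2)\not<(1,3)$. So the claim that the spawned letters ``only have to traverse words with strictly fewer $s^\bot$'s'' is false and the induction hypothesis is unavailable.

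The gap is local and easily repaired. Each of the three moves ($f\sigma\mapsto\sigma' f$, $f d^\bot\mapsto e^\top$, $f s^\bot\mapsto t^\top f f$) replaces exactly one bisimplicial letter by exactly one bisimplicial letter, so the length of the bisimplicial word still to be traversed never grows. Hence prove, by ordinary induction on $n$, that $f\,w$ reduces to $w'\,f^{\circ a}$ with $w'$ bisimplicial of length $n=|w|$: in the $s^\bot$-case apply the hypothesis first to the spawned $f$ over the tail $w_0$ (length $n-1$), producing $w_1$ with $|w_1|=n-1$, and then again to the second $f$ over $w_1$. (Alternatively, the weight $\sum 3^{c}$, summed over abacus letters with $c$ the number of bisimplicial letters to the right of that letter, strictly decreases under every move, including the spawn, where $3^{c}$ becomes $2\cdot 3^{c-1}$.) With the termination argument replaced in this way, the rest of your proof --- fullness from the factorization, uniqueness of the canonical form via faithfulness of $\simplexcategory_{/[1]}\hookrightarrow\DD$ and the fact that from any object there is at most one abacus map --- is sound and matches the paper in substance, with more detail than the paper provides.
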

  
The commutation relations of the abacus map and the cosimplicial operators,
displayed above can be read off the columns of beads. As an example, the equation
\begin{equation}\label{edf}
  e^\top = f \circ d^\bot
\end{equation}
is pictorially represented by
\begin{center}
  \begin{tikzpicture}
	
	\begin{scope}[shift={(0,0)},decoration={ markings, mark=at position 0.55 with {\arrow{>}}}]
	  \draw[postaction={decorate}] (0,0) pic {blackdot} --+ (-0.7, -0.15) pic {blackdot}; 
	  \draw (0,0.3) pic {blackdot} --+ (-0.7, -0.15) pic {blackdot};
	  \draw (0,0.6) pic {whitedot} --+ (-0.7, 0.15) pic {whitedot};
	  \draw (0,0.9) pic {whitedot} --+ (-0.7, 0.15) pic {whitedot};
	  \draw (-0.7, 0.45) pic {blackdot};
	\end{scope}
	
	\node at (0.8,0.4) {$=$};	
	
	\begin{scope}[shift={(3,0)},decoration={ markings, mark=at position 0.6 with {\arrow{>}}}]
	  \draw[postaction={decorate}] (0,0) pic {blackdot} --+ (-0.7, -0.15) pic {blackdot};
	  \draw[postaction={decorate}] (-0.7,-0.15) --+ (-0.7, 0) pic {blackdot}; 
	  \draw (0,0.3) pic {blackdot} --+ (-0.7, -0.15) pic {blackdot} --+ (-1.4, -0.15) pic {blackdot};
	  \draw (0,0.6) pic {whitedot} --+ (-0.7, 0.15) pic {whitedot} --+ (-1.4, 0.15) pic {whitedot};
	  \draw (0,0.9) pic {whitedot} --+ (-0.7, 0.15) pic {whitedot} --+ (-1.4, 0.15) pic {whitedot};
	  \draw (-0.7, 0.45) pic {whitedot}  --+ (-0.7, 0) pic {blackdot};
	\end{scope}

  \end{tikzpicture}
\end{center}

One can visualize $ \DD $ as
\begin{center}
  \begin{tikzcd}
    & {[-1,0]} & {[-1,1]} & {[-1,2]} & {} \ar[l, phantom, "\dots"]\\
    {[0,-1]} & {[0,0]} & {[0,1]} & {[0,2]} & {} \\
    {[1,-1]} & {[1,0]} & {[1,1]} & {[1,2]} & {} \\
    {[2,-1]} & {[2,0]} & {[2,1]} & {[2,2]} & {} \\
    {} & {} & {} & {} & {}
    \arrow["{\scalebox{0.9}{$d^0$}}"', shift right=1.2, from=1-2, to=1-3]
    \arrow["{\scalebox{0.9}{$d^1$}}", shift left=1.2, from=1-2, to=1-3]
    \arrow["f"', from=1-2, to=2-1]
    \arrow["{\scalebox{0.9}{$e^0$}}"', from=1-2, to=2-2]
    \arrow["{\scalebox{0.7}{$s^0$}}"{description, inner sep=.2}, shorten <=4pt, shorten >=2pt, from=1-3, to=1-2]
    \arrow[shift right=2.4, from=1-3, to=1-4]
    \arrow[shift left=2.4, from=1-3, to=1-4]
    \arrow[from=1-3, to=1-4]
    \arrow[from=1-3, to=2-2]
    \arrow[from=1-3, to=2-3]
    \arrow[shift left=1.2, shorten <=4pt, shorten >=2pt, from=1-4, to=1-3]
    \arrow[shift right=1.2, shorten <=4pt, shorten >=2pt, from=1-4, to=1-3]
    \arrow[from=1-4, to=2-3]
    \arrow[from=1-4, to=2-4]
    \arrow["{\scalebox{0.9}{$d^0$}}", from=2-1, to=2-2]
    \arrow["{\scalebox{0.9}{$e^0$}}"', shift right=1.2, from=2-1, to=3-1]
    \arrow["{\scalebox{0.9}{$e^1$}}", shift left=1.2, from=2-1, to=3-1]
    \arrow[shift right=1.2, from=2-2, to=2-3]
    \arrow[shift left=1.2, from=2-2, to=2-3]
    \arrow[from=2-2, to=3-1]
    \arrow[shift right=1.2, from=2-2, to=3-2]
    \arrow[shift left=1.2, from=2-2, to=3-2]
    \arrow[shorten <=4pt, shorten >=2pt, from=2-3, to=2-2]
    \arrow[shift right=2.4, from=2-3, to=2-4]
    \arrow[shift left=2.4, from=2-3, to=2-4]
    \arrow[from=2-3, to=2-4]
    \arrow["{\scalebox{0.9}{$f$}}", from=2-3, to=3-2]
    \arrow[shift right=1.2, from=2-3, to=3-3]
    \arrow[shift left=1.2, from=2-3, to=3-3]
    \arrow[shift right=1.2, shorten <=4pt, shorten >=2pt, from=2-4, to=2-3]
    \arrow[shift left=1.2, shorten <=4pt, shorten >=2pt, from=2-4, to=2-3]
    \arrow[from=2-4, to=3-3]
    \arrow[shift right=1.2, from=2-4, to=3-4]
    \arrow[shift left=1.2, from=2-4, to=3-4]
    \arrow["\cdots"{description}, phantom, from=2-5, to=2-4]
    \arrow["{\scalebox{0.7}{$t^0$}}"{description, inner sep=.2}, shorten <=4pt, shorten >=2pt, from=3-1, to=2-1]
    \arrow[from=3-1, to=3-2]
    \arrow[shift right=2.4, from=3-1, to=4-1]
    \arrow[from=3-1, to=4-1]
    \arrow[shift left=2.4, from=3-1, to=4-1]
    \arrow[shorten <=4pt, shorten >=2pt, from=3-2, to=2-2]
    \arrow[shift right=1.2, from=3-2, to=3-3]
    \arrow[shift left=1.2, from=3-2, to=3-3]
    \arrow[from=3-2, to=4-1]
    \arrow[shift right=2.4, from=3-2, to=4-2]
    \arrow[shift left=2.4, from=3-2, to=4-2]
    \arrow[from=3-2, to=4-2]
    \arrow[shorten <=4pt, shorten >=2pt, from=3-3, to=2-3]
    \arrow[shorten <=4pt, shorten >=2pt, from=3-3, to=3-2]
    \arrow[shift right=2.4, from=3-3, to=3-4]
    \arrow[shift left=2.4, from=3-3, to=3-4]
    \arrow[from=3-3, to=3-4]
    \arrow[from=3-3, to=4-2]
    \arrow[shift right=2.4, from=3-3, to=4-3]
    \arrow[shift left=2.4, from=3-3, to=4-3]
    \arrow[from=3-3, to=4-3]
    \arrow[shorten <=4pt, shorten >=2pt, from=3-4, to=2-4]
    \arrow[shift left=1.2, shorten <=4pt, shorten >=2pt, from=3-4, to=3-3]
    \arrow[shift right=1.2, shorten <=4pt, shorten >=2pt, from=3-4, to=3-3]
    \arrow[from=3-4, to=4-3]
    \arrow[shift right=2.4, from=3-4, to=4-4]
    \arrow[shift left=2.4, from=3-4, to=4-4]
    \arrow[from=3-4, to=4-4]
    \arrow["\cdots"{description}, phantom, from=3-5, to=3-4]
    \arrow[shift left=1.2, shorten <=4pt, shorten >=2pt, from=4-1, to=3-1]
    \arrow[shift right=1.2, shorten <=4pt, shorten >=2pt, from=4-1, to=3-1]
    \arrow[from=4-1, to=4-2]
    \arrow[shift left=1.2, shorten <=4pt, shorten >=2pt, from=4-2, to=3-2]
    \arrow[shift right=1.2, shorten <=4pt, shorten >=2pt, from=4-2, to=3-2]
    \arrow[shift right=1.2, from=4-2, to=4-3]
    \arrow[shift left=1.2, from=4-2, to=4-3]
    \arrow[shift left=1.2, shorten <=4pt, shorten >=2pt, from=4-3, to=3-3]
    \arrow[shift right=1.2, shorten <=4pt, shorten >=2pt, from=4-3, to=3-3]
    \arrow[shorten <=4pt, shorten >=2pt, from=4-3, to=4-2]
    \arrow[shift right=2.4, from=4-3, to=4-4]
    \arrow[shift left=2.4, from=4-3, to=4-4]
    \arrow[from=4-3, to=4-4]
    \arrow[shift left=1.2, shorten <=4pt, shorten >=2pt, from=4-4, to=3-4]
    \arrow[shift right=1.2, shorten <=4pt, shorten >=2pt, from=4-4, to=3-4]
    \arrow[shift left=1.2, shorten <=4pt, shorten >=2pt, from=4-4, to=4-3]
    \arrow[shift right=1.2, shorten <=4pt, shorten >=2pt, from=4-4, to=4-3]
    \arrow["\cdots"{description}, phantom, from=4-5, to=4-4]
    \arrow["\vdots"{description}, draw=none, from=5-1, to=4-1]
    \arrow["\vdots"{description}, draw=none, from=5-2, to=4-2]
    \arrow["\vdots"{description}, draw=none, from=5-3, to=4-3]
    \arrow["\vdots"{description}, draw=none, from=5-4, to=4-4]
    \arrow["\ddots"{description}, draw=none, from=5-5, to=4-4]
  \end{tikzcd}
\end{center}

\begin{remark}
  The terminology ``abacus map'' is from Carlier's second
  paper~\cite{Carlier:1812.09915}, except that his abacus maps were slightly more
  general, and do not admit any easy combinatorial description. What we call
  abacus map corresponds to what Carlier called {\em perfect abacus map}. One of
  the constructions in his second paper was concerned with modifying a diagram
  with the more general abacus maps to one with perfect abacus maps, that is, a
  presheaf on $\DD$. Since the perfect abacus maps correspond to the combinatorial
  axioms holding in $\DD$, we prefer to use the name for these.
\end{remark}

\begin{observation}
  \label{bla:Trapezium}{\bf (Trapezium equations.)}
  Several other useful equations hold in $\DD$, including the {\em trapezium 
  equations}
  \begin{equation} \label{eq:TrapeziumEq}
    f^{\circ (m{+}n{+}1)} \circ d^m \ = \ e^{i{+}1} \circ f^{\circ (m{+}n)},
	\qquad \text{where} \qquad 
    \begin{array}{rl}
     d^m : [i{-}m,j{+}m] &\to\ \ [i{-}m,j{+}m{+}1] \\
     e^{i+1} : \,\ [i{+}n,j{-}n] &\to\ \ [i{+}n{+}1,j{-}n],
    \end{array}
  \end{equation}
  valid for all $i,j \geq -1$ (but not both equal to $-1$), and all $m\leq i+1$ and 
  $n\leq j+1$.
  They owe their name to the shape they trace out in $ \DD $
  \begin{center}
    \adjustbox{scale=0.8}{
      \begin{tikzcd}[column sep={9em,between origins}, row sep ={7em,between origins}]
        & & {[i{-}m,j{+}m]} & {[i{-}m,j{+}m{+}1]} \\
        & {[i,j]} & {[i,j{+}1]} & \\
        {[i{+}n,j{-}n]} & {[i{+}1,j]} & & \\
        {[i{+}n{+}1,j{-}n]} & &
        \arrow["{d^m}", from=1-3, to=1-4]
        \arrow["{f^{\circ m}}"', from=1-3, to=2-2]
        \arrow["{f^{\circ m}}", from=1-4, to=2-3]
        \arrow["{d^0}", dotted, from=2-2, to=2-3]
        \arrow["{f^{\circ n}}"', from=2-2, to=3-1]
        \arrow["{e^{i+1}}"', dotted, from=2-2, to=3-2]
        \arrow["f", from=2-3, to=3-2]
        \arrow["{e^{i+1}}"', from=3-1, to=4-1]
        \arrow["{f^{\circ n}}", from=3-2, to=4-1]
      \end{tikzcd}
    }
  \end{center} 
  The diagram is factored so as to contain already the proof: the 
  triangle and the two parallelograms commute thanks to 
  Lemma~\ref{lem:DDAbacus}.
\end{observation}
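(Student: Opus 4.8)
The plan is to follow the factorization already exhibited in the displayed diagram: its outer boundary reads $f^{\circ(m{+}n{+}1)}\circ d^m$ along the top-and-right route and $e^{i+1}\circ f^{\circ(m{+}n)}$ along the left-and-bottom route, so it suffices to verify that the one triangle and the two parallelograms in the interior commute, and then paste. Every edge in the diagram is either an abacus map or a (vertical or horizontal) coface, so each commutation is an instance, or a short iteration of instances, of the relations in Lemma~\ref{lem:DDAbacus}.

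First I would dispose of the triangle $[i,j]\xrightarrow{d^\bot}[i,j{+}1]\xrightarrow{f}[i{+}1,j]$ versus $[i,j]\xrightarrow{e^\top}[i{+}1,j]$: this is exactly equation~\eqref{edf}, once one notes that at the object $[i,j]$ one has $d^\bot=d^0$ and $e^\top=e^{i+1}$. Next, for the upper parallelogram I would prove $f^{\circ m}\circ d^m=d^0\circ f^{\circ m}$, as maps $[i{-}m,j{+}m]\to[i,j{+}1]$, by induction on $m$: the case $m=0$ is trivial, and in the step one peels off the outermost $f$ using the single-step relation $f\circ d^k=d^{k-1}\circ f$ of Lemma~\ref{lem:DDAbacus} (with $k=m$) and then applies the inductive hypothesis at the object one step to the south-east. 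Symmetrically, for the lower parallelogram I would prove $f^{\circ n}\circ e^{i+1}=e^{i+1}\circ f^{\circ n}$, as maps $[i,j]\to[i{+}n{+}1,j{-}n]$, again by induction on $n$, each step commuting $e^{i+1}$ past one further $f$ by the relevant instance of the commutation relation between $e$ and $f$ in Lemma~\ref{lem:DDAbacus}. Pasting these three identities in the order upper parallelogram, triangle, lower parallelogram rewrites $f^{\circ(m{+}n{+}1)}\circ d^m$ first as $f^{\circ(n+1)}\circ d^0\circ f^{\circ m}$, then as $f^{\circ n}\circ e^{i+1}\circ f^{\circ m}$, then as $e^{i+1}\circ f^{\circ(m{+}n)}$, which is the asserted trapezium equation.

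The one thing that genuinely needs care --- and the only place the hypotheses enter essentially --- is the bookkeeping of index ranges along the two inductions: at every object encountered, the coface being invoked must lie in its admissible range. For the upper parallelogram this holds as soon as $j\ge -1$; for the lower one, the point is that a string of abacus maps only increases the number of black beads, so the superscript $i+1$ never exceeds one more than the first coordinate of the current object, which is precisely the side condition under which the relevant relation of Lemma~\ref{lem:DDAbacus} is available. The remaining hypotheses $m\le i+1$, $n\le j+1$ and ``$i,j$ not both $-1$'' serve only to guarantee that the source $[i{-}m,j{+}m]$, the target $[i{+}n{+}1,j{-}n]$ and the interior vertex $[i,j]$ are honest objects of $\DD$.

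If one prefers to bypass the diagram, there is a more pedestrian route: both sides of the equation are morphisms of $\DD$ with the same source and target, hence are determined by their underlying monotone maps in $\simplexcategory_+$ together with the record of which white beads get turned black; a direct computation in the bead model then shows that the two underlying monotone maps coincide and recolour the same beads. This is wholly elementary but more tedious, so the factored-diagram argument is the one I would present. Either way, the actual content is the range bookkeeping just described, not any conceptual obstacle.
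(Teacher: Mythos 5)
Your proposal is correct and is essentially the paper's own argument: the paper proves the trapezium equations exactly by the factored diagram, with the triangle being equation~\eqref{edf} and the two parallelograms commuting by (iterated) instances of the relations in Lemma~\ref{lem:DDAbacus}. You merely make explicit the inductions and the index bookkeeping that the paper leaves implicit, which is a faithful elaboration rather than a different route.
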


There is an alternative presentation of $\DD$ in terms of generators and
relations, where instead of abacus maps, we describe extra bottom codegeneracy
maps $s^\subbot$ in each row (except the augmentation row). Pictorially, these are given by
joining the last black and the first white bead to a single black bead, as in 
this example, picturing $[1,0] \stackrel{s^\subbot}\leftarrow [1,1]$:

\begin{center}
  \begin{tikzpicture}
	
	\begin{scope}[shift={(0,0)},decoration={ markings, mark=at position 0.55 with {\arrow{>}}}]
	  \draw[postaction={decorate}] (0,0) pic {blackdot} --+ (-0.7, 0.15) pic {blackdot}; 
	  \draw (0,0.3) pic {blackdot} --+ (-0.7, 0.15) pic {blackdot};
	  \draw (0,0.6) pic {whitedot} --+ (-0.7, -0.15) pic {blackdot};
	  \draw (0,0.9) pic {whitedot} --+ (-0.7, -0.15) pic {whitedot};
	\end{scope}

  \end{tikzpicture}
\end{center}
It follows readily that these $s^\subbot$ are actually extra bottom codegeneracy maps in each
row in the sense that they satisfy the identities for split cosimplicial objects.

One should be aware that the extra bottom codegeneracy maps are not compatible with vertical top coface maps. They
{\em are} compatible with all the other vertical cosimplicial operators (including the
vertical top codegeneracy maps). We omit the proof of the following 
lemma which summarizes this.

\begin{lemma}\label{lem:s-generation}
  The category $\DD$ is presented in terms of generators and relations as having
  the generators and relations of $\simplexcategory_{/[1]}$ and in addition to
  that, extra bottom codegeneracy maps in all rows (except the augmentation row).
  The equations satisfied by these extra bottom codegeneracy are the split
  cosimplicial identities 
  \begin{center}
    \begin{tikzcd}
      {[i,j]} & {[i,j{+}1]} && {[i,j]} & {[i,j{+}1]} \\
      {[i,j{+}1]} & {[i,j]} && {[i,j{+}1]} & {[i,j{+}2]}
      \arrow[""{name=0, anchor=center, inner sep=0}, "{{d^k}}"', from=1-1, to=2-1]
      \arrow["\scosplit"', from=1-2, to=1-1]
      \arrow[""{name=1, anchor=center, inner sep=0}, "{{d^{(k+1)}}}", from=1-2, to=2-2]
      \arrow["\scosplit"', from=1-5, to=1-4]
      \arrow["\scosplit", from=2-2, to=2-1]
      \arrow[""{name=2, anchor=center, inner sep=0}, "{s^k}", from=2-4, to=1-4]
      \arrow[""{name=3, anchor=center, inner sep=0}, "{{s^{(k+1)}}}"', from=2-5, to=1-5]
      \arrow["\scosplit", from=2-5, to=2-4]
      \arrow["{0 \le k \le j+1}"{description}, draw=none, from=0, to=1]
      \arrow["{0 \le k \le j}"{description}, draw=none, from=2, to=3]
    \end{tikzcd}
  \end{center}
  as well as those stating that
  they are
  compatible with all vertical cosimplicial operators except the top vertical
  coface maps
  \begin{center}
    \begin{tikzcd}
      && {[i,j]} & {[i,j{+}1]} && {[i,j]} & {[i,j{+}1]} && {\phantom{}} \\
      && {[i{+}1,j]} & {[i{+}1,j{+}1]} && {[i{+}1,j]} & {[i{+}1,j{+}1]}
      \arrow[""{name=0, anchor=center, inner sep=0}, "{e^k}"', from=1-3, to=2-3]
      \arrow["\scosplit"', from=1-4, to=1-3]
      \arrow[""{name=1, anchor=center, inner sep=0}, "{e^k}", from=1-4, to=2-4]
      \arrow["\scosplit"', from=1-7, to=1-6]
      \arrow["\scosplit", from=2-4, to=2-3]
      \arrow[""{name=2, anchor=center, inner sep=0}, "{t^k}", from=2-6, to=1-6]
      \arrow[""{name=3, anchor=center, inner sep=0}, "{t^k}"', from=2-7, to=1-7]
      \arrow["\scosplit", from=2-7, to=2-6]
      \arrow["{0 \le k \le i+1}"{description}, draw=none, from=0, to=1]
      \arrow["{0 \le k \le i}"{description}, draw=none, from=2, to=3]
    \end{tikzcd}
  \end{center}
  where in all diagrams $ i\ge 0, j \ge -1 $.
\end{lemma}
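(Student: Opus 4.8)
The plan is to bootstrap from the presentation of $\DD$ already established in Lemma~\ref{lem:DDAbacus}, where the generators beyond those of $\simplexcategory_{/[1]}$ are the abacus maps $f$. The entire comparison rests on two identities holding in $\DD$, both read off at once from the bead pictures (or checked by a one-line computation in the arrow category of $\simplexcategory_+$, of which $\DD$ is by definition a full subcategory):
\[
  \scosplit = t^\top \circ f
  \qquad\text{and}\qquad
  f = \scosplit \circ e^\top .
\]
In the first, $f:[i,j{+}1]\to[i{+}1,j]$ blackens the bottom white bead and then $t^\top$ merges that bead with the black bead below it, which is exactly the effect of the extra bottom codegeneracy $\scosplit$ of row $i$. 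In the second, $e^\top:[i,j{+}1]\to[i{+}1,j{+}1]$ inserts a fresh black bead on top of the black run and then the $\scosplit$ of row $i{+}1$ merges it with the first white bead, reproducing $f$. The first identity requires $i\ge 0$, consistent with $\scosplit$ being defined only off the augmentation row; the second still expresses the augmentation-row abacus map $f:[-1,j{+}1]\to[0,j]$ through the $\scosplit$ of row $0$.

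First I would check that, taking $\scosplit := t^\top\circ f$, all the relations listed in the statement genuinely hold in $\DD$: the split cosimplicial identities, and the compatibility of $\scosplit$ with every vertical cosimplicial operator other than $e^\top$. Each is a routine bead computation, and the single relation that fails---pushing $\scosplit$ past $e^\top$---is precisely the content of the second displayed identity. This yields the candidate presentation, call it $P'$, together with a functor $\Phi$ from the category it presents to $\DD$ which fixes the $\simplexcategory_{/[1]}$-operators and sends $\scosplit\mapsto t^\top\circ f$. It is bijective on objects, and it is surjective on morphisms: by the canonical factorization recalled just before Lemma~\ref{lem:DDAbacus}, every morphism of $\DD$ is a composite of $\simplexcategory_{/[1]}$-operators and abacus maps, and each abacus map equals $\scosplit\circ e^\top$.

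It then remains to show that $P'$ carries no superfluous relations, which I would establish by producing an inverse functor $\Psi$ out of $\DD$. Starting from the presentation of Lemma~\ref{lem:DDAbacus}, one replaces every occurrence of $f$ by $\scosplit\circ e^\top$ and verifies that each abacus relation of that lemma becomes a consequence of the $P'$-relations. Every such verification follows the same template: commute the $\simplexcategory_{/[1]}$-factor $e^\top$ past whichever simplicial operator occurs, then cancel using a split identity or a $\scosplit$-compatibility relation; for instance the relation $e^\top = f\circ d^\bot$ becomes $e^\top = \scosplit\circ e^\top\circ d^\bot = \scosplit\circ d^\bot\circ e^\top = e^\top$, using that $e^\top$ and $d^\bot$ commute in $\simplexcategory_{/[1]}$ and that $\scosplit\circ d^\bot = \id$. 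That $\Phi$ and $\Psi$ are mutually inverse on generators is then immediate from the two displayed identities.

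The one place I expect real effort is this last step: a separate derivation is needed for each family of abacus relations in Lemma~\ref{lem:DDAbacus}, and one has to keep careful track of row indices throughout---in particular of which row's $\scosplit$ appears at each stage of a reduction. This bookkeeping, rather than any conceptual point, is presumably why the authors state the lemma without proof.
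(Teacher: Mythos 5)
Your proposal is correct and is exactly the argument the paper intends: the proof is omitted there, but the surrounding discussion supplies precisely your two translation formulas $f=\scosplit\circ e^\top$ and $\scosplit=t^\top\circ f$ together with the bead calculus, and the remaining content is the family-by-family reduction of the relations of Lemma~\ref{lem:DDAbacus} that you outline (e.g.\ $f\circ e^k=e^k\circ f$ reduces via $e^\top e^k=e^ke^\top$ and the $\scosplit$--$e^k$ compatibility, and $\Psi\Phi(\scosplit)=t^\top\scosplit e^\top=\scosplit t^\top e^\top=\scosplit$ uses the $\scosplit$--$t^\top$ compatibility). One small point to keep in view: reducing the relation $f\circ s^\bot=t^\top\circ f\circ f$ requires the split identity $\scosplit\circ s^\bot=\scosplit\circ\scosplit$ (the index-$(-1)$ instance of the codegeneracy exchange law), so ``split cosimplicial identities'' must be read as including it.
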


One can pass back and forth between the abacus viewpoint and the
extra-bottom-codegeneracy viewpoint. Given $\scosplit$, the abacus map $f$ is defined as
$$
f := \scosplit \circ e^\top 
$$
\begin{center}
  \begin{tikzpicture}
	
	\begin{scope}[shift={(0,0)},decoration={ markings, mark=at position 0.6 with {\arrow{>}}}]
	  \draw[postaction={decorate}] (0,0) pic {blackdot} -- +(-0.7, 0) pic {blackdot};
	  \draw (0,0.3) pic {blackdot} --+ (-0.7, 0) pic {blackdot};
	  \draw (0,0.6) pic {whitedot} --+ (-0.7, 0) pic {blackdot};
	  \draw (0,0.9) pic {whitedot} --+ (-0.7, 0) pic {whitedot};
	\end{scope}
	  
	\node at (0.8,0.4) {$:=$};

	\begin{scope}[shift={(3,0)},decoration={ markings, mark=at position 0.55 with {\arrow{>}}}]
	  \draw[postaction={decorate}] (0,0) pic {blackdot} --+ (-0.7, -0.15) pic {blackdot}; 
	  \draw[postaction={decorate}] (-0.7, -0.15) -- +(-0.7, 0.15) pic {blackdot};
	  \draw (0,0.3) pic {blackdot} --+ (-0.7, -0.15) pic {blackdot} -- +(-1.4, 0) pic {blackdot};
	  \draw (0,0.6) pic {whitedot} --+ (-0.7, 0.15) pic {whitedot} -- +(-1.4, 0) pic {blackdot};
	  \draw (0,0.9) pic {whitedot} --+ (-0.7, 0.15) pic {whitedot} -- +(-1.4, 0) pic {whitedot};
	  \draw (-0.7,0.45) pic {blackdot} --+ (-0.7, 0.15);
	\end{scope}
	
  \end{tikzpicture}
\end{center}
Conversely, given $f$, the extra bottom codegeneracy map $\scosplit$ is defined as
$$
\scosplit := t^\top \circ f 
$$
\begin{center}
  \begin{tikzpicture}[decoration={ markings, mark=at position 0.6 with {\arrow{>}}}]
	
	\begin{scope}[shift={(0,0)}]
	  \draw[postaction={decorate}] (0,0) pic {blackdot} -- +(-0.7, 0.15) pic {blackdot};
	  \draw (0,0.3) pic {blackdot} -- +(-0.7, 0.15);
	  \draw (0,0.6) pic {whitedot} -- +(-0.7, -0.15) pic {blackdot};
	  \draw (0,0.9) pic {whitedot} -- +(-0.7, -0.15) pic {whitedot};
	\end{scope}

	\node at (0.8,0.4) {$:=$};

	\begin{scope}[shift={(3,0)}]
	  \draw[postaction={decorate}] (0,0) pic {blackdot} -- +(-0.7, 0) pic {blackdot};
	  \draw[postaction={decorate}] (-0.7,0) -- +(-0.7, 0.15) pic {blackdot};
	  \draw (0,0.3) pic {blackdot} --+ (-0.7, 0) pic {blackdot} -- +(-1.4, 0.15) pic {blackdot};
	  \draw (0,0.6) pic {whitedot} --+ (-0.7, 0) pic {blackdot} -- +(-1.4, -0.15);
	  \draw (0,0.9) pic {whitedot} --+ (-0.7, 0) pic {whitedot} -- +(-1.4, -0.15) pic {whitedot};
	\end{scope}
	
  \end{tikzpicture}
\end{center}

\subsection{\texorpdfstring{$\DD$}{D}-presheaves}

  Let $ B $ be a $ \DD $-presheaf. The relations holding in $\DD$
  (Lemma~\ref{lem:DDAbacus})
  translate into equations for the abacus maps $f$ at the presheaf level, which
  can be interpreted either in terms of rows or columns of $B$.
Precisely, the abacus maps form simplicial maps
between rows \begin{equation} \label{eq:AbacusRowsMap}
  f: B_{i+1,\bullet} \to \Decbot{(B_{i,\bullet})}
\end{equation}
and simplicial maps between columns
$$
f : \Dectop{(B_{\bullet,j})} \to B_{\bullet,j+1}.
$$
In these formulae, we allow $i=-1$, so that $f$ goes from the zeroth row to the
decalage of the augmentation row, and we also allow $j=-1$, so that $f$ goes
from the decalage of the augmentation column to the zeroth column.

\begin{remark}\label{rmk:comb}
  Purely combinatorially, from the relations holding in $\DD$ (again
  Lemma~\ref{lem:DDAbacus}), we have the important equation holding inside
  a $\DD$-presheaf:
$$
f = e_\top \circ \ssplit : B_{i+1,j} \to B_{i,j+1}.
$$
This means that each abacus map, viewed as a simplicial map
between rows, $f: B_{i+1,\bullet} \to \decbot{(B_{i,\bullet})}$,
can be described as the composite
\begin{equation}\label{eq:f=es}
B_{i+1,\bullet} \stackrel{\ssplit} \longrightarrow 
\decbot{(B_{i+1,\bullet})}
\stackrel{\decbot(e_\top)}\longrightarrow \decbot{(B_{i,\bullet})}  .
\end{equation}
Conversely, from the equation $\ssplit = f \circ t_\top : B_{i,j} \to B_{i,j+1}$ holding in any 
$\DD$-presheaf (which again
follows from Lemma~\ref{lem:DDAbacus}), we see that we can write the simplicial 
map $\ssplit: B_{i,\bullet} \to \decbot(B_{i,\bullet})$ (between rows) as the composite
\begin{equation}\label{eq:s=ft}
B_{i,\bullet}
\stackrel{t_\top} \longrightarrow
B_{i+1,\bullet}
\stackrel{f} \longrightarrow
\decbot(B_{i,\bullet}).
\end{equation}
 
From the viewpoint of columns we can write instead the simplicial map
$f : \Dectop{(B_{\bullet,j})} \to B_{\bullet,j+1}$ (between columns)
as the composite 
\begin{equation}\label{eq:f=s.epsilon}
\Dectop{(B_{\bullet,j})}
\stackrel{\ssplit} \longrightarrow 
\dectop (B_{\bullet,j+1})
\stackrel{\varepsilon}{\longrightarrow}
B_{\bullet,j+1}  .
\end{equation}

Finally, still from the equations holding in $\DD$ (again
Lemma~\ref{lem:DDAbacus}), we have the following useful equation holding in
any $\DD$-presheaf:
$$
e_\top = d_\bot \circ f : B_{i+1,j} \to B_{i,j}.
$$
In particular, we can describe the vertical augmentation map $e_0 : B_{0,\bullet} 
\to Y$ as the composite
$$
B_{0,\bullet} \stackrel f
\to \decbot{Y}\stackrel{\varepsilon}\to Y.
$$
\end{remark}

Among the $\DD$-presheaves, we are interested  particularly in those 
that are also bimodule configurations:
\begin{definition}
  A $ \DD $-presheaf $ B $ is said to be an {\em abacus bimodule
  configuration} if its underlying $\simplexcategory_{/[1]}$-presheaf
  (augmented bisimplicial space) is a bimodule configuration (as in
  Definition~\ref{bimod-conf}). We write $\Abcs$ for the full
  subcategory of $ \PrSh(\DD) $ spanned by the \underline{a}bacus
  \underline{b}icomodule \underline{c}onfigurations in the sense of
  Carlier.
\end{definition}

\subsection{How stability affects abacus structure}

\begin{definition}
  A $\DD$-presheaf is called {\em upper stable} or {\em lower stable} if the
  underlying bisimplicial space is so (cf.~\ref{subsec:stab}).
  Thus upper stable means that
  all simplicial maps $e_\bot$ between bulk rows are right fibrations,
  and all simplicial maps $d_\bot$ between bulk columns are right fibrations. Similarly lower stable means that
  all simplicial maps $e_\top$ between bulk rows are left fibrations,
  and all simplicial maps $d_\top$ between bulk columns are left fibrations.
  Note that nothing is said about the
  augmentation maps $e_\bot : B_{0,\bullet} \to B_{-1,\bullet}$
  or $d_\top : B_{\bullet,0} \to B_{\bullet,-1}$. 
\end{definition}

\begin{lemma}\label{Segal=>s-1cart}
  In a $\DD$-presheaf $B$, if all bulk rows are $1$-Segal, then for all $i\geq 0$, 
  the simplicial map between rows
  $\ssplit : B_{i,\bullet} 
  \to \decbot(B_{i,\bullet})$ is cartesian.
\end{lemma}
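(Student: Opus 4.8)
The plan is to recognize $\ssplit : B_{i,\bullet} \to \decbot(B_{i,\bullet})$ as the structure map of a $\decbot$-coalgebra on the bulk row $B_{i,\bullet}$, and then to invoke the automatic rigidity of $\decbot$-coalgebras on Segal spaces (Lemma~\ref{lem:1-Segal-is-rigid}).

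First I would use Lemma~\ref{lem:s-generation}: a $\DD$-presheaf carries, in every bulk row $B_{i,\bullet}$ (for all $i\ge 0$), extra bottom degeneracy maps $\ssplit$ satisfying the split simplicial identities. Equivalently, via the comonadicity of $\kk\upperstar\circ\ii\upperstar$ recorded in Subsection~\ref{subsec:dec} (together with Observation~\ref{obs:observation}), the bulk row $B_{i,\bullet}$ underlies a $\decbot$-coalgebra whose structure map $\gamma$ is given degreewise precisely by these extra bottom degeneracies. In other words $\gamma = \ssplit : B_{i,\bullet} \to \decbot(B_{i,\bullet})$, which is exactly the simplicial map whose cartesianness is in question (cf.\ the description of this map as a composite in~\eqref{eq:s=ft}).

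Now the hypothesis enters: since $B_{i,\bullet}$ is assumed $1$-Segal, Lemma~\ref{lem:1-Segal-is-rigid} asserts that every $\decbot$-coalgebra structure on it is rigid, i.e.\ its structure map is cartesian. Applying this to $\gamma = \ssplit$ yields the claim. This is the entire argument; there is no real obstacle beyond correctly matching the combinatorics. The one point deserving care is to confirm that the extra bottom codegeneracies of $\DD$ described in Lemma~\ref{lem:s-generation} do restrict, on each bulk row of $B$, to a \emph{genuine} $\decbot$-coalgebra structure rather than merely to a map satisfying some of the identities; but this is ensured by the fact that Lemma~\ref{lem:s-generation} lists exactly the split cosimplicial identities, which by Observation~\ref{obs:observation} correspond exactly to the coalgebra axioms. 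Once that identification is in place, Lemma~\ref{lem:1-Segal-is-rigid} closes the proof at once.
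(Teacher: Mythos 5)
Your proof is correct and is exactly the paper's argument: the paper's proof consists precisely of the remark that $\ssplit$ on a bulk row is a $\decbot$-coalgebra structure map and that, by Lemma~\ref{lem:1-Segal-is-rigid}, any such structure on a $1$-Segal space is rigid, i.e.\ cartesian. Your additional care in checking (via Lemma~\ref{lem:s-generation} and the discussion in Subsection~\ref{subsec:dec}) that the extra bottom degeneracies really constitute a genuine coalgebra structure is a fine elaboration of the same one-line argument.
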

\begin{proof}
  This is the statement that for $1$-Segal spaces, $\decbot$-coalgebras are 
  always rigid (Lemma~\ref{lem:1-Segal-is-rigid}).
\end{proof}

\begin{lemma}
  If a $\DD$-presheaf $B$ is lower stable and all its bulk rows are $1$-Segal, then
  for all $i\geq 0$, the abacus map $f : B_{i+1,\bullet} \to
  \decbot(B_{i,\bullet})$ is cartesian.
\end{lemma}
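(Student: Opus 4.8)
The plan is to prove that $f$ is cartesian by exhibiting it as a composite of two cartesian maps, using the description of the abacus map as a simplicial map between rows recorded in Remark~\ref{rmk:comb}. By equation~\eqref{eq:f=es}, the abacus map factors as
$$
f \colon\quad B_{i+1,\bullet} \stackrel{\ssplit}{\longrightarrow} \decbot(B_{i+1,\bullet}) \stackrel{\decbot(e_\top)}{\longrightarrow} \decbot(B_{i,\bullet}),
$$
so I would reduce the problem to checking that each of the two factors is cartesian and then invoke that cartesian maps compose.

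For the first factor I would simply cite Lemma~\ref{Segal=>s-1cart}: since $i\geq 0$, the row $B_{i+1,\bullet}$ is a bulk row, hence $1$-Segal by hypothesis, and therefore the extra bottom degeneracy map $\ssplit : B_{i+1,\bullet} \to \decbot(B_{i+1,\bullet})$ is cartesian (rigidity being automatic for $1$-Segal spaces).

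For the second factor I would argue as follows. Both rows $B_{i,\bullet}$ and $B_{i+1,\bullet}$ are bulk rows (again because $i\geq 0$), so lower stability makes $e_\top : B_{i+1,\bullet}\to B_{i,\bullet}$ a left fibration, i.e.\ cartesian on all top face maps. Then $\decbot(e_\top)$ is again a left fibration, because the top face map of a lower decalage in degree $n$ is an ordinary top face map of the original simplicial space (namely its $(n{+}1)$-st face map). Finally, $\decbot(B_{i+1,\bullet})$ and $\decbot(B_{i,\bullet})$ underlie bottom-split simplicial spaces via their canonical $\decbot$-coalgebra structures (given by the comultiplication), and $\decbot(e_\top)$ is a morphism of these by naturality; hence Lemma~\ref{lem:lfib=>cart} upgrades it from a left fibration to a genuinely cartesian map, which is what is needed.

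I expect the only delicate point to be this last step --- the passage from ``$e_\top$ is a left fibration'' to ``$\decbot(e_\top)$ is cartesian'' --- since it is here that both the stability of left fibrations under $\decbot$ and the availability of Lemma~\ref{lem:lfib=>cart} (through the bottom-split structure carried by decalages) are genuinely used. Everything else is a direct appeal to Remark~\ref{rmk:comb}, to Lemma~\ref{Segal=>s-1cart}, and to the closure of cartesian maps under composition.
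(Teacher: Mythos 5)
Your proof is correct and follows essentially the same route as the paper: the same factorization $f = \decbot(e_\top)\circ\ssplit$ from Remark~\ref{rmk:comb}, with $\ssplit$ cartesian by Lemma~\ref{Segal=>s-1cart} and $\decbot(e_\top)$ cartesian thanks to lower stability. The only difference is that where the paper simply cites Fact~\ref{Decbot(lfib)} (the lower decalage of a left fibration is cartesian), you re-derive it on the spot---observing that $\decbot$ preserves left fibrations and then upgrading to cartesian via the canonical bottom-split structures and Lemma~\ref{lem:lfib=>cart}---which is a valid argument for that fact.
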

\begin{proof}
  By Equation~\eqref{eq:f=es} we can write
  $f : B_{i+1,\bullet} \to \decbot(B_{i,\bullet}) $  as the 
  composite
    \begin{equation*}
      B_{i+1,\bullet} \overset{\ssplit}{\longrightarrow} 
	  \decbot(B_{i+1,\bullet}) \overset{\decbot (e_\top)}{\longrightarrow} 
	  \decbot (B_{i,\bullet})  .
    \end{equation*}
  But $\ssplit$ is cartesian as a consequence of Lemma~\ref{Segal=>s-1cart} (since
  the bulk rows are $1$-Segal), and $\decbot (e_\top)$ is cartesian since $e_\top$
  is a left fibration by the lower stability assumption (here we use $i\geq 0$)
  and since the lower decalage of a left fibration is cartesian
  (\ref{Decbot(lfib)}). 
\end{proof}
\begin{remark}
  The conclusion of the lemma is almost Condition~($\star$) from~\ref{condition-star} below, but without saying anything about the augmentation
  row. Note that we cannot use the same argument in the case $i=-1$.
  The argument would involve the
  augmentation map $e_0 : B_{0,\bullet} \to Y$ which cannot be assumed to be a
  left fibration. In fact the augmentation map is not even a left fibration for
  $B=\Tot(Y)$ except when $Y$ is $1$-Segal. (Nevertheless, for $B=\Tot(Y)$, we do
  have that $f$ is cartesian (it is even invertible).)
\end{remark}

The previous results concern the bottom splittings $\ssplit$ as simplicial maps 
between rows, as well as the interpretation of the abacus maps as simplicial 
maps between rows. We now turn to the interpretation of $\ssplit$ and $f$ as
going between columns. This is trickier, since $\ssplit$ is {\em not}
a simplicial map between columns (cf.~\ref{lem:s-generation}): it fails to be compatible with the top face maps 
$e_\top$. But after taking upper decalage, we do get a simplicial map
$\ssplit : \dectop(B_{\bullet},j) \to \dectop(B_{\bullet, j+1})$ for each $j\geq -1$.

\begin{lemma}\label{lem:s-1 rfib}
  In an upper-stable $\DD$-presheaf $B$, the $\ssplit$ constitute cartesian 
  simplicial maps
  between upper-decs of columns
  $$
  \dectop(B_{\bullet,j}) \stackrel{\ssplit}\longrightarrow 
  \dectop(B_{\bullet,j+1}) .
  $$
  This is for $j\geq -1$, so as to include the case of the augmentation column
  $\dectop{X} \stackrel{\ssplit}\longrightarrow \dectop{(B_{\bullet,0})}$.
\end{lemma}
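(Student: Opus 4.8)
The plan is to check cartesianness of each $\ssplit\colon\dectop(B_{\bullet,j})\to\dectop(B_{\bullet,j+1})$ one face map at a time, working inside the rows of $B$ rather than inside columns. Recall that $\dectop(B_{\bullet,j})$ has $B_{n+1,j}$ in simplicial degree $n$, and that its face maps are precisely the non-top vertical face maps $e_0,\dots,e_n\colon B_{n+1,j}\to B_{n,j}$ (the top face $e_\top=e_{n+1}$ of $B_{n+1,j}$ being the ``forgotten'' one). So it suffices to prove that for every $n$, every $0\le k\le n$ and every $j\ge -1$ the square
\[
\begin{tikzcd}
B_{n+1,j} \arrow[r,"\ssplit"] \arrow[d,"e_k"'] & B_{n+1,j+1} \arrow[d,"e_k"] \\
B_{n,j} \arrow[r,"\ssplit"'] & B_{n,j+1}
\end{tikzcd}
\]
is a pullback; cartesianness on the remaining generators (the degeneracies) then follows by the standard pasting argument, since each degeneracy is a section of a face map. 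These squares do commute, because $\ssplit$, being a horizontal operator, is compatible with all non-top vertical cosimplicial operators by Lemma~\ref{lem:s-generation}.

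First I would set up the row picture. By the abacus structure on $B$ (Lemma~\ref{lem:s-generation}), each bulk row $B_{m,\bullet}$ (for $m\ge 0$), together with its augmentation $B_{m,-1}$ and the horizontal bottom splittings $\ssplit$, is a bottom-split augmented simplicial space, i.e.\ a $\simplexcategory^\bb$-presheaf. Moreover the non-top vertical face maps $e_k\colon B_{n+1,\bullet}\to B_{n,\bullet}$ with $0\le k\le n$ commute with the $\ssplit$ (Lemma~\ref{lem:s-generation} again: $\ssplit$ is compatible with all vertical cosimplicial operators except the top vertical cofaces), hence are morphisms of bottom-split augmented simplicial spaces. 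Note that only rows $B_{m,\bullet}$ with $m\ge 0$ occur here, so the augmentation row is never involved.

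The crux is that each such $e_k$ ($0\le k\le n$) is a right fibration: $e_0=e_\bot$ is a right fibration by the very definition of upper stability, while for $1\le k\le n$ the coface $\delta^k\colon[n]\to[n+1]$ is endpoint-preserving, so $e_k\colon B_{n+1,\bullet}\to B_{n,\bullet}$ is a vertical active map and therefore a right fibration by Lemma~\ref{lem:stable}(1). Applying Lemma~\ref{lem:rfib-on-s-1} to each of these right fibrations of bottom-split simplicial spaces, we conclude that $e_k$ is cartesian on every splitting map, \emph{including the augmentation splitting}; this is exactly the statement that the square above is a pullback, uniformly for $j\ge 0$ (ordinary horizontal splittings) and for $j=-1$ (the augmentation splitting $\ssplit\colon B_{m,-1}\to B_{m,0}$). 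In particular this covers the augmentation column $\dectop{X}\xrightarrow{\ssplit}\dectop{(B_{\bullet,0})}$.

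The only step demanding care is recognising the inner vertical cofaces $e_1,\dots,e_n$ as active maps, which is what makes upper stability alone suffice to turn all the relevant $e_k$ into right fibrations, and hence what makes a single argument handle every $j\ge -1$ at once. The more obvious route --- that $\ssplit$ is a (degreewise) section of the horizontal bottom face map $d_\bot$, together with the fact that $\dectop$ of a right fibration is cartesian --- works only for $j\ge 0$, where $d_\bot\colon B_{\bullet,j+1}\to B_{\bullet,j}$ is a right fibration; at $j=-1$ the relevant $d_\bot\colon B_{\bullet,0}\to B_{\bullet,-1}$ is the horizontal augmentation map, about which nothing is assumed, so that route breaks down precisely where we need it.
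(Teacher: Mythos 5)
Your proof is correct and takes essentially the same route as the paper: both arguments rest on upper stability together with Lemma~\ref{lem:rfib-on-s-1}, and on passing to $\dectop$ of the columns precisely to discard the top vertical face maps, with which $\ssplit$ fails to commute, so that the remaining pullback squares (for all $j\geq -1$, including the augmentation splitting) assemble into cartesian simplicial maps, cartesianness on degeneracies being automatic since they are sections. The only divergence is in how the inner faces are handled: the paper applies Lemma~\ref{lem:rfib-on-s-1} to $e_\bot$ alone and then propagates the pullback squares to $e_k$ ($1\le k\le n$) by a standard pasting argument, whereas you note that these $e_k$ are vertical active maps, hence right fibrations by Lemma~\ref{lem:stable}, and apply Lemma~\ref{lem:rfib-on-s-1} to each of them directly --- a cosmetic relocation of the same prism-lemma reasoning.
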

\begin{proof}
The simplicial maps $e_\bot$ are right fibrations between (bulk) rows by virtue of
upper stability. Lemma~\ref{lem:rfib-on-s-1} tells us that each $e_\bot$ is also
cartesian on all bottom splittings (including $X_{i}
\stackrel{\ssplit}\longrightarrow B_{i,0}$). Concretely this means that all the
squares
\[
\begin{tikzcd}
B_{i,j} \ar[r, "\ssplit"] & B_{i,j+1}  \\
B_{i+1,j} \ar[r, "\ssplit"'] \ar[u, "e_k"] & B_{i+1,j+1} \ar[u, "e_k"']
\end{tikzcd}
\]
are pullbacks for all $i \geq 0$, $j \ge -1$, and for $k=0$ (that's $e_\bot$). 
A standard pullback argument shows that
we then get pullback squares also for $e_k$ for all $0\leq k \leq i$, but for
$k=i+1$ the square does not even commute. But after taking upper decalage,
the $\ssplit$ do form simplicial maps $\ssplit : 
\dectop(B_{\bullet,j}) \stackrel{\ssplit}\longrightarrow 
  \dectop(B_{\bullet,j+1})$, whose component on face maps are the pullback 
  squares above. Therefore these simplicial maps are cartesian.
\end{proof}

\begin{prop}\label{prop:fcol rfib}
  Let $B$ be a $\DD_{i\geq 0}$-presheaf. If $B$ is upper stable and if the bulk columns 
  are $1$-Segal, then the abacus maps regarded as simplicial maps between columns
  $$
  f: \dectop(B_{\bullet,j}) \to B_{\bullet, j+1}
  $$
  are right fibrations.
  This is for $j\geq -1$, so as to include the case of 
  $\dectop(X) \stackrel{f}\longrightarrow B_{\bullet,0}$, but it 
  does {\em not}
  involve the abacus maps to the augmentation row.
\end{prop}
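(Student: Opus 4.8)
The plan is to use the factorization of the column-wise abacus map recorded in Remark~\ref{rmk:comb}: by Equation~\eqref{eq:f=s.epsilon} the map $f : \dectop(B_{\bullet,j}) \to B_{\bullet,j+1}$ is the composite
$$
\dectop(B_{\bullet,j}) \stackrel{\ssplit}{\longrightarrow} \dectop(B_{\bullet,j+1}) \stackrel{\varepsilon}{\longrightarrow} B_{\bullet,j+1}
$$
of simplicial maps between columns, where $\varepsilon$ is the counit of the upper decalage comonad. Since right fibrations are closed under composition (pullback squares paste), it is enough to show that each of the two factors is a right fibration. This factorization only makes sense at the level of $\dectop$'s of columns, since $\ssplit$ is not itself compatible with the vertical top face maps (cf.~\ref{lem:s-generation}); part of the point of the proposition is that passing to $\dectop$ repairs this.

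For the first factor, upper stability of $B$ is exactly the hypothesis of Lemma~\ref{lem:s-1 rfib}, which tells us that $\ssplit : \dectop(B_{\bullet,j}) \to \dectop(B_{\bullet,j+1})$ is cartesian for every $j \geq -1$ (the lemma explicitly includes the augmentation-column input $\dectop(X) \to \dectop(B_{\bullet,0})$); in particular it is a right fibration. For the second factor, observe that $j \geq -1$ forces $j+1 \geq 0$, so $B_{\bullet,j+1}$ is a \emph{bulk} column and is therefore $1$-Segal by assumption. For a Segal space the upper-decalage counit $\varepsilon : \dectop(-) \to -$ is a right fibration (\ref{cheat:counits}), so $\varepsilon : \dectop(B_{\bullet,j+1}) \to B_{\bullet,j+1}$ is a right fibration. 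Composing the two, $f$ is a right fibration, as claimed.

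There is not much of an obstacle here: the real work — extracting cartesianness of the $\ssplit$ between $\dectop$'s of columns out of upper stability — has already been done in Lemma~\ref{lem:s-1 rfib} (via Lemma~\ref{lem:rfib-on-s-1}), and the behaviour of the decalage counit on Segal spaces is standard. The only points requiring a little care are bookkeeping ones: that Lemma~\ref{lem:s-1 rfib} applies to a $\DD_{i\geq 0}$-presheaf (its proof uses only upper stability of the bulk rows and the presence of the augmentation column, both available here), and that one never needs the augmentation column $X = B_{\bullet,-1}$ to be $1$-Segal — only the bulk columns enter — which is also why the statement cannot be pushed to the abacus maps landing in the augmentation row, where the analogue of $\varepsilon$ would be the counit on a merely $2$-Segal space.
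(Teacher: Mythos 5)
Your proof is correct and follows exactly the paper's argument: factor $f$ via Equation~\eqref{eq:f=s.epsilon} as $\varepsilon\circ\ssplit$ on upper decalages of columns, use Lemma~\ref{lem:s-1 rfib} (upper stability) for the first factor and~\ref{cheat:counits} (bulk column $B_{\bullet,j+1}$ being $1$-Segal) for the second. The extra bookkeeping remarks you add are accurate but not needed beyond what the paper records.
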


\begin{proof}
  By \eqref{eq:f=s.epsilon} in Remark~\ref{rmk:comb}, $f$ factors as
$$
\dectop(B_{\bullet,j}) \stackrel{\ssplit}\longrightarrow  
\dectop(B_{\bullet,j+1}) \stackrel{\varepsilon}\to B_{\bullet, j+1}  .
$$
The first map is cartesian by Lemma~\ref{lem:s-1 rfib}
(since $B$ is upper stable),
  and the second map is a right fibration since we assume
  that the bulk columns $B_{\bullet, j+1}$ are
  $1$-Segal (\ref{cheat:counits}).
\end{proof}

\section{Simplicial maps vs. abacus bicomodule configurations}

\label{sec:carlier-stuff}

Carlier~\cite{Carlier:1801.07504} constructed from any simplicial map a
$\DD$-presheaf. The main results in this section improve upon his
construction by establishing three equivalences: the first equivalence
identifies those $\DD$-presheaves that arise from simplicial maps (the
condition ($\star$) of Theorem~\ref{thm:star-equiv}), while the second
equivalence (Theorem~\ref{thm:ABC*=Up2Seg}) characterizes those
simplicial maps that correspond to the abacus bicomodule configurations
that satisfy condition ($\star$). The third equivalence concerns
identity simplicial maps, and is interpreted as an equivalence between
$2$-Segal spaces and bicomodule configurations with invertible abacus
maps (Theorem~\ref{thm:ABCInv=2Seg}). This is an analogue of the BOORS
equivalence, but with Carlier augmentations instead of BOORS pointing.

\subsection{Right Kan extension along \texorpdfstring{$\qq$}{q}}

\label{subsec:q}

Denote by $[1]$ the category $0\stackrel a \to 1$.
Consider the functor $\qq: \simplexcategory \times [1] \to \DD$
that includes $\simplexcategory \times \{0\}$ as the 
augmentation row of $\DD$ and includes $\simplexcategory\times 
\{1\}$ as the augmentation column of $\DD$, and sends all maps 
$[i] \times a$ to the long composite abacus maps.

The right Kan extension has an explicit formula: given a simplicial map 
$F \in \PrSh(\simplexcategory\times [1])$, the right Kan extension
$ B:= \qq\lowerstar (F) $ is given by
\begin{align*}
  B_{i,j} \simeq \Map_{\PrSh(\simplexcategory\times [1])}(\qq\upperstar  
  (\yo[i,j]), F)  ,
\end{align*}
where $\yo: \DD \to \PrSh(\DD)$ is the Yoneda embedding.
In fact, a direct computation gives $ \qq\upperstar (\yo[i,j]) \simeq (\Delta^i \stackrel{(d^\top)^{\circ (1+j)}}{\longrightarrow}
\Delta^{i+1+j}) $ where by convention we take $ \Delta^{-1} \simeq \emptyset $.
(Note that since we dealing with simplicial spaces, $\Delta^{-1}$ is 
not a representable functor, but it is convenient notation.)
With this the right Kan extension becomes
\begin{equation} \label{eq:qstarFormula} 
  B_{i,j} \simeq 
  \Map_{\PrSh(\simplexcategory\times [1])}(\Delta^i {\to} \Delta^{i+1+j}, F).
\end{equation}
This formula is essentially Carlier's cocartesian nerve~\cite{Carlier:1801.07504}.

An object of $ B_{i,j} $ is thus a commutative square
\begin{center}
  \begin{tikzcd}
    {\Delta^i} & X \\
    {\Delta^{i+1+j}} & Y  .
    \arrow[from=1-1, to=1-2]
    \arrow["{(d^\top)^{\circ(1+j)}}"', from=1-1, to=2-1]
    \arrow["F", from=1-2, to=2-2]
    \arrow[from=2-1, to=2-2]
  \end{tikzcd}
\end{center}
For $i,j\geq 0$ we equivalently have $B_{ij} = X_i \times_{Y_i} Y_{i+1+j} $ by
Yoneda. More precisely, and exhibiting more of the structure, we have the following
proposition.

\begin{prop} \label{prop:qstarDescrip}
  For any simplicial map $F:X \to Y$ considered as an object in 
  $\PrSh(\simplexcategory\times[1])$, put
  $B := \qq\lowerstar(F)$
  Then we have:
  
  \begin{enumerate}
  
  \item \label{item:q*Descrip1}
  The $i$th row of $B$ is $B_{i,\bullet}=X_i \times_{Y_i} \Dec_\bot^{i+1}(Y)$, 
  which is more 
  precisely given by
  \begin{center}  
    \begin{tikzcd}
      {\bar Y_i} & {\Dec_\bot^{i+1}Y} \\
      {\bar X_i} & {\ulpullback B_{i,\bullet}}  .
      \arrow["{{\augmap}}"', from=1-2, to=1-1]
      \arrow["{{{\bar F_i}}}", from=2-1, to=1-1]
      \arrow["{{{f^{\circ(i+1)}}}}"', from=2-2, to=1-2]
      \arrow["{{{\augmap}}}", from=2-2, to=2-1]
    \end{tikzcd}
  \end{center}
  
  (Recall that overline means constant simplicial space and that the zeroth
  component of $ \augmap $ is $ d_\top $.)
  
  \item \label{item:q*Desctip2}
  The $j$th column of $B$ is $B_{\bullet,j} = X \times_Y \dectop^{1+j}(Y)$, 
  which is more precisely
  \begin{center}
    \begin{tikzcd}
      Y & {\dectop^{1+j}Y} \\
      X & {\ulpullback B_{\bullet,j}}  .
      \arrow["{{{d_\top^{\circ(1+j)}}}}"', from=1-2, to=1-1]
      \arrow["F", from=2-1, to=1-1]
      \arrow["{{{f^{\circ(\bullet+1)}}}}"', from=2-2, to=1-2]
      \arrow["{{{d_\top^{\circ(1+j)}}}}", from=2-2, to=2-1]
    \end{tikzcd}
  \end{center}
  This includes the case $j=-1$ which shows that $X$ itself appears as the 
  augmentation column of $\qq\lowerstar(F)$; the case $j=0$ exhibits 
  the horizontal augmentation map 
$X \stackrel{d_0}\longleftarrow B_{\bullet,0}$
as the projection seen in the pullback diagram.

  \item 
  The abacus map (Equation~\eqref{eq:AbacusRowsMap}) $ f : B_{i+1,\bullet} \to
  \decbot (B_{i,\bullet}) $ is
    \begin{center}
      \begin{tikzcd}[column sep=large]
        {\bar X_{i+1} \underset{\bar Y_{i+1}}{\times}     
		\Dec_\bot^{(i+1)+1} (Y)} & {\bar X_i \underset{\bar Y_i}{\times} 
		\Dec_\bot^{(i+1)+1} (Y)  .}
        \arrow["{\bar e_\top \underset{\bar d_\top}{\times} \id}", from=1-1, to=1-2]
      \end{tikzcd}
    \end{center}

  \end{enumerate}
\end{prop}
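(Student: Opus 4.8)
The plan is to read everything off the pointwise formula \eqref{eq:qstarFormula}, $B_{i,j}\simeq\Map_{\PrSh(\simplexcategory\times[1])}(\Delta^i\to\Delta^{i+1+j},F)$, together with the identification $\qq\upperstar(\yo[i,j])\simeq(\Delta^i\stackrel{(d^\top)^{\circ(1+j)}}{\longrightarrow}\Delta^{i+1+j})$ established just above. Since $\Map(-,F)$ turns colimits into limits, Yoneda gives $B_{i,j}\simeq X_i\times_{Y_i}Y_{i+1+j}$ for $i,j\ge 0$ (and $B_{i,-1}\simeq X_i$, $B_{-1,j}\simeq Y_j$ in the degenerate cases); the substance of the proposition is to keep track of the structure maps, i.e.\ of $\qq\upperstar(\yo(g))$ for the generators $g$ of $\DD$ (the horizontal and vertical bisimplicial operators, plus the abacus maps $f$, cf.\ Lemma~\ref{lem:DDAbacus}).

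The key combinatorial step is a clean description of $\qq\upperstar$ on representable morphisms: for $g\colon[i',j']\to[i,j]$ in $\DD$, the map $\qq\upperstar(\yo(g))\colon(\Delta^{i'}\to\Delta^{i'+1+j'})\to(\Delta^i\to\Delta^{i+1+j})$ is the commuting square whose lower edge $\Delta^{i'+1+j'}\to\Delta^{i+1+j}$ is the underlying monotone map of $g$ and whose upper edge $\Delta^{i'}\to\Delta^i$ is the restriction of $g$ to the black beads. This follows by evaluating the representable: $\qq\upperstar(\yo[i,j])$ has value $\operatorname{Hom}_\DD([\bullet,-1],[i,j])=\Delta^i$ over $\simplexcategory\times\{1\}$ (a map out of an all-black column necessarily lands in the black part $[i]\subseteq[i+1+j]$), value $\operatorname{Hom}_\DD([-1,\bullet],[i,j])=\Delta^{i+1+j}$ over $\simplexcategory\times\{0\}$, and transition map the black-bead inclusion $[i]\hookrightarrow[i+1+j]$; then $\qq\upperstar(\yo(g))$ is postcomposition with $g$.

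Granting this, the three items become bead bookkeeping. For item~1 fix $i$: the horizontal operators $d^k,s^k$ between $[i,-]$'s are the identity on black beads and act on the codomain ordinal by operators of index $\ge i+1$, so after $\Map(-,F)$ the factor $X_i$ and the base $Y_i$ stay constant in $j$ while $j\mapsto Y_{i+1+j}$ acquires exactly the structure of $\Dec_\bot^{i+1}(Y)$, with comparison to $\bar Y_i$ the augmentation $\augmap$ (a composite of top faces); hence $B_{i,\bullet}\simeq\bar X_i\times_{\bar Y_i}\Dec_\bot^{i+1}(Y)$. For item~2 fix $j$: the vertical cofaces $e^k$ ($0\le k\le i+1$) act on the black beads and on the codomain ordinal by $d^k$, i.e.\ by operators of index $\le i+1$, so $i\mapsto Y_{i+1+j}$ carries the structure of $\Dec_\top^{1+j}(Y)$, with comparison to $Y$ the counit $d_\top^{\circ(1+j)}$, giving $B_{\bullet,j}\simeq X\times_Y\Dec_\top^{1+j}(Y)$; the case $j=-1$ ($\Dec_\top^{0}(Y)=Y$, comparison the identity) recovers $B_{\bullet,-1}\simeq X$, and $j=0$ exhibits the horizontal augmentation $X\leftarrow B_{\bullet,0}$ as a pullback projection. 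For item~3, the abacus map $f\colon[i,j+1]\to[i+1,j]$ has underlying ordinal map the identity and black-bead restriction $d^{i+1}$; since $\decbot$ preserves this pullback and fixes constant simplicial spaces, $\decbot(B_{i,\bullet})\simeq\bar X_i\times_{\bar Y_i}\Dec_\bot^{i+2}(Y)$, and mapping into $F$ turns $f$ into $d_{i+1}=e_\top$ on the first factor, the identity on the decalage factor, compatible over $d_{i+1}=d_\top$ on $\bar Y_i$ --- that is, $\bar e_\top\times_{\bar d_\top}\id$.

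Finally, the right legs $f^{\circ(i+1)}$ of the squares in items~1 and~2 are identified with the pullback projections by iterating item~3: each $\Dec_\bot^k(f)$ occurring in the composite $B_{i,\bullet}\to\decbot(B_{i-1,\bullet})\to\cdots\to\Dec_\bot^{i+1}(Y)$ is the identity on the decalage factor, so the whole composite is the projection onto $\Dec_\bot^{i+1}(Y)$; dually for columns, the degenerate case $j=-1$ showing $f^{\circ(i+1)}\colon X_i\to Y_i$ is $F_i$. The only place I expect to need genuine care is the combinatorial step: pinning down the index ranges so that the horizontal operators really hit the ``top'' $\Dec_\bot^{i+1}$-part of $Y$ while the vertical operators stay below it, and checking that the two indices --- which mix under a single abacus map --- recombine correctly in the iterated composite. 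Everything else is Yoneda.
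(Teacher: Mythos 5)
Your proposal is correct and follows the paper's route for items (1) and (2): everything is read off the explicit Kan-extension formula \eqref{eq:qstarFormula} together with the identification $\qq\upperstar(\yo[i,j])\simeq(\Delta^i\to\Delta^{i+1+j})$; the paper simply asserts this ("follows directly from the formula"), whereas you make the underlying bead bookkeeping explicit, including the correct observation that $\qq\upperstar(\yo(g))$ has lower edge the underlying monotone map and upper edge the black-bead restriction, which is exactly what makes the horizontal operators land in $\Dec_\bot^{i+1}$ and the vertical ones in $\Dec_\top^{1+j}$. The only real divergence is item (3): the paper deduces the formula for the abacus map from the already-established row descriptions, via a commutative cube and the uniqueness of the map induced into the pullback $\decbot(B_{i,\bullet})\simeq\bar X_i\times_{\bar Y_i}\Dec_\bot^{i+2}(Y)$, while you compute $\qq\upperstar(\yo(f))$ directly on the generator ($d^\top$ on black beads, identity downstairs) and transport through $\Map(-,F)$. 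Both are valid; your version is more self-contained and also yields, as a byproduct, the identification of the legs $f^{\circ(i+1)}$ with the pullback projections (and $f^{\circ(i+1)}\simeq F_i$ in the degenerate case), which the paper leaves implicit, whereas the paper's cube argument stays at the level of universal properties and avoids re-entering the combinatorics of $\DD$.
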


\begin{proof}
  The first two claims follow directly from Equation~\eqref{eq:qstarFormula}. 
  As for the third claim, consider the commutative diagram
  \begin{center}
    \begin{tikzcd}[column sep={6em,between origins}, row sep ={4em,between origins}]
      & {\bar X_{i+1}} && {B_{i+1,\bullet}} \\
      {\bar X_i} && {\decbot(B_{i,\bullet})} \dlpullback \\
      & {\bar Y_{i+1}} && {\Dec_\bot^{(i+1)+1}(Y)} \\
      {\bar Y_i} && {\decbot^{1+i+1}(Y)}
      \arrow["{{\bar e_\top}}"', from=1-2, to=2-1]
      \arrow["{{\bar F_{i+1}}}"'{pos=0.8}, from=1-2, to=3-2]
      \arrow[from=1-4, to=1-2]
      \arrow["f"', from=1-4, to=2-3]
      \arrow["{f^{\circ((i+1)+1)}}", from=1-4, to=3-4]
      \arrow["{{\bar F_i}}"', from=2-1, to=4-1]
      \arrow[from=2-3, to=2-1]
      \arrow["{f^{\circ(i+1)}}"{pos=0.2}, from=2-3, to=4-3]
      \arrow["{{\bar d_\top}}"', from=3-2, to=4-1]
      \arrow[from=3-4, to=3-2]
      \arrow[Rightarrow, no head, from=3-4, to=4-3]
      \arrow[from=4-3, to=4-1]
    \end{tikzcd}
  \end{center}
  Here the front and back faces are the pullback squares of the first claim, where in the case of the front face we first applied
  $ \decbot $ to the whole square. The result follows from the uniqueness
  of the induced map into the front pullback.
\end{proof}

\begin{remark}
  The simplicial map $B_{\bullet,k} \to \dectop^{1+k} Y$ is given in
  degree $p$ by $f^{\circ (p+1)}$. The fact that these components form a simplicial map
  is an expression of the trapezium equations of~\ref{bla:Trapezium}.
  Specifically, evaluated at the $m$th coface map $[p] \to [p+1]$ we get the
  face maps $e_m : B_{p+1,k} \to B_{p,k}$ and $d_m : Y_{p+k+2} \to Y_{p+k+1}$;
  the simplicial-map equation $d_m \circ f^{\circ (p+2)} = f^{\circ (p+1)}
  \circ e_m$ now appears as an instance of the trapezium equations (by taking
  $m=m$, $n=p-m+1$, $i=m-1$, $j=k+p-m+1$, in the notation of~\ref{bla:Trapezium}).
\end{remark}

Since $ \qq $ is fully faithful, so is $ \qq\lowerstar $. We now characterize the
image by analyzing the unit for the $\qq\upperstar \isleftadjointto \qq\lowerstar$ 
adjunction. For $ B \in \PrSh(\DD) $, let $ \eta : B \to \qq\lowerstar \qq\upperstar(B) $ be the unit. Evaluated on $[i,-1]$
(for $i\ge 0$) the unit reduces to the identity $ \id_{X_i} : X_i \to X_i $.
Similarly, when evaluated on $[-1,j]$ it reduces to the identity $\id_{Y_j} :
Y_j \to Y_j $. For the remaining values $ i,j \ge 0 $, the unit is given by the pullback induced map
\begin{center}
  \begin{tikzcd}
    \eta_{i,j} : B_{i,j} & X_{i} \times_{Y_i} Y_{i+1+j},
    \arrow[from=1-1, to=1-2]
  \end{tikzcd}
\end{center}
which, in more detail, is constructed as in the diagram
\begin{center}
  \begin{tikzcd}
    {B_{i,j}} \\
    & {\drpullback P_{i,j}} & {X_i} \\
    & {Y_{i+1+j}} & {Y_i}  ,
    \arrow["{\eta_{i,j}}", dashed, from=1-1, to=2-2]
    \arrow["{d_\top^{\circ(1+j)}}", bend left=20, from=1-1, to=2-3]
    \arrow["{f^{\circ(i+1)}}"', bend right=25, from=1-1, to=3-2]
    \arrow[from=2-2, to=2-3]
    \arrow[from=2-2, to=3-2]
    \arrow["{F_i}", from=2-3, to=3-3]
    \arrow["{d_\top^{\circ(1+j)}}"', from=3-2, to=3-3]
  \end{tikzcd}
\end{center}
where $P_{i,j}$ is the pullback characterizing 
$(\qq\lowerstar \qq\upperstar(B))_{ij}$.

\begin{definition}
  \label{condition-star}
  We say a $\DD$-presheaf $B$ satisfies {\em Condition ($\star$)} if
  the abacus maps are cartesian, regarded as a simplicial map between rows:
  \begin{center}
    ($\star$)  \hfill
    $f: B_{i+1,\bullet} \to \Decbot{(B_{i,\bullet})}$
    \quad is cartesian for all $i\geq -1$. \hfill \phantom{($\star$)}
  \end{center}
\end{definition}

\begin{lemma} \label{lem:UnitInv<=>star}
  The unit $B \to \qq\lowerstar \qq\upperstar B$ is invertible if and
  only if $B$ satisfies Condition ($\star$) of~\ref{condition-star}.
\end{lemma}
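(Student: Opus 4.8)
The statement to prove is Lemma~\ref{lem:UnitInv<=>star}: the unit $\eta : B \to \qq\lowerstar \qq\upperstar B$ is invertible if and only if $B$ satisfies Condition ($\star$). My plan is to check invertibility component by component on the objects $[i,j]$ of $\DD$, using the explicit description of $\eta$ just given.

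First I would dispose of the augmentation components. As noted right before the lemma, evaluated on $[i,-1]$ (for $i \ge 0$) the unit is the identity $\id_{X_i}$, and evaluated on $[-1,j]$ it is the identity $\id_{Y_j}$; so these are automatically invertible and impose no condition. Thus $\eta$ is invertible if and only if $\eta_{i,j} : B_{i,j} \to P_{i,j}$ is an equivalence for all $i,j \ge 0$, where $P_{i,j} = X_i \times_{Y_i} Y_{i+1+j}$ is the pullback displayed above.

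Next I would reinterpret the condition ``$\eta_{i,j}$ invertible for all $i,j \ge 0$'' in terms of the simplicial maps between rows. Fix $i \ge 0$ and let $i$ range; I claim that for each fixed $i \ge -1$, the map $f : B_{i+1,\bullet} \to \Decbot{(B_{i,\bullet})}$ being cartesian is equivalent to a family of these component equivalences. The key is to iterate: using the equation $f = e_\top \circ \ssplit$ and the fact that $B_{i,\bullet} \to \dectop^{\,?}$... more precisely, I would use the factorization from Proposition~\ref{prop:qstarDescrip}(1), which says the $i$th row of $\qq\lowerstar\qq\upperstar(B)$ is $\bar X_i \times_{\bar Y_i} \Dec_\bot^{i+1}(\kk\upperstar\ldots)$, and the iterated abacus maps $f^{\circ(i+1)}$ give the comparison $B_{i,\bullet} \to \Dec_\bot^{i+1}(B_{-1,\bullet})$ covering $\bar F_i$. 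The cleanest route: observe that the unit $\eta_{i,\bullet}$ on the $i$th row factors as a composite of the $(i{+}1)$ squares
\[
B_{i,\bullet} \xrightarrow{f} \decbot(B_{i-1,\bullet}) \xrightarrow{\decbot(f)} \Dec_\bot^2(B_{i-2,\bullet}) \xrightarrow{} \cdots \xrightarrow{} \Dec_\bot^{i+1}(B_{-1,\bullet}) = \Dec_\bot^{i+1}(Y),
\]
each factor being $\Dec_\bot^k$ applied to an abacus map $f : B_{m+1,\bullet} \to \decbot(B_{m,\bullet})$, and similarly the target $P_{i,\bullet}$ sits over $\bar X_i$ compatibly. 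Since $\decbot$ preserves pullbacks, if every abacus map $f : B_{m+1,\bullet} \to \decbot(B_{m,\bullet})$ is cartesian (i.e. ($\star$) holds), then each factor in the composite is cartesian, hence the composite is cartesian, and pulling back along $\bar F_i$ (which is exactly what $\eta_{i,\bullet}$ does, by comparing with the pullback squares of Proposition~\ref{prop:qstarDescrip}(1)) yields that $\eta_{i,\bullet}$ is an equivalence. Conversely, if $\eta$ is invertible, then $B$ is in the essential image of $\qq\lowerstar$, and by Proposition~\ref{prop:qstarDescrip}(3) the abacus maps of such a presheaf are of the form $\bar e_\top \times \id$, which are manifestly cartesian (they are base changes of $\bar e_\top : \bar X_{i+1} \to \bar X_i$... no — more simply, $\id$ is cartesian and the pullback square expressing $B_{i+1,\bullet}$ vs $B_{i,\bullet}$ shows directly that $f$ is a base change, hence cartesian).

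The main obstacle I anticipate is organizing the iterated-decalage factorization of $\eta_{i,\bullet}$ cleanly and verifying that it really does agree with the composite of $\Dec_\bot^k(f)$'s covering $\bar F_i$ — this is a compatibility between the pullback defining $P_{i,j}$, the trapezium equations governing how the $f^{\circ(i+1)}$ assemble, and the explicit row formula of Proposition~\ref{prop:qstarDescrip}(1). Once that bookkeeping is in place, both implications reduce to the two standard facts that a composite of cartesian squares is cartesian and that $\decbot$ (and its iterates) preserve pullbacks, together with the fact (from Proposition~\ref{prop:qstarDescrip}(3)) that presheaves in the image of $\qq\lowerstar$ have cartesian abacus maps on the nose. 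I would also remark, for the reverse direction, that one may avoid invoking Proposition~\ref{prop:qstarDescrip}(3) and instead argue directly: if $\eta$ is an equivalence then $B_{i+1,\bullet} \simeq \bar X_{i+1} \times_{\bar Y_{i+1}} \Dec_\bot^{i+2}(Y)$ and $\decbot(B_{i,\bullet}) \simeq \decbot(\bar X_i) \times_{\decbot(\bar Y_i)} \Dec_\bot^{i+2}(Y)$, and the abacus map is the evident projection comparing these two, which is a pullback of the square $\bar X_{i+1} \to \bar X_i \leftarrow \bar Y_i \to \bar Y_{i+1}$ base-changed appropriately — hence cartesian.
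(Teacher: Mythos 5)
Your overall strategy is sound, and your forward direction is essentially the paper's: the paper also proves that the square \eqref{diag:UnitPllbk} is a pullback by decomposing it into small abacus-map-against-$d_\top$ squares supplied by Condition ($\star$) and pasting via the prism lemma; your packaging of this as ``$f^{\circ(i+1)}$ is a composite of $\Dec_\bot^k(f)$'s, each cartesian, hence cartesian'' is the same computation in different clothing. (Like the paper, you leave implicit the boundary squares against the augmentation maps $d_\top\colon B_{m,0}\to B_{m,-1}$, which are what carry you from cartesianness over the bulk of the row down to $X_i\to Y_i$; this is needed and is covered only because the rows in ($\star$) are taken to include the $j=-1$ entries.) Where you genuinely diverge is the converse. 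The paper argues directly from the assumed pullbacks \eqref{diag:UnitPllbk}, chopping up two explicit diagrams (one for top face maps, one for the remaining face maps) and applying the prism lemma repeatedly. You instead transport the problem through the equivalence $B\simeq \qq\lowerstar\qq\upperstar(B)$ and deduce cartesianness of the abacus maps from the explicit formula of Proposition~\ref{prop:qstarDescrip}(3). This route is legitimate and arguably cleaner --- it makes visible that ($\star$) is exactly a property of the image of $\qq\lowerstar$ --- but it requires one verification the paper's converse does not, and that is where your write-up is shaky.

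Concretely, Proposition~\ref{prop:qstarDescrip}(3) only gives the formula $f=\bar e_\top\times_{\bar d_\top}\id$; it does not assert cartesianness, and neither of your two sketched justifications holds as stated. In particular $f$ is \emph{not} a base change of $\bar e_\top\colon \bar X_{i+1}\to\bar X_i$: since $\decbot(B_{i,\bullet})\simeq \bar X_i\times_{\bar Y_i}\Dec_\bot^{i+2}Y$, one has $\bar X_{i+1}\times_{\bar X_i}\decbot(B_{i,\bullet})\simeq \bar X_{i+1}\times_{\bar Y_i}\Dec_\bot^{i+2}Y$, which differs from $B_{i+1,\bullet}\simeq \bar X_{i+1}\times_{\bar Y_{i+1}}\Dec_\bot^{i+2}Y$, so the square of $f$ over $\bar e_\top$ is not a pullback. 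The correct statement (close to your closing remark, but it needs to be said precisely) is that $f$ is the base change of the comparison map $\bar X_{i+1}\to \bar X_i\times_{\bar Y_i}\bar Y_{i+1}$ along the map $\decbot(B_{i,\bullet})\simeq \bar X_i\times_{\bar Y_i}\Dec_\bot^{i+2}Y\to \bar X_i\times_{\bar Y_i}\bar Y_{i+1}$ induced by $\augmap$; a map of constant simplicial spaces is trivially cartesian, and cartesian maps are stable under base change along arbitrary simplicial maps, so $f$ is cartesian. With that computation spelled out (and the analogous remark for the abacus maps touching the augmentations, i.e.\ the cases $i=-1$ and $j=-1$), your converse goes through and the lemma is proved.
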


\begin{proof}
  Assume that $f : B_{i+1,\bullet} \to \decbot (B_{i,\bullet})$ is cartesian. The unit is invertible if the square
  \begin{equation} \label{diag:UnitPllbk}
    \begin{tikzcd}[column sep={7em,between origins}, row sep ={5em,between origins}]
      {Y_{i}} & {Y_{i+1+j}} \\
      {X_{i}} & {B_{i,j}}
      \arrow["{d_\top^{\circ(1+j)}}"', from=1-2, to=1-1]
      \arrow["{F_i}", from=2-1, to=1-1]
      \arrow["{f^{\circ(i+1)}}"', from=2-2, to=1-2]
      \arrow["{d_\top^{\circ(1+j)}}", from=2-2, to=2-1]
    \end{tikzcd}
  \end{equation}
  is a pullback for all $ i,j \ge 0 $. But this square can be broken down into
  squares with abacus maps against top face maps, which are pullbacks 
  because
  $f : B_{i+1,\bullet} \to \decbot (B_{i,\bullet})$ 
  is cartesian. Thus the
  square \eqref{diag:UnitPllbk} is a pullback by the pullback prism lemma.

  For the other direction, assume that the unit is invertible, i.e.~the square
  \eqref{diag:UnitPllbk} is a pullback for all $i,j \ge 0$. To show that $f :
  B_{i+1,\bullet} \to \decbot (B_{i,\bullet})$ is cartesian, it suffices to show
  that the abacus maps form pullbacks against all face maps, so we must show that
  the square
  \begin{center}
    \begin{tikzcd}[column sep={7em,between origins}, row sep ={5em,between origins}]
      {B_{i-1,j}} & {B_{i-1,j+1}} \\
      {B_{i,j-1}} & {B_{i,j}}
      \arrow["{d_{k+1}}"', from=1-2, to=1-1]
      \arrow["f", from=2-1, to=1-1]
      \arrow["f"', from=2-2, to=1-2]
      \arrow["{d_{k}}", from=2-2, to=2-1]
    \end{tikzcd}
  \end{center}
  is a pullback for all $i,j \ge 0$ and all $0\le k \le j$. For the case of top
  face maps, consider the diagram
  \begin{center}
    \adjustbox{scale=0.9}{
    \begin{tikzcd}
      & {Y_{i-1}} & {Y_{i}} && {Y_{i+j}} & {Y_{-i+1+j}} \\
      {X_{i-1}} &&& {B_{i-1,j}} & {B_{i-1,j+1}} \\
      {X_{i}} && {B_{i,j-1}} & {B_{i,j}}
      \arrow[dotted, from=1-3, to=1-2]
      \arrow[dotted, from=1-5, to=1-3]
      \arrow[dotted, from=1-6, to=1-5]
      \arrow[dotted, from=2-1, to=1-2]
      \arrow[dotted, from=2-4, to=1-5]
      \arrow[dotted, from=2-4, to=2-1]
      \arrow[dotted, from=2-5, to=1-6]
      \arrow["{d_\top}"', from=2-5, to=2-4]
      \arrow[dotted, from=3-1, to=1-3]
      \arrow["f", from=3-3, to=2-4]
      \arrow[dotted, from=3-3, to=3-1]
      \arrow["f"', from=3-4, to=2-5]
      \arrow["{d_\top}", from=3-4, to=3-3]
    \end{tikzcd}
    }
  \end{center}
  Here all the horizontal dotted arrows are composites of $ d_\top $ maps, and
  the diagonal dotted maps are composites of abacus maps. All the big
  parallelograms that end at the augmentation row and column are pullbacks by
  assumption. Chopping up all the parallelograms appropriately and using the
  pullback prism lemma repeatedly, we see that the bottom right solid
  parallelogram is a pullback. For the squares with the remaining (non-top) face
  maps consider the commutative diagram
  \begin{center}
    \begin{tikzcd}
      & {B_{i-1,0}} && {B_{i-1,j}} & {B_{i-1,j+1}} \\
      {X_{i}} && {B_{i,j-1}} & {B_{i,j}}
      \arrow["{d_\top^{\circ j}}", dotted, from=1-4, to=1-2]
      \arrow["{d_\top^{\circ(j+1)}}"', bend right=15, dotted, from=1-5, to=1-2]
      \arrow["{d_{k+1}}", from=1-5, to=1-4]
      \arrow["f", dotted, from=2-1, to=1-2]
      \arrow["f", from=2-3, to=1-4]
      \arrow["{d_\top^{\circ j}}"', dotted, from=2-3, to=2-1]
      \arrow["f"', from=2-4, to=1-5]
      \arrow["{d_\top^{\circ(j+1)}}", bend left=15, dotted, from=2-4, to=2-1]
      \arrow["{d_k}"', from=2-4, to=2-3]
    \end{tikzcd}
  \end{center}
  The outer and inner left parallelogram are pullbacks by arguments similar to
  that above. By the pullback prism lemma the solid parallelogram is a
  pullback. Thus, in total, $f : B_{i+1,\bullet} \to \decbot (B_{i,\bullet})$ is
  cartesian for all $i \ge -1$.
\end{proof}

\begin{cor} \label{cor:Imq*star}
  The image of $ \qq\lowerstar $ is characterized by Condition ($\star$) (of~\ref{condition-star}), so as to consist of those presheaves $ B \in
  \PrSh(\DD) $ for which
  $$
    f: B_{i+1,\bullet} \to \decbot{(B_{i,\bullet})} 
  $$
  is cartesian (for all $ i \ge -1 $).
\end{cor}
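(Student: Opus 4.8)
The plan is to derive the corollary from Lemma~\ref{lem:UnitInv<=>star} by a purely formal argument about fully faithful right adjoints. Recall that $\qq\upperstar \isleftadjointto \qq\lowerstar$, and that $\qq\lowerstar$ is fully faithful because $\qq$ is; equivalently, the counit $\varepsilon : \qq\upperstar\qq\lowerstar \to \id$ is an equivalence. I read ``image'' in the statement as essential image, i.e.\ the full subcategory of $\PrSh(\DD)$ spanned by those $B$ admitting an equivalence $B \simeq \qq\lowerstar(F)$ for some $F \in \PrSh(\simplexcategory\times[1])$.

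First I would record the standard fact that, under these hypotheses, $B$ lies in the essential image of $\qq\lowerstar$ if and only if the unit $\eta_B : B \to \qq\lowerstar\qq\upperstar(B)$ is invertible. One direction is trivial: if $\eta_B$ is invertible then $B \simeq \qq\lowerstar(\qq\upperstar B)$. For the converse, suppose $B \simeq \qq\lowerstar(F)$. The triangle identity gives $\qq\lowerstar(\varepsilon_F) \circ \eta_{\qq\lowerstar(F)} \simeq \id_{\qq\lowerstar(F)}$, and since $\varepsilon_F$ is an equivalence, so is $\qq\lowerstar(\varepsilon_F)$; hence $\eta_{\qq\lowerstar(F)}$ is an equivalence, and transporting along the equivalence $B \simeq \qq\lowerstar(F)$ shows $\eta_B$ is an equivalence.

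It then remains only to quote Lemma~\ref{lem:UnitInv<=>star}, which identifies invertibility of $\eta_B$ with Condition ($\star$) --- that the abacus maps $f : B_{i+1,\bullet} \to \decbot(B_{i,\bullet})$ are cartesian for all $i \geq -1$. Combining the two statements, the essential image of $\qq\lowerstar$ is precisely the full subcategory of $\PrSh(\DD)$ cut out by ($\star$), which is the assertion.

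There is essentially no obstacle here: all the substantive content sits in Lemma~\ref{lem:UnitInv<=>star} and in the explicit descriptions of Proposition~\ref{prop:qstarDescrip}. The only points deserving a sentence of care are that the ``fully faithful right adjoint'' fact is purely formal --- a consequence of the triangle identities, hence valid $\infty$-categorically --- and that ``image'' must be read up to equivalence; neither of these requires any computation.
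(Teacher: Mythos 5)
Your proposal is correct and follows the paper's own route: the paper likewise notes that $\qq\lowerstar$ is fully faithful (since $\qq$ is) and characterizes the essential image by the invertibility of the unit, which Lemma~\ref{lem:UnitInv<=>star} identifies with Condition ($\star$). The formal fully-faithful-right-adjoint argument you spell out via the triangle identities is exactly the (implicit) content of the paper's deduction of the corollary from that lemma.
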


Let $\Pr\upperfivestar(\DD)$ denote the full subcategory $\Pr(\DD)$ 
consisting of the $\DD$-presheaves that satisfy Condition ($\star$).
We summarize the results in the following theorem.
\begin{theorem}\label{thm:star-equiv}
  The $\qq\upperstar \isleftadjointto \qq\lowerstar$ adjunction 
  restricts to an equivalence
  $$
  \PrSh(\simplexcategory \times [1]) \simeq \Pr\upperfivestar(\DD) .
  $$
\end{theorem}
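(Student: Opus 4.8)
The plan is to recognize Theorem~\ref{thm:star-equiv} as the formal statement that an adjunction restricts to an adjoint equivalence between its two fixpoint loci --- the full subcategory where the unit is invertible and the full subcategory where the counit is invertible --- and then to identify these two loci using results already in hand. Concretely, writing $L := \qq\upperstar$ and $R := \qq\lowerstar$, I would consider on the $\PrSh(\DD)$ side the full subcategory $\mathcal{D}_0 \subseteq \PrSh(\DD)$ of presheaves $B$ for which the unit $\eta_B : B \to \qq\lowerstar\qq\upperstar(B)$ is an equivalence, and on the $\PrSh(\simplexcategory\times[1])$ side the full subcategory $\mathcal{C}_0 \subseteq \PrSh(\simplexcategory\times[1])$ of presheaves $F$ for which the counit $\qq\upperstar\qq\lowerstar(F) \to F$ is an equivalence.

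The counit locus is easy to pin down. Since $\qq$ is fully faithful, the right Kan extension $\qq\lowerstar$ along it is fully faithful (as already observed just before Lemma~\ref{lem:UnitInv<=>star}), which is precisely to say that the counit $\qq\upperstar\qq\lowerstar \to \id$ is a natural equivalence; hence $\mathcal{C}_0$ is all of $\PrSh(\simplexcategory\times[1])$. The unit locus is exactly the content of Lemma~\ref{lem:UnitInv<=>star}: the unit $\eta_B$ is invertible if and only if $B$ satisfies Condition ($\star$), so $\mathcal{D}_0 = \Pr\upperfivestar(\DD)$. It then remains to invoke the fixpoint-locus fact, valid for adjunctions of $\infty$-categories: the triangle identities force, for $B \in \mathcal{D}_0$, that $\qq\upperstar(\eta_B)$ is inverse to the (already invertible) counit component at $\qq\upperstar(B)$, so $\qq\upperstar$ carries $\mathcal{D}_0$ into $\mathcal{C}_0$ and is there an inverse to $\qq\lowerstar$; dually $\qq\lowerstar$ lands in $\mathcal{D}_0$. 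Thus the adjunction $\qq\upperstar \isleftadjointto \qq\lowerstar$ restricts to an equivalence $\PrSh(\simplexcategory\times[1]) \simeq \Pr\upperfivestar(\DD)$, with $\qq\lowerstar$ and $\qq\upperstar$ the two mutually inverse functors. (Equivalently, one may argue directly from Corollary~\ref{cor:Imq*star}: $\qq\lowerstar$ is fully faithful with essential image $\Pr\upperfivestar(\DD)$, hence corestricts to an equivalence onto that full subcategory, with inverse $\qq\upperstar$.)

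Since all the substantive work has been absorbed into Lemma~\ref{lem:UnitInv<=>star} (and Proposition~\ref{prop:qstarDescrip} feeding into it), I do not expect any genuine obstacle at this stage; the only point worth stating with a little care is that the ``fixpoint locus'' formalism is being applied in the $\infty$-categorical setting, which is standard. A short, self-contained alternative avoiding any black box is simply to chase the two triangle identities as indicated above, using that one of the two composites is already an equivalence by the full faithfulness of $\qq\lowerstar$.
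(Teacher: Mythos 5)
Your proposal is correct and follows essentially the same route as the paper: the paper offers no separate argument for Theorem~\ref{thm:star-equiv}, stating it as a summary of the full faithfulness of $\qq\lowerstar$ (from full faithfulness of $\qq$) together with Lemma~\ref{lem:UnitInv<=>star}/Corollary~\ref{cor:Imq*star} identifying the unit-invertibility locus, i.e.\ the essential image, with $\Pr\upperfivestar(\DD)$. Your packaging of this via the fixpoint loci of unit and counit is just the standard formal argument the paper leaves implicit.
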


\subsection{Dictionary between conditions on \texorpdfstring{$F$}{F} and on \texorpdfstring{$B:=\qq\lowerstar(F)$}{B=q*(F)}}

In this subsection, we consider a simplicial map $F: X \to Y$ and the
corresponding $\DD$-presheaf $B := \qq\lowerstar(F)$, and explain how various
conditions match up.

\begin{lemma} \label{lem:YSegal-Bstable}
  Given a simplicial map $F : X \to Y$, if $Y$ is lower $2$-Segal, then
  $B:=\qq\lowerstar(F)$ is lower stable. If $Y$ is upper $2$-Segal, then
  $\qq\lowerstar(F)$ is upper stable.
\end{lemma}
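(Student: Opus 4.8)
The plan is to establish the two assertions in parallel: the \emph{lower} case is read off the rows of $B$, the \emph{upper} case off its columns. I will spell out the lower case and indicate the dual. Throughout, recall that by the reformulation of stability in~\S\ref{subsec:stab} it is enough, for lower stability, to prove that every $e_\top$ between bulk rows is a left fibration, and dually for upper stability that every $d_\bot$ between bulk columns is a right fibration. By Proposition~\ref{prop:qstarDescrip}(1) we have $B_{i,\bullet}=\bar X_i\times_{\bar Y_i}\Dec_\bot^{i+1}(Y)$. Since $Y$ is lower $2$-Segal, $\decbot(Y)$ is Segal~(\ref{upper-lower-2Segal}), hence $1$-Segal and a fortiori lower $2$-Segal; iterating, $\Dec_\bot^{i+1}(Y)$ is a Segal space for every $i\ge 0$. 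As constant simplicial spaces are Segal and Segal spaces are closed under pullback, each bulk row $B_{i,\bullet}$ is a Segal space.

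Now I factor $e_\top$. The relation $e_\top=d_\bot\circ f$ of Remark~\ref{rmk:comb}, read with $f\colon B_{i+1,\bullet}\to\decbot(B_{i,\bullet})$ the abacus map between rows and $d_\bot$ (in each degree the horizontal bottom face) recognized as the counit $\varepsilon$, says that $e_\top$ factors as
\[
B_{i+1,\bullet}\ \xrightarrow{\ f\ }\ \decbot(B_{i,\bullet})\ \xrightarrow{\ \varepsilon\ }\ B_{i,\bullet}.
\]
The counit $\varepsilon$ is a left fibration because $B_{i,\bullet}$ is Segal~(\ref{cheat:counits}). For $f$, recall from Proposition~\ref{prop:qstarDescrip}(3) that $f=\bar e_\top\times_{\bar d_\top}\id$, a map of pullbacks which is the identity on the $\Dec_\bot^{i+2}(Y)$-leg. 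Since the structure map $\Dec_\bot^{i+2}(Y)\to\bar Y_i$ of the target pullback $\decbot(B_{i,\bullet})=\bar X_i\times_{\bar Y_i}\Dec_\bot^{i+2}(Y)$ factors through $\bar d_\top\colon\bar Y_{i+1}\to\bar Y_i$, we may rewrite $\decbot(B_{i,\bullet})=(\bar X_i\times_{\bar Y_i}\bar Y_{i+1})\times_{\bar Y_{i+1}}\Dec_\bot^{i+2}(Y)$, and then $f$ is the base change along $\Dec_\bot^{i+2}(Y)\to\bar Y_{i+1}$ of the gap map $\bar X_{i+1}\to\bar X_i\times_{\bar Y_i}\bar Y_{i+1}$. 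This gap map is a morphism of constant simplicial spaces, hence cartesian over every morphism of $\simplexcategory$, and base change preserves this; in particular $f$ is a left fibration. Therefore $e_\top=\varepsilon\circ f$ is a composite of left fibrations, and $B$ is lower stable.

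For upper stability one argues dually. If $Y$ is upper $2$-Segal then, iterating, $\Dec_\top^{1+j}(Y)$ is Segal for every $j\ge 0$. By Proposition~\ref{prop:qstarDescrip}(2), $B_{\bullet,j}=X\times_Y\Dec_\top^{1+j}(Y)$, and unwinding the horizontal operators identifies $d_\bot\colon B_{\bullet,j+1}\to B_{\bullet,j}$ with $\id_X\times_{\id_Y}\varepsilon$, where $\varepsilon\colon\Dec_\top^{2+j}(Y)\to\Dec_\top^{1+j}(Y)$ is the counit. Since $B_{\bullet,j+1}=B_{\bullet,j}\times_{\Dec_\top^{1+j}(Y)}\Dec_\top^{2+j}(Y)$, the map $d_\bot$ is the base change of $\varepsilon$; and $\varepsilon$ is a right fibration because $\Dec_\top^{1+j}(Y)$ is Segal (the upper-decalage version of~(\ref{cheat:counits})). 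Hence every $d_\bot$ between bulk columns is a right fibration, so $B$ is upper stable.

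I expect the only delicate part to be bookkeeping rather than conceptual: matching $e_\top$ (resp.~$d_\bot$) with the claimed composite, using the relations of Remark~\ref{rmk:comb} together with the pullback formulas of Proposition~\ref{prop:qstarDescrip}, and then verifying that once the decalage direction is stripped away the remaining factor is a base change of a map of constant simplicial spaces. There is no genuine obstacle beyond getting the face-map indices and the nested pullback squares to line up.
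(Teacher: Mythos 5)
Your proof is correct, but it is organized differently from the paper's. The paper argues square by square: it writes each entry as $B_{i,j}=X_i\times_{Y_i}Y_{i+1+j}$, exhibits every $e_\top$-against-$d_\top$ (resp.\ $e_\bot$-against-$d_\bot$) stability square as the outer square of a prism whose central square is a lower (resp.\ upper) $2$-Segal square of $Y$ and whose remaining faces are the defining pullbacks, and concludes by the pullback prism lemma. You instead work with whole rows and columns: you factor $e_\top=\varepsilon\circ f$ via Remark~\ref{rmk:comb} and Proposition~\ref{prop:qstarDescrip}(3), prove $f$ cartesian by exhibiting it as a base change of a map of constant simplicial spaces, and identify $d_\bot$ between bulk columns as a base change of the $\dectop$-counit, so that everything reduces to the counit-fibration characterization of Segal spaces (\ref{cheat:counits}) applied to $\Dec_\bot^{i+1}(Y)$, $\Dec_\top^{1+j}(Y)$ and to the bulk rows. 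Your route is more structural and in passing re-proves two facts the paper records separately: that the abacus maps of $\qq\lowerstar(F)$ are cartesian (Condition~($\star$), Corollary~\ref{cor:Imq*star}, which you could simply have cited since it precedes this lemma) and that the bulk rows are Segal (your direct ``pullback of Segal is Segal'' argument avoids any circularity with Lemma~\ref{lem:bulk-rows-1-Segal}, which in the paper comes later). What the paper's version buys is economy of hypotheses and bookkeeping: it needs only the degreewise pullback formula, the single relevant $2$-Segal square of $Y$, and the prism lemma, with no appeal to Segal-ness of rows or to iterated decalages (your iteration step tacitly uses the standard fact that Segal implies $2$-Segal, which is fine but not in the cheat sheet). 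Both arguments are sound; yours makes the conceptual source of stability (Segal-ness of decalages of $Y$) more visible, at the price of heavier identifications of the maps involved.
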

\begin{proof}
  For the first claim, it
  is a question of checking each individual square in the condition of 
  being lower stable. Consider the square
  \[
  \begin{tikzcd}
  B_{0,0}  & B_{0,1} \ar[l, "d_1"']  \\
  B_{1,0} \ar[u, "e_1"] & B_{1,1} \ar[u, "e_1"'] \ar[l, "d_1"]
  \end{tikzcd}
  \]
  which in terms of the data of $F$ is given by the outer square of the diagram
  \begin{center}
    \begin{tikzcd}
      {X_0 \times_{Y_0} Y_1} &&& {X_0 \times_{Y_0} Y_2} \\
      & {Y_1} & {Y_2} \\
      & {Y_2} & {Y_3} \\
      {X_1\times_{Y_1} Y_2} &&& {X_1 \times_{Y_1} Y_3 }  .
      \arrow[from=1-1, to=2-2]
      \arrow["{\id\times_{\id} d_2}"', from=1-4, to=1-1]
      \arrow[from=1-4, to=2-3]
      \arrow["{d_2}"', from=2-3, to=2-2]
      \arrow["{d_1}", from=3-2, to=2-2]
      \arrow["{d_1}"', from=3-3, to=2-3]
      \arrow["{d_3}", from=3-3, to=3-2]
      \arrow["{e_1\times_{d_1} d_1}", from=4-1, to=1-1]
      \arrow[from=4-1, to=3-2]
      \arrow["{e_1 \times_{d_1} d_1}"', from=4-4, to=1-4]
      \arrow[from=4-4, to=3-3]
      \arrow["{\id\times_{\id} d_3}", from=4-4, to=4-1]
    \end{tikzcd}
  \end{center}
  Here all diagonals are projections and the top and bottom inner trapezoids are
  pullbacks. Now the central square is a pullback since $Y$ is lower 2-Segal.
  Thus, by the pullback prism lemma, the outer square is again a pullback. The
  same argument works for all other $e_\top$-against-$d_\top$ squares.

  The proof of the second statement is similar.
\end{proof}

\begin{lemma}\label{lem:bulk-rows-1-Segal} \label{lem:YlowSegalS-1Cart}
  If $Y$ is lower $2$-Segal, then in $B:=\qq\lowerstar(F)$ all bulk rows are
  $1$-Segal, and as a result $B$ has cartesian $\ssplit : \dectop(B_{\bullet},j) \to \dectop(B_{\bullet, j+1})$ in all bulk rows.
\end{lemma}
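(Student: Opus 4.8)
The plan is to read off the rows of $B$ from the explicit pullback formula of Proposition~\ref{prop:qstarDescrip} and to recognise them as limits of Segal spaces. For $i\ge 0$, item~(\ref{item:q*Descrip1}) of that proposition identifies the $i$th row with the levelwise pullback of simplicial spaces
\[
B_{i,\bullet}\;\simeq\;\bar X_i \times_{\bar Y_i} \Dec_\bot^{i+1}(Y),
\]
formed along the map $\bar F_i : \bar X_i \to \bar Y_i$ of constant simplicial spaces and the augmentation $\augmap : \Dec_\bot^{i+1}(Y) \to \bar Y_i$. So it is enough to check that all three corners of this cospan are $1$-Segal and that the class of Segal spaces is stable under pullback.

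First I would observe that $\Dec_\bot^{k}(Y)$ is $1$-Segal for every $k\ge 1$. Indeed, since $Y$ is lower $2$-Segal, $\decbot(Y)$ is $1$-Segal by~\ref{upper-lower-2Segal}; and as a $1$-Segal space is in particular lower $2$-Segal, iterating~\ref{upper-lower-2Segal} gives that $\Dec_\bot^{k}(Y)$ is $1$-Segal for all $k\ge 1$. Taking $k=i+1$ (legitimate because $i\ge 0$) covers every bulk row. The constant simplicial spaces $\bar X_i$ and $\bar Y_i$ are $1$-Segal, their Segal maps being identities. Finally, the Segal condition is a levelwise limit condition, hence preserved under limits --- concretely, the Segal map of a pullback of simplicial spaces is the pullback of the respective Segal maps, so it is an equivalence as soon as all three of them are --- and therefore $B_{i,\bullet}$ is $1$-Segal. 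As $i\ge 0$ was arbitrary, all bulk rows of $B$ are $1$-Segal. The remaining assertion, that the bottom splittings $\ssplit$ are cartesian in each bulk row, is then immediate from Lemma~\ref{Segal=>s-1cart}: on a $1$-Segal space the $\decbot$-coalgebra structure map $\ssplit$ is automatically cartesian.

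I do not expect a genuine obstacle here; the argument is routine bookkeeping of pullbacks. The two points deserving a moment's care are that the lower $2$-Segal property is inherited under decalage --- this is~\ref{upper-lower-2Segal} combined with the standard implication $1$-Segal $\Rightarrow$ $2$-Segal --- and that a levelwise pullback of Segal spaces is again Segal, which holds because limits commute with the pullbacks defining the Segal maps.
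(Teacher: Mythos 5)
Your proof is correct, but it justifies the Segalness of the bulk rows by a different mechanism than the paper. Both arguments start from Proposition~\ref{prop:qstarDescrip}\eqref{item:q*Descrip1}, but the paper does not view the row as a limit: it uses that the projection $f^{\circ(i+1)}\colon B_{i,\bullet}\to \Dec_\bot^{i+1}(Y)$ is cartesian --- this is Condition~($\star$) of~\ref{condition-star}, available via Corollary~\ref{cor:Imq*star} because $B$ lies in the image of $\qq\lowerstar$ --- and then invokes Fact~\ref{fibover1Segal} (a left or right fibration over a $1$-Segal base is $1$-Segal). You instead use that constant simplicial spaces are $1$-Segal and that $1$-Segal spaces are closed under levelwise pullback; this is valid (the Segal map of the pullback is the pullback of the three Segal maps, since limits commute with limits), and it has the mild advantage of not relying on the unit analysis behind Condition~($\star$), at the price of using a closure fact not listed in the paper's cheat sheet, whereas the paper's route reuses machinery (cartesianness of the abacus composites) that is needed elsewhere anyway. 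Both arguments require $\Dec_\bot^{i+1}(Y)$ to be $1$-Segal; you spell this out by iterating~\ref{upper-lower-2Segal} together with the standard implication ``$1$-Segal $\Rightarrow$ lower $2$-Segal'', a step the paper leaves implicit. Your final step, deducing cartesianness of $\ssplit$ on the $1$-Segal rows from Lemma~\ref{Segal=>s-1cart} (i.e.\ rigidity, Lemma~\ref{lem:1-Segal-is-rigid}), coincides with the paper's.
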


\begin{proof}
  By Lemma~\ref{prop:qstarDescrip}\eqref{item:q*Descrip1} we have a map $
  f^{\circ(i+1)} : B_{i,\bullet} \to \Dec_\bot^{i+1} Y $, which by
  Corollary~\ref{cor:Imq*star} is cartesian. Since $ Y $ is 2-Segal, $ \decbot Y
  $ is $1$-Segal, and since $B_{i,\bullet}$ is cartesian over $\decbot Y$, it is
  $1$-Segal too (by~\ref{fibover1Segal}). The second statement follows from the fact that every 1-Segal
  space with bottom splittings is automatically rigid by
  Lemma~\ref{lem:1-Segal-is-rigid}.
\end{proof}

  One way to interpret the upper-$2$-Segal condition (see~\ref{upper-lower-2Segal}) on a simplicial space $X$ is that one cannot quite
  compose arrows in $X$, so as to get a $2$-simplex from a pair of composable 
  arrows $\cdot \to \cdot \to \cdot$, but if everything has a further arrow down to a
  common point $z \in X_0$, then it {\em is} possible to compose. In other
  words, one can compose in the slice over $z$. Indeed, one formulation of the
  upper-$2$-Segal condition is that every slice is a $1$-Segal space. 

  Now consider $F:X\to Y$. Rather than asking that $X_{/x}$ be $1$-Segal for
  every point $x\in X$, we may ask instead that the slice $X_{/y}$ --- or more
  precisely comma category $F \comma y$ --- be $1$-Segal for every $y\in Y$.
  That is, we demand that the $X$-arrows can be composed in $Y$ if just they are
  over some point $y\in Y$. This discussion motivates the following relative
  notion.

\begin{definition}
  A simplicial map $F : X\to Y$ is called {\em relatively upper $2$-Segal} when 
  the pullback
  $X\times_Y \Dectop Y$ 
  (of $\varepsilon$ along $F$)
  is $1$-Segal.
\end{definition}

\begin{example}
  If $Y$ is upper $2$-Segal and $F: X\to Y$ is a left fibration or a 
  right fibration, then $F$ is relatively upper $2$-Segal. Indeed, $\dectop(Y)$
  is then $1$-Segal, and $X\times_Y \Dectop Y$ will then be a left or right 
  fibration over $\dectop(Y)$ and hence $1$-Segal (by~\ref{fibover1Segal}).
  (The same arguments work in the situation where $F$ is ikeo, or just 
  semi-ikeo; see~\cite{GKT:Crapo} for these notions.)
\end{example}

\begin{lemma}\label{higher-upper}
  If $Y$ is upper $2$-Segal and if $F:X
  \to Y$ is relatively upper $2$-Segal (meaning that $X\times_Y 
  \Dectop Y$ is $1$-Segal), then also $X\times_Y \dectop^{1+j} Y$
  is $1$-Segal (for all $j \geq 0$).
\end{lemma}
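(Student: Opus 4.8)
The plan is to exhibit $X\times_Y\dectop^{1+j}(Y)$ as a right fibration over $X\times_Y\dectop(Y)$ and then to conclude from the standard fact~\ref{fibover1Segal} that the total space of a right fibration over a $1$-Segal space is again $1$-Segal --- the base here being $1$-Segal precisely by the hypothesis that $F$ is relatively upper $2$-Segal. (The case $j=0$ is exactly that hypothesis, and the argument below specializes to it.)

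First I would check that the iterated counit $\dectop^{1+j}(Y)\to\dectop(Y)$ is a right fibration. Writing $\dectop^{1+j}(Y)=\dectop^{\,j}\bigl(\dectop(Y)\bigr)$, we have that $\dectop(Y)$ is Segal because $Y$ is upper $2$-Segal (\ref{upper-lower-2Segal}); since a Segal space is in particular upper $2$-Segal, \ref{upper-lower-2Segal} applies again, and inductively $\dectop^{\,k}(Y)$ is Segal for every $k\geq 1$ (equivalently, the upper decalage of a Segal space is Segal). By the dual of~\ref{cheat:counits}, each counit $\varepsilon:\dectop^{\,k+1}(Y)\to\dectop^{\,k}(Y)$ with $k\geq 1$ is then a right fibration, and hence so is their composite $\dectop^{1+j}(Y)\to\dectop(Y)$; in degree $n$ this composite is the iterated top face map $d_\top^{\circ j}:Y_{n+1+j}\to Y_{n+1}$.

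Next I would paste pullbacks. The triangle $\dectop^{1+j}(Y)\to\dectop(Y)\xrightarrow{\varepsilon}Y$ commutes, its composite being the map $d_\top^{\circ(1+j)}$ along which $B_{\bullet,j}=X\times_Y\dectop^{1+j}(Y)$ is formed in Proposition~\ref{prop:qstarDescrip}. So in the diagram
\[
\begin{tikzcd}
X\times_Y\dectop^{1+j}(Y) \ar[r] \ar[d] & \dectop^{1+j}(Y) \ar[d] \\
X\times_Y\dectop(Y) \ar[r] \ar[d] & \dectop(Y) \ar[d, "\varepsilon"] \\
X \ar[r, "F"'] & Y
\end{tikzcd}
\]
the bottom square is the defining pullback for $X\times_Y\dectop(Y)$ and the outer rectangle is the defining pullback for $X\times_Y\dectop^{1+j}(Y)$, so by the pasting law the top square is a pullback as well. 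Therefore the left-hand vertical map $X\times_Y\dectop^{1+j}(Y)\to X\times_Y\dectop(Y)$ is a base change of the right fibration $\dectop^{1+j}(Y)\to\dectop(Y)$, and is itself a right fibration.

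Finally, $X\times_Y\dectop(Y)$ is $1$-Segal by relative upper $2$-Segality of $F$, so~\ref{fibover1Segal} applies to the right fibration just produced and yields that $X\times_Y\dectop^{1+j}(Y)$ is $1$-Segal. I expect no conceptual obstacle here: the only thing needing care is the decalage bookkeeping --- keeping the re-indexing straight and checking that the iterated counit is degreewise $d_\top^{\circ j}$, so that the triangle over $Y$ commutes on the nose --- and this is the formal dual of manipulations already carried out in this section.
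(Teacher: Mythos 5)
Your argument is correct and is essentially the paper's own proof: both rest on the fact that the counit of $\dectop$ on a Segal space is a right fibration (\ref{cheat:counits}), on pullback along $X\times_Y-$, and on \ref{fibover1Segal} applied over the $1$-Segal base $X\times_Y\Dectop Y$. The only cosmetic difference is that you compose the counits upstairs into a single right fibration $\dectop^{1+j}Y\to\dectop Y$ and pull back once, whereas the paper pulls back each counit and inducts on $j$ on the fibered side.
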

\begin{proof}
  The map $\varepsilon_{\Dectop Y} : \Dectop Y \leftarrow \dectop\Dectop Y$ is a right fibration since $\Dectop Y$ is $1$-Segal
  (\ref{cheat:counits}). 
  Now take $X\times_Y \_$ on that fibration to obtain that
  $X\times_Y \dectop^{1+1} Y$ is a fibration over $X\times_Y \dectop^{1} Y$, and
  so on. As a result, all the higher versions are $1$-Segal too.
\end{proof}

\begin{cor} \label{cor:RelUp=>1SegBulkCol}
  If $Y$ is upper 2-Segal and if $F : X \to Y$ is relatively upper 2-Segal, then
  $B=\qq\lowerstar(F)$ has 1-Segal bulk columns.
\end{cor}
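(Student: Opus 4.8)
The plan is to reduce the corollary to two results already established: the explicit description of the columns of $\qq\lowerstar(F)$ in Proposition~\ref{prop:qstarDescrip}, and the propagation of $1$-Segalness in Lemma~\ref{higher-upper}. Recall that the \emph{bulk} columns of $B$ are those indexed by $j\geq 0$; the column $j=-1$ is the augmentation column (which is $X$ itself, by Proposition~\ref{prop:qstarDescrip}), and about it nothing is asserted.

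First I would invoke Proposition~\ref{prop:qstarDescrip}\eqref{item:q*Desctip2}, which identifies the $j$th column of $B := \qq\lowerstar(F)$ with the pullback
$$
B_{\bullet,j} \ \simeq\ X \times_Y \dectop^{1+j}(Y)
$$
formed by pulling $d_\top^{\circ(1+j)} : \dectop^{1+j}(Y) \to Y$ back along $F : X\to Y$. Thus for each $j\geq 0$ the bulk column $B_{\bullet,j}$ is precisely $X\times_Y \dectop^{1+j} Y$.

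Next I would apply Lemma~\ref{higher-upper}: the hypotheses of the corollary are exactly those of that lemma ($Y$ upper $2$-Segal, and $F$ relatively upper $2$-Segal, i.e.\ $X\times_Y \Dectop Y$ is $1$-Segal), and its conclusion is that $X\times_Y \dectop^{1+j} Y$ is $1$-Segal for every $j\geq 0$. Combining the two observations, every bulk column of $B$ is $1$-Segal, which is the claim.

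I do not expect a genuine obstacle here: all the substance has been front-loaded into Proposition~\ref{prop:qstarDescrip} (the column description) and into Lemma~\ref{higher-upper} (whose own proof is the short iterated-right-fibration argument via $\varepsilon_{\Dectop Y}$). The only point needing a moment's care is the indexing convention — ``bulk'' excludes $j=-1$ — so that the corollary makes no assertion about $X$ itself, consistent with allowing $F$ to be an arbitrary relatively upper $2$-Segal map rather than demanding that $X$ be $1$-Segal.
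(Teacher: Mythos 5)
Your proposal is correct and coincides with the paper's own proof: it likewise cites Proposition~\ref{prop:qstarDescrip}\eqref{item:q*Desctip2} to identify $B_{\bullet,j}\simeq X\times_Y \dectop^{1+j}Y$ and then applies Lemma~\ref{higher-upper} to conclude $1$-Segalness for $j\geq 0$. Your remark on the indexing (bulk excludes $j=-1$) matches the paper's intent, so there is nothing to add.
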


\begin{proof}
  By Proposition~\ref{prop:qstarDescrip}\eqref{item:q*Desctip2} we have $B_{\bullet,j}
  \simeq X\times_Y \Dec_\bot^{j+1} Y$ which is 1-Segal for $j\geq 0$ 
  by Lemma~\ref{higher-upper}.
\end{proof}

We can now state the main result of this subsection:

\begin{theorem} \label{thm:ABC*=Up2Seg} 
  Let $B = \qq\lowerstar (F)$ be the $\DD$-presheaf corresponding to a
  simplicial map $F:X\to Y$ as in~\ref{thm:star-equiv}. Then $X$ and $Y$
  are $2$-Segal and $F:X\to Y$ is relatively upper $2$-Segal if and only
  if $B$ is a bicomodule configuration. In particular, the functor $
  \qq\lowerstar $ restricts to an equivalence
  $$
  {\PrSh^{\operatorname{up\, 2-Seg}}(\simplexcategory\times [1])} 
  \stackrel{\simeq}{\longrightarrow} \Abcs\upperfivestar.
  $$
\end{theorem}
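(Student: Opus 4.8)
The plan is to prove the stated biconditional at the level of objects first, and then to obtain the displayed equivalence by restricting the equivalence of Theorem~\ref{thm:star-equiv} to the appropriate full subcategories. All the structural input I need about $B := \qq\lowerstar(F)$ is already on the table: by Corollary~\ref{cor:Imq*star}, $B$ automatically satisfies Condition~($\star$); by Proposition~\ref{prop:qstarDescrip} its augmentation column is $X$ (case $j=-1$), its augmentation row is $Y$ (case $i=-1$), its $i$th bulk row is $X_i\times_{Y_i}\Dec_\bot^{i+1}(Y)$, and its $j$th bulk column is $X\times_Y\Dec_\top^{1+j}(Y)$. Granting this, the converse implication is immediate: if $B$ is a bicomodule configuration, then by Definition~\ref{bimod-conf} its augmentation row $Y$ and column $X$ are $2$-Segal, and $B$ is double Segal, so its zeroth bulk column $B_{\bullet,0}=X\times_Y\dectop(Y)$ is $1$-Segal, which is precisely the assertion that $F$ is relatively upper $2$-Segal.

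For the forward implication I would assume $X,Y$ are $2$-Segal and $F$ is relatively upper $2$-Segal, and verify the four clauses of Definition~\ref{bimod-conf} for $B$ in turn. Stability is Lemma~\ref{lem:YSegal-Bstable}, using that $Y$ is both lower and upper $2$-Segal. For the double-Segal condition I would combine Lemma~\ref{lem:bulk-rows-1-Segal} (bulk rows $1$-Segal, since $Y$ is lower $2$-Segal) with Corollary~\ref{cor:RelUp=>1SegBulkCol} (bulk columns $1$-Segal, since $Y$ is upper $2$-Segal and $F$ is relatively upper $2$-Segal). That the augmentation row $Y$ and column $X$ are $2$-Segal is the hypothesis. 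The remaining, and only genuinely delicate, clause is that the two augmentation maps are culf. The horizontal augmentation map $B_{\bullet,0}\to X$ is, by Proposition~\ref{prop:qstarDescrip}\eqref{item:q*Desctip2} with $j=0$, the projection out of $B_{\bullet,0}=X\times_Y\dectop(Y)$, i.e.\ the pullback along $F$ of the decalage counit $\varepsilon\colon\dectop(Y)\to Y$; since $Y$ is $2$-Segal this counit is culf (a standard fact about $2$-Segal spaces), and culf maps are stable under pullback. The vertical augmentation map $e_0\colon B_{0,\bullet}\to Y$ I would factor, following Remark~\ref{rmk:comb}, as $B_{0,\bullet}\xrightarrow{f}\decbot(Y)\xrightarrow{\varepsilon}Y$; here $f$ is cartesian --- this is Condition~($\star$) in the case $i=-1$, Corollary~\ref{cor:Imq*star} --- hence culf, and $\varepsilon$ is culf because $Y$ is $2$-Segal, so the composite is culf. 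This finishes the biconditional.

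For the last sentence of the theorem I would argue as follows. By Theorem~\ref{thm:star-equiv} the adjunction $\qq\upperstar\isleftadjointto\qq\lowerstar$ restricts to an equivalence $\PrSh(\simplexcategory\times[1])\simeq\Pr\upperfivestar(\DD)$; in particular every object of $\Pr\upperfivestar(\DD)$ is canonically of the form $\qq\lowerstar(F)$, the unit being invertible exactly on objects satisfying ($\star$) by Lemma~\ref{lem:UnitInv<=>star}. On the left, $\PrSh^{\operatorname{up\, 2-Seg}}(\simplexcategory\times[1])$ is the full subcategory on the relatively upper $2$-Segal maps between $2$-Segal spaces; on the right, $\Abcs\upperfivestar$ is the full subcategory of $\Pr\upperfivestar(\DD)$ on abacus bicomodule configurations, and since $\qq\lowerstar(F)$ always satisfies ($\star$), this amounts to asking that $\qq\lowerstar(F)$ be a bicomodule configuration. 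The biconditional therefore says exactly that $\qq\lowerstar$ carries the left-hand subcategory onto the right-hand one and conversely, and since both inclusions are full, restricting the equivalence yields $\PrSh^{\operatorname{up\, 2-Seg}}(\simplexcategory\times[1])\simeq\Abcs\upperfivestar$. I expect the main obstacle to be nothing deep but rather the culf verification in the fourth clause, together with taking care that the two augmentation maps of $\qq\lowerstar(F)$ really are the pullback of $\varepsilon\colon\dectop(Y)\to Y$ and the composite $\varepsilon\circ f$ respectively; everything else is a matter of quoting Lemmas~\ref{lem:YSegal-Bstable}, \ref{lem:bulk-rows-1-Segal} and Corollary~\ref{cor:RelUp=>1SegBulkCol} and unwinding Proposition~\ref{prop:qstarDescrip} and Remark~\ref{rmk:comb}.
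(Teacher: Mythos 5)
Your proposal is correct and follows essentially the same route as the paper: the converse direction reads off relative upper $2$-Segalness from the zeroth bulk column via Proposition~\ref{prop:qstarDescrip}, and the forward direction checks the bicomodule-configuration axioms by citing Lemmas~\ref{lem:YSegal-Bstable}, \ref{lem:bulk-rows-1-Segal}, Corollary~\ref{cor:RelUp=>1SegBulkCol}, and handles the two culf augmentation maps exactly as the paper does (pullback of the counit $\varepsilon\colon\dectop Y\to Y$ for the column, and the factorization $\varepsilon\circ f$ with $f$ cartesian by Condition~($\star$) for the row). The concluding restriction of the Theorem~\ref{thm:star-equiv} equivalence to the full subcategories is likewise the paper's (implicit) argument.
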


\begin{proof}
  Suppose $X$ and $Y$ are $2$-Segal and that $F$ is relatively 
  upper $2$-Segal. Now we start checking the axioms for $B$ 
  being a comodule configuration. 

  {\em The augmentation row is 
  $2$-Segal with culf augmentation}. Indeed, the augmentation 
  row is $Y$ itself, and the augmentation map $B_{0,\bullet}\to 
  Y$ is given by $B_{0,\bullet}\stackrel{f}\to \Decbot Y 
  \stackrel{\varepsilon}\to Y$, where $f$ is cartesian by 
  Condition ($\star$) (which holds by~\ref{cor:Imq*star} since 
  we are in the image of $\qq\lowerstar$)
  and $\varepsilon$ is culf because $Y$ is 
  $2$-Segal (see~\ref{cheat:counitculf}).

  {\em The augmentation column is $2$-Segal and the augmentation map is culf}.
  Indeed, the augmentation column is $X$ itself, and the augmentation map $X
  \leftarrow B_{\bullet,0}$ is the pullback of $Y
  \stackrel{\varepsilon}\leftarrow \Dectop Y$ (see
  Lemma~\ref{prop:qstarDescrip}\eqref{item:q*Desctip2}), which is culf since $Y$
  is $2$-Segal (by~\ref{cheat:counitculf}).

  {\em All bulk rows are $1$-Segal.} This follows from Lemma~\ref{lem:bulk-rows-1-Segal}.
  
  {\em All bulk columns are $1$-Segal.} This follows from
  Corollary~\ref{cor:RelUp=>1SegBulkCol}.

  {\em Stability.} This follows from Lemma~\ref{lem:YSegal-Bstable}.

  For the converse implication, assume $B:=\qq\lowerstar(F)$ is an abacus bicomodule
  configuration. Then $X$ and $Y$ are already $2$-Segal as part of the
  assumptions, and the relative upper-$2$-Segal-ness of $F$ is precisely the
  assumption that the zeroth column is $1$-Segal (see
  Proposition~\ref{prop:qstarDescrip}\eqref{item:q*Desctip2}).
\end{proof}

\subsection{\texorpdfstring{$2$}{2}-Segal cocartesian correspondences} \label{ssec:CoCartCorr}

The relatively upper $2$-Segal condition featured in the previous subsection
may appear a bit mysterious, but it can
be explained in connection with Carlier's viewpoint of cocartesian fibrations over
$\Delta^1$, which is the purpose of this subsection. We will see (in
Proposition~\ref{prop:2up=2}) that $F:X\to Y$ is relatively upper $2$-Segal (with
both $X$ and $Y$ $2$-Segal) if and only if the associated cocartesian fibration $M
\to\Delta^1$ has $M$ $2$-Segal. We will be slightly sketchy in this subsection, so
as not to deviate too much from the main thread.

Given any simplicial map $F:X\to Y$, there is associated
a cocartesian fibration $M \to \Delta^1$, in the sense of simplicial spaces (see 
Carlier~\cite[\S3.2]{Carlier:1801.07504}). This should be a kind of
Grothendieck construction of a functor $[1] \to \sS$, but we are
not aware of the general theory of such a thing for $\sS$-valued functors,
so we just spell out the construction by hand in this very simple case where the
base is just $\Delta^1$. The $0$-simplices of $M$ are given by
$$
M_0 := X_0 + Y_0 ,
$$
with $X_0$ mapping to $0\in \Delta^1$ and $Y_0$ mapping to $1\in \Delta^1$.
The $1$-simplices of $M$ are
$$
M_1 := X_1 + [X_0,Y_1] + Y_1  ,
$$
where $[X_0,Y_1]$ is temporary notation for the space of $1$-simplices in $Y$
that start in a $0$-simplex of $X$. Formally it is the pullback
$$
\begin{tikzcd}
{}[X_0,Y_1] \drpullback \ar[r] \ar[d] & Y_1 \ar[d, "d_1"]  \\
X_0 \ar[r, "F"'] & Y_0  .
\end{tikzcd}
$$
The map to $\Delta^1$ is given as follows: the $1$-simplices in $X_1$ map to
$\id_0 \in \Delta^1$, the $1$-simplices in $Y_1$ map to $\id_1\in \Delta^1$, and
the $1$-simplices in $[X_0,Y_1]$ map to the nontrivial edge $a\in \Delta^1$.

More generally, an $n$-simplex of $M$ is either an $n$-simplex of $X$, an 
$n$-simplex of $Y$, or an $n$-simplex of $Y$ together
with a specification of how an initial-segment $i$-simplex comes from $X$.
Formally this component of $M_n$ is the pullback
$$
\begin{tikzcd}
{}[X_i,Y_n] \drpullback \ar[r] \ar[d] & Y_n \ar[d, "(d_\top)^{\circ (n-i)}"]  \\
X_i \ar[r, "F"'] & Y_i  .
\end{tikzcd}
$$
We recognize that this is precisely
\begin{equation}\label{Mn=Bij}
M_n = \sum_{i+1+j=n} B_{i,j}  ,
\end{equation}
with reference to $B:= \qq\lowerstar(F)$ as in Subsection~\ref{subsec:q}. 

The simplicial map $M\to
\Delta^1$ is given as follows. An $n$-simplex in $B_{i,j}$ (that is, $i+1+j=n$)
maps to $s_0{}^{\circ i} s_1{}^{\circ j}
(a) \in (\Delta^1)_n$, meaning that out of the $i+1+j$ edges the first $i$ edges
contract to $0$ and the last $j$ edges contract to $1$, whereas the remaining
middle edge maps to the nontrivial edge $a$. In the special case where $i=-1$,
this means that all edges map to $1\in \Delta^1$ and in the special case where
$j=-1$, it means that all edges map to $0\in \Delta^1$.

The simplicial structure of $M$ is also readily described in terms of $B$:
the $n+1$ face maps 
$$
M_{n-1} \stackrel{\bar d_k} \longleftarrow M_n , \qquad\qquad 0\leq k\leq n ,
$$
are given on the $B_{i,j}$-component as the $n+1$ face maps going out of 
$B_{i,j}$:
$$
\begin{tikzcd}[sep={60pt,between origins}]
 & B_{i-1,j}  \\
B_{i,j-1} & B_{i,j}  .
\ar[u, shift left=3.5, "0"] 
\ar[u, phantom, "\mbox{\footnotesize $\cdots$}" description] 
\ar[u, shift right=3.5, "i"']
\ar[l, shift left=3.5, "i+1"] 
\ar[l, shift right=1, phantom, "\mbox{\footnotesize $\vdots$}" description] 
\ar[l, shift right=3, "i+1+j"']
\end{tikzcd}
$$
More precisely,
\begin{equation}\label{M-faces}
\bar d_k := \begin{cases} 
e_k & \text{ for } k=0,\ldots,i \\
d_{k-i-1} & \text{ for } k=i+1,\ldots,n  .
\end{cases}
\end{equation}
In particular, in the case $i=-1$, this reduces to
$Y_{n-1} \stackrel{d_k}\leftarrow Y_n$, and in the case 
$j=-1$ this reduces to $X_{n-1} \stackrel{e_k}\leftarrow X_n$.
Similarly, the degeneracy maps $\bar s_k : M_n \to M_{n+1}$ are given by
$$
\bar s_k := \begin{cases} 
t_k & \text{ for } k=0,\ldots,i \\
s_{k-i-1} & \text{ for } k=i+1,\ldots,n  .
\end{cases}
$$

This construction is in fact nothing but the canonical equivalence between 
presheaves on a slice and slice of the presheaf category
$$
\Pr( \simplexcategory_{/[1]}) \isopil \Pr(\simplexcategory)_{/\Delta^1} .
$$
In the direction indicated, this is precisely the assignment $B \mapsto M$
explained above. In the other direction, given $M \to \Delta^1$, 
one can extract each individual $B_{i,j}$ as the pullback
\[
\begin{tikzcd}
B_{i,j} \drpullback \ar[r]\ar[d] & \Map(\Delta^{i+1+j},M) \ar[d] 
\\
1\ar[r, "\name{\id}"'] & \Map(\Delta^1,\Delta^1)  .
\end{tikzcd}
\]
The vertical map is postcomposition with $M\to\Delta^1$ and precomposition with
$(d^\bot)^{\circ i} (d^\top)^{\circ j}$, so it amounts to picking out the edge of the
$(i+1+j)$-simplex that lies over the nontrivial arrow $a\in \Delta^1$.

\begin{lemma}
  Starting with $F:X \to Y$, the associated simplicial map $M\to\Delta^1$ is a
  cocartesian fibration.
\end{lemma}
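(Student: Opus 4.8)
The statement is that the simplicial map $M \to \Delta^1$ constructed from $F:X\to Y$ is a cocartesian fibration of simplicial spaces. The plan is to verify this directly against the combinatorial description already at hand, namely that $M_n = \sum_{i+1+j=n} B_{i,j}$ with face and degeneracy maps given by~\eqref{M-faces} and the subsequent formula, and with the projection to $\Delta^1$ sending the $B_{i,j}$-summand to $s_0^{\circ i} s_1^{\circ j}(a)$. Being a cocartesian fibration of simplicial spaces means two things: first, it is an \emph{inner fibration} (a right fibration on active maps, equivalently the Segal-type inner-horn maps are trivial fibrations, or in the Rezk/complete-Segal formulation the relevant matching maps are trivial Kan fibrations); and second, cocartesian lifts of the nontrivial edge $a\in\Delta^1$ exist. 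Since the base $\Delta^1$ is a $1$-category (indeed just the arrow), the inner-fibration condition is automatic once one checks it is a ``categorical fibration'' in the relevant model, so the real content is the existence and characterization of cocartesian lifts.

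First I would identify the $F$-cocartesian edges in $M$: an edge of $M$ lying over $a$ is, by construction, an element of $[X_0, Y_1] = X_0 \times_{Y_0} Y_1$, i.e.\ a pair $(x, \alpha)$ with $x\in X_0$ and $\alpha : F(x) \to y$ an edge of $Y$. I claim that \emph{every} such edge is cocartesian. To prove this I would check the lifting/universal property: given a $2$-simplex of $\Delta^1$ whose first edge is $a$ and whose total edge is $a$ (the only nondegenerate possibility, since after $a$ everything stays at $1$), and given a partially-defined $2$-simplex of $M$ consisting of the cocartesian edge $(x,\alpha)$ together with an extension along the $d_1$-boundary, one must produce a unique filler. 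Unwinding the definitions, the space of such fillers is computed by a matching square which, because $B_{0,1} = X_0 \times_{Y_0} Y_2$ and the relevant faces are $e$- and $d$-maps as in~\eqref{M-faces}, reduces to the statement that $Y_2 \to Y_1 \times_{Y_0} Y_1$ is just postcomposition in $Y$ — no Segal condition on $Y$ is needed because we are prescribing \emph{all} of the outgoing data on the $X_0$-side and only propagating it forward into $Y$. I would make this precise by exhibiting the relevant filler space as a pullback along a path-space fibration $Y^{\Delta^n} \to Y^{\Lambda}$ that is an equivalence for purely simplicial reasons (a deformation retract of simplices), then pulling back along $X_0 \to Y_0$.

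Concretely, the cleanest route is to use the equivalence $\PrSh(\simplexcategory_{/[1]}) \simeq \PrSh(\simplexcategory)_{/\Delta^1}$ recorded just above the lemma, together with the explicit reconstruction $B_{i,j} \simeq \Map(\Delta^{i+1+j}, M) \times_{\Map(\Delta^1,\Delta^1)} \{\id\}$. Cocartesianness of an edge $e : \Delta^1 \to M$ over $a$ amounts to asking that, for each $n\geq 1$, the square
\[
\begin{tikzcd}
\Map(\Delta^{n+1}, M) \ar[r] \ar[d] & \Map(\Delta^{\{0,1\}}, M) \ar[d] \\
\Map(\Lambda^{n+1}_0 \cup \Delta^{\{0,1\}}, M) \ar[r] & \Map(\Delta^{\{0,1\}}, M)
\end{tikzcd}
\]
— i.e.\ the map from $n{+}1$-simplices to those extending a $0$-initial horn together with the chosen edge — is an equivalence after pulling back to the fibre where the edge is $e$ and the horn lies over the appropriate degenerate simplex of $\Delta^1$. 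Restricting to the relevant summands of $M_\bullet$, the $X$-part and $Y$-part contribute degenerate (hence trivially satisfied) data, and the cross-term is exactly $[X_i, Y_{n+1}] = X_i \times_{Y_i} Y_{n+1}$; the map in question becomes the base change along $X_i \to Y_i$ of the corresponding map built purely out of $Y$, which is an equivalence because it is a composite of (iterated) outer-horn inclusions $\Lambda^{m}_0 \hookrightarrow \Delta^m$ whose induced maps on mapping spaces into \emph{any} simplicial space are equivalences (these horn inclusions are anodyne; no $2$-Segal hypothesis enters).

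The main obstacle I anticipate is purely presentational: being careful about which model of ``simplicial space'' and hence which formulation of ``cocartesian fibration'' is in force (Segal-space vs.\ complete-Segal-space vs.\ just levelwise), and correspondingly which class of horn/matching maps must be shown to be equivalences. Since the excerpt flags that this subsection is deliberately sketchy, I expect the intended proof is exactly the bookkeeping above: match $M_\bullet$ to $\sum B_{i,j}$, read off that the lift of $a$ at a vertex $x\in X_0$ is given by $(x, \id)$ via the degeneracy $t_0$ into $[X_0,Y_1]$ (equivalently the abacus map $f$ applied to a degenerate simplex), and verify the universal property by the base-change-of-an-outer-horn argument, noting that the essential uniqueness of cocartesian lifts follows from the same computation. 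I would present it in two to three short paragraphs: (1) recall the description of $M$ and single out the candidate cocartesian edges; (2) verify the lifting property via the pullback-of-path-space argument; (3) remark that existence of lifts over every vertex of $X$ is immediate from the degeneracy $t_0$, concluding that $M\to\Delta^1$ is a cocartesian fibration.
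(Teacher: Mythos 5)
There is a genuine error in your characterization of the cocartesian edges, and it sits exactly where the content of the lemma lies. You claim that \emph{every} edge of $M$ lying over $a$ --- i.e.\ every pair $(x,\alpha)$ with $\alpha : Fx \to y$ in $[X_0,Y_1]$ --- is cocartesian. This is false. Take $X=\Delta^0$, $Y=\Delta^1$ and $F$ the inclusion of the vertex $0$: the edge of $M$ over $a$ corresponding to the nondegenerate arrow $u: 0\to 1$ of $Y$ is not cocartesian, since precomposition with $u$, $\Map_Y(1,z)\to\Map_Y(0,z)$, fails to be an equivalence already for $z=0$. The cocartesian edges are precisely those $(x,\alpha)$ with $\alpha: Fx \to y$ invertible, which is what the paper's (very short) proof records, with the canonical lift at $x\in X_0$ being the identity $Fx = Fx$ (the degenerate edge you also mention via $t_0$). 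The step that leads you astray is the assertion that the outer-horn inclusions $\Lambda^m_0\hookrightarrow\Delta^m$ induce equivalences on mapping spaces into an arbitrary simplicial space ``for purely simplicial reasons''. They do not: in any model of $\infty$-categories only inner horns and \emph{special} outer horns (those whose initial edge is an equivalence, resp.\ cocartesian) have this property, and the filling condition against $\Lambda^m_0$ relative to a chosen initial edge is exactly the cocartesianness being tested, not a formal consequence of anodyne-ness. Concretely, in your $n=2$ analysis the filler space is the space of $\sigma\in Y_2$ with $d_2\sigma=\alpha$ and $d_1\sigma=\beta$ (over a fixed $x$), and its contractibility for all $\beta$ is not automatic --- for Segal $Y$ it holds precisely when $\alpha$ is invertible.

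The damage is confined to that claim: the lemma only asserts the \emph{existence} of cocartesian lifts, and your own remark that the lift at $x$ is given by the degeneracy (the identity $Fx\to Fx$) is the right candidate. If you restrict your verification to the edges $(x,\alpha)$ with $\alpha$ invertible --- where precomposition with $\alpha$ \emph{is} an equivalence, so the relevant matching squares do become pullbacks without any $2$-Segal hypothesis on $Y$ --- the argument closes and agrees with the paper, whose proof consists exactly of identifying these edges (following Carlier's definition of cocartesian fibration for simplicial spaces) and exhibiting the identity as the canonical lift.
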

\begin{proof}
  The cocartesian edges of $M$ (see~\cite[\S3.2]{Carlier:1801.07504}) are the
  elements in $[X_0,Y_1]$ corresponding to invertible maps $Fx\isopil y$. The
  canonical cocartesian lift of $a:0\to 1$ in $\Delta^1$ to an object $x\in X$ is
  simply the identity map $Fx\stackrel=\to Fx$.
\end{proof}

\begin{prop}\label{prop:2up=2}
  A simplicial map $F:X\to Y$ is relatively upper $2$-Segal between $2$-Segal
  spaces if and only if the associated cocartesian fibration
  $M \to \Delta^1$ has $2$-Segal total space.
\end{prop}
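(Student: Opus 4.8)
The plan is to reduce the statement, via Theorem~\ref{thm:ABC*=Up2Seg}, to the assertion that $ B := \qq\lowerstar(F) $ is a bicomodule configuration if and only if the associated $ M $ is $ 2 $-Segal; here $ B $ automatically satisfies Condition~($\star$) by Corollary~\ref{cor:Imq*star}, so ``abacus bicomodule configuration'' and ``bicomodule configuration'' coincide for it. The engine for both directions is the decomposition $ M_n = \sum_{i+1+j=n} B_{i,j} $ of~\eqref{Mn=Bij} together with the fact that $ \Delta^1 $ is degreewise discrete, so that via $ M \to \Delta^1 $ every simplicial operator of $ M $ respects the fibrewise decomposition: one has $ M_n = \bigsqcup_{\sigma \in (\Delta^1)_n} M_n^\sigma $, with $ M_n^\sigma = B_{i,j} $ when $ \sigma : [n] \to [1] $ has exactly the edge $ \{i, i{+}1\} $ active (and $ M_n^\sigma = X_n $ resp.\ $ Y_n $ in the two degenerate extremes). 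Since $ \Delta^1 $ is itself $ 1 $-Segal, for any triangulation $ \mathcal{T} $ of the $ n $-gon the comparison $ M_n \to \lim_{\triangle \in \mathcal{T}} M_2 $ is a coproduct over $ \sigma $ of maps $ M_n^\sigma \to \lim_\triangle M_2^{\sigma|_\triangle} $; thus $ M $ is $ 2 $-Segal precisely when each of these graded pieces is an equivalence, for one (hence any) triangulation.

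For the forward direction I would use this as follows. Taking the two constant $ \sigma $ shows that the fibres $ X = M \times_{\Delta^1} \{0\} $ and $ Y = M \times_{\Delta^1} \{1\} $ are $ 2 $-Segal. Then I would observe that the spaces $ B_{p,0} \subseteq M_{p+1} = (\dectop M)_p $, with operators $ e_0, \dots, e_p $ and $ t_0, \dots, t_p $, form a simplicial subspace of $ \dectop M $ equal to the zeroth bulk column $ B_{\bullet,0} = X \times_Y \dectop Y $ (Proposition~\ref{prop:qstarDescrip}\eqref{item:q*Desctip2}), and that it is the preimage of a simplicial subset of $ \dectop \Delta^1 $, hence a union of graded components degreewise. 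Since a $ 2 $-Segal $ M $ is in particular upper $ 2 $-Segal, $ \dectop M $ is $ 1 $-Segal (see~\ref{upper-lower-2Segal}), and the grading argument of the first paragraph transfers this to the subspace: $ X \times_Y \dectop Y $ is $ 1 $-Segal, i.e.\ $ F $ is relatively upper $ 2 $-Segal. That settles the forward direction.

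For the converse I would invoke Theorem~\ref{thm:ABC*=Up2Seg} to upgrade the hypotheses ($ X, Y $ $ 2 $-Segal, $ F $ relatively upper $ 2 $-Segal) into the full package: $ B $ is a bicomodule configuration, so its bulk rows and columns are $ 1 $-Segal, the augmentations $ X, Y $ are $ 2 $-Segal with culf augmentation maps, and $ B $ is stable. Then it suffices to check each graded piece $ M_n^\sigma \to \lim_{\mathcal{T}} M_2^{\sigma|} $ for a convenient $ \mathcal{T} $. For $ \sigma $ of $ X $- or $ Y $-type it is $ 2 $-Segal-ness of $ X $ or $ Y $. For $ \sigma $ of type $ B_{i,j} $ with $ i, j \ge 0 $ I would pick the triangulation of the polygon on vertices $ 0, \dots, i{+}1{+}j $ that fans the vertices $ 0, \dots, i $ from $ 0 $, fans the vertices $ i{+}1, \dots, i{+}1{+}j $ from $ i{+}1 $, and adds the straddling triangle $ \{0, i, i{+}1\} $; the associated limit then breaks into an $ X $-cluster (computing $ X_i $ by $ 2 $-Segal-ness of $ X $), a $ Y $-cluster assembled over copies of $ B_{0,0} $ (collapsing by $ 2 $-Segal-ness of $ Y $ together with the Segal condition on $ B_{\bullet,0} $, i.e.\ relative upper $ 2 $-Segal-ness), and the straddling triangle $ B_{1,0} $, with stability of $ B $ identifying the interfaces; repeated use of the pullback prism lemma then collapses everything to $ X_i \times_{Y_i} Y_{i+1+j} = B_{i,j} $ (Proposition~\ref{prop:qstarDescrip}).

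I expect this last case analysis to be the main obstacle: it requires matching the location of the active edge $ \{i, i{+}1\} $ within each triangulated polygon, square by square, against exactly the right condition on $ B $ (Segal rows, Segal columns, culf augmentations, stability), with nothing left over and nothing missing --- which is the precise sense in which ``$ M $ is $ 2 $-Segal'' and ``$ B $ is a bicomodule configuration'' are equivalent. The $ \Delta^1 $-grading is what keeps this finite and tractable, but the polygon bookkeeping is fiddly, so --- in keeping with the informal tone announced for this subsection --- I would present it telegraphically, spelling out only the smallest non-degenerate instances ($ B_{1,1} \simeq B_{1,0} \times_{B_{0,0}} B_{0,1} $, $ B_{0,2} \simeq B_{0,1} \times_{B_{0,0}} B_{0,1} $, $ B_{i,0} \simeq X_i \times_{X_1} B_{1,0} $) and leaving the general pattern to the reader.
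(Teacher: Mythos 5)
Your overall skeleton is the paper's: reduce via Theorem~\ref{thm:ABC*=Up2Seg} to comparing the bicomodule-configuration axioms for $B=\qq\lowerstar(F)$ with $2$-Segality of $M$, and exploit the decomposition \eqref{Mn=Bij} of $M$ over $\Delta^1$ so that every $2$-Segal comparison map splits into graded pieces. Your forward direction (the constant gradings give $2$-Segality of the fibres $X$ and $Y$; the components $B_{p,0}\subset M_{p+1}$ assemble, by \eqref{M-faces}, into a simplicial subspace of $\dectop M$ identified with $X\times_Y\Dectop{Y}$, which inherits the Segal condition from $\dectop M$) is sound and is in fact a self-contained replacement for the paper's appeal to Carlier (his Proposition~3.1.1) for that implication.

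The converse direction has a genuine gap: the parenthetical claim that $M$ is $2$-Segal as soon as each graded piece of the comparison is an equivalence ``for one (hence any) triangulation'' is false. Verifying a single triangulation of each polygon does not give all of them: the fan triangulations from the last vertex alone amount only to the upper $2$-Segal condition (\ref{upper-lower-2Segal}), which is strictly weaker, and the flip argument that would transport the property between triangulations requires the quadrilateral identities in both directions, i.e.\ exactly the elementary $2$-Segal squares you are trying to establish. Concretely, different triangulations of the \emph{same} graded piece encode \emph{different} bicomodule axioms: for the piece $B_{0,2}\subset M_3$, the diagonal $\{0,2\}$ yields the Segal square $B_{0,2}\simeq B_{0,1}\times_{B_{0,0}}B_{0,1}$ of the zeroth row, whereas the diagonal $\{1,3\}$ yields $B_{0,2}\simeq B_{0,1}\times_{Y_1}Y_2$, a culf square for the augmentation $B_{0,\bullet}\to Y$. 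A one-triangulation check therefore uses only part of the hypotheses, while (as your own forward direction shows) $2$-Segality of $M$ forces all of them. The repair is to do what the paper does: check all the elementary $2$-Segal squares of $M$, each of which splits by the grading into squares that are pullbacks by, collectively, all the axioms ($2$-Segality of $X$ and $Y$, Segal rows and columns, upper and lower stability, culfness of both augmentations). A smaller slip: the triangulation you describe (fan of $0,\dots,i$ from $0$, fan of $i{+}1,\dots,n$ from $i{+}1$, plus $\{0,i,i{+}1\}$) has only $i+j-1$ triangles, one too few --- it misses $\{0,i{+}1,n\}$; for the $B_{1,1}$ piece, for instance, it consists of the single triangle $\{0,1,2\}$.
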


\begin{remark}
  It is very likely that this result can be improved to an equivalence of
  $\infty$-categories between a category of $2$-Segal cocartesian fibrations over
  $\Delta^1$ and a category of abacus bicomodule configurations satisfying
  Condition~($\star$), but the arguments employed in the present proof are not quite enough, since
  these categories are not full within the categories of the equivalence $\Pr(
  \simplexcategory_{/[1]}) \isopil \Pr(\simplexcategory)_{/\Delta^1}$.
\end{remark}

\begin{proof}[Proof of Proposition \ref{prop:2up=2}]
  Theorem~\ref{thm:ABC*=Up2Seg} says that the condition ``relatively upper 
  $2$-Segal between $2$-Segal spaces'' means that $B$ is a bicomodule 
  configuration. We claim that this condition in turn matches up with the $2$-Segal condition 
  on $M$. As an illustration of 
  the arguments, let us check the square
  \begin{equation}\label{M-sq}
  \begin{tikzcd}
  M_3 \ar[r, "d_2"] \ar[d, "d_0"'] & M_2 \ar[d, "d_0"]  \\
  M_2 \ar[r, "d_1"'] & M_1  .
  \end{tikzcd}
  \end{equation}
  In terms of $B_{i,j}$, this expands to
  $$
  \begin{tikzcd}
  X_3{+}B_{2,0}{+}B_{1,1}{+}B_{0,2}{+}Y_3 \ar[r] \ar[d] & X_2{+}B_{1,0}{+}B_{0,1}{+}Y_2 \ar[d]  \\
  X_2{+}B_{1,0}{+}B_{0,1}{+}Y_2 \ar[r] & X_1{+}B_{0,0}{+}Y_1 ,
  \end{tikzcd}
  $$
  and by following through the definition of the face maps in $M$ 
  (Equation~\eqref{M-faces}), this square is 
  the sum of three squares:
  $$
  \begin{tikzcd}
  X_3 \ar[r, "e_2"] \ar[d, "e_0"'] & X_2 \ar[d, "e_0"]  \\
  X_2 \ar[r, "e_1"'] & X_1
  \end{tikzcd}
  \quad + \quad
  \begin{tikzcd}
  B_{2,0}{+}B_{1,1} \ar[r, "e_2|d_0"] \ar[d, "e_0{+}e_0"'] & B_{1,0} \ar[d, "e_0"]  \\
  B_{1,0}{+}B_{0,1} \ar[r, "e_1|d_0"'] & B_{0,0}
  \end{tikzcd}
  \quad + \quad
  \begin{tikzcd}
  B_{0,2}{+}Y_3 \ar[r, "d_1+d_2"] \ar[d, "e_0|d_0"'] & B_{0,1}{+}Y_2 \ar[d, 
  "e_0|d_0"]  \\
  Y_2 \ar[r, "d_1"'] & Y_1  .
  \end{tikzcd}
$$
The first square is a pullback since $X$ is $2$-Segal. The second square is a 
combination of two pullbacks: 
one is a Segal square for $B_{\bullet,0}$, the other is an upper stability square.
The third is a combination of two pullbacks:
one is a pullback since the augmentation map to $Y$ is culf, the other because 
$Y$ is $2$-Segal. So if $B$ is a 
bicomodule configuration, then \eqref{M-sq} is a pullback.
Similarly, the
mirror image of \eqref{M-sq} is a pullback because of other bicomodule axioms ---
and all the bicomodule axioms are actually used.
The fact that conversely $2$-Segalness of $M$ implies the bicomodule axioms was
proved already by Carlier~\cite[Proposition~3.1.1]{Carlier:1801.07504}.
\end{proof}

\subsection{Invertible abacus maps and \texorpdfstring{$2$}{2}-Segal spaces}

So far we restricted the adjunction $ \qq\upperstar \isleftadjointto \qq\lowerstar$ to an
equivalence between 2-Segal relatively upper-2-Segal simplicial maps and abacus
bicomodule configurations satisfying Condition ($\star$), as on the top row of the
diagram
\begin{center}
    \begin{tikzcd}
      {\PrSh^{\operatorname{up \, 2-Seg}}(\simplexcategory\times [1])} &  {\Abcs\upperfivestar}
	  &  \\
      \twoSeg &  {\Abcs^\simeq} 
        \arrow["{{\ref{thm:ABC*=Up2Seg}}}"', "\simeq", from=1-1, to=1-2]
        \arrow[hook, from=2-1, to=1-1]
        \arrow["\simeq", from=2-1, to=2-2]
        \arrow[hook, from=2-2, to=1-2]
    \end{tikzcd}
\end{center}
We now show how this restricts further to an equivalence between 2-Segal spaces
and bicomodule configurations with invertible abacus maps, as indicated above in
the second row.  (In reality, the category $\twoSeg$ appears as
the full subcategory of $\PrSh(\simplexcategory\times [1])$ consisting of
invertible simplicial maps between $2$-Segal spaces.)

\begin{lemma} \label{lem:FInv<=>AbcsInv}
  Let $ F : X \to Y $ be a map of simplicial spaces. Then $ \qq\lowerstar (F) $
  has invertible abacus maps if and only if $ F $ is an equivalence.
\end{lemma}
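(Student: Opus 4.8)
The plan is to use the explicit description of the abacus maps provided by Proposition~\ref{prop:qstarDescrip}\eqref{item:q*Descrip1}, which identifies the abacus map $f : B_{i+1,\bullet} \to \decbot(B_{i,\bullet})$ with the map
$$
\bar X_{i+1} \underset{\bar Y_{i+1}}{\times} \Dec_\bot^{(i+1)+1}(Y)
\ \xrightarrow{\ \bar e_\top \underset{\bar d_\top}{\times} \id\ }\
\bar X_i \underset{\bar Y_i}{\times} \Dec_\bot^{(i+1)+1}(Y).
$$
Since the two sides are pullbacks over $\Dec_\bot^{(i+1)+1}(Y)$, this map is an equivalence if and only if the map of pullback cones is, and by the universal property this comes down to comparing the two maps into the common vertex $\bar Y_{i+1}$ versus $\bar Y_i$; more concretely, $f$ is invertible precisely when the square
$$
\begin{tikzcd}
\bar X_{i+1} \ar[r, "\bar e_\top"] \ar[d, "\bar F_{i+1}"'] & \bar X_i \ar[d, "\bar F_i"] \\
\bar Y_{i+1} \ar[r, "\bar d_\top"'] & \bar Y_i
\end{tikzcd}
$$
is a pullback of constant simplicial spaces, i.e.\ when the square $X_{i+1} \to X_i$, $Y_{i+1}\to Y_i$ of spaces is a pullback. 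So the content of the lemma is: all abacus maps of $\qq\lowerstar(F)$ are invertible $\iff$ $F_{i+1}: X_{i+1}\to X_i\times_{Y_i}Y_{i+1}$ is an equivalence for all $i\geq -1$ (where $i=-1$ reads as $X_0\xrightarrow{\sim} Y_0$).

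Next I would observe that this collection of conditions is exactly the statement that the augmented map of decalage towers degenerates, and relate it back to $F$ itself. The case $i=-1$ gives $X_0 \simeq Y_0$. Given that, the case $i=0$ gives $X_1 \simeq X_0\times_{Y_0}Y_1 \simeq Y_1$ (using $X_0\simeq Y_0$), and inductively $X_n\simeq Y_n$ for all $n$, with the equivalences compatible with all the structure maps, since the maps $\bar e_\top$, $\bar d_\top$ appearing are the bottom face maps. Hence all abacus maps invertible $\implies$ $F$ is a levelwise equivalence, hence an equivalence of simplicial spaces. Conversely, if $F$ is an equivalence then each square $X_{i+1}\to X_i$, $Y_{i+1}\to Y_i$ is trivially a pullback (both legs being equivalences on the $X$-side), so every abacus map is invertible by the description above.

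The one point requiring a little care is organizing the induction so that it produces a genuine equivalence of simplicial spaces rather than just a levelwise one, and making sure the $i=-1$ case (involving the augmentation row, where the abacus map $f: B_{0,\bullet}\to\decbot(Y)$ is the one landing in the decalage of the augmentation row) is handled on the same footing as the bulk cases; this is exactly why Proposition~\ref{prop:qstarDescrip}\eqref{item:q*Descrip1} was stated to include $i=-1$. I expect the main (mild) obstacle to be bookkeeping the compatibility of the inductively-constructed equivalences $X_n\simeq Y_n$ with face and degeneracy maps — but since all the maps in sight are among the simplicial structure maps and the comparison is natural in $[n]$, this is formal: the family $\{\eta_{i,j}\}$ of unit components is already a map of $\DD$-presheaves by Lemma~\ref{lem:UnitInv<=>star}, and invertibility of the abacus maps forces the unit to be invertible, placing $F$ in the essential image where one reads off $X\simeq Y$ directly.

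\begin{proof}
By Proposition~\ref{prop:qstarDescrip}\eqref{item:q*Descrip1}, for each $i\geq -1$ the abacus map $f : B_{i+1,\bullet}\to\decbot(B_{i,\bullet})$ of $B:=\qq\lowerstar(F)$ is identified with the map
$$
\bar X_{i+1}\underset{\bar Y_{i+1}}{\times}\Dec_\bot^{(i+1)+1}(Y)
\ \xrightarrow{\ \bar e_\top\underset{\bar d_\top}{\times}\id\ }\
\bar X_i\underset{\bar Y_i}{\times}\Dec_\bot^{(i+1)+1}(Y),
$$
where for $i=-1$ we read $X_0$ for $X_i$, $Y_0$ for $Y_i$, and $Y\xrightarrow{=}Y$ for $\bar e_\top$ (so the source is $\Dec_\bot^{1}(Y)$ over $\bar Y_0$). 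Both source and target are displayed as pullbacks over $\Dec_\bot^{(i+1)+1}(Y)$, and the map is the identity on that common base; hence it is an equivalence if and only if the induced map on the other legs is an equivalence, i.e.\ if and only if the square of constant simplicial spaces
\begin{equation}\label{eq:abInv-sq}
\begin{tikzcd}
\bar X_{i+1}\ar[r,"\bar e_\top"]\ar[d,"\bar F_{i+1}"'] & \bar X_i\ar[d,"\bar F_i"]\\
\bar Y_{i+1}\ar[r,"\bar d_\top"'] & \bar Y_i
\end{tikzcd}
\end{equation}
is a pullback. Since everything here is a constant simplicial space, \eqref{eq:abInv-sq} is a pullback if and only if the square of spaces $X_{i+1}\to X_i$, $Y_{i+1}\to Y_i$ is, that is, if and only if the comparison map $X_{i+1}\to X_i\times_{Y_i}Y_{i+1}$ is an equivalence. (For $i=-1$ this says $X_0\xrightarrow{\sim}Y_0$.)

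Suppose first that $F$ is an equivalence. Then in \eqref{eq:abInv-sq} the maps $\bar F_{i+1}$ and $\bar F_i$ are equivalences, so the square is a pullback for every $i\geq -1$; hence every abacus map of $B$ is invertible.

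Conversely, suppose every abacus map of $B$ is invertible. By the above, the comparison $X_{i+1}\to X_i\times_{Y_i}Y_{i+1}$ is an equivalence for all $i\geq-1$. Taking $i=-1$ gives that $F_0 : X_0\to Y_0$ is an equivalence. Arguing by induction on $n$, assume $F_n : X_n\to Y_n$ is an equivalence. The case $i=n$ of the comparison says $X_{n+1}\xrightarrow{\sim}X_n\times_{Y_n}Y_{n+1}$, and since $F_n$ is an equivalence the projection $X_n\times_{Y_n}Y_{n+1}\to Y_{n+1}$ is an equivalence as well; composing, $F_{n+1} : X_{n+1}\to Y_{n+1}$ is an equivalence. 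Thus $F_n$ is an equivalence for all $n\geq 0$, i.e.\ $F$ is a levelwise equivalence of simplicial spaces, hence an equivalence.
\end{proof}
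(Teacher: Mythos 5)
Your reduction is the right one, and the ``$F$ equivalence $\Rightarrow$ abacus maps invertible'' half is fine (two parallel equivalences make the square \eqref{eq:abInv-sq} a pullback, and base change then gives the invertibility of each $f$). The gap is in the pivotal step of the converse: from ``$f\colon B_{i+1,\bullet}\to\Decbot{(B_{i,\bullet})}$ is an equivalence'' you conclude that the square \eqref{eq:abInv-sq} is a pullback, justified only by ``both sides are pullbacks over the common base $\Dec_\bot^{(i+1)+1}(Y)$ and the map is the identity on that base''. That inference is not valid as stated: base change along a map is not conservative in general, so an equivalence between the two pulled-back objects does not by itself force the comparison map $X_{i+1}\to X_i\times_{Y_i}Y_{i+1}$ to be an equivalence. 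Since your whole converse direction (the induction giving $X_n\simeq Y_n$) rests on exactly this, the proof as written is incomplete. The claim is true, and two quick repairs are available: (a) the map $\zeta\colon\Dec_\bot^{i+2}(Y)\to\bar Y_{i+1}$ admits a degreewise section (composites of top degeneracies), so pulling the equivalence back along that section recovers the comparison map $X_{i+1}\to X_i\times_{Y_i}Y_{i+1}$ as an equivalence; or, more simply, (b) observe that this comparison map is itself the value of $B$ on an abacus map, namely $[i,0]\to[i{+}1,-1]$ (the component $B_{i+1,-1}\to B_{i,0}$ of $f$ out of the augmentation column), so its invertibility is already part of the hypothesis and no conservativity argument is needed; your induction then runs unchanged.

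For comparison, the paper's proof is shorter in both directions and avoids the squares altogether. For ``abacus invertible $\Rightarrow F$ invertible'' it just notes that $F_i$ is identified with the composite of abacus maps $f^{\circ(i+1)}\colon B_{i,-1}\to B_{-1,i}$ (this is how $\qq$ sends $[i]\times a$ into $\DD$), so invertibility of $F$ is immediate. For the converse it uses Proposition~\ref{prop:qstarDescrip}\eqref{item:q*Descrip1} to see that the abacus maps landing in the augmentation row are pullbacks of $F_0$, hence invertible, and then gets the bulk (and augmentation-column) abacus maps by 2-out-of-3 after postcomposing with abacus maps until the augmentation row is reached. Your route, once the step above is patched, is a legitimate alternative and makes the levelwise content of the statement explicit, but it is doing more work than necessary.
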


\begin{proof}
  First note that the each component $F_i : X_i \to Y_i$ of $F$ is identified
  with the composite of abacus maps $ f^{\circ(i+1)} : X_i \to Y_i $ in $B := \qq\lowerstar
  (F)$. If all abacus maps are invertible, then it follows immediately that $F$
  is invertible.
    
  For the other direction, assume that $F$ is invertible and consider some abacus map
  $f$. If $f$ ends in the augmentation row, i.e.~it is of the form $ f : B_{0,j}
  \to Y_{j+1} $, then by
  Proposition~\ref{prop:qstarDescrip}\eqref{item:q*Descrip1}, $f$ is a pullback
  of $F_0$ and is therefore invertible. If $f$ is in the bulk, i.e.~it is of the form
  $ f : B_{i+1,j} \to B_{i,j+1} $, then we first postcompose with abacus maps
  until we reach the augmentation row, giving the map $ B_{i+1,j}
  \overset{f}{\to} B_{i,j+1} \overset{f^{\circ(i+1)}}{\to} Y_{i+j+2} $.
  Again by the same
  proposition and the same argument, we find that $ f^{\circ (i+1)} \circ f : B_{i+1,j}
  \to Y_{i+j+2}$ as well as $ f^{\circ(i+1)} : B_{i,j+1} \to Y_{i+j+2} $ are 
  invertible. By the 2-out-of-3 property, it follows that $ f : B_{i+1,j} \to
  B_{i,j+1} $ is invertible. A similar argument works in the last case in which
  $f$ starts in the augmentation column.
\end{proof}

\begin{theorem} \label{thm:ABCInv=2Seg}
   The adjunction $ \qq\upperstar \isleftadjointto \qq\lowerstar $ restricts to an equivalence
   between $2$-Segal
   spaces and bicomodule configurations with invertible abacus maps:
   $$
   \twoSeg \isopil \Abcs^\simeq .
   $$
\end{theorem}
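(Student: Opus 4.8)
The plan is to deduce the statement from Theorem~\ref{thm:ABC*=Up2Seg}, by checking that the equivalence $\qq\lowerstar : \PrSh^{\operatorname{up\, 2-Seg}}(\simplexcategory\times [1]) \isopil \Abcs\upperfivestar$ established there matches up the two full subcategories in question. Recall (from the discussion preceding the theorem) that $\twoSeg$ is to be regarded as the full subcategory of $\PrSh(\simplexcategory\times[1])$ spanned by the invertible simplicial maps between $2$-Segal spaces, via $X \mapsto (\id_X : X \to X)$; and that $\Abcs^\simeq$ is the full subcategory of $\Abcs$ spanned by those bicomodule configurations whose abacus maps are invertible.

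First I would check that these two full subcategories sit inside the two sides of the equivalence of Theorem~\ref{thm:ABC*=Up2Seg}. On the simplicial side: if $F : X \to Y$ is an equivalence of $2$-Segal spaces, then it is in particular a left (hence also a right) fibration, so by the example following the definition of relatively upper $2$-Segal, $F$ is relatively upper $2$-Segal; one may also argue directly that $X\times_Y \dectop(Y) \simeq \dectop(Y)$, which is $1$-Segal because $Y$ is upper $2$-Segal. Hence $\twoSeg \subseteq \PrSh^{\operatorname{up\, 2-Seg}}(\simplexcategory\times[1])$. On the abacus side: if all abacus maps $f : B_{i+1,j}\to B_{i,j+1}$ of a bicomodule configuration $B$ are invertible, then each simplicial map $f : B_{i+1,\bullet} \to \decbot(B_{i,\bullet})$ is a levelwise equivalence, hence an equivalence, hence in particular cartesian, so Condition~($\star$) of~\ref{condition-star} holds, giving $\Abcs^\simeq \subseteq \Abcs\upperfivestar$. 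Both inclusions are full by construction.

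The heart of the argument is then Lemma~\ref{lem:FInv<=>AbcsInv}, which says precisely that for $B = \qq\lowerstar(F)$ one has: $B$ has invertible abacus maps if and only if $F$ is an equivalence. Consequently $\qq\lowerstar$ restricts to a functor $\twoSeg \to \Abcs^\simeq$; this functor is fully faithful (as $\qq\lowerstar$ already is, $\qq$ being fully faithful) and essentially surjective (given $B \in \Abcs^\simeq \subseteq \Abcs\upperfivestar$, Theorem~\ref{thm:ABC*=Up2Seg} produces $F$ with $\qq\lowerstar(F) \simeq B$, and Lemma~\ref{lem:FInv<=>AbcsInv} then forces $F$ to be an equivalence, i.e.\ $F \in \twoSeg$). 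So it is an equivalence, with inverse the restriction of $\qq\upperstar$: this restriction lands in $\twoSeg$ because the counit $\qq\lowerstar\qq\upperstar \to \id$ is an equivalence on $\Abcs\upperfivestar$ (Theorem~\ref{thm:star-equiv}), so that $\qq\upperstar B$ recovers the $F \in \twoSeg$ with $\qq\lowerstar F \simeq B$. Thus the adjunction $\qq\upperstar \isleftadjointto \qq\lowerstar$ restricts to the equivalence $\twoSeg \isopil \Abcs^\simeq$.

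I do not anticipate a genuine obstacle here: the only content beyond the already-established equivalences is the pair of bookkeeping inclusions of the second paragraph, both one-line consequences of ``equivalences are cartesian'' and ``pullbacks of equivalences are equivalences''. The one point deserving a little care is pinning down the identification of $\twoSeg$ with the full subcategory of $\PrSh(\simplexcategory\times[1])$ on invertible simplicial maps between $2$-Segal spaces, i.e.\ that $X \mapsto \id_X$ (with a map of $2$-Segal spaces sent to the evident commuting square) is fully faithful with that image; but this is a standard fact about morphism/arrow categories and is independent of the rest.
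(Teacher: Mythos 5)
Your proposal is correct and follows essentially the same route as the paper: the paper's own proof consists precisely of combining Theorem~\ref{thm:ABC*=Up2Seg} with Lemma~\ref{lem:FInv<=>AbcsInv}, noting that invertible simplicial maps between $2$-Segal spaces are trivially relatively upper $2$-Segal. The extra bookkeeping you supply (invertible abacus maps are cartesian, hence Condition~($\star$) holds, and the identification of $\twoSeg$ with identity/invertible simplicial maps) is exactly what the paper leaves implicit, and it is argued correctly.
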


\begin{proof}
	This follows by putting together Theorem~\ref{thm:ABC*=Up2Seg} and
	Lemma~\ref{lem:FInv<=>AbcsInv}, noting that invertible simplicial maps of
	2-Segal spaces are trivially relatively upper 2-Segal.
\end{proof}

\begin{remark} \label{rem:qstar=Tot} 
  From the formula for $\qq\lowerstar$ in
  Equation~\eqref{eq:qstarFormula} we see that the equivalence is
  actually the total decalage functor $\Tot = \rr\upperstar$ induced by
  the functor $\rr : \DD \to \simplexcategory$ that sends $[i,j]$ to
  $[i{+}1{+}j]$.
\end{remark}

\section{\texorpdfstring{$\Sigma$}{Sigma}-presheaves vs.~\texorpdfstring{$\DD$}{D}-presheaves, and the BOORS equivalence}
\label{sec:Boors-comp}

The goal of this section is to relate the disparate notions of augmentation of
BOORS and Carlier, and as a result derive the BOORS equivalence, using the results
already established in the previous sections.

We first show that for a $\Sigma$-presheaf satisfying both the
horizontal and the vertical pointing axiom, the associated abacus maps
become invertible (Corollary~\ref{cor:BOORS=>InvAbcs}). We use this result to derive an equivalence between
$\Sigma$-presheaves satisfying the BOORS axioms and bicomodule
configurations with invertible abacus maps (Theorem~\ref{thm:j-starEquivalence}). Composing this equivalence
with that between bicomodule configurations with invertible abacus and
$2$-Segal spaces (Theorem~\ref{thm:ABCInv=2Seg}) establishes the
original BOORS equivalence.

We end this section with a finer analysis of the relation between
$\Sigma$-presheaves and $\DD$-presheaves, establishing an equivalence (Theorem~\ref{thm:Di=Sigma})
between $\Sigma$-presheaves that satisfy only half of the BOORS axioms
(horizontal pointing, upper stable, Segal rows) and $\DD_{i \geq
0}$-presheaves that are upper stable and have Segal rows (the $i\geq 0$
decoration means that the augmentation row is missing).

Before we embark on this deeper comparison between the two notions of
augmentation, let us note how the total decalage exemplifies both.

\subsection{Augmentations of the total decalage} \label{bla:Tot}
  For $X$ a
  simplicial space, the {\em total decalage} $\Tot(X)$ is the bisimplicial space
  obtained by pulling back $X$ along the ordinal sum functor
  $\simplexcategory\times \simplexcategory \to \simplexcategory $ (see for
  example~\cite{Stevenson:TAC26}). It has the upper decalage $\dectop(X)$ as its
  zeroth column and the lower decalage $\decbot(X)$ as its zeroth row, so a
  picture of $\Tot(X)$ starts like this (suppressing degeneracy maps):
  \[
  \begin{tikzcd}[column sep={50pt,between origins}, row sep={36pt,between origins}]
  X_1 & \ar[l, shift left, "d_1"] \ar[l, shift right, "d_2"'] X_2 \ar[r, "\cdots"{description}, phantom] & {} \\
  X_2 \ar[u, shift left, "d_0"] \ar[u, shift right, "d_1"'] \ar[d, "\vdots"{description}, phantom] & \ar[l, shift left, 
  "d_2"] \ar[l, shift right, "d_3"'] X_3  \ar[u, shift left, "d_0"] \ar[u, shift 
  right, "d_1"'] \ar[r, "\cdots"{description}, phantom] \ar[d, "\vdots"{description}, phantom] & {} \\[-9pt]
  {} & {} &
  \end{tikzcd}
  \]
  (It also contains the edgewise subdivision $\sd(X)$
  as its diagonal. This has recently turned out to be of relevance
  for $2$-Segal spaces: Bergner et al.~\cite{BOORS:Edgewise} show that $X$ is
  $2$-Segal if and only if $\sd(X)$ is $1$-Segal, and
  Hackney--Kock~\cite{Hackney-Kock:2210.11191} show that a simplicial map $F: X\to
  Y$ is culf if and only if $\sd(F)$ is a right fibration.)
  
  The $\Tot$ construction can be refined to provide either a BOORS augmentation 
  (i.e.~pointing)
  or a row-and-column augmentation \`a la Carlier. In the BOORS case this is
  just to take the degeneracy map $s_0: X_0 \to X_1$ as pointing. The
  row-and-column-augmented $\Tot$ instead simply puts the original simplicial space $X$ both in
  the augmentation column and in the augmentation row, where they fit in by
  simplicial operators. 
  
  Let $ \Sigma $ be the category obtained as 
  the
  cocone on $ \simplexcategory \times \simplexcategory $. In terms of generators
  and relations this category is described by the generators and relations of $
  \simplexcategory\times \simplexcategory $ together with an object $[-1]$ and a
  morphism $[0,0] \to [-1]$. The BOORS version of the total decalage
  is $\Tot = \pp\upperstar$, where $\pp: \Sigma\to\simplexcategory$ extends the
  ordinal-sum functor by mapping $[-1]$ to $[0]$. 
  The BOORS equivalence is established
  in~\cite{Bergner-Osorno-Ozornova-Rovelli-Scheimbauer:1809.10924} as a
  restriction of the adjunction $\pp\upperstar\isleftadjointto\pp\lowerstar$,
  with $\pp\lowerstar$ being interpreted as a generalized Waldhausen
  construction. Pulling back along the similarly defined functor
  $\simplexcategory_{/[1]}\to\simplexcategory$ gives the
  row-and-column-augmented $\Tot$, which plays a role in Carlier's theory: it is
  the result of applying his constructions to the identity correspondence or the
  identity functor of an $ \infty$-category. The functor
  $\simplexcategory_{/[1]} \to \simplexcategory$ factors 
  as the inclusion $\simplexcategory_{/[1]} \to \DD$ followed by the functor $\rr : \DD \to \simplexcategory$ (cf.~Remark~\ref{rem:qstar=Tot}).
  Altogether the shapes
  mentioned so far fit into the diagram (of ordinary categories)
  
    \begin{center}
    \begin{tikzcd}[column sep={24pt,between origins}, row sep={24pt,between origins}]
    && \simplexcategory_{/[1]} \ar[rr] && \DD \ar[rrd, "\rr"] &&
	\\
    {\simplexcategory\times \simplexcategory}  \ar[rrrrrr] \ar[rru, hook] \ar[rrrd, hook] 
	  &&&&&& \simplexcategory 
	\\
    &&& \Sigma \ar[rrru, "\pp"'] \ar[ruu, dotted, pos=0.63, "\jj"] &&&
    \end{tikzcd}
  \end{center}
  The functor $\jj$ is the topic of Subsection~\ref{subsec:Abcs=BOORS}.

\subsection{From the pointing axioms to invertible abacus maps} \label{subsec:Pointing->InvAbcs}

We first recall the BOORS equivalence.

\begin{definition} \label{def:PointingAxiom}
  A presheaf $B \in \PrSh(\Sigma)$ is said to satisfy the {\em horizontal 
  pointing axiom} if 
  the pointing $B_{-1}
  \to B_{0,0}$ constitutes a
  local-initial-objects structure on the zeroth row.
  Dually, 
  $B \in \PrSh(\Sigma)$ is said to satisfy the {\em vertical 
  pointing axiom} if 
  the pointing $B_{-1}
  \to B_{0,0}$ constitutes a local-terminal-objects structure on the zeroth 
  column.
\end{definition}

\begin{remark}
  Bergner et
  al.~\cite{Bergner-Osorno-Ozornova-Rovelli-Scheimbauer:1609.02853,Bergner-Osorno-Ozornova-Rovelli-Scheimbauer:1809.10924}
  use the terminology {\em preaugmented bisimplicial spaces} for general presheaves on
  $\Sigma$, and say {\em augmented bisimplicial spaces} for presheaves that
  satisfy both the horizontal and vertical pointing axioms of
  Definition~\ref{def:PointingAxiom}.
\end{remark}

Let $ B $ be an upper-stable $\Sigma$-presheaf satisfying the horizontal pointing
axiom. By Proposition~\ref{prop:RigCoalgIsRigPoint}, which identifies 
local-initial-objects structure with rigid $\decbot$-coalgebra structure,
the zeroth row of $B$ is
endowed with a bottom-split structure. This is the first step towards relating
$\Sigma$-presheaves and $\DD$-presheaves. Upper stability, which says that each
simplicial map
$e_\bot : B_{i+1,\bullet} \to B_{i,\bullet}$ is a right fibration between rows, induces
the remaining extra bottom sections in the bulk: applying
Lemma~\ref{lem:RFibLiftDecCoal} produces first a $\decbot$-coalgebra structure
on $B_{1,\bullet} $ which is compatible with $ e_\bot $, and with the help of
an inductive argument we can propagate down the bottom-split structure to all
rows.

A priori, the aforementioned construction of the bottom sections $\ssplit$ only
guarantees that they commute with the bottom face maps $ e_\bot $. It remains to
show that the bottom sections are compatible also with the active part of the
columns, so that altogether the bottom sections $\ssplit$ form simplicial maps between
columns after applying $ \dectop $ to the columns. To see that the bottom
sections are compatible with the active part of each column we provide a construction that produces all the bottom sections uniformly. In
this construction we invoke Lemma~\ref{lem:RFibLiftDecCoal} again, but this time
take $ \mathcal E $ to be $ \sS $ (simplicial spaces). The simplicial object $ C $ 
of the lemma is taken to be
the composite
\begin{center}
  \begin{tikzcd}[column sep=large]
    \simplexcategory^\text{op} \arrow[r, "{B_{0,\bullet}}"] & \spaces \arrow[r, "{\text{const}}"] & \sS
  \end{tikzcd}
\end{center}
which is a $\decbot$-coalgebra in $\mathcal E$.
For $ C' $ we take the transpose of
\begin{center}
  \begin{tikzcd}[column sep=large]
    \simplexcategory\op\times\simplexcategory\op \arrow[r,"\dectop\times \id"] 
	& \simplexcategory\op\times \simplexcategory\op \arrow[r,"B_{\bullet,\bullet}"] &\spaces
  \end{tikzcd}
\end{center}
in the first coordinate, which gives $ C' :
\simplexcategory^\text{op} \to \sS; \ j \mapsto \dectop^{\text{vert}}
(B_{\bullet,j}) $.  The simplicial map $C' \to C$ is given by the vertical 
augmentation map for $\dectop$ (which in degree $0$ is the map $e_\bot$).
So far,  the
argument leaves out the compatibility of the horizontal bottom sections $ \ssplit
$ with the vertical top degeneracies $ t_\top $ (as these are discarded by the 
vertical $\dectop$); however, this compatibility will be automatic thanks to the top
degeneracy $ t_\top $ being a section of a face map $ e_{\top-1} $, which is
contained in $\dectop (B_{\bullet,0})$.

The above discussion can be distilled into a proof of the following lemma.

\begin{lemma} \label{lem:PropagateS-1}
  Let $B$ be a $\Sigma$-presheaf which is upper stable and satisfies the
  horizontal pointing axiom. Then $B$ has induced extra bottom sections
  in every row.
\end{lemma}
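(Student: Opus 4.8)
The plan is to follow the construction already sketched in the paragraphs preceding the statement, organizing it into a clean inductive argument that produces the extra bottom sections $\ssplit$ in every bulk row and verifies that they satisfy the split cosimplicial identities and are compatible with the remaining vertical operators.

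First I would recall the starting point: by hypothesis $B$ satisfies the horizontal pointing axiom, so the pointing $B_{-1}\to B_{0,0}$ is a local-initial-objects structure on the zeroth row $B_{0,\bullet}$. By Proposition~\ref{prop:RigCoalgIsRigPoint} this is equivalent to a rigid $\decbot$-coalgebra structure on $B_{0,\bullet}$; in particular we obtain the extra bottom section $\ssplit : B_{0,\bullet}\to\decbot(B_{0,\bullet})$ in the zeroth row. Next I would propagate downward. Upper stability says each $e_\bot : B_{i+1,\bullet}\to B_{i,\bullet}$ is a right fibration between rows. Applying Lemma~\ref{lem:RFibLiftDecCoal} (with $\mathcal E = \spaces$) to the right fibration $e_\bot : B_{1,\bullet}\to B_{0,\bullet}$ pulls the $\decbot$-coalgebra structure back to $B_{1,\bullet}$, uniquely with the property that $e_\bot$ is a coalgebra map; iterating gives a $\decbot$-coalgebra structure, hence extra bottom sections $\ssplit$, in every bulk row $B_{i,\bullet}$, all compatible with the bottom face maps $e_\bot$ by construction.

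It remains to check that these bottom sections $\ssplit$ are compatible with the \emph{active} part of each column, i.e.\ that after applying the vertical $\dectop$ they assemble into simplicial maps $\dectop(B_{\bullet,j})\to\dectop(B_{\bullet,j+1})$ between columns. For this I would run the uniform construction indicated in the text: invoke Lemma~\ref{lem:RFibLiftDecCoal} once more, this time with $\mathcal E := \sS$. Take $C : \simplexcategory\op\to\sS$ to be the constant-simplicial-space composite of the zeroth row $B_{0,\bullet}$ with $\mathrm{const}$ (a $\decbot$-coalgebra in $\mathcal E$ via the rigid structure on the zeroth row), and take $C' : \simplexcategory\op\to\sS;\ j\mapsto\dectop(B_{\bullet,j})$ (the vertical upper decalage of the columns). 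The simplicial map $C'\to C$ is the vertical augmentation map for $\dectop$, which in degree $0$ is $e_\bot$, hence a right fibration in $\sS$. Lemma~\ref{lem:RFibLiftDecCoal} then produces a $\decbot$-coalgebra structure on $C'$, which unpacks to exactly the family of horizontal bottom sections on the upper-deced columns, and the coalgebra axioms encode the split cosimplicial identities plus compatibility with all column operators retained by $\dectop$. Finally, the one case omitted by $\dectop$, namely compatibility of the $\ssplit$ with the vertical top degeneracies $t_\top$, is automatic: $t_\top$ is a section of the face map $e_{\top-1}$, which \emph{is} among the operators of $\dectop(B_{\bullet,0})$, so compatibility with that face map forces compatibility with its section.

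The main obstacle I expect is the second half: verifying that the bottom sections produced row-by-row by the downward induction actually agree with the ones produced by the uniform $\sS$-valued construction, and hence are genuinely compatible with the vertical active structure rather than merely with $e_\bot$. The resolution is precisely the uniqueness clause in Lemma~\ref{lem:RFibLiftDecCoal}: both constructions yield $\decbot$-coalgebra structures characterized by the property that the relevant augmentation ($e_\bot$ on rows, resp.\ the $\dectop$-augmentation map on columns) is a coalgebra morphism, and these characterizations are compatible degreewise, so the two families of sections coincide. Everything else reduces to routine bookkeeping with the bisimplicial identities and the split cosimplicial identities, which I would not spell out in detail.
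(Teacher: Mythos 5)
Your proposal is correct and follows essentially the same route as the paper: the paper's proof is precisely the discussion preceding the lemma, namely rigid $\decbot$-coalgebra structure on the zeroth row via Proposition~\ref{prop:RigCoalgIsRigPoint}, downward propagation along the right fibrations $e_\bot$ via Lemma~\ref{lem:RFibLiftDecCoal}, the uniform $\sS$-valued application of that lemma to $C' \to C$ (whose right-fibration property is where upper stability enters, rather than merely the degree-$0$ identification with $e_\bot$), and the observation that compatibility with $t_\top$ is automatic since it is a section of $e_{\top-1}$. Your use of the uniqueness clause of Lemma~\ref{lem:RFibLiftDecCoal} to reconcile the two constructions is consistent with the paper's intent.
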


\begin{remark} \label{rem:PropagateS-1}
  As a matter of fact, the argument above Lemma~\ref{lem:PropagateS-1} applies
  to any bisimplicial space $B$ with extra bottom sections in the first row. In
  other words, any upper-stable bisimplicial space with a $\decbot$-coalgebra
  structure on its zeroth row has a canonical extension to a
  $\DD_{\geq0}$-presheaf, where $\DD_{\geq0}$ is the category $\DD$ with the
  augmentations removed.
\end{remark}

The equivalence of Bergner et
al.~\cite{Bergner-Osorno-Ozornova-Rovelli-Scheimbauer:1609.02853,Bergner-Osorno-Ozornova-Rovelli-Scheimbauer:1809.10924}
says that if a $ \Sigma$-presheaf is stable, double Segal and satisfies both
horizontal and vertical pointing axioms, then it is the total decalage of a
2-Segal space. In the total decalage of a simplicial space the abacus maps are
the identities, and  in particular invertible. We show directly how for
a $\Sigma$-presheaf with all the above properties the induced abacus maps are
invertible.

Let $ B $ be a $ \Sigma$-presheaf which is stable, double Segal and satisfies
both horizontal and vertical pointing axioms. By Lemma~\ref{lem:PropagateS-1}
and its dual, $B$ is endowed both with extra bottom sections $\ssplit$ in all bulk
rows and with extra top sections $\tsplit$ in all bulk columns. Taking colimits
row-wise gives an augmentation column, where the augmentation maps inherit a
bottom section from the bottom-split structure of the rows. Similarly, taking
colimits column-wise gives an augmentation row, where the augmentation map is
equipped with extra top sections. 
The following lemma expresses a key compatibility between the
horizontal bottom splittings and the vertical top splittings.

\begin{lemma} \label{lem:ts=ts}
  Let $ B $ be a $ \Sigma$-presheaf which is stable, double Segal and satisfies
  both horizontal and vertical pointing axioms. Then for the induced splittings 
  we have
  \begin{equation} \label{eq:ts=ts}
    t_\top \ssplit = \tsplit \, \ssplit   \,.
  \end{equation}
  This holds also in the augmentation column which is obtained by taking colimits.
\end{lemma}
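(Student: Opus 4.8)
The plan is to prove the identity $t_\top \ssplit = \tsplit\,\ssplit$ as an equation of maps $B_{i,j}\to B_{i,j+1}$ (and then in the augmentation column) by exhibiting both sides as the unique section, against a suitable face map, of a pullback square, so that uniqueness forces them to agree. Concretely: $\tsplit$ is the extra top degeneracy coming from the vertical bottom-split (really top-split) structure on the column $B_{\bullet,j+1}$, so it is characterized as a section of $e_\top$; and $\ssplit$ is the horizontal bottom section. The right-hand side $\tsplit\,\ssplit$ is then a section of $e_\top$ over $B_{i,j+1}$. On the left, $t_\top$ is an \emph{ordinary} vertical top degeneracy, also a section of $e_\top$, and $t_\top\ssplit$ is again a section of $e_\top$. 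So the question reduces to showing that two sections of $e_\top\colon B_{i+1,j+1}\to B_{i,j+1}$ agree; if I can show the relevant square is a pullback and that both sections restrict to the same map after projecting to whatever second leg the pullback has, I am done by the universal property.

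First I would set up the correct pullback. By Lemma~\ref{lem:PropagateS-1} (and its dual) the extra bottom sections $\ssplit$ in each bulk row were \emph{produced} by iterated application of Lemma~\ref{lem:RFibLiftDecCoal}, pulling the $\decbot$-coalgebra structure of the zeroth row back along the right fibrations $e_\bot$; the content of that construction is precisely that $\ssplit$ forms pullback squares against the $e_\bot$-maps. Dually, $\tsplit$ forms pullback squares against the $d_\bot$-maps. The strategy is to combine one such ``horizontal'' pullback square (for $\ssplit$ against $e_\bot$) with one ``vertical'' pullback square (for $\tsplit$ against $d_\bot$) and then invoke the pullback prism lemma, exactly as in the proofs of Lemma~\ref{lem:s-1 rfib} and Lemma~\ref{lem:YSegal-Bstable}. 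The relevant commuting cube has corners $B_{i,j}$, $B_{i,j+1}$, $B_{i+1,j}$, $B_{i+1,j+1}$ on the near face and their images under the appropriate face maps on the far face; the equation $e_\top = d_\bot\circ f$ and $f = e_\top\circ\ssplit$ from Remark~\ref{rmk:comb}, together with the compatibility of $\ssplit$ with $t_\top$ established in Lemma~\ref{lem:s-generation}, give all the commuting triangles needed to glue the two squares. Then the map induced into the pullback is unique, and chasing the two legs shows it computes both $t_\top\ssplit$ and $\tsplit\,\ssplit$.

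For the statement ``in the augmentation column'', I would note that the augmentation column $X = B_{\bullet,-1}$ is obtained as the row-wise geometric realization, and the induced bottom section on the augmentation maps is, by construction, the colimit of the bottom sections $\ssplit$ in the bulk (this is the absolute-colimit phenomenon recorded in Subsection~\ref{subsec:split}: a $\simplexcategory^\bb$-presheaf exhibits its $(-1)$-part as the realization of the rest). Since geometric realization is a colimit and hence preserves the relevant commuting/pullback data up to the descent statements already used in Proposition~\ref{prop:colim-aug}, the identity $t_\top\ssplit = \tsplit\,\ssplit$ passes to the augmentation column by applying the colimit functor to the bulk identity; alternatively one re-runs the same prism argument with $j=-1$, using that Lemma~\ref{lem:RFibLiftDecCoal} was stated in the generality needed (with $\mathcal E$ a category with pullbacks) precisely so that the $\ssplit$-squares remain pullbacks down to the augmentation.

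\textbf{Main obstacle.} The delicate point is \emph{not} the prism lemma bookkeeping but ensuring that the two families of splittings are genuinely compatible in the direction \emph{transverse} to how each was built: $\ssplit$ was built to be compatible with vertical face/degeneracy maps \emph{except} $e_\top$ (Lemma~\ref{lem:s-generation}), and $\tsplit$ dually is not compatible with $d_\top$. So one cannot naively slide a $\ssplit$ past a $\tsplit$; the argument must route everything through the face maps $e_\bot$, $d_\bot$ (where the pullback properties \emph{do} hold) and use stability — both upper and lower — to close the cube. I expect the real work to be in choosing the cube so that only the ``good'' maps $e_\bot$, $d_\bot$ and ordinary degeneracies appear on its edges, and in verifying the one genuinely new commuting face using the $\DD$-relations from Lemma~\ref{lem:DDAbacus} / Remark~\ref{rmk:comb}; once that cube commutes, double (upper and lower) stability makes the needed faces pullbacks and the conclusion is immediate from uniqueness.
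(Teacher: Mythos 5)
There is a genuine gap. Your reduction — both $t_\top\ssplit$ and $\tsplit\,\ssplit$ are sections of $e_\top$, so they agree once they also agree after composing with the other projection of a suitable pullback square — is exactly the right mechanism, but checking agreement on that other projection ($e_\bot$, resp.\ $d_\top$) is the identity \eqref{eq:ts=ts} itself, one degree lower. So the argument as proposed is circular unless it is organized as an induction, and an induction needs a base case. The base case is the one ingredient your proposal never touches: the horizontal splitting $\ssplit$ and the vertical splitting $\tsplit$ are induced from the \emph{same} pointing $B_{-1}\to B_{0,0}$, and in the lowest degree they literally coincide (whence $t_\top\ssplit=t_\top\tsplit=\tsplit\tsplit=\tsplit\ssplit$ by a split simplicial identity). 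Without this anchor the statement is not a consequence of stability, Segality and the prism lemma alone — those only govern each family of splittings separately, never their interaction — and invoking the $\DD$-relations of Lemma~\ref{lem:DDAbacus}/Remark~\ref{rmk:comb} is not available here, since $B$ is only a $\Sigma$-presheaf and the compatibility encoded by those relations is precisely what is being proved.

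The paper's proof is the double induction you would need: base case at the pointing as above; induction along rows using the Segal pullback square $e_\top$-against-$e_\bot$ in the column $B_{\bullet,j+1}$ (both candidate maps are sections of $e_\top$, and the inductive hypothesis plus the compatibility of $\ssplit$, $t_\top$, $\tsplit$ with $e_\bot$ gives agreement on the $e_\bot$-leg); then induction along columns using the lower-stability square $e_\top$-against-$d_\top$. Note also that the augmentation column is not deduced afterwards by a colimit argument, as you suggest; it is built into the induction from the start (the objects $B_{i,-1}$ are included, with $B_{0,-1}=B_{-1}$), and indeed it must be, since the base case lives there. Your colimit remark is not wrong in itself (maps out of $B_{i,-1}$ are determined by restriction to the bulk), but the logical order in your write-up — bulk first, augmentation by realization — cannot get off the ground because the bulk identity has no anchor without the pointing.
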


\begin{proof}
  The proof relies on an inductive argument. First of all, consider the diagram
  \begin{center}
    \begin{tikzcd}
      & {B_{-1}} \\
      {B_{-1}} & {B_{0,0}} \\
      & {B_{1,0}}
      \arrow["{\tsplit}", from=1-2, to=2-2]
      \arrow[Rightarrow, no head, from=2-1, to=1-2]
      \arrow["{\ssplit}"', from=2-1, to=2-2]
      \arrow["{\tsplit}", shift left, from=2-2, to=3-2]
      \arrow["{t_\top}"', shift right, from=2-2, to=3-2]
    \end{tikzcd}
  \end{center}
  The top triangle commutes by definition, where both $ \ssplit $ and $ \tsplit
  $ are given by the pointing in $ \Sigma $. Since $ \tsplit : B_{-1} \to
  B_{0,0} $ equalizes the two maps $ t_\top, \tsplit : B_{0,0} \to B_{1,0} $,
  (this is a simplicial identity), it follows that also $ \ssplit$ equalizes
  $t_\top$ and $\tsplit $. This gives us the first instance of 
  Equation~\eqref{eq:ts=ts} and forms the base case for the inductive proof.

  To allow for a uniform argument, let us denote the objects of the augmentation
  column by $ B_{i,-1} $. In particular, $ B_{0,-1} = B_{-1} $. For the
  inductive step, assume that
  \begin{center}
    \begin{tikzcd}
      B_{i,j} \arrow[r,"\ssplit"] & B_{i,j+1} \arrow[r, "t_\top"', shift right] \arrow[r, "\tsplit", shift left] & B_{i+1,j+1}
    \end{tikzcd} 
  \end{center}
  commutes for some $ i \ge 0 $ and some $ j \ge -1 $. We show that this
  continues to commute in the next row, i.e.~with $i$ replaced by $i+1$. For
  this consider the diagram
  \begin{center}
    \begin{tikzcd}
      {B_{i,j}} & {B_{i,j+1}} & {B_{i+1,j+1}} & {B_{i,j+1}} \\
      {B_{i+1,j}} & {B_{i+1,j+1}} & {\urpullback B_{i+2,j+1}} & {B_{i+1,j+1}}  .
      \arrow["{\ssplit}", from=1-1, to=1-2]
      \arrow["{\tsplit}", shift left, from=1-2, to=1-3]
      \arrow["{t_\top}"', shift right, from=1-2, to=1-3]
      \arrow["{e_\top}", from=1-3, to=1-4]
      \arrow["{e_\bot}", from=2-1, to=1-1]
      \arrow["{\ssplit}"', from=2-1, to=2-2]
      \arrow["{e_\bot}", from=2-2, to=1-2]
      \arrow["{\tsplit}", shift left, from=2-2, to=2-3]
      \arrow["{t_\top}"', shift right, from=2-2, to=2-3]
      \arrow["{e_\bot}", from=2-3, to=1-3]
      \arrow["{e_\top}"', from=2-3, to=2-4]
      \arrow["{e_\bot}"', from=2-4, to=1-4]
    \end{tikzcd}
  \end{center}
  The two outer squares commute, and so does the middle square if we consider either
  both $ t_\top $ or both $ \tsplit $ degeneracies. The right-most square is a
  pullback by the Segal condition in column $j+1$. Since both $ t_\top $ and $
  \tsplit $ are sections of $ e_\top $ and we are assuming that the equation
  holds on the first row, by the uniqueness of the pullback-induced maps it
  follows that Equation~\eqref{eq:ts=ts} holds also in the next row.

  Finally we show that it also holds in the next column, i.e.~with $j$ 
  replaced by $ j+1 $. For this we use the same argument as for the induction
  along rows, now applied to the the diagram
  \begin{center}
    \begin{tikzcd}
      {B_{i,j}} & {B_{i,j+1}} & {B_{i+1,j+1}} & {B_{i,j+1}} \\
      {B_{i,j+1}} & {B_{i,j+2}} & {\urpullback B_{i+1,j+2}} & {B_{i,j+2}}
      \arrow["{\ssplit}", from=1-1, to=1-2]
      \arrow["{\tsplit}", shift left, from=1-2, to=1-3]
      \arrow["{t_\top}"', shift right, from=1-2, to=1-3]
      \arrow["{e_\top}", from=1-3, to=1-4]
      \arrow["{d_\top}", from=2-1, to=1-1]
      \arrow["{\ssplit}"', from=2-1, to=2-2]
      \arrow["{d_\top}", from=2-2, to=1-2]
      \arrow["{\tsplit}", shift left, from=2-2, to=2-3]
      \arrow["{t_\top}"', shift right, from=2-2, to=2-3]
      \arrow["{d_\top}", from=2-3, to=1-3]
      \arrow["{e_\top}"', from=2-3, to=2-4]
      \arrow["{d_\top}"', from=2-4, to=1-4]
    \end{tikzcd}
  \end{center}
\end{proof}

\begin{cor} \label{cor:BOORS=>InvAbcs}
  Let $ B $ be a $\Sigma$-presheaf which is stable, double Segal and satisfies
  the horizontal and vertical pointing axioms. Then the induced abacus maps are
  invertible. In particular, the pair of maps
  \begin{center}
    \begin{tikzcd}
      & {B_{i,j+1}} \\
      {B_{i+1,j}}
      \arrow["{g := d_\bot \tsplit}", shift left, from=1-2, to=2-1]
      \arrow["{f:=e_\top \ssplit}", shift left, from=2-1, to=1-2]
    \end{tikzcd}
  \end{center}
  constitute a pair of inverse maps for all $ i,j \ge -1 $, where the $ B_{\bullet,-1} $ is obtained by taking colimits row-wise, and
  $ B_{-1,\bullet} $ is obtained by taking colimits column-wise.
\end{cor}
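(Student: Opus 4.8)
The plan is to prove the stronger ``in particular'' clause: that for every $i,j\geq-1$ the maps $f = e_\top\ssplit : B_{i+1,j}\to B_{i,j+1}$ and $g = d_\bot\tsplit : B_{i,j+1}\to B_{i+1,j}$ are mutually inverse. Once this is known, invertibility of the induced abacus maps is immediate, since $f : B_{i+1,\bullet}\to\decbot(B_{i,\bullet})$ is a simplicial map whose component in degree $p$ is the map $f : B_{i+1,p}\to B_{i,p+1}$ just shown invertible (with inverse $g$), and likewise for the abacus maps read between columns. To set up, I would first record the ambient structure: by Lemma~\ref{lem:PropagateS-1}, upper stability plus the horizontal pointing axiom supplies a bottom splitting $\ssplit$ in every bulk row, compatible with all vertical simplicial operators except $e_\top$; dually, lower stability plus the vertical pointing axiom supplies a top splitting $\tsplit$ in every bulk column, compatible with all horizontal simplicial operators except $d_\bot$. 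Passing to row-wise colimits for $B_{\bullet,-1}$ and column-wise colimits for $B_{-1,\bullet}$ propagates these splittings to the augmentation column and row (exactly as in the colimit discussion preceding Lemma~\ref{lem:ts=ts} and in Remark~\ref{rem:PropagateS-1}), so that $B$ becomes a $\DD$-presheaf. In particular the relations of Remark~\ref{rmk:comb} are available for all $i,j\geq-1$: $e_\top\ssplit = f$, $d_\bot\tsplit = g$, $\ssplit = f\circ t_\top$, $\tsplit = g\circ s_\bot$, $e_\top = d_\bot\circ f$, together with their horizontal--vertical duals; and so are the split/section identities $d_\bot\ssplit = \id$, $e_\top\tsplit = \id$.

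The heart of the argument is then a diagram chase establishing $g\circ f = \id_{B_{i+1,j}}$ and $f\circ g = \id_{B_{i,j+1}}$. Starting from $g\circ f = d_\bot\,\tsplit\,e_\top\,\ssplit$, I would rewrite $e_\top\ssplit$ as the abacus map $f$, expand the vertical top section via $\tsplit = g\circ s_\bot$, and move the genuine horizontal degeneracy $s_\bot$ and the genuine vertical face $e_\top$ into position using the bisimplicial commutation relations together with the relations of Lemma~\ref{lem:DDAbacus}, so as to arrange an occurrence of $d_\bot\ssplit = \id$ or $e_\top\tsplit = \id$. The one genuinely non-formal input is the compatibility equation $t_\top\ssplit = \tsplit\ssplit$ of Lemma~\ref{lem:ts=ts} (and its dual $s_\bot\tsplit = \ssplit\tsplit$): this is precisely what governs how the vertical top section and the horizontal bottom section interact and is therefore what makes the chase close. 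The identity $f\circ g = \id$ is obtained by the mirror-image computation with the roles of rows and columns, of $\top$ and $\bot$, and of $\ssplit$ and $\tsplit$ interchanged. Where the computation touches the augmentation row ($i=-1$) or augmentation column ($j=-1$), I would rerun the inductive propagation argument of the proof of Lemma~\ref{lem:ts=ts}, which already arranged that $t_\top\ssplit = \tsplit\ssplit$ descends to the colimit augmentations.

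The main obstacle I anticipate is the bookkeeping forced by the \emph{failure} of $\ssplit$ to commute with $e_\top$ (and of $\tsplit$ with $d_\bot$): these non-commutations are themselves the abacus maps, so one must track them carefully in order to bring the section identities $d_\bot\ssplit=\id$ and $e_\top\tsplit=\id$ to bear, rather than accidentally producing a circular identity $gf=gf$. A secondary technical point is verifying that the induced splittings $\ssplit$, $\tsplit$ and the compatibility equation $t_\top\ssplit=\tsplit\ssplit$ genuinely survive the passage to the colimit augmentations, which is handled by the same inductive scheme used for Lemma~\ref{lem:ts=ts}.
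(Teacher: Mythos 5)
Your overall architecture matches the paper's: install $\ssplit$ in all bulk rows and $\tsplit$ in all bulk columns via Lemma~\ref{lem:PropagateS-1} and its dual, pass to row-wise/column-wise colimits for the two augmentations, and then verify $gf\simeq\id$ and $fg\simeq\id$ by a direct computation whose only non-formal input is Lemma~\ref{lem:ts=ts} (with a mirror argument for the second composite). That is exactly the paper's strategy, and you have correctly isolated Lemma~\ref{lem:ts=ts} as the decisive ingredient.

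The gap is that the computational core --- which is essentially the entire content of the corollary once the splittings are in place --- is left as an unexecuted ``diagram chase'', and the one concrete rewriting move you do propose, expanding $\tsplit = g\circ s_\bot$, cannot drive it: since $g := d_\bot\tsplit$, this substitution merely re-introduces $\tsplit$ and leads straight into the circularity you yourself flag. No such expansion is needed, and the bookkeeping you worry about (the failure of $\ssplit$ to commute with $e_\top$) never actually arises: the only interchange required is the split-simplicial identity $\tsplit\, e_\top = e_{\top-1}\,\tsplit$ for the extra top degeneracy against a non-top vertical face. The paper's chain is simply $gf = d_\bot\,\tsplit\, e_\top\,\ssplit = d_\bot\, e_{\top-1}\,\tsplit\,\ssplit = d_\bot\, e_{\top-1}\, t_\top\,\ssplit = d_\bot\,\ssplit = \id$, where the middle step is Lemma~\ref{lem:ts=ts} and the remaining steps are the simplicial identities $e_{\top-1}t_\top=\id$ and $d_\bot\ssplit=\id$; the dual chain gives $fg=\id$. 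So your plan is salvageable --- the ingredients you list do suffice, including the treatment of the augmentations by colimits --- but as written it does not exhibit the identities, and the specific route you sketch (through $\tsplit=g\circ s_\bot$ and the relations of Lemma~\ref{lem:DDAbacus}) is not the one that closes the computation.
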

\begin{remark}
  The maps $g:=d_\bot \tsplit$ are the ``dual abacus maps'' obtained from the 
  vertical extra top degeneracy maps induced by the local-terminal-objects 
  structure corresponding to the vertical pointing axiom.
  \end{remark}
\begin{proof}[Proof of Corollary \ref{cor:BOORS=>InvAbcs}]
  We compute
  \begin{align*}
    gf &= d_\bot \tsplit \, e_\top \ssplit \\
       &= d_\bot e_{\top-1} \, \tsplit \, \ssplit \\
       &= d_\bot e_{\top-1} t_{\top} \, \ssplit, \qquad \text{by Lemma~\ref{lem:ts=ts}} \\
       &= d_\bot \ssplit \\
       &= 1.
  \end{align*}
  where every line except the first and third uses a simplicial identity. A dual
  argument shows that $ fg = 1 $.
\end{proof}

\begin{remark}
  The fact that in a BOORS-augmented stable double Segal space $B$ there is an
  equivalence $B_{1,0}\simeq B_{0,1}$ was observed in~\cite[Remark
  2.31]{Bergner-Osorno-Ozornova-Rovelli-Scheimbauer:1809.10924}: they establish a
  zig-zag of weak equivalences between the two spaces, using the 
  pointing axioms
  and the stability axiom. From the viewpoint of that zig-zag, it is perhaps
  surprising that half of the axioms are enough to get the abacus map directly, not
  as a zigzag (although of course both sides of the axioms are needed to establish
  that it is invertible, as we have just seen). On the other hand, a posteriori,
  the BOORS equivalence tells us of course that a canonical equivalence must
  exist, since in the case (which is every case) of $B = \Tot(X)$ both $B_{1,0}$
  and $B_{0,1}$ are identified with $X_2$.
\end{remark}

\subsection{The equivalence \texorpdfstring{$\BOORS \simeq \Abcs^\simeq$}{ABCinv=BOORS}} \label{subsec:Abcs=BOORS}

Consider the inclusion $ \jj : \Sigma \to \DD $, which is defined as the identity on the underlying
bisimplicial space and which maps the pointing of $ \Sigma $ onto the map $ \scosplit
: [0,0] \to [0,-1] $ in $ \DD $. Recall that $ \Abcs^\simeq
\subset \PrSh(\DD) $ is the full subcategory spanned by those presheaves that are
stable, double Segal and have invertible abacus maps. For $ B \in \Abcs^\simeq $,
the restriction $ \jj\upperstar (B) $ will be
stable and double Segal, as these conditions do not refer to the augmentations.
The zeroth row in $ B $ is equipped with a $ \decbot$-coalgebra structure. Since
$B$ is double Segal, this coalgebra structure will automatically be rigid by
Lemma~\ref{lem:1-Segal-is-rigid}. It follows from Proposition~\ref{prop:RigCoalgIsRigPoint}
that the zeroth row together with the map $ \ssplit : X_0 \to B_{0,0} $
satisfies the horizontal pointing axiom. On the other hand, the 
invertibility of the abacus maps provides extra top degeneracy maps in every 
column, and in particular a $\dectop$-coalgebra structure
$$
  B_{\bullet,j} \overset{s_0}{\longrightarrow} B_{\bullet,j+1} 
  \overset{f^{-1}}{\longrightarrow} \Dectop(B_{\bullet,j}).
$$
This is the description of $\tsplit$, dual to Equation~\eqref{eq:s=ft} in
Remark~\ref{rmk:comb}. In particular we get a $\dectop$-coalgebra 
structure on the zeroth
column, which is rigid (in the sense of top splittings) by the double Segal
condition again and by the dual of~\ref{lem:1-Segal-is-rigid}. From
Proposition~\ref{prop:RigCoalgIsRigPoint}, it follows that the zeroth column
satisfies the vertical pointing axiom. Altogether we have shown that the functor
$\jj\upperstar$ restricts to
$$
  \jj\upperstar : \Abcs^\simeq \to \BOORS.
$$

\begin{theorem} \label{thm:j-starEquivalence}
  The functor $ \jj\upperstar : \Abcs^\simeq \to \BOORS $ is an equivalence.
\end{theorem}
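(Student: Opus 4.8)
The plan is to construct an explicit quasi-inverse to $\jj\upperstar$ and show it is quasi-inverse by analyzing the unit and counit of the adjunction $\jj\upperstar \isleftadjointto \jj\lowerstar$ (restricted to the relevant full subcategories). Since $\jj : \Sigma \to \DD$ is fully faithful and identity on the common bisimplicial bulk, the right Kan extension $\jj\lowerstar$ is fully faithful, so the counit $\jj\upperstar \jj\lowerstar \to \id$ is invertible automatically; the whole content is therefore the unit $B \to \jj\lowerstar\jj\upperstar B$ and the question of which $\DD$-presheaves lie in the essential image.

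\medskip

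\noindent\textbf{Step 1: The two constructions are inverse on objects.} I have already shown above that $\jj\upperstar$ restricts to a functor $\Abcs^\simeq \to \BOORS$. For the other direction, given $B \in \BOORS$, Lemma~\ref{lem:PropagateS-1} (using upper stability and the horizontal pointing axiom) equips $B$ with extra bottom sections $\ssplit$ in every bulk row, which by Remark~\ref{rem:PropagateS-1} and Lemma~\ref{lem:s-generation} assembles into a $\DD_{\geq 0}$-presheaf, and hence — together with the given Carlier-style augmentations obtained by taking row-wise and column-wise colimits (Proposition~\ref{prop:colim-aug}, using that $B$ is stable double Segal so the augmentation row and column become $2$-Segal with culf augmentation maps) — into a $\DD$-presheaf. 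Corollary~\ref{cor:BOORS=>InvAbcs} shows the abacus maps of this $\DD$-presheaf are invertible, so it lies in $\Abcs^\simeq$. Call this functor $\Phi : \BOORS \to \Abcs^\simeq$; one checks it is the restriction of $\jj\lowerstar$ (or verifies directly that $\Phi$ is functorial and that the augmentation column of $\Phi(B)$ recovers $B_{-1}$, using that the colimit augmentation is absolute and the pointing $\ssplit$ picks out exactly $B_{-1}$ inside it).

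\medskip

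\noindent\textbf{Step 2: $\jj\upperstar \circ \Phi \simeq \id$ on $\BOORS$.} Restricting $\Phi(B)$ back along $\jj$ forgets the Carlier augmentations but remembers the $\ssplit$-maps on the bulk, which by Proposition~\ref{prop:RigCoalgIsRigPoint} is exactly the same data as the horizontal pointing on $B$ — and symmetrically, the invertibility of the abacus maps of $\Phi(B)$ encodes the $\tsplit$-structure, which under Proposition~\ref{prop:RigCoalgIsRigPoint} corresponds to the vertical pointing on $B$. The point is that the pointing object $B_{-1}$ and its two maps to $B_{0,0}$ are recovered because the local-initial-objects pointing of a rigid $\decbot$-coalgebra is essentially unique, and $B$ being double Segal forces rigidity via Lemma~\ref{lem:1-Segal-is-rigid}.

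\medskip

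\noindent\textbf{Step 3: $\Phi \circ \jj\upperstar \simeq \id$ on $\Abcs^\simeq$.} Starting from $B \in \Abcs^\simeq$, restricting to $\Sigma$ keeps the bulk and the $\ssplit$-structure on the zeroth row, then $\Phi$ re-propagates the bottom sections down the bulk rows and re-forms the augmentations by colimit. One must check this recovers the original $B$: the re-propagated bottom sections agree with the originals because Lemma~\ref{lem:RFibLiftDecCoal} (propagation along right fibrations $e_\bot$) produces the \emph{unique} such coalgebra structure compatible with $e_\bot$, and $B$ already had one; the re-formed augmentation column agrees with $X := B_{\bullet,-1}$ because the colimit augmentation of a stable double $2$-Segal space is the canonical one (Proposition~\ref{prop:colim-aug}), and similarly for the row; and the invertible abacus maps are recovered from the $\tsplit$ via $f = e_\top \ssplit$. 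Equivalently, one phrases Steps 2 and 3 as: the unit $B \to \jj\lowerstar\jj\upperstar B$ is invertible precisely on the subcategory $\Abcs^\simeq$, by an argument parallel to Lemma~\ref{lem:UnitInv<=>star} — the unit is identity on the bulk, identity on both augmentations (since those are determined as colimits), so it is a global equivalence.

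\medskip

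\noindent\textbf{The main obstacle} I anticipate is Step 3 / the unit computation: one must verify that \emph{all} of the $\DD$-structure of $B$ — not just the bulk and the zeroth-row splittings but the compatibility of bottom splittings with vertical active maps across every bulk row, and the full system of invertible abacus maps and their relations from Lemma~\ref{lem:DDAbacus} — is uniquely reconstructed from the $\Sigma$-data. The key technical input is that for a stable double Segal $\DD$-presheaf the $\ssplit$-propagation via Lemma~\ref{lem:RFibLiftDecCoal} is unique among $e_\bot$-compatible choices (so it cannot differ from the original), combined with the uniform construction over $\sS$ sketched above Lemma~\ref{lem:PropagateS-1} which simultaneously forces compatibility with the vertical active maps; Lemma~\ref{lem:ts=ts} then pins down that the reconstructed $\tsplit$-structure matches, making the abacus maps of $\Phi(\jj\upperstar B)$ literally those of $B$. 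Once uniqueness is in hand the equivalence follows formally.
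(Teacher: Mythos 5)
There is a genuine gap, and it sits exactly where you park the difficulty. Your framing rests on the claim that the explicit inverse $\Phi$ is the restriction of the right Kan extension $\jj\lowerstar$, so that everything reduces to a unit/counit analysis. This identification is false: for any augmentation-row object $[-1,j']$ of $\DD$ there are \emph{no} morphisms $\jj(\sigma)\to[-1,j']$ at all (a map $[i,j]\to[-1,j']$ would require a monotone map $[i]\to[-1]=\emptyset$, impossible for $i\ge 0$, and $\jj(\Sigma)$ consists only of bulk objects and $[0,-1]$). Hence $(\jj\lowerstar B')_{-1,j'}$ is a limit over an empty category, i.e.\ the terminal space, for every $\Sigma$-presheaf $B'$ --- whereas the correct augmentation row (e.g.\ $Y_j=X_{1+j}$ when $B'$ comes from $\Tot(X)$) is not contractible. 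So $\jj\lowerstar$ does not land in $\Abcs^\simeq$, the unit $B\to\jj\lowerstar\jj\upperstar B$ is \emph{not} invertible on $\Abcs^\simeq$ (it kills the augmentation row), and the adjunction strategy as set up cannot produce the equivalence. The augmentation row must be built by (absolute) colimits, not by right Kan extension, which is why the paper constructs the extension by hand.

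Once the adjunction crutch is removed, the morphism-level statements are no longer automatic, and these are the substance of the paper's proof: one must show $\jj\upperstar$ is fully faithful on these subcategories (equivalently, that your $\Phi$ is functorial and natural). Faithfulness uses that every object of $\DD$ outside the image of $\jj$ is linked to one inside by an invertible abacus map, so a $\DD$-natural transformation is determined by its $\Sigma$-restriction. Fullness is the hard part: given a map $G'$ of $\Sigma$-presheaves, one must prove it is automatically compatible with the \emph{induced} bottom splittings $\ssplit$ in every row (the paper does this with a cube argument using rigidity of the $\decbot$-coalgebra structure on Segal rows, then propagates through the upper-stability pullback squares), and only then extend $G'$ over both augmentations by uniqueness of colimit-induced maps. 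Your Steps 2--3 and the ``main obstacle'' paragraph only address reconstruction and uniqueness of structure \emph{on objects}; they do not show that an arbitrary $\Sigma$-morphism respects that structure, so essential surjectivity plus object-level uniqueness does not yet yield an equivalence. Your Step 1 (essential surjectivity via Lemma~\ref{lem:PropagateS-1}, row/column colimits, Proposition~\ref{prop:colim-aug} and Corollary~\ref{cor:BOORS=>InvAbcs}) does match the paper's argument.
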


\begin{proof}
  {\em Essential surjectivity}: Starting with a $\Sigma$-presheaf $A' \in \BOORS$ we shall
  extend this to a $\DD$-presheaf in $ \Abcs^\simeq $ which restricts to $ A'$
  under $\jj\upperstar $. We obtain all the bottom splittings in the bulk by
  the horizontal pointing axiom and upper stability, via
  Lemma~\ref{lem:PropagateS-1}. Taking colimits row-wise allows us to build the
  complete augmentation column along with the augmentation maps. Since the rows
  are split by bottom sections, so will the augmentation maps be. Taking colimits
  column-wise constructs the augmentation row together with the augmentation maps.
  So far we have built a $\DD$-diagram $A$ which restricts to $A'$ when pulled
  back along $\jj$. Now, $A$ inherits the properties of being stable and
  double Segal from $A'$. The augmentation row and column are 2-Segal and the
  augmentation maps are culf by Proposition~\ref{prop:colim-aug}. By
  Theorem~\ref{cor:BOORS=>InvAbcs} all abacus maps are invertible.

  {\em Faithfulness:} Consider a functor $ G \in \Map_{\Abcs^\simeq}(A,B) $. Every
  object in $ \DD $ which is not in the image of $ \jj : \Sigma \hookrightarrow
  \DD $ is connected to an object in the image of $\jj$ by moving along an abacus
  map. We visualize this on the level of presheaves
  \begin{center}
    \begin{tikzcd} 
      & B_{-1,0} \ar[r, dotted, shorten <=5pt, shorten >=2pt] 
      & B_{-1,1} \ar[l, shift left=1.5, dotted]
                 \ar[l, shift right=1.5, dotted]
                 \ar[r, shift left=1.5, dotted, shorten <=5pt, shorten >=2pt] 
                 \ar[r, shift right=1.5, dotted, shorten <=5pt, shorten >=2pt]
      & B_{-1,2} \ar[l, dotted]
                 \ar[l, shift left=3, dotted] 
                 \ar[l, shift right=3, dotted] 
                 \ar[r, phantom, "\dots"]
                 & \phantom{} \\
    B_{0,-1} \ar[d, dotted, shorten <=5pt, shorten >=2pt]
             \ar[ur, dotted]
             \ar[r, bend right=15, shorten <=4pt, shorten >=3pt]
      & B_{0,0} \ar[d, shorten <=5pt, shorten >=2pt]
                \ar[l, dotted]
                \ar[u, dotted]
                \ar[r, shorten <=5pt, shorten >=2pt] 
                \ar[ur, dotted]
      & B_{0,1} \ar[d, shorten <=5pt, shorten >=2pt]
                \ar[u, dotted]
                \ar[l, shift left=1.5]
                \ar[l, shift right=1.5]
                \ar[r, shift left=1.5, shorten <=5pt, shorten >=2pt] 
                \ar[r, shift right=1.5, shorten <=5pt, shorten >=2pt]
                \ar[ur, dotted, shorten <=6pt, shorten >=2pt] 
      & B_{0,2} \ar[u, dotted]
                \ar[d, shorten <=5pt, shorten >=2pt]
                \ar[l]
                \ar[l, shift left=3] 
                \ar[l, shift right=3]
                \ar[r, phantom, "\dots"]
                 & \phantom{}\\
    B_{1,-1} \ar[u, shift left=1.5, dotted]
             \ar[u, shift right=1.5, dotted]
                \ar[ur, dotted]
                \ar[d, phantom, "\vdots"]
      & B_{1,0} \ar[u, shift left=1.5]
                \ar[u, shift right=1.5]
                \ar[l, dotted] 
                \ar[r, shorten <=5pt, shorten >=2pt] 
                \ar[ur, dotted]
                \ar[d, phantom, "\vdots"]
      & B_{1,1} \ar[u, shift left=1.5]
                \ar[u, shift right=1.5]
                \ar[l, shift left=1.5]
                \ar[l, shift right=1.5]
                \ar[r, shift left=1.5, shorten <=5pt, shorten >=2pt] 
                \ar[r, shift right=1.5, shorten <=5pt, shorten >=2pt] 
                \ar[ur, dotted, shorten <=6pt, shorten >=3pt]
                \ar[d, phantom, "\vdots"]
      & B_{1,2}   
                \ar[u, shift left=1.5]
                \ar[u, shift right=1.5]
                \ar[l]
                \ar[l, shift left=3] 
                \ar[l, shift right=3]
                \ar[r, phantom, "\dots"]
                \ar[d, phantom, "\vdots"]
                 & \phantom{}\\
     \phantom{}
       & \phantom{}\
       &\phantom{}
       & \phantom{}&
    \end{tikzcd}
  \end{center}
  where the restriction to $ \PrSh(\Sigma)$ under $\jj\upperstar$ is depicted using solid arrows. Since the
  abacus maps are assumed to be invertible, the value of any natural
  transformation on the image of $\jj\upperstar$ fixes its values outside of it,
  as can be seen for example in the square
  \begin{center}
    \begin{tikzcd}
      {A_{-1,0}} & {B_{-1,0}} \\
      {A_{0,-1}} & {B_{0,-1}}
      \arrow["G", from=1-1, to=1-2]
      \arrow["f", from=2-1, to=1-1]
      \arrow["G"', from=2-1, to=2-2]
      \arrow["f"', from=2-2, to=1-2]
    \end{tikzcd}
  \end{center}
  As a result, if two functors $ G,H \in \Map_{\Abcs^\simeq}(A,B) $ agree on $
  \Sigma$, that is $ \jj\upperstar(G) \simeq \jj\upperstar(H) $, then they also agree
  on $ \DD $, that is $ G \simeq H $, proving faithfulness.
  
  {\em Fullness:} Let $ A,B $ be two $\DD$-presheaves in $ \Abcs^\simeq $ and let $ G'
  \in \Map_{\BOORS}(\jj\upperstar(A),\jj\upperstar(B)) $ be a functor. Our goal is to
  construct a functor $ G : A \to B $ which when pulled back along $ \jj $
  recovers $ G' $. Let us begin with the diagram
  \begin{center}
    \begin{tikzcd}[row sep=small, column sep=small]
      & {B_{0,-1}} && { B_{0,0}} \\
      {A_{0,-1}} && { A_{0,0}} \\
      & {B_{0,0}} && {B_{0,1}} \\
      {A_{0,0}} && {A_{0,1}} \\
      &&& {B_{0,0}} \\
      && {A_{0,0}}
      \arrow["{\ssplit}"{pos=0.8}, from=1-2, to=3-2]
      \arrow["{d_0}"', from=1-4, to=1-2]
      \arrow["{\ssplit}", from=1-4, to=3-4]
      \arrow["{{G'}}", from=2-1, to=1-2]
      \arrow["{\ssplit}"', from=2-1, to=4-1]
      \arrow[dashed, from=2-3, to=1-4]
      \arrow["{d_0}"'{pos=0.3}, from=2-3, to=2-1]
      \arrow["{\ssplit}"{pos=0.2}, from=2-3, to=4-3]
      \arrow["{{d_1}}"{pos=0.7}, from=3-4, to=3-2]
      \arrow["{{d_0}}", from=3-4, to=5-4]
      \arrow["{{G'}}", from=4-1, to=3-2]
      \arrow["{{G'}}"', from=4-3, to=3-4]
      \arrow["{{d_1}}", from=4-3, to=4-1]
      \arrow["{{d_0}}"', from=4-3, to=6-3]
      \arrow["{{G'}}"', from=6-3, to=5-4]
    \end{tikzcd}
  \end{center}
  Here all the solid squares involving the diagonal maps $G'$ commute since
  $G'$ is a map of $\Sigma$-presheaves. Since all rows are 1-Segal and bottom-split, the $\decbot$-coalgebra structure is rigid by Lemma~\ref{lem:1-Segal-is-rigid}. It follows that the front and back face of the cube are
  pullbacks, with the back face
  pullback inducing the dashed morphism. By construction this induced morphism is
  compatible with the augmentation map and with the extra bottom degeneracy $
  [0,0] \leftarrow [0,1] $. Composing the right face of the cube with the hanging
  square $G'd_\bot \simeq d_\bot G'$, as in the diagram, shows that the dashed
  morphism is necessarily $G'$, since the two vertical composites are the
  identities. Working instead with the pullback $ \bar{A}_{0,-1} \times_{\bar
  A_{0,0}} \Decbot (A_{0,\bullet}) $ of simplicial spaces and the corresponding one
  for $ B $, a similar argument allows us to conclude that $G'$ is in fact
  compatible with all extra bottom sections of the complete zeroth row as well as with
  the augmentation map in the zeroth row.

  Next we show that $G'$ is compatible with all the extra bottom sections in
  the bulk (which are obtained by pullback using upper stability). By
  assumption, the square
  \begin{equation} \label{diag:Fs=sF}
    \begin{tikzcd}
      {A_{0,0}} & {A_{0,1}} \\
      {B_{0,0}} & {B_{0,1}}
      \arrow["{\ssplit}", from=1-1, to=1-2]
      \arrow["G'"', from=1-1, to=2-1]
      \arrow["G'", from=1-2, to=2-2]
      \arrow["{\ssplit}"', from=2-1, to=2-2]
    \end{tikzcd}
  \end{equation}
  commutes. To start with, we claim that the same square in the row below 
  commutes too. To see this consider the two commutative diagrams
  \begin{center}
    \begin{tikzcd}
      {A_{1,0}} \\
      {A_{0,0}} & {A_{1,1}} & {A_{1,0}} \\
      & {A_{0,1}} & {\drpullback B_{1,1}} & {B_{1,0}} \\
      && {B_{0,1}} & {B_{0,0}}
      \arrow["{e_\bot}"', from=1-1, to=2-1]
      \arrow["{\ssplit}", from=1-1, to=2-2]
      \arrow[bend left=20, Rightarrow, no head, from=1-1, to=2-3]
      \arrow["{\ssplit}"', from=2-1, to=3-2]
      \arrow["{d_\bot}", from=2-2, to=2-3]
      \arrow["{e_\bot}", from=2-2, to=3-2]
      \arrow["G'", from=2-2, to=3-3]
      \arrow["G'", from=2-3, to=3-4]
      \arrow["G'"', from=3-2, to=4-3]
      \arrow["{d_\bot}", from=3-3, to=3-4]
      \arrow["{e_\bot}"', from=3-3, to=4-3]
      \arrow["{e_\bot}", from=3-4, to=4-4]
      \arrow["{d_\bot}"', from=4-3, to=4-4]
    \end{tikzcd}
    \qquad\qquad
    \begin{tikzcd}
      {A_{1,0}} \\
      {A_{0,0}} & {B_{1,0}} \\
      & {B_{0,0}} & {\drpullback B_{1,1}} & {B_{1,0}} \\
      && {B_{0,1}} & {B_{0,0}}
      \arrow["{e_\bot}"', from=1-1, to=2-1]
      \arrow["G'", from=1-1, to=2-2]
      \arrow["G'"', from=2-1, to=3-2]
      \arrow["{e_\bot}"', from=2-2, to=3-2]
      \arrow["{\ssplit}", from=2-2, to=3-3]
      \arrow[bend left=20, Rightarrow, no head, from=2-2, to=3-4]
      \arrow["{\ssplit}"', from=3-2, to=4-3]
      \arrow["{d_\bot}", from=3-3, to=3-4]
      \arrow["{e_\bot}"', from=3-3, to=4-3]
      \arrow["{e_\bot}", from=3-4, to=4-4]
      \arrow["{d_\bot}"', from=4-3, to=4-4]
    \end{tikzcd}
  \end{center}
  where the pullback is an upper stability square. By the commutativity of
  diagram~\eqref{diag:Fs=sF} the two diagonal maps $ A_{1,0} \to B_{1,1} $ agree,
  when composed with either of the two pullback projections. Thus, by the
  uniqueness in the universal property of the pullback, they must agree,
  proving the claim. The same argument can be used in any row and any column.
  Thus, starting with the compatibility of $ G' $ with the extra bottom sections
  in the zeroth row, we can deduce the compatibility of $ G' $ with all the extra
  bottom sections in all other rows by applying an inductive argument. We define
  $G$ on the bulk and on $A_{0,-1}$ to be equal to $G'$.

  Next we turn our attention to the augmentation column. Since the rows of $A$ and
  $B$ are (absolute) colimits, we can define the value of $G$ on the augmentation
  by the colimit-induced map as in the diagram
  \begin{center}
    \begin{tikzcd}
      & {B_{i,-1}} & {B_{i,0}} & {B_{i,1}} & {} \\
      {A_{i,-1}} & {A_{i,0}} & {A_{i,1}} & {}
      \arrow[shift left=2, bend right=30, shorten <=3pt, shorten >=3pt, from=1-2, to=1-3]
      \arrow[from=1-3, to=1-2]
      \arrow[bend right=30, shift left=1, shorten <=3pt, shorten >=3pt, from=1-3, to=1-4]
      \arrow[shift left, from=1-4, to=1-3]
      \arrow[shift right, from=1-4, to=1-3]
      \arrow["\cdots"{description}, draw=none, from=1-5, to=1-4]
      \arrow["G"{pos=0.3}, dashed, from=2-1, to=1-2]
      \arrow[shift left=2, bend right=30, shorten <=3pt, shorten >=3pt, from=2-1, to=2-2]
      \arrow["G"{pos=0.3}, from=2-2, to=1-3]
      \arrow[from=2-2, to=2-1]
      \arrow[bend right=30, shift left=1, shorten <=4pt, shorten >=4pt, from=2-2, to=2-3]
      \arrow["G"{pos=0.3}, from=2-3, to=1-4]
      \arrow[shift left, from=2-3, to=2-2]
      \arrow[shift right, from=2-3, to=2-2]
      \arrow["\cdots"{description}, draw=none, from=2-4, to=2-3]
    \end{tikzcd}
  \end{center}
  where $ i > 0 $. This automatically makes $G$ compatible with the augmentation
  map and its extra bottom section. Thus defined, $G$ will be compatible also with
  the simplicial operators between rows, that is, all the squares
  \begin{center}
    \begin{tikzcd}
      & {B_{i,-1}} \\
      {A_{i,-1}} & {B_{i+1,-1}} \\
      {A_{i+1,-1}}
      \arrow["G"{pos=0.3}, from=2-1, to=1-2]
      \arrow["{e_k}"', from=2-2, to=1-2]
      \arrow["{e_k}", from=3-1, to=2-1]
      \arrow["G"', from=3-1, to=2-2]
    \end{tikzcd}
  \end{center}
  commute for all $ 0 \le k \le \top $ and similarly for all degeneracies. This is
  because the same equation holds in the bulk and as a result, by the uniqueness
  of induced maps on colimits, also on the augmentation.

  Finally, the augmentation row can be addressed in a similar fashion as we did
  for the augmentation column. All in all we have extended $ G'$ to a map $ G : A
  \to B $, which by construction restricts to $ G' : \jj\upperstar(A) \to
  \jj\upperstar(B) $.
\end{proof}

Recall that $\pp: \Sigma\to\simplexcategory$ sends $[i,j]$ to 
$[i{+}1{+}j]$ and $[-1]$ to $[0]$, and that we have the factorization 
$\pp = \rr \circ \jj$ (see~\ref{bla:Abacus} and \ref{rem:qstar=Tot}).
The following corollary is the BOORS equivalence.

\begin{cor} [BOORS~\cite{Bergner-Osorno-Ozornova-Rovelli-Scheimbauer:1809.10924}]
  \label{cor:BOORS}
  The functor $\pp\upperstar : \twoSeg \longrightarrow \BOORS$ is an equivalence.
\end{cor}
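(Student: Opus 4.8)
The plan is to exhibit $\pp\upperstar$ as a composite of two equivalences that have already been established. Recall from the discussion preceding the corollary that $\pp = \rr \circ \jj$, where $\jj : \Sigma \to \DD$ is the inclusion of~\ref{subsec:Abcs=BOORS} and $\rr : \DD \to \simplexcategory$ is the functor $[i,j]\mapsto[i{+}1{+}j]$ of Remark~\ref{rem:qstar=Tot}. Passing to presheaf categories reverses the composite, so that
$$
\pp\upperstar \;=\; \jj\upperstar \circ \rr\upperstar : \PrSh(\simplexcategory) \longrightarrow \PrSh(\DD) \longrightarrow \PrSh(\Sigma).
$$
It therefore suffices to check that each factor restricts to an equivalence on the relevant subcategories and that these restrictions compose along $\twoSeg \to \Abcs^\simeq \to \BOORS$.

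First I would invoke Remark~\ref{rem:qstar=Tot}, which identifies $\rr\upperstar$ with the total decalage functor $\Tot$ and which records that the equivalence $\twoSeg \isopil \Abcs^\simeq$ of Theorem~\ref{thm:ABCInv=2Seg} is precisely this functor $\rr\upperstar$. Concretely, for a $2$-Segal space $X$ the presheaf $\rr\upperstar(X) = \Tot(X)$ is stable and double Segal, has both augmentation row and augmentation column equal to $X$, and has identity --- hence invertible --- abacus maps, so it lies in $\Abcs^\simeq$; Theorem~\ref{thm:ABCInv=2Seg} supplies the inverse. Thus $\rr\upperstar$ restricts to an equivalence $\twoSeg \isopil \Abcs^\simeq$.

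Next I would apply Theorem~\ref{thm:j-starEquivalence}, which states that $\jj\upperstar : \Abcs^\simeq \to \BOORS$ is an equivalence. Composing the two, $\pp\upperstar = \jj\upperstar \circ \rr\upperstar$ restricts to an equivalence $\twoSeg \isopil \BOORS$, which is the assertion of the corollary. As a sanity check one may note that on objects this composite sends a $2$-Segal space $X$ to $\Tot(X)$ equipped with the BOORS pointing $s_0 : X_0 \to X_1$ of~\ref{bla:Tot}, recovering the functor in the original BOORS statement.

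I do not expect any real obstacle: all the content resides in Theorems~\ref{thm:ABCInv=2Seg} and~\ref{thm:j-starEquivalence} and in the bookkeeping of Remark~\ref{rem:qstar=Tot}. The only point deserving a moment's care is to confirm that the composite $\jj\upperstar\circ\rr\upperstar$, a priori defined on all of $\PrSh(\simplexcategory)$, does land in $\BOORS$ when restricted to $\twoSeg$ and there agrees with the two established equivalences --- but this follows at once from the factorization $\pp=\rr\circ\jj$ of index categories together with the identification of $\rr\upperstar$ with the equivalence of Theorem~\ref{thm:ABCInv=2Seg}.
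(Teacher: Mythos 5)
Your proposal is correct and follows essentially the same route as the paper: it combines Theorem~\ref{thm:ABCInv=2Seg} and Theorem~\ref{thm:j-starEquivalence}, and uses Remark~\ref{rem:qstar=Tot} together with the factorization $\pp=\rr\circ\jj$ to identify the composite equivalence with $\pp\upperstar$. The extra sanity check that the composite sends $X$ to $\Tot(X)$ with pointing $s_0$ is fine but not needed.
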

\begin{proof}
  Combining Theorem~\ref{thm:ABCInv=2Seg} and Theorem~\ref{thm:j-starEquivalence} gives an equivalence
  $$
  \twoSeg \stackrel{\qq\lowerstar}{\longrightarrow} \Abcs^\simeq \stackrel{\jj\upperstar}{\longrightarrow} \BOORS.
  $$
  According to Remark~\ref{rem:qstar=Tot}, the functor $\qq\lowerstar : \twoSeg
  \to \Abcs^\simeq$ is equivalent to the total decalage $\rr\upperstar$, induced by
  $\rr : \DD \to \simplexcategory$, which when composed with $\jj\upperstar$ gives
  precisely $\pp\upperstar$.
\end{proof}

\subsection{More detailed comparison between \texorpdfstring{$\Sigma$}{Sigma}-presheaves and \texorpdfstring{$\DD$}{Curly-D}-presheaves}

The category $ \Sigma $ as well as the BOORS axioms are symmetric with respect to
the diagonal. The category $\DD$ on the other hand, is
asymmetric forcing the inclusion $\jj : \Sigma \hookrightarrow \DD$ to be asymmetric
as well. We now embrace the asymmetry and study the inclusion $\jj : \Sigma
\hookrightarrow \DD $ more carefully by factorizing it into smaller steps:
\begin{center}
    \begin{tikzcd}
      \Sigma \arrow[r, "\simeq"] & \psimpcat \sqcup_{\simplexcategory} 
	  (\simplexcategory{\times}\simplexcategory) \arrow[r, "\hh 
	  \sqcup_{\id} \id"] & \simplexcategory^\bb 
	  \sqcup_{\simplexcategory} (\simplexcategory{\times}\simplexcategory) \arrow[r, "\w"] & \mathcal{D}_{i \ge 0} \arrow[r] & \mathcal{D} ,
    \end{tikzcd}
\end{center}
where $\mathcal{D}_{i \ge 0}$ is $\DD$ with the augmentation 
row removed.

It follows from Proposition~\ref{prop:RigCoalgIsRigPoint} that
the map
\begin{center}
  \begin{tikzcd}[column sep=large]
    \PrSh(\simplexcategory^\bb) \times_{\PrSh(\simplexcategory)} \PrSh(\simplexcategory\times \simplexcategory
  ) \arrow[r, "\hh\upperstar \times_{\id} \id"] & \PrSh(\psimpcat) \times_{\PrSh(\simplexcategory)} \PrSh(\simplexcategory\times \simplexcategory
  )
  \end{tikzcd}
\end{center}
restricts to an equivalence after imposing the horizontal pointing axiom on the
domain and imposing rigidity on the first row on the codomain. By
Lemma~\ref{lem:1-Segal-is-rigid} the rigidity becomes automatic if we ask for the
rows to be 1-Segal (or at least the zeroth row). We now turn our attention to
$\w\upperstar$. Let $ \PrSh^{\operatorname{up \, st}}(\DD_{i\ge 0}) $ and $
\PrSh^{\operatorname{up \,
st}}(\simplexcategory^\bb\sqcup_{\simplexcategory}(\simplexcategory\times\simplexcategory))
$ be the full subcategories of the domain and the codomain of $\w\upperstar$
respectively consisting of the presheaves that are upper stable.

\begin{lemma}
  The functor
  $$
  \PrSh^{\operatorname{up \, st}}(\mathcal{D}_{i \ge 0}) 
  \overset{\w\upperstar}{\longrightarrow} \PrSh^{\operatorname{up \, st}}(\simplexcategory_\bb \sqcup_{\simplexcategory} 
    (\simplexcategory{\times}\simplexcategory))
  $$
  is an equivalence.
\end{lemma}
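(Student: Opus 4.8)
The plan is to exhibit an explicit quasi-inverse to $\w\upperstar$, the construction being the ``propagation of bottom sections'' of Remark~\ref{rem:PropagateS-1}. First one unwinds what the two sides are. Since $\PrSh$ turns the pushout $\simplexcategory^\bb\sqcup_{\simplexcategory}(\simplexcategory\times\simplexcategory)$ into a pullback of presheaf $\infty$-categories, and since $\kk\upperstar\ii\upperstar$ is comonadic (Subsection~\ref{subsec:dec}), a presheaf on $\simplexcategory^\bb\sqcup_{\simplexcategory}(\simplexcategory\times\simplexcategory)$ amounts to a bisimplicial space $B_{\bullet,\bullet}$ together with a $\decbot$-coalgebra structure on its zeroth row $B_{0,\bullet}$; the space $B_{0,-1}$ and the section $\ssplit:B_{0,-1}\to B_{0,0}$ carry no independent information, being the absolute colimit of the decalage (cf.~\cite[6.1.3.16]{Lurie:HTT}). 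Under this identification $\w\upperstar$ simply reads off, from a $\DD_{i\ge0}$-presheaf, its underlying bisimplicial space together with the $\decbot$-coalgebra structure carried by its zeroth row; and ``upper stable'' means the same thing on both sides, referring only to the bisimplicial part.

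Next I would build the candidate inverse $\Phi$. Given an upper-stable presheaf $B$ on $\simplexcategory^\bb\sqcup_{\simplexcategory}(\simplexcategory\times\simplexcategory)$, upper stability makes each $e_\bot:B_{i+1,\bullet}\to B_{i,\bullet}$ a right fibration, so the construction given just before Lemma~\ref{lem:PropagateS-1} applies: form in $\Fun(\simplexcategory\op,\sS)$ the right fibration $j\mapsto\dectop^{\text{vert}}(B_{\bullet,j})$ over the constant object $B_{0,\bullet}$ and lift the zeroth-row coalgebra structure along it by Lemma~\ref{lem:RFibLiftDecCoal}; this yields compatible extra bottom sections $\ssplit$ in every bulk row, hence abacus maps $f=e_\top\ssplit$, and, taking $B_{i,-1}:=\lvert B_{i,\bullet}\rvert$ row-wise with the induced operators, an upper-stable $\DD_{i\ge0}$-presheaf $\Phi(B)$ — this is precisely the canonical extension of Remark~\ref{rem:PropagateS-1}. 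Because pullbacks, absolute colimits, and the lift of Lemma~\ref{lem:RFibLiftDecCoal} are all functorial, $\Phi$ is a functor $\PrSh^{\operatorname{up\,st}}(\simplexcategory^\bb\sqcup_{\simplexcategory}(\simplexcategory\times\simplexcategory))\to\PrSh^{\operatorname{up\,st}}(\DD_{i\ge0})$.

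Then I would verify the two round-trips. The composite $\w\upperstar\Phi$ is the identity essentially by construction, since $\Phi$ does not alter the bisimplicial part or the zeroth-row coalgebra structure, which is exactly the data $\w\upperstar$ retains. For the composite $\Phi\,\w\upperstar$, let $B$ be an upper-stable $\DD_{i\ge0}$-presheaf. Then $\Phi(\w\upperstar B)$ has the same bisimplicial part and zeroth-row coalgebra structure as $B$; the uniqueness clause of Lemma~\ref{lem:RFibLiftDecCoal} (for the right fibration above) forces the extra bottom sections, and hence the abacus maps, in every bulk row of $\Phi(\w\upperstar B)$ to agree with those of $B$; and the augmentation column of $B$ is already forced to be the row-wise geometric realization, since each row of a $\DD$-presheaf, together with its $(-1)$-term and bottom sections, is a $\simplexcategory^\bb$-presheaf (Lemma~\ref{lem:s-generation}) and for such objects the $(-1)$-term is the absolute colimit of the underlying simplicial space. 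Hence $\Phi(\w\upperstar B)\simeq B$, naturally in $B$, so $\w\upperstar$ and $\Phi$ are mutually inverse equivalences.

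The step requiring the most care is the morphism level of $\Phi\,\w\upperstar\simeq\id$: one must know that a map of upper-stable source presheaves is automatically compatible with all the propagated bottom sections and with the colimit augmentation column, so that it has a unique extension to a map of $\DD_{i\ge0}$-presheaves. This compatibility is governed by the uniqueness in Lemma~\ref{lem:RFibLiftDecCoal}; concretely it is the inductive pullback-uniqueness argument already carried out in the fullness part of the proof of Theorem~\ref{thm:j-starEquivalence} — around diagram~\eqref{diag:Fs=sF} and its continuation down rows and across columns — of which here only the horizontal half is needed, there being no vertical pointing axiom in play.
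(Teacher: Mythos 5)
Your proposal is correct and follows essentially the same route as the paper: the inverse is built by propagating the zeroth-row bottom splittings down through the bulk via upper stability (Lemma~\ref{lem:PropagateS-1}/Remark~\ref{rem:PropagateS-1}, i.e.\ Lemma~\ref{lem:RFibLiftDecCoal}), the augmentation column is filled in by the absolute row-wise colimits, and compatibility of morphisms with the propagated sections is handled by the same pullback-uniqueness induction as in the fullness part of Theorem~\ref{thm:j-starEquivalence}. Packaging this as an explicit quasi-inverse with two round-trip checks, rather than as essential surjectivity plus full faithfulness, is only a cosmetic difference.
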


\begin{proof}
  This proof relies on similar arguments as those appearing in the proof of
  Theorem~\ref{thm:j-starEquivalence}.

  {\em Essential surjectivity:} Starting with a presheaf $B$ in the codomain of
  $\w\upperstar$, Lemma~\ref{lem:PropagateS-1} together with
  Remark~\ref{rem:PropagateS-1} imply that the extra bottom sections can be propagated
  down starting from the zeroth row, thus producing a $\DD_{\ge 0}$-presheaf
  with a pointing, where $\DD_{\ge 0}$ is $\DD$ but with both augmentations
  removed. Taking colimits row-wise (which are absolute) we fill in the
  augmentation column producing the desired $\DD_{i\ge 0}$-presheaf which
  restricts to $B$ along $\w$.

  {\em Fullness and Faithfulness:} The argument is the same as that in the proof
  of Theorem~\ref{thm:j-starEquivalence}.
\end{proof}

Putting everything together gives the following theorem.

\begin{theorem} \label{thm:Di=Sigma}
  The functor $ \PrSh(\DD_{i \ge 0}) \to \PrSh(\Sigma) $
  induced by the inclusion $ \Sigma \hookrightarrow \DD_{i \ge 0} $ restricts to an equivalence on the full subcategories
  \begin{center}
    \begin{tikzcd}
      \{ B \in \PrSh(\DD_{i \ge 0}) \, | \, \text{upper stable, Segal rows} \, \} \arrow[r, "\simeq"] & \left\{ B' \in \PrSh(\Sigma) \left| \, \begin{matrix}
        \text{upper stable, Segal rows,} \\ \text{horizontal pointing axiom}
      \end{matrix} \right. \right\}  .
    \end{tikzcd}
  \end{center}
\end{theorem}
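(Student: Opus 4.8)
The plan is to obtain the equivalence by splicing together three equivalences, one for each factor of the displayed factorization of $\jj$, checking at every stage that the three conditions in play --- ``upper stable'', ``Segal rows'', and (on the $\Sigma$-side) the horizontal pointing axiom --- are matched. Concretely, the restriction functor $\PrSh(\DD_{i\ge0})\to\PrSh(\Sigma)$ of the theorem is the composite of $\w\upperstar$, then the functor $\hh\upperstar\times_{\id}\id$, then the equivalence $\PrSh(\psimpcat)\times_{\PrSh(\simplexcategory)}\PrSh(\simplexcategory{\times}\simplexcategory)\simeq\PrSh(\Sigma)$ coming from the isomorphism of categories $\Sigma\cong\psimpcat\sqcup_{\simplexcategory}(\simplexcategory{\times}\simplexcategory)$; so it suffices to restrict each of these three functors to the appropriate subcategories.

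I would begin with the last two factors. Under $\PrSh(\Sigma)\simeq\PrSh(\psimpcat)\times_{\PrSh(\simplexcategory)}\PrSh(\simplexcategory{\times}\simplexcategory)$ a $\Sigma$-presheaf corresponds to the pair given by its zeroth row together with the pointing $B_{-1}\to B_{0,0}$ (a pointed simplicial space) and by its underlying bisimplicial space, glued along the shared zeroth row; here ``upper stable'' and ``Segal rows'' depend only on the bisimplicial factor, while the horizontal pointing axiom is by Definition~\ref{def:PointingAxiom} exactly the assertion that the $\psimpcat$-factor is a local-initial-objects structure. Then I would apply $\hh\upperstar\times_{\id}\id$: by Proposition~\ref{prop:RigCoalgIsRigPoint} the adjunction $\hh\upperstar\isleftadjointto\hh\lowerstar$ restricts to an equivalence between rigid $\decbot$-coalgebras and local-initial-objects structures, and taking the fibre product with $\PrSh(\simplexcategory{\times}\simplexcategory)$ over $\PrSh(\simplexcategory)$ (both legs reading off the zeroth row) turns this into an equivalence on pairs, leaving the bisimplicial factor --- hence ``upper stable'' and ``Segal rows'' --- untouched. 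The key simplification is that once ``Segal rows'' is imposed the zeroth row is in particular $1$-Segal, so by Lemma~\ref{lem:1-Segal-is-rigid} any $\decbot$-coalgebra structure on it is automatically rigid; hence on the $\simplexcategory^\bb$-side the rigidity condition becomes vacuous (and every $\simplexcategory^\bb$-presheaf carries a $\decbot$-coalgebra structure by comonadicity of $\kk\upperstar\ii\upperstar$), so no condition beyond ``upper stable'' and ``Segal rows'' survives on that side.

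Next I would treat $\w\upperstar$ using the lemma immediately preceding this theorem, which says $\w\upperstar$ restricts to an equivalence $\PrSh^{\operatorname{up \, st}}(\DD_{i\ge0})\simeq\PrSh^{\operatorname{up \, st}}(\simplexcategory^\bb\sqcup_{\simplexcategory}(\simplexcategory{\times}\simplexcategory))$; since $\w$ is the identity on the bulk bisimplicial data it also carries ``Segal rows'' to ``Segal rows'' in both directions, so this restricts further to the subcategories of ``upper stable, Segal rows'' objects. Moreover a $\DD_{i\ge0}$-presheaf automatically carries the extra bottom codegeneracy $\scosplit$ in its zeroth row --- this is part of the structure of $\DD$ (Lemma~\ref{lem:s-generation}) --- and under $\w$ this is precisely the $\decbot$-coalgebra structure on the zeroth row seen in the previous paragraph; so the pointing recovered on the $\Sigma$-side through Proposition~\ref{prop:RigCoalgIsRigPoint} is the expected one and no extra condition on the $\DD_{i\ge0}$-side is needed. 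Composing the three equivalences then yields the asserted equivalence, the surviving condition from Proposition~\ref{prop:RigCoalgIsRigPoint} translating back to the horizontal pointing axiom on $\PrSh(\Sigma)$.

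The hard part is not conceptually deep: it is the bookkeeping needed to make the two reductions airtight --- that the Segal-rows hypothesis forces rigidity (so the $\DD_{i\ge0}$-side requires no pointing-type condition beyond what is built into $\DD$), and dually that the bottom section $\scosplit$ of the zeroth row of a $\DD_{i\ge0}$-presheaf restricts under $\w$ to exactly the coalgebra structure that Proposition~\ref{prop:RigCoalgIsRigPoint} converts into a local-initial-objects structure (with the rest of the augmentation column, absent from $\Sigma$, reconstructed by the absolute row-wise colimits as in the proof of the preceding lemma via Lemma~\ref{lem:PropagateS-1}). Both facts follow from the material of \S\ref{sec:from -1 to splittings} together with Lemma~\ref{lem:s-generation}; one should also record that ``Segal rows'' constrains only the bulk rows and the shared zeroth row, there being no augmentation row on either side to worry about.
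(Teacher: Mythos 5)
Your proposal is correct and follows essentially the same route as the paper: the paper's proof likewise consists of splicing the factorization $\Sigma \to \psimpcat\sqcup_{\simplexcategory}(\simplexcategory{\times}\simplexcategory) \to \simplexcategory^\bb\sqcup_{\simplexcategory}(\simplexcategory{\times}\simplexcategory) \xrightarrow{\w} \DD_{i\ge 0}$, using Proposition~\ref{prop:RigCoalgIsRigPoint} for the $\hh$-step (with rigidity rendered automatic by the Segal-rows hypothesis via Lemma~\ref{lem:1-Segal-is-rigid}) and the preceding lemma on $\w\upperstar$ for upper-stable presheaves. Your extra bookkeeping (that $\w$ leaves the bulk untouched, so ``Segal rows'' transfers, and that the built-in $\scosplit$ of the zeroth row supplies the coalgebra structure converted into the pointing) is exactly what the paper's ``putting everything together'' tacitly relies on.
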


\begin{remark}
  Note that Proposition~\ref{prop:colim-aug}, where we deduced the
  $2$-Segalness of the augmentations and the culfness of the augmentation maps
  from properties of the bulk, cannot be applied in Theorem~\ref{thm:Di=Sigma}, since
  we do not have the full stability (see also Remark~\ref{rem:colim-aug}). As a
  result, the $\DD_{i\ge 0}$-presheaves which are upper stable with Segal rows,
  appearing in Theorem~\ref{thm:Di=Sigma}, cannot be ensured to have 
  $2$-Segal augmentation column or culf augmentation map.
\end{remark}

\appendix


\newpage

\section{\texorpdfstring{$2$}{2}-Segal cheat sheet}

All the following are standard facts.

\begin{aplemma}\label{cheat:counits}
  The following are equivalent conditions on a simplicial space $X$:
\begin{dashlist}
  \item $X$ is $1$-Segal

  \item the counit $\varepsilon:\Dectop X \to X$ is a right fibration (meaning cartesian on
  $d_\bot$).

  \item the counit $\varepsilon:\Decbot X \to X$ is a left fibration (meaning cartesian on
  $d_\top$).
\end{dashlist}
\end{aplemma}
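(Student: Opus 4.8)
The plan is to translate the two décalage conditions into conditions on $X$ alone, observe that these are the same condition up to transposing a square, and then identify that condition with the $1$-Segal condition.

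First I would unwind the counit $\varepsilon\colon\decbot X\to X$. Since $(\decbot X)_n=X_{n+1}$, with $\decbot X$ carrying the operators $d_1,\dots,d_{n+1}$ and $s_1,\dots,s_{n+1}$ of $X$ in each degree and $\varepsilon$ equal to $d_\bot=d_0$ throughout, the naturality square of $\varepsilon$ at the top coface $[n{-}1]\to[n]$ is

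\begin{center}
\begin{tikzcd}
X_{n+1} \ar[r, "d_{n+1}"] \ar[d, "d_0"'] & X_n \ar[d, "d_0"] \\
X_n \ar[r, "d_n"'] & X_{n-1}
\end{tikzcd}
\end{center}

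and it commutes by the simplicial identity $d_0 d_{n+1}=d_n d_0$. Hence ``$\varepsilon\colon\decbot X\to X$ is a left fibration'' means exactly that this square is a pullback for all $n\ge 1$. Running the same bookkeeping for $\dectop X$ (now $(\dectop X)_n=X_{n+1}$ carries $d_0,\dots,d_n$ and $s_0,\dots,s_n$, and $\varepsilon=d_\top=d_{n+1}$ in degree $n$) shows that ``$\varepsilon\colon\dectop X\to X$ is a right fibration'' says precisely that the transposed square

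\begin{center}
\begin{tikzcd}
X_{n+1} \ar[r, "d_0"] \ar[d, "d_{n+1}"'] & X_n \ar[d, "d_n"] \\
X_n \ar[r, "d_0"'] & X_{n-1}
\end{tikzcd}
\end{center}

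is a pullback for all $n\ge 1$. A commutative square is a pullback if and only if its reflection across the diagonal is, so these two conditions are equivalent on the nose, with no reference to the Segal property.

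It then remains to see that this family of pullback squares is equivalent to $X$ being $1$-Segal. For $n=1$ the square is literally the Segal square $X_2\simeq X_1\times_{X_0}X_1$; for general $n$ it reads $X_{n+1}\simeq X_n\times_{X_{n-1}}X_n$, the two legs down to $X_{n-1}$ being the ``forget the first edge'' map $d_0$ and the ``forget the last edge'' map $d_n$. If $X$ is $1$-Segal, then transporting along the Segal equivalences $X_k\simeq X_1^{\times_{X_0}k}$ turns this square into the evident pullback expressing $X_1^{\times_{X_0}(n+1)}$ as a fibre product of two copies of $X_1^{\times_{X_0}n}$ over $X_1^{\times_{X_0}(n-1)}$. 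Conversely, supposing all the squares are pullbacks, I would prove $X_k\simeq X_1^{\times_{X_0}k}$ by induction on $k$, the cases $k\le 2$ being trivial and the step using the square to write $X_{k+1}\simeq X_k\times_{X_{k-1}}X_k$ followed by the pullback prism lemma to recognise this iterated fibre product as $X_1^{\times_{X_0}(k+1)}$ compatibly with the spine maps. I expect this inductive step to be the one place requiring care: one must keep track of which face maps become which projections under the identifications $X_k\simeq X_1^{\times_{X_0}k}$ so that the fibre products glue as claimed. (Equivalently, one may simply invoke the standard fact that $X$ is a Segal space iff $(d_{k+1},d_0)\colon X_{k+1}\to X_k\times_{X_{k-1}}X_k$ is an equivalence for every $k$, which is exactly the output of the first step.) Assembling the two equivalences yields the asserted three-way equivalence.
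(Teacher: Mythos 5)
This is one of the appendix ``cheat sheet'' items, which the paper records as a standard fact without proof, so there is no internal argument to compare against; your blind verification is correct and is essentially the standard one. With the paper's conventions ($\varepsilon$ given degreewise by $d_\bot$ on $\decbot X$ and by $d_\top$ on $\dectop X$), cartesianness on the indicated face maps unwinds exactly to the squares you display, namely $X_{n+1}\simeq X_n\times_{X_{n-1}}X_n$ with legs $d_0$ and $d_n$, and the two families of naturality squares are transposes of one another, which gives the equivalence of the second and third conditions with no Segal input — this is a clean way to organize it. The only step you leave schematic, that these gluing squares for all $n\ge 1$ are equivalent to the spine (Segal) condition, is indeed the standard induction: the $n=1$ square is the basic Segal square, and once $X_n\simeq X_{n-1}\times_{X_0}X_1$ (via $d_n$ and the last edge) is known, the $n$th square collapses to $X_{n+1}\simeq X_n\times_{X_0}X_1$ by cancelling the middle factor, and one checks the composite is the spine map. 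You correctly flag that the bookkeeping of which face maps become which projections is the only delicate point; with that made explicit (or by citing the standard reformulation you mention), the argument is complete.
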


\begin{aplemma}[Cf.~{\cite[Prop.4.13]{Galvez-Kock-Tonks:1512.07573}}]
  \label{cheat:dec(culf)}
   If $F:Y\to X$ is culf, then 
\begin{dashlist}
   \item $\Dectop(F)$ is a left fibration.

   \item $\Decbot(F)$ is a right fibration.
\end{dashlist}
\end{aplemma}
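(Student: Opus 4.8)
The plan is to unwind both assertions in terms of face maps, using the characterisations recorded in Fact~\ref{cheat:counits}: a simplicial map is a left fibration exactly when it is cartesian on the top face maps $d_\top$, and a right fibration exactly when it is cartesian on the bottom face maps $d_\bot$. On the other side, culfness of $F$ means (see~\cite{Galvez-Kock-Tonks:1512.07573}) that $F$ is cartesian on \emph{all} active maps; in particular $F$ is cartesian on every inner coface $d^i : [m-1] \to [m]$ with $0 < i < m$, since these fix both endpoints. So the whole statement reduces to identifying which original face maps of $Y$ underlie the maps $d_\top$ of $\dectop Y$ and $d_\bot$ of $\decbot Y$, and checking they are inner.

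First I would record the relevant combinatorics of decalage. The simplicial space $\dectop Y$ has $(\dectop Y)_n = Y_{n+1}$, with face maps $d_0, \dots, d_n$ of $Y$ in degree $n$ (the remaining coface $[n] \to [n+1]$, omitting the appended top vertex, being absorbed into the counit $\varepsilon$), and $\dectop(F)$ is given in degree $n$ by $F_{n+1}$. Hence the naturality square of $\dectop(F)$ against $d_\top$ in degree $n$ is precisely the naturality square of $F$ against $d_n : X_{n+1} \to X_n$. For $n \ge 1$ the coface $d^n : [n] \to [n+1]$ is inner, so culfness makes this square a pullback; therefore $\dectop(F)$ is cartesian on $d_\top$, i.e.\ a left fibration. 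The claim for $\decbot(F)$ is the mirror image: $\decbot Y$ has $(\decbot Y)_n = Y_{n+1}$ with face maps $d_1, \dots, d_{n+1}$ of $Y$, so its $d_\bot$ in degree $n$ is $d_1 : Y_{n+1} \to Y_n$, again inner for $n \ge 1$; the same argument shows $\decbot(F)$ is cartesian on $d_\bot$, i.e.\ a right fibration. (Alternatively, the second item follows from the first by the involution of $\simplexcategory$ reversing order, which exchanges $\decbot$ with $\dectop$ and left fibrations with right fibrations.)

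The only delicate point is the index bookkeeping in the previous paragraph: one must be sure that the top face of $\dectop Y$ in degree $n$ is the inner face $d_n$ of $Y$, and not the outer face $d_{n+1}$, which is precisely the counit $\varepsilon : \dectop Y \to Y$ and is emphatically not cartesian in general, nor $d_0$. Once this is settled, together with the observation that $\dectop(F)_n = F_{n+1}$ so that the square in question is genuinely a square of $F$ itself, culfness delivers all the required pullbacks with no further work; the result is the simplicial-space instance of \cite[Prop.~4.13]{Galvez-Kock-Tonks:1512.07573}.
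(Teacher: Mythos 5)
Your proof is correct. The paper itself gives no argument for this appendix fact --- it is listed among standard results and cited to \cite[Prop.~4.13]{Galvez-Kock-Tonks:1512.07573} --- and your unwinding (identifying $d_\top$ of $\dectop(Y)$ in degree $n$ with the inner face $d_n$ of $Y$, and $d_\bot$ of $\decbot(Y)$ with the inner face $d_1$, so that culfness, i.e.\ cartesianness on active maps, immediately gives the required pullback squares) is exactly the standard argument behind that citation, including the correct handling of the only delicate point, namely that the outer face absorbed into the counit is not among the faces one must check.
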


\begin{aplemma}\label{Decbot(lfib)}
  Let $F: Y \to X$ be a map of simplicial spaces.
  \begin{dashlist}
    \item If $F$ is a left fibration, 
    then $\Decbot(F)$ is cartesian.
    \item If $F$ is a right fibration,
    then $\Dectop(F)$ is cartesian.
  \end{dashlist}
\end{aplemma}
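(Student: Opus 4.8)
The plan is to exhibit $\decbot(F)$ as a pullback of the morphism $\bar F_0 \colon \bar Y_0 \to \bar X_0$ of constant simplicial spaces, and then to invoke stability of cartesian maps under pullback. I treat the first part; the second is the evident dual.

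Recall from Subsection~\ref{subsec:dec} that the canonical augmentation $\augmap \colon \decbot(Z) \to \bar Z_0$ has, in degree $n$, the composite of top face maps $Z_{n+1} \to Z_0$, i.e.\ the operator induced by the bottom-vertex inclusion $[0] \hookrightarrow [n{+}1]$. This is natural in $Z$, so applying it to $F$ yields a commuting square of simplicial spaces
\[
\begin{tikzcd}
\decbot(Y) \ar[r, "\augmap"] \ar[d, "\decbot(F)"'] & \bar Y_0 \ar[d, "\bar F_0"] \\
\decbot(X) \ar[r, "\augmap"'] & \bar X_0 .
\end{tikzcd}
\]
The key observation is that its degree-$n$ component is exactly the naturality square of $F$ for the map $[0] \hookrightarrow [n{+}1]$, and this map factors as a composite of top coface maps $[0] \hookrightarrow [1] \hookrightarrow \cdots \hookrightarrow [n{+}1]$. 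Since $F$ is a left fibration it is cartesian on every top face map, hence on this composite, so every degreewise square is a pullback. As pullbacks of simplicial spaces are computed degreewise, the whole square is a pullback, exhibiting $\decbot(F)$ as the pullback of $\bar F_0$ along $\augmap$.

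To conclude, $\bar F_0$ is a morphism of constant simplicial spaces, hence cartesian --- each of its naturality squares is an identity square --- and cartesian maps of $\spaces$-valued simplicial presheaves are stable under pullback, by a one-line application of the pasting law. Therefore $\decbot(F)$ is cartesian. The second part follows by replacing $\decbot$, top face maps and the bottom-vertex inclusion by $\dectop$, bottom face maps and the top-vertex inclusion $[0] \hookrightarrow [n{+}1]$, and "left fibration" by "right fibration". I expect no real obstacle here; the only delicate point is the bookkeeping that identifies the degree-$n$ slice of the augmentation square with the naturality square of $F$ on a composite of top cofaces, which is precisely the description of $\augmap$ recalled above.
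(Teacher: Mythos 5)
Your argument is correct, and since the paper relegates this statement to the appendix as a ``standard fact'' with no proof, there is nothing to compare it against; what you give is the standard argument, cleanly packaged. The key points all check out: the degree-$n$ component of the square formed by the two augmentations $\augmap$ is precisely the naturality square of $F$ for the bottom-vertex inclusion $[0]\hookrightarrow[n{+}1]$, which factors through top cofaces only, so left-fibrancy plus pasting makes every level a pullback; levelwise pullbacks give a pullback of simplicial spaces; and pulling back the (trivially cartesian) map $\bar F_0$ of constant simplicial spaces along $\augmap$ yields a cartesian map, by the usual pasting/cancellation argument. Note that this last step is doing real work: it is what gives cartesianness of $\Decbot(F)$ on \emph{all} operators (inner and bottom faces, degeneracies), not just on those in the image of the ``add a bottom element'' functor that are composites of top faces --- one could equally phrase it as a cancellation of pullback squares against the square for $[0]\hookrightarrow[n{+}1]$, but your formulation via pullback-stability of cartesian maps is the tidier way to say the same thing. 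The dual case goes through verbatim with $\Dectop$, bottom cofaces, and the top-vertex inclusion, as you indicate.
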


\begin{aplemma}\label{fibover1Segal}
  If $Y\to X$ is a left or a right fibration and $X$ is $1$-Segal, then also $Y$
  is $1$-Segal.
\end{aplemma}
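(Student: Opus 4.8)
The plan is to prove the right-fibration case and then deduce the left-fibration case by opposite-duality, since passing to levelwise opposites $X\mapsto X\op$ exchanges $d_\bot$ with $d_\top$ (hence right fibrations with left fibrations) and preserves the $1$-Segal condition. So it suffices to treat right fibrations.

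First I would recall the spine (initial-edge) characterization of the Segal condition: a simplicial space $Z$ is $1$-Segal if and only if for every $n\ge 2$ the square
\[
\begin{tikzcd}
Z_n \ar[r, "d_0"] \ar[d] & Z_{n-1} \ar[d] \\
Z_1 \ar[r, "d_0"'] & Z_0
\end{tikzcd}
\]
is a pullback, where the left and right vertical maps are the composites of top face maps restricting to the initial edge $\{0,1\}$ and the vertex $\{1\}$, respectively. Iterating this square reproduces the full spine equivalence $Z_n\simeq Z_1\times_{Z_0}\cdots\times_{Z_0}Z_1$, so this family of squares is equivalent to $1$-Segalness.

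Now suppose $F:Y\to X$ is a right fibration, i.e.\ $F$ is cartesian on every bottom face map $d_\bot$; in particular the naturality square of $F$ along $d_0$ exhibits $Y_m\simeq X_m\times_{X_{m-1}}Y_{m-1}$ for all $m\ge 1$. Fixing $n\ge 2$, I would paste the $d_0$-naturality square of $F$ at level $n$ on top of the initial-edge Segal square of $X$ (stacking along the shared edge $X_n\xrightarrow{d_0}X_{n-1}$); since both are pullbacks, the pullback prism lemma yields a pullback rectangle exhibiting $Y_n\simeq Y_{n-1}\times_{X_0}X_1$, where the map $Y_n\to X_1$ is the initial edge followed by $F$, equivalently $F$ followed by the initial edge.

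The remaining step, which is also the main point to get right, is to convert this pullback over $X_0$ into the desired initial-edge square of $Y$ over $Y_0$. For this I would use the degree-one naturality square $Y_1\simeq X_1\times_{X_0}Y_0$ (again right-fibrancy on $d_0$) together with the factorization $Y_{n-1}\to X_0 \;\simeq\; Y_{n-1}\to Y_0\xrightarrow{F}X_0$ coming from $F$ being a simplicial map, to rewrite $Y_{n-1}\times_{X_0}X_1\simeq Y_{n-1}\times_{Y_0}Y_1$ by pullback cancellation. Naturality of $F$ then identifies the two projections as exactly the face $d_0:Y_n\to Y_{n-1}$ and the initial edge $Y_n\to Y_1$, so the comparison map is precisely the initial-edge Segal map of $Y$ and hence an equivalence. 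As $n$ was arbitrary, all initial-edge squares of $Y$ are pullbacks, so $Y$ is $1$-Segal. I expect the only delicate bookkeeping to be this last matching of projection maps---confirming that the chain of pullback identifications reproduces the Segal map of $Y$ based at $Y_0$ rather than $X_0$---which is exactly where the degree-one square and the naturality of $F$ are indispensable.
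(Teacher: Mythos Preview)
Your argument is correct. The paper does not actually supply a proof of this statement: it appears in the appendix ``$2$-Segal cheat sheet'' as one of several facts explicitly flagged as standard, with no proof given. So there is nothing to compare against.

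Your pasting argument is a clean way to do it. One small remark on presentation: rather than first producing the pullback $Y_n\simeq X_1\times_{X_0}Y_{n-1}$ and then converting it to $Y_1\times_{Y_0}Y_{n-1}$, you can compress the two steps into a single prism. Stack the Segal square of $Y$ on top of the degree-one right-fibration square
\[
\begin{tikzcd}
Y_n \ar[r, "d_0"] \ar[d] & Y_{n-1} \ar[d] \\
Y_1 \ar[r, "d_0"] \ar[d, "F"'] & Y_0 \ar[d, "F"] \\
X_1 \ar[r, "d_0"'] & X_0
\end{tikzcd}
\]
with the top vertical arrows the iterated top face maps. Naturality of $F$ identifies the outer rectangle with the composite of the right-fibration square at level $n$ and the Segal square of $X$, hence a pullback; the bottom square is a pullback by right-fibrancy; so the prism lemma gives the top square directly. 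This avoids the separate ``matching of projection maps'' step you flagged as delicate. The duality reduction to the right-fibration case via the order-reversing involution of $\simplexcategory$ is fine (your phrase ``levelwise opposites'' is slightly nonstandard here---it is precomposition with the involution of $\simplexcategory$, not anything done to the values---but the content is correct).
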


\begin{apdefinition}
  \label{upper-lower-2Segal}
  Recall that a simplicial space $Y$ is called {\em upper $2$-Segal} if
  $\Dectop(Y)$ is $1$-Segal. 
  In particular, the following square is then a pullback:
  \[
  \begin{tikzcd}
  Y_2 \ar[d, "d_0"']  & Y_3 \dlpullback \ar[l, "d_2"'] \ar[d, "d_0"]  \\
  Y_1 & Y_2  .  \ar[l, "d_1"]
  \end{tikzcd}
  \]

  Similarly, a simplicial space $Y$ is called {\em lower $2$-Segal} if
  $\Decbot(Y)$ is $1$-Segal. 
  In particular, the following square is then a pullback:
  \[
  \begin{tikzcd}
  Y_2 \ar[d, "d_2"']  & Y_3 \dlpullback  \ar[l, "d_1"'] \ar[d, "d_3"]  \\
  Y_1 & Y_2 .  \ar[l, "d_1"]
  \end{tikzcd}
  \]
\end{apdefinition}

\begin{aplemma}\label{cheat:culfover2Segal}\label{cheat:culf+effepi}
  Suppose $Y\to X$ is culf.
  \begin{dashlist}
    \item If $X$ is lower $2$-Segal, then also $Y$ is lower $2$-Segal.
    \item If $X$ is upper $2$-Segal, then also $Y$ is upper $2$-Segal.
  \end{dashlist}
  In particular, if $X$ is $2$-Segal, then also $Y$ is $2$-Segal.

  If $Y\to X$ culf and also an effective epimorphism, then the 
  converse of these implications also hold.

\end{aplemma}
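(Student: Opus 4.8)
The plan is to reduce everything to the decalage constructions and to Facts~\ref{cheat:dec(culf)} and~\ref{fibover1Segal}, together with the standard fact (Dyckerhoff--Kapranov) that a simplicial space is $2$-Segal exactly when it is both upper and lower $2$-Segal. For the forward implications, let $F:Y\to X$ be culf. If $X$ is lower $2$-Segal then $\Decbot X$ is $1$-Segal by definition (\ref{upper-lower-2Segal}), while $\Decbot(F):\Decbot Y\to\Decbot X$ is a right fibration by~\ref{cheat:dec(culf)}; hence $\Decbot Y$ is $1$-Segal by~\ref{fibover1Segal}, i.e.\ $Y$ is lower $2$-Segal. Dually, if $X$ is upper $2$-Segal then $\Dectop(F)$ is a left fibration over the $1$-Segal space $\Dectop X$, so $\Dectop Y$ is $1$-Segal and $Y$ is upper $2$-Segal. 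The ``in particular'' clause is the conjunction of these two.

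For the converse, add the hypothesis that $F$ is an effective epimorphism, i.e.\ a levelwise effective epimorphism of simplicial spaces; then $\Decbot(F)$ and $\Dectop(F)$ are again levelwise effective epimorphisms, being given levelwise by components of $F$. So it is enough to prove: \emph{if $p:E\to B$ is a right fibration of simplicial spaces which is a levelwise effective epimorphism and $E$ is $1$-Segal, then $B$ is $1$-Segal} (together with its dual for left fibrations), and then apply this to $p=\Decbot(F)$ (resp.\ $p=\Dectop(F)$).

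To prove that statement, fix $n\ge 2$ and consider the Segal map $\sigma^B_n:B_n\to\operatorname{Sp}_n(B):=B_1\times_{B_0}\cdots\times_{B_0}B_1$. Iterating the pullback squares of $p$ along $d_\bot$ yields $E_n\simeq B_n\times_{B_0}E_0$ with structure map the last-vertex map $B_n\to B_0$; the same kind of bookkeeping identifies $\operatorname{Sp}_n(E)\simeq\operatorname{Sp}_n(B)\times_{B_0}E_0$ (again via the last vertex), and under these identifications $\sigma^E_n$ becomes $\sigma^B_n\times_{B_0}\operatorname{id}_{E_0}$. Since $E$ is $1$-Segal, $\sigma^E_n$ is an equivalence, so $\sigma^B_n$ becomes an equivalence after base change along the effective epimorphism $E_0\to B_0$ --- equivalently along its pullback $\operatorname{Sp}_n(B)\times_{B_0}E_0\to\operatorname{Sp}_n(B)$, which is again an effective epimorphism. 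As being an equivalence is detected by pullback along an effective epimorphism in the $\infty$-topos of spaces (descent: pull $\sigma^B_n$ back along the \v{C}ech nerve of that cover and compare realizations), $\sigma^B_n$ is an equivalence. Hence $B$ is $1$-Segal, which gives the converse for ``lower $2$-Segal'' and, dually, for ``upper $2$-Segal''; the ``$2$-Segal'' case follows.

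The main obstacle is the identification in the previous paragraph: one must check that the right-fibration descriptions of $E_n$ and of $\operatorname{Sp}_n(E)$ as pullbacks over $B_0$ are mutually compatible, so that the comparison map really is $\sigma^B_n\times_{B_0}\operatorname{id}_{E_0}$ rather than just abstractly a base change of the same shape --- it is this compatibility that lets descent transport invertibility from $\sigma^E_n$ down to $\sigma^B_n$. The remaining ingredients --- Facts~\ref{cheat:dec(culf)} and~\ref{fibover1Segal}, the definition~\ref{upper-lower-2Segal}, and descent for effective epimorphisms of spaces --- are entirely routine.
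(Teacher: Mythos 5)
Your argument is correct. Note that the paper itself gives no proof of this item: it sits in the appendix cheat sheet and is invoked as a standard fact (it is used in Proposition~\ref{prop:colim-aug}, where ``effective epimorphism'' indeed means levelwise, equivalently an effective epimorphism in the presheaf $\infty$-topos, as you assume). Your forward direction is exactly the expected argument from Facts~\ref{cheat:dec(culf)} and~\ref{fibover1Segal}. For the converse, the usual route is to descend the lower/upper $2$-Segal pullback squares of $X$ directly: by culfness the squares formed by $F$ over active maps are cartesian, so the comparison map $X_n \to X_{n-1}\times_{X_{n-2}}X_{n-1}$ pulls back along the effective epimorphism to the corresponding comparison map for $Y$, and one concludes by the same descent principle you invoke. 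Your reduction through decalage --- ``a right (resp.\ left) fibration which is a levelwise effective epimorphism reflects the $1$-Segal condition'' applied to $\Decbot(F)$ and $\Dectop(F)$ --- is a clean packaging of the same idea, and it buys a statement that is reusable on its own. The compatibility you flag as the main point does check out: by induction on $n$, the right-fibration pullback squares along $d_\bot$ give $E_n \simeq B_n\times_{B_0}E_0$ and $\operatorname{Sp}_n(E)\simeq \operatorname{Sp}_n(B)\times_{B_0}E_0$, both via (projection, last vertex), and since the Segal map commutes with the projection to the base and with the last-vertex map (the last vertex of a simplex agrees with the last vertex of its spine), it is identified with $\sigma^B_n\times_{B_0}\id_{E_0}$; invertibility then descends along the effective epimorphism $\operatorname{Sp}_n(E)\to\operatorname{Sp}_n(B)$, which is a pullback of $E_0\to B_0$. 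The dual case and the ``$2$-Segal = upper and lower $2$-Segal'' bookkeeping are as you say.
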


\begin{aplemma}\label{cheat:counitculf}
  Let $Y$ be a simplicial space.
  \begin{dashlist}
    \item  If $Y$ is upper $2$-Segal, then the counit $\varepsilon : \Decbot(Y)\to Y$ is
  culf.
  
    \item If $Y$ is lower $2$-Segal, then the counit $\varepsilon : \Dectop(Y)\to
  Y$ is culf.
  \end{dashlist}
  In particular, if $Y$ is $2$-Segal, then both the counits $\Dectop
  Y\to Y$ and $\Decbot Y\to Y$ are culf~\cite{Galvez-Kock-Tonks:1512.07573}.

\end{aplemma}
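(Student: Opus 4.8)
The plan is to verify part~1 directly from the definition of culf as cartesianness on active maps; part~2 is then formally dual, interchanging top and bottom (and hence upper and lower), and the concluding ``in particular'' is just the conjunction of the two parts. Recall that the counit $\varepsilon\colon\decbot(Y)\to Y$ is given in each degree by the bottom face map $d_0$, and that $(\decbot Y)_n = Y_{n+1}$. Since the active maps of $\simplexcategory$ are generated under composition by the inner cofaces and the codegeneracies, and since pullback squares paste by the pullback prism lemma, it suffices to show that $\varepsilon$ is cartesian on each inner face map; cartesianness on degeneracies then follows from the usual section argument (as at the end of the proof of Lemma~\ref{lem:lfib=>cart}), because the degeneracies are sections of face maps.

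First I would write out the naturality square of $\varepsilon$ against an inner face. For the inner coface $d^i\colon[n{-}1]\to[n]$ (with $0<i<n$), the induced map on $\decbot(Y)$ is $(\bb d^i)^\ast = d_{i+1}$, so the square to be shown cartesian is
\[
\begin{tikzcd}
Y_{n+1} \ar[r,"d_{i+1}"] \ar[d,"d_0"'] & Y_n \ar[d,"d_0"] \\
Y_n \ar[r,"d_i"'] & Y_{n-1} .
\end{tikzcd}
\]
The key observation is that all three face maps here ($d_0$ and the inner faces $d_i,d_{i+1}$) omit vertices strictly below the top, so none of them touches the top vertex. Consequently the whole square is a diagram internal to $Z:=\dectop(Y)$, where $Z_k = Y_{k+1}$ and $d^Z_j = d^Y_j$ for $0\le j\le k$; reindexed, it becomes the square with corners $Z_n, Z_{n-1}, Z_{n-1}, Z_{n-2}$ whose inner legs are $d_{i+1},d_i$ and whose other two legs are the bottom face $d_0$.

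Now I would invoke the hypothesis: $Y$ upper $2$-Segal means precisely that $Z=\dectop(Y)$ is $1$-Segal (Definition~\ref{upper-lower-2Segal}). For a $1$-Segal space every square of the shape ``outer bottom face against an inner face'', i.e.\ the reindexed square above, is a pullback: the base case $n=2$, $i=1$ is literally the Segal pullback $Z_2\simeq Z_1\times_{Z_0}Z_1$, and the general case follows by repeated application of the pullback prism lemma to the Segal squares of $Z$. Hence the displayed square is a pullback, $\varepsilon$ is cartesian on every inner face (and, by the section argument, on degeneracies), and therefore culf. Part~2 is the top--bottom dual, where $\varepsilon\colon\dectop(Y)\to Y$ has naturality squares internal to $\decbot(Y)$, which is $1$-Segal exactly when $Y$ is lower $2$-Segal.

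The main obstacle I anticipate is purely combinatorial bookkeeping. One piece is confirming the reduction to generating active maps (cartesianness composes along the generators and descends to the degeneracy sections). The more substantial piece is spelling out the pullback-prism reduction of the general ``outer-against-inner'' square to the basic Segal squares of $Z$, with the index shift $d_{i+1}\leftrightarrow d_i$ induced by the counit, and the passage between faces of $Y$ and faces of $\dectop(Y)$, all handled correctly. Both steps are standard for $1$-Segal spaces, so I expect the write-up to be routine once the index arithmetic is pinned down.
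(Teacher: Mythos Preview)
The paper does not prove this statement: it appears in the appendix cheat sheet as a standard fact, with the citation to G\'alvez--Kock--Tonks covering the final 2-Segal case. So there is no proof in the paper to compare against, and your write-up would serve as the actual argument.

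Your reduction for the inner face maps is correct and is exactly the right idea: the naturality square of $\varepsilon\colon\decbot Y\to Y$ against an inner $d^i$ lives entirely inside $Z=\dectop Y$, and for $Z$ 1-Segal these ``$d_0$ against $d_j$ with $j\ge 2$'' squares are pullbacks (your base case $Z_2\simeq Z_1\times_{Z_0}Z_1$ is literally the Segal square, and the prism induction goes through).

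There is, however, a small gap in the degeneracy step. Your section argument pastes the $s^i$-square with the square for an inner coface $d^j$ satisfying $s^i d^j=\id$, i.e.\ $j\in\{i,i+1\}$ with $0<j<n{+}1$. This covers every codegeneracy $s^i\colon[n{+}1]\to[n]$ \emph{except} $s^0\colon[1]\to[0]$: there the only sections are $d^0,d^1\colon[0]\to[1]$, neither of which is inner, so the paste-with-an-inner-face trick is unavailable. The end of the proof of Lemma~\ref{lem:lfib=>cart} that you cite uses cartesianness on \emph{all} face maps, which you do not have here. This lowest degeneracy square
\[
\begin{tikzcd}
Y_1 \ar[r,"s_1"] \ar[d,"d_0"'] & Y_2 \ar[d,"d_0"] \\
Y_0 \ar[r,"s_0"'] & Y_1
\end{tikzcd}
\]
genuinely needs a separate argument (it cannot be placed inside $Z$, since $Y_0$ is not a level of $\dectop Y$). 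One clean route is to use that the canonical $\dectop$-coalgebra structure on $Z=\dectop Y$ is rigid when $Z$ is 1-Segal (the dual of Lemma~\ref{lem:1-Segal-is-rigid}) and unwind what rigidity says at the augmentation; alternatively, for the full 2-Segal case one can invoke the unitality theorem of Feller--Garner--Kock--Proulx--Weber. Either way this is a routine fix, but as written the sentence ``degeneracies are sections of face maps'' does not close the case $s^0\colon[1]\to[0]$.
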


\newpage

\hyphenation{mathe-matisk}

\medskip

\noindent {\sc Joachim Kock} \texttt{<joachim.kock@uab.cat>}\\
University of Copenhagen, Universitat Aut\`onoma de 
Barcelona, and Centre de Recerca Matem\`atica

\medskip

\noindent {\sc Thomas Jan Mikhail} \texttt{<tjm@math.ku.dk>}\\
Universitat Aut\`onoma de 
Barcelona \\
Current affiliation: University of Copenhagen


\begin{thebibliography}{10}

  \bibitem{Batanin-Kock-Weber:operadic}
{\sc Michael Batanin, Joachim Kock, {\rm and }Mark Weber}.
\newblock Work in progress. {M}eanwhile see \emph{{{O}}peradic categories and
  simplicial sets}, in: Oberwolfach reports no.~46/2021.
\newblock
  \url{https://publications.mfo.de/bitstream/handle/mfo/3897/OWR_2021_46.pdf},
  2021.

\bibitem{Batanin-Markl:1404.3886}
{\sc Michael Batanin {\rm and }Martin Markl}.
\newblock {\em Operadic categories and duoidal {D}eligne's conjecture}.
\newblock Adv. Math. {\bf 285} (2015), 1630--1687.
\newblock ArXiv:1404.3886.

\bibitem{Bergner-Osorno-Ozornova-Rovelli-Scheimbauer:1609.02853}
{\sc Julia~E. Bergner, Ang\'{e}lica~M. Osorno, Viktoriya Ozornova, Martina
  Rovelli, {\rm and }Claudia~I. Scheimbauer}.
\newblock {\em 2-{S}egal sets and the {W}aldhausen construction}.
\newblock Topology Appl. {\bf 235} (2018), 445--484.
\newblock ArXiv:1609.02853.

\bibitem{BOORS:Edgewise}
{\sc Julia~E. Bergner, Ang\'{e}lica~M. Osorno, Viktoriya Ozornova, Martina
  Rovelli, {\rm and }Claudia~I. Scheimbauer}.
\newblock {\em The edgewise subdivision criterion for 2-{S}egal objects}.
\newblock Proc. Amer. Math. Soc. {\bf 148} (2020), 71--82.
\newblock ArXiv:1807.05069.

\bibitem{Bergner-Osorno-Ozornova-Rovelli-Scheimbauer:1809.10924}
{\sc Julia~E. Bergner, Angélica~M. Osorno, Viktoriya Ozornova, Martina
  Rovelli, {\rm and }Claudia~I. Scheimbauer}.
\newblock {\em 2-Segal objects and the Waldhausen construction}.
\newblock Algebr. Geom. Topol. {\bf 21} (2021), 1267--1326.
\newblock ArXiv:1809.10924.

\bibitem{Bergner-Osorno-Ozornova-Rovelli-Scheimbauer:1901.03606}
{\sc Julia~E. Bergner, Angélica~M. Osorno, Viktoriya Ozornova, Martina
  Rovelli, {\rm and }Claudia~I. Scheimbauer}.
\newblock {\em Comparison of Waldhausen constructions}.
\newblock Ann. K-theory {\bf 6} (2021), 97--136.
\newblock ArXiv:1901.03606.

\bibitem{Bousfield-Friedlander:10.1007/BFb0068711}
{\sc Aldridge~K. Bousfield {\rm and }Eric~M. Friedlander}.
\newblock {\em Homotopy theory of $\Gamma$-spaces, spectra, and bisimplicial
  sets}.
\newblock In M.~G. Barratt {\rm and }M.~E. Mahowald, editors, {\em Geometric
  Applications of Homotopy Theory II}, vol. 658 of Lecture Notes in
  Mathematics, pp. 80--130. Springer Berlin Heidelberg, Berlin, Heidelberg,
  1978.

\bibitem{Carlier:1801.07504}
{\sc Louis Carlier}.
\newblock {\em Incidence bicomodules, {M}\"{o}bius inversion and a {R}ota
  formula for infinity adjunctions}.
\newblock Algebr. Geom. Topol. {\bf 20} (2020), 169--213.
\newblock ArXiv:1801.07504.

\bibitem{Carlier:1812.09915}
{\sc Louis Carlier}.
\newblock {\em M{\"o}bius functions of directed restriction species and free
  operads, via the generalised Rota formula}.
\newblock Mediterr. J. Math. {\bf 18} (2021), 170.
\newblock ArXiv:1812.09915.

\bibitem{Dyckerhoff-Kapranov:1212.3563}
{\sc Tobias Dyckerhoff {\rm and }Mikhail Kapranov}.
\newblock {\em Higher Segal spaces}.
\newblock No. 2244 in Lecture Notes in Mathematics. Springer-Verlag, 2019.
\newblock ArXiv:1212.3563.

\bibitem{Feller-Garner-Kock-Proulx-Weber:1905.09580}
{\sc Matthew Feller, Richard Garner, Joachim Kock, May~U. Proulx, {\rm and
  }Mark Weber}.
\newblock {\em Every 2-Segal space is unital}.
\newblock Commun. Contemp. Math. {\bf 23} (2021), 2050055.
\newblock ArXiv:1905.09580.


\bibitem{Galvez-Kock-Tonks:1512.07573}
{\sc Imma G{\'a}lvez-Carrillo, Joachim Kock, {\rm and }Andrew Tonks}.
\newblock {\em Decomposition spaces, incidence algebras and {M}\"{o}bius
  inversion {I}: {B}asic theory}.
\newblock Adv. Math. {\bf 331} (2018), 952--1015.
\newblock ArXiv:1512.07573.

\bibitem{Galvez-Kock-Tonks:1512.07577}
{\sc Imma G{\'a}lvez-Carrillo, Joachim Kock, {\rm and }Andrew Tonks}.
\newblock {\em Decomposition spaces, incidence algebras and {M}\"{o}bius
  inversion {II}: {C}ompleteness, length filtration, and finiteness}.
\newblock Adv. Math. {\bf 333} (2018), 1242--1292.
\newblock ArXiv:1512.07577.

\bibitem{Galvez-Kock-Tonks:1612.09225}
{\sc Imma G{\'a}lvez-Carrillo, Joachim Kock, {\rm and }Andrew Tonks}.
\newblock {\em Decomposition spaces in combinatorics}.
\newblock To appear in {\em Higher Segal spaces and applications}, 
Contemp. Math., AMS.
\newblock ArXiv:1612.09225.

\bibitem{GKT:Crapo}
{\sc Imma G\'{a}lvez-Carrillo, Joachim Kock, {\rm and }Andrew Tonks}.
\newblock {\em Convex decomposition spaces and {C}rapo complementation
  formula}.
\newblock Preprint, arXiv:2409.03742.

\bibitem{Garner-Kock-Weber:1812.01750}
{\sc Richard Garner, Joachim Kock, {\rm and }Mark Weber}.
\newblock {\em Operadic categories and d{\'e}calage}.
\newblock Adv. Math. {\bf 377} (2021), 107440.
\newblock ArXiv:1812.01750.

\bibitem{Godicke:2407.13357}
{\sc Jonte G{\"o}dicke}.
\newblock {\em An $\infty$-category of 2-{S}egal spaces}.
\newblock Preprint, arXiv:2407.13357.

\bibitem{Hackney-Kock:2210.11191}
{\sc Philip Hackney {\rm and }Joachim Kock}.
\newblock {\em Culf maps and edgewise subdivision}.
 With an appendix coauthored with Jan
  Steinebrunner. 
  \newblock To appear in Trans. Amer. Math. Soc.
  \newblock ArXiv:2210.11191,

\bibitem{Illusie2}
{\sc Luc Illusie}.
\newblock {\em Complexe cotangent et d\'eformations. {II}}.
\newblock No. 283 in Lecture Notes in Mathematics. Springer-Verlag, Berlin,
  1972.

\bibitem{Lurie:HTT}
{\sc Jacob Lurie}.
\newblock {\em Higher topos theory}, vol. 170 of Annals of Mathematics Studies.
\newblock Princeton University Press, Princeton, NJ, 2009.
\newblock ArXiv:math/0608040.

\bibitem{Rota:Moebius}
{\sc Gian-Carlo Rota}.
\newblock {\em On the foundations of combinatorial theory. {I}. {T}heory of
  {M}\"obius functions}.
\newblock Z. Wahrscheinlichkeitstheorie und Verw. Gebiete {\bf 2} (1964),
  340--368.

\bibitem{Rovelli:2412.17400}
{\sc Martina Rovelli}.
\newblock {\em The $S_\bullet$-construction as an equivalence between 2-Segal
  spaces and stable augmented double Segal spaces}.
  \newblock To appear in {\em Higher Segal spaces and applications}, 
Contemp. Math., AMS.
\newblock ArXiv:2412.17400.

\bibitem{Stevenson:TAC26}
{\sc Danny Stevenson}.
\newblock {\em D{\'e}calage and {K}an's simplicial loop group functor}.
\newblock Theory Appl. Categ. {\bf 26} (2012), 768--787.

\bibitem{Walde:1611.08241}
{\sc Tashi Walde}.
\newblock {\em Hall monoidal categories and categorical modules}.
\newblock Preprint, arXiv:1611.08241.

\bibitem{Waldhausen}
{\sc Friedhelm Waldhausen}.
\newblock {\em Algebraic {$K$}-theory of spaces}.
\newblock In {\em Algebraic and geometric topology ({N}ew {B}runswick,
  {N}.{J}., 1983)}, vol. 1126 of Lecture Notes in Mathematics, pp. 318--419.
  Springer, Berlin, 1985.

\bibitem{Young:1611.09234}
{\sc Matthew~B. Young}.
\newblock {\em Relative 2-{S}egal spaces}.
\newblock Algebr. Geom. Topol. {\bf 18} (2018), 975--1039.
\newblock ArXiv:1611.09234.

\end{thebibliography}
\end{document}